\definecolor{darkgreen}{rgb}{0.0, 0.6, 0.0}
\numberwithin{equation}{section}
\numberwithin{figure}{section}
\newlist{enua}{enumerate}{1}
\setlist*[enua]{label={\upshape (\arabic*)}, nosep}
\newlist{enur}{enumerate}{1}
\setlist*[enur]{label={\upshape (\roman*)}, nosep}
\theoremstyle{plain}
\newtheorem{thm}{Theorem}[section]
\newtheorem{prp}[thm]{Proposition}
\newtheorem{lem}[thm]{Lemma}
\newtheorem{cor}[thm]{Corollary}
\newtheorem{fct}[thm]{Fact}
\newtheorem{thma}{Theorem}
\theoremstyle{definition}
\newtheorem{dfn}[thm]{Definition}
\newtheorem{rmk}[thm]{Remark}
\newtheorem{ex}[thm]{Example}
\newtheorem{claim}{Claim}
\crefname{thm}{Theorem}{Theorems}
\crefname{thma}{Theorem}{Theorems}
\crefname{prp}{Proposition}{Propositions}
\crefname{lem}{Lemma}{Lemmas}
\crefname{cor}{Corollary}{Corollaries}
\crefname{dfn}{Definition}{Definitions}
\crefname{rmk}{Remark}{Remarks}
\crefname{fct}{Fact}{Facts}
\crefname{ex}{Example}{Examples}
\crefname{ques}{Question}{Questions}
\newcommand{\ass}{\mathrm{ass}}
\DeclareMathOperator{\add}{\mathsf{add}} 
\newcommand{\up}{\mathsf{up}}
\newcommand{\cov}{\mathsf{cov}}
\DeclareMathOperator{\dom}{dom}
\DeclareMathOperator{\soc}{soc}
\DeclareMathOperator{\Se}{\mathsf{S}}
\DeclareMathOperator{\Bass}{\mathsf{Bass}}
\DeclareMathOperator{\Fct}{\mathsf{Fct}}
\newcommand{\F}{\mathsf{F}}
\DeclareMathOperator{\ke}{\mathsf{ke}}
\DeclareMathOperator{\torf}{\mathsf{torf}}
\DeclareMathOperator{\serre}{\mathsf{serre}}
\newcommand{\imply}{\Rightarrow}
\newcommand{\equi}{\Leftrightarrow}
\newcommand{\Imply}{\quad \Longrightarrow \quad}
\newcommand{\Equi}{\quad \Longleftrightarrow \quad}
\newcommand{\xr}[1]{\xrightarrow{\, #1 \, }}
\newcommand{\inj}{\hookrightarrow}
\newcommand{\surj}{\twoheadrightarrow}
\newcommand{\isoto}{\xr{\simeq}}
\newcommand{\iso}{\cong}
\newcommand{\bbF}{\mathbb{F}}
\newcommand{\bbN}{\mathbb{N}}
\newcommand{\bbk}{\Bbbk}
\newcommand{\catA}{\mathcal{A}}
\newcommand{\catC}{\mathcal{C}}
\newcommand{\catF}{\mathcal{F}}
\newcommand{\catS}{\mathcal{S}}
\newcommand{\catX}{\mathcal{X}}
\newcommand{\catY}{\mathcal{Y}}
\DeclareMathOperator{\Mod}{\mathsf{Mod}}
\DeclareMathOperator{\catmod}{\mathsf{mod}}
\DeclareMathOperator{\proj}{\mathsf{proj}}
\DeclareMathOperator{\fl}{\mathsf{fl}}
\DeclareMathOperator{\tf}{\mathsf{tf}}
\DeclareMathOperator{\cm}{\mathsf{cm}}
\DeclareMathOperator{\Fun}{Fun}
\newcommand{\id}{\mathrm{id}}
\DeclareMathOperator{\Hom}{Hom}
\DeclareMathOperator{\End}{End}
\DeclareMathOperator{\Ext}{Ext}
\DeclareMathOperator{\Ima}{Im}
\DeclareMathOperator{\Ker}{Ker}
\DeclareMathOperator{\Cok}{Cok}
\DeclareMathOperator{\ZZ}{Z} 
\newcommand{\pb}{\arrow[rd,"{\mathrm{PB}}",phantom]}
\DeclareMathOperator{\Spec}{Spec}
\DeclareMathOperator{\Max}{Max}
\DeclareMathOperator{\Min}{Min}
\DeclareMathOperator{\Ass}{Ass}
\DeclareMathOperator{\Assh}{Assh} 
\DeclareMathOperator{\NF}{NF} 
\DeclareMathOperator{\NZ}{NZ}
\DeclareMathOperator{\htt}{ht}
\DeclareMathOperator{\depth}{depth}
\DeclareMathOperator{\grade}{grade} 
\newcommand{\mm}{\mathfrak{m}}
\newcommand{\nn}{\mathfrak{n}}
\newcommand{\pp}{\mathfrak{p}}
\newcommand{\qq}{\mathfrak{q}}
\newcommand{\kk}{\kappa}
\DeclareMathOperator{\Supp}{Supp}
\newcommand{\LL}{\Lambda}
\newcommand{\arr}[1]{\arrow[{#1}]}
\newenvironment{bsmatrix}{\left[\begin{smallmatrix}}{\end{smallmatrix}\right]}
\begin{document}
\title{Classifying KE-closed subcategories over a commutative noetherian ring}

\author{Toshinori Kobayashi}
\address{School of Science and Technology, Meiji University, 1-1-1 Higashi-Mita, Tama-ku, Kawasaki-shi, Kanagawa 214-8571, Japan}
\email{tkobayashi@meiji.ac.jp}

\author{Shunya Saito}
\address{Graduate School of Mathematical Sciences, The University of Tokyo, 3-8-1 Komaba, Meguro-ku, Tokyo 153-8914, Japan}
\email{shunya-saito@g.ecc.u-tokyo.ac.jp}

\subjclass[2020]{Primary 13C60; Secondary 13D02, 18E10}
\keywords{KE-closed subcategories; torsion-free classes; dominant resolving subcategories; exact categories.}

\begin{abstract}
Let $\catmod R$ denote the category of finitely generated $R$-modules for a commutative noetherian ring $R$.
In this paper, we investigate KE-closed subcategories of $\catmod R$ as a continuation of our previous work.
We associate a function on $\Spec R$ with each KE-closed subcategory of $\catmod R$, and show that this function completely determines the original subcategory.
To classify the functions obtained from KE-closed subcategories, we introduce the notion of an $n$-Bass function for each $n\ge 0$.
We obtain a bijection between the set of KE-closed subcategories and the set of $2$-Bass functions provided that $R$ is $(S_2)$-excellent in the sense of \v{C}esnavi\v{c}ius.
\end{abstract}

\maketitle
\tableofcontents

\section{Introduction}

Understanding subcategories has been one of the central themes in both ring theory and algebraic geometry.
Gabriel provided a complete classification of the Serre subcategories of the module category over a commutative noetherian ring, or more generally, the category of coherent sheaves on a noetherian scheme \cite{Gab}.
The notion of torsion(-free) classes, which are subcategories closed under taking extensions and quotient objects (subobjects) is tightly connected with wide areas of ring theory, including tilting theory and the theory of t-structures on derived categories; see \cite{AR, AIR, IK} for instance.
It should be mentioned that the torsion-free classes of the category of finitely generated modules over a commutative noetherian ring are completely classified by Takahashi \cite{Takahashi}.

To seek generalizations of these results, it is natural to focus on subcategories that are closed under basic operations such as taking kernels, images, and cokernels of morphisms.
Recently, the notion of IE-closed subcategories was introduced to denote subcategories closed under extensions and images of morphisms, and it was applied to study representations of hereditary algebras \cite{ES}.
Enomoto showed that for commutative noetherian rings, the notions of IE-closed subcategories and torsion-free classes coincide \cite{Eno}.

A similar approach can be taken to define and study other classes of subcategories.
KE-closed subcategories form such a class, as they are defined as subcategories closed under taking kernels and extensions.
The authors investigated KE-closed subcategories with a particular emphasis on comparing these subcategories with torsion-free classes \cite{KS}.

In this paper, we continue the study of KE-closed subcategories over commutative noetherian rings. 
Despite their natural definition via basic operations, a complete classification of KE-closed subcategories remains an open problem.
We aim to classify these subcategories over a commutative noetherian ring $R$.
We associate a function $f_\catX$ on the prime spectrum $\Spec R$ of $R$ with each KE-closed subcategory $\catX$ of the category $\catmod R$ of finitely generated $R$-modules, and show in \cref{prp:X=XfX} that each such subcategory can be recovered from the function.
To characterize the functions appearing as $f_\catX$, we introduce the notion of an \emph{$n$-Bass function} for each non-negative integer $n$.
An $n$-Bass function is a function on $\Spec R$ which satisfies certain comparability conditions respect to the inclusion order on $\Spec R$ (\cref{dfn:Bass fct}).
We prove in \cref{prp:bound KE} that the function $f_\catX$ for a KE-closed subcategory $\catX$ is a $2$-Bass function.
Furthermore, in \cref{cor:bij KE 2-Bass} we establish a bijection between the set $\ke(\catmod R)$ of KE-closed subcategories of $\catmod R$ and the set of $2$-Bass functions provided that $R$ is $(S_2)$-excellent in the sense of \v{C}esnavi\v{c}ius \cite{Ces}.
Examples of $(S_2)$-excellent rings include excellent rings and homomorphic images of Cohen--Macaulay rings.
Thus our result applies to such rings.
Each $2$-Bass function is naturally associated with a pair of subsets of $\Spec R$ satisfying certain conditions.
We call such a pair \emph{a $2$-Bass sequence}; see \cref{dfn:Bass sequence}.
The set of all subsets of $\Spec R$ can be canonically embedded in the set of $2$-Bass sequences.
Therefore, we interpret Gabriel's theorem as a bijection between three sets; the set $\serre(\catmod R)$ of Serre subcategories of $\catmod R$, the set of all subsets of $\Spec R$, and the set of $0$-Bass functions.
Similarly, Takahashi's theorem can be interpreted as a bijection between the set $\torf(\catmod R)$ of torsion-free subcategories of $\catmod R$, the set of all specialization-closed subsets of $\Spec R$, and the set of $1$-Bass functions.
The results can be summarized as follows.

\begin{thma}\label{thma:Main}
We have the following commutative diagram of maps:
\begin{equation}
\begin{tikzcd}
\ke(\catmod R) \ar[r,"{f_{(-)}}",yshift=2.5pt,hook] & \Bass_2(\Spec R) \ar[l,"\catX_{(-)}",yshift=-2.5pt,two heads] \ar[r,"\simeq"] & \{\text{$2$-Bass sequences $(\Phi,\Psi)$ of $\Spec R$}\} \ar[l] \\
\torf(\catmod R) \ar[u,phantom,"\subseteq"sloped] \ar[r,"\simeq"] & \Bass_1(\Spec R) \ar[u,phantom,"\subseteq"sloped] \ar[r,"\simeq"] \ar[l] & \{\text{subsets of $\Spec R$}\} \ar[l] \ar[u,phantom,"\subseteq"sloped]\\
\serre(\catmod R) \ar[u,phantom,"\subseteq"sloped] \ar[r,"\simeq"] & \Bass_0(\Spec R) \ar[u,phantom,"\subseteq"sloped] \ar[r,"\simeq"] \ar[l] & \{\text{specialization-closed subsets of $\Spec R$}\} \ar[u,phantom,"\subseteq"sloped] \ar[l]
\end{tikzcd}
\end{equation}
The map $f_{(-)}$ is an injection with a retraction $\catX_{(-)}$.
Moreover, $f_{(-)}$ is bijective provided that $R$ is $(S_2)$-excellent.
\end{thma}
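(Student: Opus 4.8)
The plan is to read \cref{thma:Main} as the assembly of three layers of data: the left column records subcategories, the middle column records $n$-Bass functions, and the right column records the underlying combinatorial information on $\Spec R$. I would first dispose of the formal parts. The left-hand inclusions $\serre(\catmod R)\subseteq\torf(\catmod R)\subseteq\ke(\catmod R)$ are immediate from the definitions, since a torsion-free class is closed under submodules (hence under kernels) and extensions, and a Serre subcategory is moreover closed under quotients. The middle inclusions $\Bass_0(\Spec R)\subseteq\Bass_1(\Spec R)\subseteq\Bass_2(\Spec R)$ are immediate from \cref{dfn:Bass fct}: an $(n-1)$-Bass function satisfies \emph{a fortiori} the comparability conditions defining an $n$-Bass function. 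Finally, the three bijections in the rightmost column, and the vertical embeddings between them, follow by unwinding \cref{dfn:Bass fct} and \cref{dfn:Bass sequence}: a $0$-Bass function is exactly a specialization-closed subset, a $1$-Bass function is exactly an arbitrary subset, a $2$-Bass function is exactly a $2$-Bass sequence $(\Phi,\Psi)$, and a specialization-closed subset (resp. a subset $\Phi$) is sent to the corresponding subset (resp. $2$-Bass sequence) in the evident way.

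The top row is the essential new content. By \cref{prp:bound KE}, $f_\catX$ is a $2$-Bass function for every KE-closed $\catX$, so $f_{(-)}\colon\ke(\catmod R)\to\Bass_2(\Spec R)$ is well defined; one further checks that $\catX_f$ is KE-closed for every $2$-Bass function $f$, so that $\catX_{(-)}\colon\Bass_2(\Spec R)\to\ke(\catmod R)$ is well defined as well. Then \cref{prp:X=XfX} says precisely that $\catX_{f_\catX}=\catX$, i.e. $\catX_{(-)}\circ f_{(-)}=\id$; in particular $f_{(-)}$ is injective, $\catX_{(-)}$ is a retraction of it, and $\catX_{(-)}$ is surjective. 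For the last sentence of the theorem, \cref{cor:bij KE 2-Bass} supplies the reverse identity $f_{(-)}\circ\catX_{(-)}=\id$ under the hypothesis that $R$ is $(S_2)$-excellent, i.e. every $2$-Bass function is realized as $f_\catX$; hence in that case $f_{(-)}$ is a bijection with inverse $\catX_{(-)}$.

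For the two lower rows I would invoke Gabriel's and Takahashi's classifications and check their compatibility with the function construction. Concretely, I would verify that $f_{(-)}$ applied to a Serre subcategory $\catS$ yields the $0$-Bass function corresponding under the rightmost bijection to the specialization-closed set $\bigcup_{M\in\catS}\Supp M$ attached to $\catS$ by Gabriel, and that $f_{(-)}$ applied to a torsion-free class $\catF$ yields the $1$-Bass function corresponding to the subset of $\Spec R$ attached to $\catF$ by Takahashi; together with the bijections $\Bass_0(\Spec R)\simeq\{\text{specialization-closed subsets}\}$ and $\Bass_1(\Spec R)\simeq\{\text{subsets}\}$ already noted, this both shows the two lower horizontal arrows are bijections and identifies them with the classical ones. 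Commutativity of the four squares is then immediate: the recipe $\catX\mapsto f_\catX$ is one and the same on all of $\ke(\catmod R)$, so restricting it along the left-hand inclusions agrees with post-composing the lower arrows with the middle inclusions, and the right-hand squares commute by the construction of the combinatorial bijections.

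Granting the cited propositions, the above is essentially bookkeeping, and the real difficulty is concentrated in \cref{cor:bij KE 2-Bass}: the surjectivity of $f_{(-)}$ for $(S_2)$-excellent $R$. Proving that amounts to the realization problem of exhibiting, for a prescribed $2$-Bass function $f$, a KE-closed subcategory whose associated function is exactly $f$ --- which requires constructing modules over the local rings $R_\pp$ with prescribed depth/Bass-number behaviour and gluing them into a subcategory of $\catmod R$, and it is precisely here that $(S_2)$-excellence enters, to control completions and the $(S_2)$-locus. I expect this step, rather than any part of the formal assembly, to be the main obstacle.
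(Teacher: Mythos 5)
Your assembly is correct and, at the level of the top row, is exactly the paper's: well-definedness from \cref{prp:catX_f KE} and \cref{prp:bound KE} (together with \cref{prp:funct ass to cat}, which is what actually makes $f_\catX$ a \emph{Bass} function), the retraction identity $\catX_{f_\catX}=\catX$ from \cref{prp:X=XfX}, and bijectivity under $(S_2)$-excellence from \cref{cor:bij KE 2-Bass}; the right-hand column is the straightforward function/sequence dictionary, as in the paper. Where you diverge is in the lower rows: the paper does not import Gabriel's and Takahashi's theorems but re-derives those bijections from the same machinery (\cref{prp:X=XfX}, \cref{prp:fuctions F}, \cref{prp:1-Bass satisfies F}) and only afterwards observes that they coincide with the classical classifications; your route of quoting \cref{fct:Takahashi} and Gabriel and checking compatibility also works, but note the compatibility check is not entirely free --- e.g.\ to see $f_{\catmod^\ass_\Phi R}=f_\Phi$ you must exhibit, for $\pp\in\Phi^{\up}\setminus\Phi$, a module in $\catmod^\ass_\Phi R$ of depth exactly $1$ at $\pp$, which is \cref{prp:depth 1 2}(1) (this is precisely what \cref{prp:1-Bass satisfies F} packages). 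Finally, your closing diagnosis mislocates the difficulty: in the paper the surjectivity of $f_{(-)}$ is the light part --- $(S_2)$-excellence enters only through the $(S_2)$-ification (\cref{S_2-ification}, \cref{exist S_2}) to produce nonzero modules in $\Se_2(R/\pp)$, after which \cref{S2 deform} adjusts depths; there is no gluing of local modules or control of completions beyond that. The genuinely hard ingredient of \cref{thma:Main} is the reconstruction theorem \cref{thm:KE-closed reconstruction} underlying \cref{prp:X=XfX} (injectivity), which you treat as a black box; that is legitimate for this assembly, but your assessment of where the work lies is the reverse of the actual situation.
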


As a corollary, we characterize when the equality $\ke(\catmod R)=\torf(\catmod R)$ holds (\cref{prp:KE=torf via depth}).
In particular, we demonstrate the existence of a commutative noetherian local domain $R$ of dimension two such that $\ke(\catmod R)=\torf(\catmod R)$ (\cref{ex: 2-dim KE=torf}).
On the other hand, if the ring is assumed to be $(S_2)$-excellent, the equality $\ke(\catmod R)=\torf(\catmod R)$ implies that $\dim R\le 1$ (\cref{prp:KE=torf for S_2 excellent}). 
This extends the result of \cite{KS} by relaxing the assumption from a homomorphic image of a Cohen--Macaulay ring to an $(S_2)$-excellent ring.
In the case where $R$ is a $2$-dimensional local ring, we provide a detailed description of KE-closed subcategories that are not torsion-free in terms of certain subsets of the set of minimal primes of $R$ (\cref{prp:classify KE but not torf}).

The proof of our main results is achieved through further refinement of the techniques developed in \cite{KS}.
The fundamental approach in there involves performing a ``change of rings'' by taking the center $\ZZ_R(M)$ of the endomorphism ring of an $R$-module $M$, thereby enabling an induction argument.
Therefore, we investigate the $R$-algebra $\ZZ_R(M)$ in detail.
As one of the key tools, we obtain \cref{characterize free center}, which characterizes when $\ZZ_R(M)$ is isomorphic to the base ring $R$.
Since the centers naturally appear in more general studies of modules, this would be of independent interest.

\medskip
\noindent
\textbf{Organization.}
This paper is organized as follows.
In Section \ref{s:basic}, we recall basic properties of KE-closed subcategories established in earlier works.
In Section \ref{s:localization}, we study the behavior of KE-closed subcategories with respect to localization.
In particular, we give a ``local-to-global'' principle for KE-closed subcategories (\cref{prp:local-to-global for KE}), which provides a local criterion for determining whether a module belongs to a KE-closed subcategory.
This allows us to reduce the study of KE-closed subcategories to the local case.
In Section \ref{s:center}, we study the center of the endomorphism ring of a module
and characterize when it is isomorphic to the base ring $R$ (\cref{characterize free center}).
In Section \ref{s:KE-closed}, 
we introduce the notion of \emph{higher associated primes} 
and prove that KE-closed subcategories are recovered from them (\cref{thm:KE-closed reconstruction}).
This is essentially the most difficult part of \cref{thma:Main}, for which the results from Sections \ref{s:localization} and \ref{s:center} are used as preparation. 
In Section \ref{s:Bass}, we give correspondences between subcategories and functions on the prime spectrum and prove \cref{thma:Main}.
We explain the $(S_2)$-excellence assumption of \cref{thma:Main} in Section \ref{ss:S2}.
We also provide some consequences and explicit examples of \cref{thma:Main} in Section \ref{ss:ex Bass}.
The relationship between higher associated primes and functions corresponding to subcategories is also discussed there.

\medskip
\noindent
\textbf{Conventions.}
For a category $\catC$,
we denote by $\Hom_{\catC}(M,N)$ the set of morphisms between objects $M$ and $N$ in $\catC$.
In this paper,
we suppose that all subcategories are full subcategories closed under isomorphisms.
Thus, we often identify the subcategories with the subsets of the set of isomorphism classes
of objects.
A subcategory of an additive category is said to be \emph{additive}
if it is closed under finite direct sums.
In particular, an additive subcategory always contains zero objects.

Let $R$ be a commutative ring.
We denote by $\Spec R$ the set of prime ideals of $R$
and $\Min R$ (resp.\ $\Max R$) the set of minimal prime (resp.\ maximal) ideals of $R$.
The topology on $\Spec R$ we consider is always the Zariski topology.
By $\dim R$ we mean the Krull dimension of $R$.
For any $\pp \in \Spec R$,
we denote by $\kk(\pp):=R_{\pp}/{\pp R_{\pp}}$ the residue field of $R$ at $\pp$, and by $\htt \pp$ the height of $\pp$.
For an ideal $I$ of $R$, we denote by $V(I)$ the set of prime ideals containing $I$.
A subset $\Phi$ of $\Spec R$ is \emph{specialization-closed}
if for any inclusion $\pp \subseteq \qq$ of prime ideals, $\pp \in \Phi$ implies $\qq \in \Phi$.
We denote by $\catmod R$ the category of finitely generated (left) $R$-modules.
For $M\in \catmod R$, we denote by $\Ass M$ the set of associated prime ideals of $M$, by $\Supp M$ the support of $M$, and by $\ZZ_R(M)$ the center of the endomorphism ring $\End_R(M)$ of $M$.

\medskip
\noindent
\textbf{Acknowledgement.}
The first author was partly supported by JSPS KAKENHI Grant Number 25K17240.
The second author is supported by JSPS KAKENHI Grant Number JP24KJ0057.

\section{Preliminaries}\label{s:basic}
In this section, we recall basic properties of KE-closed subcategories established in the earlier works.
Throughout the section, let $R$ denote a commutative noetherian ring.

\subsection{Depth and the associated primes}
We begin by collecting some basic facts on the associated primes and the depth of an $R$-module, which will be used throughout this paper.
\begin{fct}[{\cite[Chapter IV, \S 1.1, Proposition 4]{Bourbaki CA}}]\label{fct:Goto-Watanabe}
Let $M \in \catmod R$.
Consider a decomposition $\Ass M = \Phi \sqcup \Psi$ as a set.
Then there exists an exact sequence $0\to L \to M\to N\to 0$
such that $\Ass L=\Phi$ and $\Ass N=\Psi$.
\end{fct}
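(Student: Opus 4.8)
The plan is to produce the submodule $L$ by means of a primary decomposition, which is available since $R$ is noetherian and $M$ is finitely generated. Write $\Ass M=\{\pp_1,\dots,\pp_n\}$ and fix an irredundant primary decomposition $0=Q_1\cap\cdots\cap Q_n$ of the zero submodule of $M$, with $Q_i$ a $\pp_i$-primary submodule, so that $\Ass(M/Q_i)=\{\pp_i\}$ for each $i$. After renumbering we may assume $\Phi=\{\pp_1,\dots,\pp_k\}$ and $\Psi=\{\pp_{k+1},\dots,\pp_n\}$. Set $L:=\bigcap_{i=k+1}^{n}Q_i$, with the convention $L=M$ when $\Psi=\emptyset$, and $N:=M/L$; this yields a short exact sequence $0\to L\to M\to N\to 0$, and the claim will be that it has the required property.

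Next I bound $\Ass L$ and $\Ass N$ from above. The canonical map $N=M/\bigcap_{i>k}Q_i\inj\bigoplus_{i=k+1}^{n}M/Q_i$ is injective, so $\Ass N\subseteq\bigcup_{i=k+1}^{n}\Ass(M/Q_i)=\Psi$. Likewise, from $\bigcap_{i=1}^{k}(L\cap Q_i)=L\cap\bigcap_{i=1}^{k}Q_i=\bigcap_{i=1}^{n}Q_i=0$ one gets an injection $L\inj\bigoplus_{i=1}^{k}L/(L\cap Q_i)$, and since $L/(L\cap Q_i)\cong(L+Q_i)/Q_i$ embeds into $M/Q_i$ this gives $\Ass L\subseteq\{\pp_1,\dots,\pp_k\}=\Phi$.

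Finally I upgrade these to equalities using the standard inclusions $\Ass L\subseteq\Ass M\subseteq\Ass L\cup\Ass N$ attached to the short exact sequence. If $\pp\in\Phi\subseteq\Ass M$, then $\pp\in\Ass L$ or $\pp\in\Ass N\subseteq\Psi$; the latter is impossible because $\Phi\cap\Psi=\emptyset$, so $\pp\in\Ass L$, proving $\Phi\subseteq\Ass L$. Symmetrically $\Psi\subseteq\Ass N$. Combined with the upper bounds, $\Ass L=\Phi$ and $\Ass N=\Psi$, as desired.

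I do not expect a genuine obstacle here: the only points that need (routine) care are the injectivity of the two comparison maps and the elementary behavior of $\Ass$ under submodules and finite direct sums, all of which follow by localizing. One can also avoid primary decomposition altogether by instead choosing $L$ maximal among submodules of $M$ with $\Ass L\subseteq\Phi$ (which exists by noetherianity): were some $\pp\in\Ass(M/L)$ to lie in $\Phi$, there would be a submodule $L'$ with $L\subsetneq L'\subseteq M$ and $L'/L\cong R/\pp$, hence $\Ass L'\subseteq\Ass L\cup\{\pp\}\subseteq\Phi$, contradicting the maximality of $L$; thus $\Ass(M/L)\cap\Phi=\emptyset$, and one finishes exactly as in the previous paragraph. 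I would present the primary-decomposition version, being the most transparent.
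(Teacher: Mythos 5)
Your main argument is correct and complete: the primary-decomposition construction of $L=\bigcap_{\pp_i\in\Psi}Q_i$, the two embeddings giving the upper bounds $\Ass L\subseteq\Phi$ and $\Ass N\subseteq\Psi$, and the sandwich $\Ass L\subseteq\Ass M\subseteq\Ass L\cup\Ass N$ to upgrade to equalities all go through, including the degenerate cases $\Phi=\emptyset$ or $\Psi=\emptyset$. Note, however, that the paper offers no proof at all for this statement: it is quoted as a Fact from Bourbaki (Chapter IV, \S 1.1, Proposition 4), and Bourbaki's argument is essentially the one you relegate to your closing aside, namely taking $L$ maximal among submodules with $\Ass L\subseteq\Phi$ (Bourbaki phrases it with the complementary set, via Zorn's lemma, so it works for arbitrary modules, whereas your primary-decomposition route genuinely uses that $M$ is finitely generated over a noetherian ring --- harmless in this paper's setting). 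One caveat about that aside: as you state it, maximality only yields $\Ass(M/L)\cap\Phi=\emptyset$, which is \emph{not} enough to ``finish exactly as before,'' since a priori $\Ass(M/L)$ could contain primes outside $\Ass M$, and you need the upper bound $\Ass(M/L)\subseteq\Psi$. The repair is the standard one: for $\pp\in\Ass(M/L)$ take $L\subsetneq L'\subseteq M$ with $L'/L\cong R/\pp$; then $\Ass L'\subseteq\Ass L\cup\{\pp\}$, and maximality forces $\Ass L'\not\subseteq\Phi$, hence $\pp\in\Ass L'\subseteq\Ass M$ and $\pp\notin\Phi$, so $\pp\in\Psi$. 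With that one line added, the alternative is also a complete proof; the version you chose to present needs no changes.
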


\begin{fct}[{\cite[Chapter IV, \S 2.1, Proposition 10]{Bourbaki CA}}] \label{ass hom}
$\Ass_R(\Hom_R(M,N)) = \Supp M \cap \Ass N$ for any $M,N \in \catmod R$.
\end{fct}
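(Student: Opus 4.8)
The plan is to verify the equality one prime at a time, reducing to a local statement that is then settled by Hom-tensor adjunction and Nakayama's lemma.

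First I would record the localization behaviour of all the functors involved. For a finitely generated module $X$ over the noetherian ring $R$ and a prime $\pp$, one has $\pp \in \Ass_R X$ if and only if $\pp R_\pp \in \Ass_{R_\pp} X_\pp$; moreover $\pp \in \Supp M$ if and only if $M_\pp \neq 0$, and, since $M$ is finitely generated over the noetherian ring $R$ hence finitely presented, $\Hom_R(M,N)_\pp \iso \Hom_{R_\pp}(M_\pp,N_\pp)$. Applying the first fact to $X = \Hom_R(M,N)$ and to $X = N$, we see that $\pp$ lies in the left-hand side of the asserted equality if and only if the maximal ideal of $R_\pp$ lies in $\Ass_{R_\pp}\Hom_{R_\pp}(M_\pp,N_\pp)$, while $\pp$ lies in the right-hand side if and only if $M_\pp \neq 0$ and $\pp R_\pp \in \Ass_{R_\pp} N_\pp$. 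Hence it suffices to prove the statement in the case where $R$ is local with maximal ideal $\mm$ and $\pp = \mm$: namely that $\mm \in \Ass_R \Hom_R(M,N)$ if and only if $M \neq 0$ and $\mm \in \Ass_R N$.

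For this local case I would use that over $(R,\mm)$ a module $X$ satisfies $\mm \in \Ass_R X$ exactly when $\Hom_R(R/\mm, X) \neq 0$, because a nonzero element of $X$ killed by $\mm$ has annihilator equal to $\mm$ (the maximal ideal), and the converse is immediate. Thus $\mm \in \Ass_R \Hom_R(M,N)$ if and only if $\Hom_R(R/\mm, \Hom_R(M,N)) \neq 0$, and the Hom-tensor adjunction identifies this group with $\Hom_R((R/\mm) \otimes_R M, N) = \Hom_R(M/\mm M, N)$. If $M = 0$ then $M/\mm M = 0$ and this group is zero, while $M \neq 0$ fails as well, so neither condition holds. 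If $M \neq 0$, Nakayama's lemma gives $M/\mm M \iso (R/\mm)^{\oplus r}$ with $r \geq 1$, whence $\Hom_R(M/\mm M, N) \iso \Hom_R(R/\mm, N)^{\oplus r}$, and this is nonzero precisely when $\mm \in \Ass_R N$. This completes the proof.

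I do not expect a genuine obstacle here: the only points requiring care are the compatibility of $\Ass$ with localization and the isomorphism $\Hom_R(M,N)_\pp \iso \Hom_{R_\pp}(M_\pp,N_\pp)$, both of which are standard for finitely generated modules over a noetherian ring. Everything else is a formal computation with adjunction and Nakayama's lemma.
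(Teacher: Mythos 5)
Your proof is correct. Note that the paper does not prove this statement at all: it is recorded as a Fact with a citation to Bourbaki (Chapter IV, \S 2.1, Proposition 10), so there is no in-paper argument to compare against. Your route — reduce to the local case via compatibility of $\Ass$, $\Supp$ and $\Hom$ with localization (the last using that $M$ is finitely presented), then identify $\Hom_R(R/\mm,\Hom_R(M,N))\iso\Hom_R(M/\mm M,N)$ by adjunction and conclude with Nakayama and $M/\mm M\iso(R/\mm)^{\oplus r}$ — is complete and each step is justified. For comparison, the classical Bourbaki-style proof works globally in two inclusions: presenting $M$ as a quotient of $R^n$ embeds $\Hom_R(M,N)$ into $N^{\oplus n}$, giving $\Ass\Hom_R(M,N)\subseteq\Ass N$, and $\Supp\Hom_R(M,N)\subseteq\Supp M$ gives the other containment; conversely, for $\pp\in\Supp M\cap\Ass N$ one produces a nonzero map $M\to R/\pp$ (again via Nakayama at $\pp$) and composes with $R/\pp\inj N$ to exhibit $\pp$ as an associated prime of $\Hom_R(M,N)$, using that $\Hom_R(M,R/\pp)$ is a torsion-free $R/\pp$-module. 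Your localization argument trades that torsion-freeness observation for the reduction to the maximal ideal of a local ring; both are standard and of comparable length.
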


Assume that $(R,\mm)$ is local.
The \emph{depth} of $M\in \catmod R$ is defined as the infimum $\inf\{n \mid n\ge 0, \Ext^n(k,M)\not=0 \}$ and denoted by $\depth_R M$.
We adopt the convention $\depth_R 0=\infty$.
For an $R$-module $M\in\catmod R$, it is well known that $\pp\in \Ass M$ exactly when $\depth_{R_\pp}M_\pp=0$.

The following is well known to experts as Bass' lemma; see also \cite[Lemma 6.2]{Takahashi2}.

\begin{fct}[{\cite[(3.1) Lemma]{Bass}}] \label{Bass lemma}
Let $\pp, \qq \in \Spec R$ with $\pp \subseteq \qq$.
Let $M\in \catmod R$.
Then we have
\[
\depth_{R_\qq} M_\qq \le \depth_{R_\pp} M_\pp + \htt(\qq/\pp).
\]
\end{fct}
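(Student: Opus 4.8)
The plan is to reduce to the case $\htt(\qq/\pp)=1$ and then argue by induction on depth. For the reduction, I would pick a saturated chain $\pp=\pp_0\subsetneq\pp_1\subsetneq\dots\subsetneq\pp_n=\qq$ with $n=\htt(\qq/\pp)$; this exists because $R$ is noetherian, and by maximality of its length every link satisfies $\htt(\pp_{i+1}/\pp_i)=1$, so the general inequality follows by telescoping the height-one case along the chain. For the height-one case, since $\depth_{R_\qq}M_\qq$, $\depth_{R_\pp}M_\pp$ and $\htt(\qq/\pp)$ are all unchanged on replacing $R$ by $R_\qq$ and $M$ by $M_\qq$, I may assume $(R,\mm)$ is local with $\mm=\qq$; I may further assume $\pp\in\Supp M$, since otherwise the right-hand side is $\infty$. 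Thus it remains to prove: if $(R,\mm)$ is local, $\dim R/\pp=1$, and $\pp\in\Supp M$, then $\depth_R M\le\depth_{R_\pp}M_\pp+1$.

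This I would prove by induction on $d:=\depth_R M$. The case $d=0$ is trivial, so suppose $d\ge 1$, whence $\mm\notin\Ass M$. Suppose first that $\pp\notin\Ass M$. Because $\dim R/\pp=1$, the only primes containing $\pp$ are $\pp$ and $\mm$, neither of which lies in $\Ass M$; hence no associated prime of $M$ contains $\pp$, so by prime avoidance there is an $M$-regular element $x\in\pp$. Then $\depth_R(M/xM)=d-1$, while $x$ becomes an $M_\pp$-regular non-unit in $R_\pp$, so (using $\depth_{R_\pp}M_\pp\ge 1$, as $\pp\notin\Ass M$) one has $\depth_{R_\pp}(M/xM)_\pp=\depth_{R_\pp}M_\pp-1$; also $(M/xM)_\pp\ne 0$ by Nakayama's lemma, so $\pp\in\Supp(M/xM)$. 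Applying the inductive hypothesis to $M/xM$ gives $d-1\le(\depth_{R_\pp}M_\pp-1)+1$, as desired.

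Now suppose $\pp\in\Ass M$, so $\depth_{R_\pp}M_\pp=0$ and the goal is $\depth_R M\le 1$. Writing $\Ass M=\{\pp\}\sqcup\Psi$ with $\Psi:=\Ass M\setminus\{\pp\}$, I would invoke \cref{fct:Goto-Watanabe} to get a short exact sequence $0\to L\to M\to N\to 0$ with $\Ass L=\{\pp\}$ and $\Ass N=\Psi$. Since $\pp$ is an associated prime of $L$, we have $\depth_R L\le\dim R/\pp=1$. The standard depth inequality for a short exact sequence gives $\depth_R L\ge\min\{\depth_R M,\ \depth_R N+1\}$, so either $\depth_R M\le 1$, in which case we are done, or else $\depth_R N=0$; in the latter case $\mm\in\Ass N\subseteq\Ass M$, so $\depth_R M=0\le 1$.

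The main obstacle will be the case $\pp\in\Ass M$. The naive attempt --- using $\pp\in\Ass M$ to embed $R/\pp\hookrightarrow M$ and feeding the resulting short exact sequence into the depth inequality --- leaves a genuine gap precisely when the cokernel has depth zero. What closes it is the sharper decomposition of \cref{fct:Goto-Watanabe}: there the quotient $N$ satisfies $\Ass N\subseteq\Ass M$, so a depth-zero cokernel can occur only when $\mm$ is already an associated prime of $M$, forcing $\depth_R M=0$.
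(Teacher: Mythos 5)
Your argument is correct. There is nothing in the paper to compare it with: the statement is quoted as \cref{Bass lemma} directly from Bass (and Takahashi) with no in-paper proof, and your route --- reduce to a saturated chain so that each link has relative height one, localize at $\qq$, then induct on $\depth_R M$ by slicing along an $M$-regular element of $\pp$ when $\pp\notin\Ass M$ --- is the classical argument for this lemma, and all of its steps (saturatedness of a maximal-length chain, invariance of the three quantities under localization at $\qq$, the Nakayama and depth-lemma computations, well-foundedness of the induction) check out. One remark on the case $\pp\in\Ass M$: the inequality you invoke for $L$, namely $\depth_R L\le\dim R/\pp$ because $\pp\in\Ass L$, is the classical Ischebeck-type bound $\depth_R X\le\dim R/\mathfrak{a}$ for every $\mathfrak{a}\in\Ass X$ (e.g.\ Bruns--Herzog, Proposition 1.2.13). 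Applied directly to $M$ itself, where $\pp\in\Ass M$, that bound already gives $\depth_R M\le\dim R/\pp=1$ in one line, so the detour through \cref{fct:Goto-Watanabe} and the depth lemma is harmless but unnecessary; contrary to your closing diagnosis, it is this classical bound, not the decomposition, that closes the gap in that case. If you prefer not to quote it, your decomposition still works, but then justify $\depth_R L\le 1$ by the weaker facts $\depth_R L\le\dim_R L$ and $\Supp L=V(\pp)$ (the latter because $\Ass L=\{\pp\}$), rather than by the very inequality whose direct use would make the decomposition superfluous.
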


Observations on the depth of $R/\pp$ and $\pp$ at a localization are given here.

\begin{lem}\label{prp:deoth R/p}
Let $\pp, \qq \in \Spec R$ with $\pp \subseteq \qq$.
\begin{enua}
\item
$\pp = \qq$ if and only if $\depth_{R_{\qq}}(R/\pp)_{\qq}=0$.
\item
Suppose that $\pp \subsetneq \qq$. Then we have
\[
1 \le \depth_{R_{\qq}}(R/\pp)_{\qq} \le \htt (\qq/\pp).
\]
\end{enua}
\end{lem}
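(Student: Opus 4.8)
The plan is to reduce everything to the fact recalled just above the statement --- that for $M\in\catmod R$ and $\qq\in\Spec R$ one has $\qq\in\Ass_R M$ if and only if $\depth_{R_\qq}M_\qq=0$ --- together with Bass' lemma (\cref{Bass lemma}), applied throughout to the module $M=R/\pp$.

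For (1), I would first observe that since $R/\pp$ is an integral domain one has $\Ass_R(R/\pp)=\{\pp\}$, and that $\qq\supseteq\pp$ means $\qq\in\Supp(R/\pp)$, so $(R/\pp)_\qq\ne 0$ and its depth is a non-negative integer. Applying the recalled equivalence with $M=R/\pp$ and the prime $\qq$ then gives $\depth_{R_\qq}(R/\pp)_\qq=0$ exactly when $\qq\in\Ass_R(R/\pp)=\{\pp\}$, i.e. when $\qq=\pp$. This is precisely (1).

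For (2), assume $\pp\subsetneq\qq$. The lower bound $1\le\depth_{R_\qq}(R/\pp)_\qq$ is immediate from (1): since $\qq\ne\pp$ the depth is nonzero, and being a non-negative integer it is at least $1$ (equivalently, one may note directly that the maximal ideal of the domain $(R/\pp)_\qq$ is nonzero, so it contains a nonzerodivisor). For the upper bound I would invoke \cref{Bass lemma} with the chain $\pp\subseteq\qq$ and $M=R/\pp$, obtaining
\[
\depth_{R_\qq}(R/\pp)_\qq \le \depth_{R_\pp}(R/\pp)_\pp + \htt(\qq/\pp).
\]
Since $(R/\pp)_\pp=\kk(\pp)$ is a field, $\depth_{R_\pp}(R/\pp)_\pp=0$ (indeed $\Hom_{\kk(\pp)}(\kk(\pp),\kk(\pp))\ne 0$), so the right-hand side is $\htt(\qq/\pp)$, which is the desired bound.

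The argument is essentially formal; the only points requiring any care are the standard identification of $\depth_{R_\qq}(R/\pp)_\qq$ with the depth of $(R/\pp)_\qq$ as a module over itself (via the surjection $R_\qq\twoheadrightarrow(R/\pp)_\qq$), which is what lets us freely use both the recalled fact and Bass' lemma, and the degenerate observation that a field has depth $0$. I do not expect any genuine obstacle here.
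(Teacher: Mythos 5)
Your proof is correct and takes essentially the same route as the paper: part (2) is obtained exactly as there (the lower bound from (1), the upper bound from Bass' lemma applied to $M=R/\pp$ with $\depth_{R_\pp}(R/\pp)_\pp=0$ since $(R/\pp)_\pp=\kk(\pp)$ is a field). For (1) your detour through $\Ass_R(R/\pp)=\{\pp\}$ and the depth-zero criterion is just a repackaging of the paper's direct argument (a field has depth $0$; a local domain that is not a field has a nonzerodivisor in its maximal ideal), which you in fact also record parenthetically, so no substantive difference remains.
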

\begin{proof}
(1)
It is clear that $\pp=\qq$ implies $\depth_{R_\qq} (R/\pp)_{\qq} = 0$ 
since $(R/\pp)_{\qq}$ is a field.
Suppose that $\pp \subsetneq \qq$.
Then $(R/\pp)_{\qq}$ is an integral local domain which is not a field.
Thus, its maximal ideal contains a nonzero divisor,
which implies $\depth_{R_\qq} (R/\pp)_\qq \ge 1$.

(2)
The first and second inequalities follow from (1) and  \cref{Bass lemma}, respectively.
\end{proof}

\begin{lem}\label{prp:depth p}
For any prime ideals $\pp, \qq \in \Spec R$, we have 
\[
\depth_{R_{\qq}} \pp_{\qq}=
\begin{cases}
\depth R_{\qq} & \text{if $\qq \not\supseteq \pp$,}\\
\min\{1,\depth R_\qq\} & \text{if $\qq = \pp$ and $\pp_\qq\not=0$,}\\
\infty & \text{if $\qq = \pp$ and $\pp_\qq=0$,}\\
\ge \min\{2,\depth R_\qq\} & \text{if $\qq \supsetneq \pp$.}\\
\end{cases}
\]
\end{lem}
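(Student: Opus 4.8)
The plan is to dispose of the four cases one at a time, in each case passing to $R_\qq$ and then combining the short exact sequence that defines $\pp_\qq$ with the usual depth lemma for short exact sequences: if $0\to A\to B\to C\to 0$ is exact with $A,B,C$ finitely generated over a noetherian local ring, then $\depth A\ge\min\{\depth B,\ \depth C+1\}$ and $\depth C\ge\min\{\depth A-1,\ \depth B\}$. Two of the cases are essentially free. If $\qq\not\supseteq\pp$, any $a\in\pp\setminus\qq$ becomes a unit in $R_\qq$, so $\pp_\qq=\pp R_\qq=R_\qq$ and $\depth_{R_\qq}\pp_\qq=\depth R_\qq$. If $\qq=\pp$ with $\pp_\qq=0$, the claim is just the convention $\depth 0=\infty$.

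For $\qq=\pp$ with $\pp_\qq\ne 0$, write $(A,\nn):=(R_\pp,\pp R_\pp)$, so that $\pp_\pp=\nn\ne 0$, and apply the depth lemma to $0\to\nn\to A\to\kk(\pp)\to 0$. The first inequality gives $\depth_A\nn\ge\min\{\depth A,\ \depth\kk(\pp)+1\}=\min\{\depth A,1\}$, so the stated value is a lower bound. If $\depth A\ge 1$, the second inequality gives $0=\depth\kk(\pp)\ge\min\{\depth_A\nn-1,\ \depth A\}$, which forces $\depth_A\nn\le 1$, hence $\depth_A\nn=1=\min\{1,\depth A\}$. If instead $\depth A=0$, then $\nn\in\Ass_A A$ furnishes some $x\in A\setminus\{0\}$ with $\Ann_A x=\nn$; since $\nn\ne 0$ this $x$ cannot be a unit, so $x\in\nn$, and then $\nn=\Ann_A x\in\Ass_A\nn$, giving $\depth_A\nn=0=\min\{1,\depth A\}$.

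For $\qq\supsetneq\pp$, apply the depth lemma to $0\to\pp_\qq\to R_\qq\to(R/\pp)_\qq\to 0$, obtaining $\depth_{R_\qq}\pp_\qq\ge\min\{\depth R_\qq,\ \depth_{R_\qq}(R/\pp)_\qq+1\}$. Since $\pp\subsetneq\qq$, \cref{prp:deoth R/p}(2) gives $\depth_{R_\qq}(R/\pp)_\qq\ge 1$, so the right-hand side is $\ge\min\{\depth R_\qq,2\}$, as required. This last step is valid verbatim even when $\pp_\qq=0$, in which case the inequality is vacuous.

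The argument is essentially routine; the only place that needs genuine (if minor) care is the $\depth A=0$ subcase above, where the associated-prime witness has to be used explicitly, and otherwise the only hazard is keeping the case distinctions — in particular the degenerate situation $\pp_\qq=0$ — straight.
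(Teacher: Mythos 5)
Your proposal is correct and follows essentially the same route as the paper: the unit case and the zero-module convention are disposed of directly, the depth-zero subcase at $\qq=\pp$ is handled by exhibiting a socle/associated-prime element of $R_\qq$ lying in $\pp_\qq$, and the remaining cases come from the depth lemma applied to $0\to\pp_\qq\to R_\qq\to(R/\pp)_\qq\to 0$ together with \cref{prp:deoth R/p}. The only cosmetic difference is that you make the upper bound $\depth_{R_\pp}\pp_\pp\le 1$ explicit via the second depth inequality, which the paper leaves implicit in its appeal to the depth lemma.
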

\begin{proof}
Assume $\qq \not\supseteq \pp$.
Then $\pp_\qq=R_\qq$.
Therefore, $\depth_{R_\qq} \pp_\qq=\depth R_\qq$.

Assume $\qq=\pp$ and $\depth R_\qq=0$.
Then the socle $\mathrm{soc}_{R_\qq}(R_\qq)$ of $R_\qq$ is nonzero.
Moreover, we have that $\mathrm{soc}_{R_\qq}(R_\qq) \subseteq \pp_\qq$ if and only if $\pp_\qq\not=0$.
Thus, it follows that $\depth \pp_\qq=0$ if $\pp_\qq\not=0$ and that $\depth \pp_\qq=\infty$ if $\pp_\qq=0$.

In the remaining cases, the assertion follows from 
the short exact sequence $0 \to \pp_\qq \to R_\qq \to (R/\pp)_\qq \to 0$, the depth lemma, and \cref{prp:deoth R/p}.
\end{proof}

\subsection{KE-closed subcategories in an abelian category}
This subsection is devoted to collecting general properties of KE-closed subcategories in an abelian category.
First of all, we give definitions of some basic classes of subcategories of an abelian category.

\begin{dfn}
Let $\catA$ be an abelian category and $\catX$ its additive subcategory.
\begin{enua}
\item
$\catX$ is said to be \emph{closed under extensions} (or \emph{extension-closed})
if for any exact sequence $0 \to A \to B \to C \to 0$,
we have that $A,C \in \catX$ implies $B\in \catX$.
\item
$\catX$ is said to be \emph{closed under subobjects} (resp.\ \emph{quotients})
if for any injection $A \inj X$ (resp.\ surjection $X \surj A$) in $\catA$ with $X\in \catX$,
we have that $A \in \catX$.
\item
$\catX$ is said to be \emph{closed under kernels}
if for any morphism $f \colon X \to Y$ in $\catA$ with $X,Y \in \catX$,
we have that $\Ker f \in \catX$.
\item
$\catX$ is called a \emph{Serre subcategory}
if it is closed under subobjects, quotients and extensions.
\item
$\catX$ is called a \emph{torsion-free class}
if it is closed under extensions and subobjects.
\item
$\catX$ is said to be \emph{KE-closed} 
if it is closed under kernels and extensions.
\item
We denote by $\serre \catA$, $\torf \catA$ and $\ke \catA$,
the set of Serre subcategories, torsion-free classes, KE-closed subcategories of $\catA$, respectively.
\end{enua}
\end{dfn}
The hierarchy of these subcategories is displayed as follows:
\[
\text{Serre} \Imply \text{torsion-free} \Imply \text{KE-closed}.
\]

%
%

Next, we recall the notion of Serre subcategories and torsion-free classes in an extension-closed subcategory.
\begin{dfn}\label{dfn:subcat in ex cat}
Let $\catX$ be an extension-closed subcategory of an abelian category $\catA$.
\begin{enua}
\item
A \emph{conflation} of $\catX$ is 
an exact sequence $0 \to A \to B \to C \to 0$ of $\catA$
such that $A$, $B$ and $C$ belong to $\catX$.
\item
An additive subcategory $\catS$ of $\catX$ is said to be \emph{closed under conflations}
if for any conflation $0\to X \to Y \to Z \to 0$ in $\catX$,
we have that $X,Z \in \catS$ implies $Y\in \catS$.
\item
An additive subcategory $\catS$ of $\catX$ is said to be \emph{closed under admissible subobjects}
if for any conflation $0\to X \to Y \to Z \to 0$ in $\catX$,
we have that $Y\in\catS$ implies $X\in\catS$.
\item
A \emph{torsion-free class of $\catX$} is an additive subcategory of $\catX$
closed under conflations and admissible subobjects.
\end{enua}
\end{dfn}

We can characterize KE-closed subcategories in terms of torsion-free classes.
For a subcategory $\catX$ of an abelian category $\catA$,
we denote by $\F(\catX)$ the smallest torsion-free class containing $\catX$.
We call it the \emph{torsion-free closure} of $\catX$.
\begin{fct}[{\cite[Proposition 3.1]{KS}}]\label{prp:KE=torf in torf}
The following are equivalent for a subcategory $\catX$ of an abelian category $\catA$.
\begin{enur}
\item
$\catX$ is a KE-closed subcategory of $\catA$.
\item
$\catX$ is a torsion-free class of $\F(\catX)$.
\item
There exists a torsion-free class $\catF$ of $\catA$
such that $\catX$ is a torsion-free class of $\catF$.
\end{enur}
\end{fct}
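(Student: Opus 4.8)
The plan is to prove the cycle of implications $(\mathrm{ii}) \Rightarrow (\mathrm{iii}) \Rightarrow (\mathrm{i}) \Rightarrow (\mathrm{ii})$, reserving most of the effort for the last one. For $(\mathrm{ii}) \Rightarrow (\mathrm{iii})$ one simply takes $\catF := \F(\catX)$, which is a torsion-free class of $\catA$ by definition of the torsion-free closure. For $(\mathrm{iii}) \Rightarrow (\mathrm{i})$, suppose $\catX$ is a torsion-free class of a torsion-free class $\catF$ of $\catA$ in the sense of \cref{dfn:subcat in ex cat}; the key remark is that, since $\catF$ is extension-closed in $\catA$, a conflation of $\catF$ is exactly a short exact sequence of $\catA$ all three of whose terms lie in $\catF$. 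Given an exact sequence $0 \to A \to B \to C \to 0$ in $\catA$ with $A, C \in \catX$, we have $A, C \in \catF$, hence $B \in \catF$, so this is a conflation of $\catF$ with outer terms in $\catX$; closure of $\catX$ under conflations gives $B \in \catX$. Given a morphism $f \colon X \to Y$ with $X, Y \in \catX$, set $K := \Ker f$ and $I := \Ima f$; as $\catF$ is closed under subobjects and $K \inj X$, $I \inj Y$, both $K$ and $I$ lie in $\catF$, so $0 \to K \to X \to I \to 0$ is a conflation of $\catF$ with $X \in \catX$, whence $K \in \catX$ by closure of $\catX$ under admissible subobjects. Thus $\catX$ is KE-closed.

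The substance of the statement is $(\mathrm{i}) \Rightarrow (\mathrm{ii})$, and its crux is to describe $\F(\catX)$ concretely. I would prove the general fact that $\F(\catX) = \Filt(\Sub(\catX))$ and, more importantly, that every object of $\F(\catX)$ admits a \emph{finite} filtration whose subquotients all lie in $\Sub(\catX)$. The key observation is that if $M$ carries a finite filtration $0 = M_0 \subseteq M_1 \subseteq \cdots \subseteq M_k = M$ with $M_i/M_{i-1} \in \Sub(\catX)$ and $N \inj M$ is any subobject, then $0 = N \cap M_0 \subseteq N \cap M_1 \subseteq \cdots \subseteq N \cap M_k = N$ is a finite filtration of $N$ whose $i$-th subquotient embeds into $M_i/M_{i-1}$ and hence lies in $\Sub(\catX)$, because $\Sub(\catX)$ is closed under subobjects. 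Consequently $\Filt(\Sub(\catX))$ is closed under subobjects; being visibly closed under extensions and containing $\catX$, it is a torsion-free class containing $\catX$, and since it is contained in every such class it must equal $\F(\catX)$.

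It then remains to check that $\catX$ is a torsion-free class of $\catF := \F(\catX)$. Closure under conflations is again immediate from $\catX$ being extension-closed in $\catA$. For closure under admissible subobjects I would prove, by induction on filtration length, the claim: for every $C \in \catF$ and every exact sequence $0 \to A \to B \to C \to 0$ with $B \in \catX$, one has $A \in \catX$. If the filtration of $C$ has length at most one, then $C$ embeds into some $X \in \catX$, the composite $B \surj C \inj X$ has kernel $A$, and closure of $\catX$ under kernels gives $A \in \catX$. Otherwise, let $Z$ be the penultimate term of the filtration of $C$, so that $C/Z \in \Sub(\catX)$ and $Z$ has strictly shorter filtration; taking $B'$ to be the preimage of $Z$ under the surjection $B \surj C$ produces an exact sequence $0 \to B' \to B \to C/Z \to 0$, so $B' \in \catX$ by the base case, and then the exact sequence $0 \to A \to B' \to Z \to 0$ with $B' \in \catX$ lets the induction hypothesis give $A \in \catX$. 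Applying the claim with $A$ an arbitrary admissible subobject of $B \in \catX$ (so that $C := B/A$ lies in $\catF$) completes $(\mathrm{i}) \Rightarrow (\mathrm{ii})$. I expect this induction, together with the preliminary identification of $\F(\catX)$, to be the only genuine obstacles; everything else is formal manipulation of conflations and subobjects.
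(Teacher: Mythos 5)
Your proof is correct: the cycle (ii)$\Rightarrow$(iii)$\Rightarrow$(i) is the routine unwinding of the definitions, and your treatment of (i)$\Rightarrow$(ii) via the identification $\F(\catX)=\Filt(\Sub(\catX))$ together with the induction on filtration length of the cokernel is complete and sound (the base case correctly reduces to closure under kernels, and the inductive step correctly splices through the penultimate filtration term). Note that this paper does not prove the statement itself but quotes it as \cref{prp:KE=torf in torf} from \cite[Proposition 3.1]{KS}; your argument is essentially the standard one for that result, resting on the explicit description of the torsion-free closure as iterated extensions of subobjects of $\catX$.
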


\subsection{KE-closed subcategories over a commutative noetherian ring}
We now focus on the fact on KE-closed subcategories in the module category $\catmod R$.
One of the most remarkable properties of KE-closed subcategories in $\catmod R$
is that they are closed under taking Hom-sets and centers.
Recall that we denote by $\ZZ_R(M):=Z(\End_R(M))$ the center of the endomorphism algebra of an $R$-module $M$.
\begin{fct}[{\cite[Lemma 2.4 (2)]{IMST}, \cite[Lemma 4.28]{KS}}]\label{fct:Hom-ideal}
Let $\catX$ be a KE-closed subcategory of $\catmod R$ and $M\in \catX$.
\begin{enua}
\item For any $X\in \catmod R$,
the $R$-module $\Hom_R(X,M)$ belongs to $\catX$.
\item The $R$-module $\ZZ_R(M)$ belongs to $\catX$.
\end{enua}
\end{fct}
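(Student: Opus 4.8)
The plan is to realize both $\Hom_R(X,M)$ and $\ZZ_R(M)$ as kernels of morphisms between finite direct sums of copies of modules already known to lie in $\catX$, and then invoke that a KE-closed subcategory is additive and closed under kernels.

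For (1), since $R$ is noetherian and $X$ is finitely generated, $X$ admits a finite free presentation $R^{\oplus m} \to R^{\oplus n} \to X \to 0$. Applying the left-exact functor $\Hom_R(-,M)$ produces an exact sequence $0 \to \Hom_R(X,M) \to M^{\oplus n} \to M^{\oplus m}$, which exhibits $\Hom_R(X,M)$ as the kernel of a morphism $M^{\oplus n}\to M^{\oplus m}$. As $M\in\catX$ and $\catX$ is additive, $M^{\oplus n}$ and $M^{\oplus m}$ belong to $\catX$; since $\catX$ is closed under kernels, $\Hom_R(X,M)\in\catX$.

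For (2), apply (1) with $X=M$ to get that $E:=\End_R(M)=\Hom_R(M,M)$ lies in $\catX$. The key point is that $E$ is a finitely generated $R$-module (again because $R$ is noetherian and $M$ is finitely generated), so one may pick $R$-module generators $g_1,\dots,g_k$ of $E$. Then $f\in E$ is central if and only if $fg_i=g_if$ for all $i$, because the $g_i$ span $E$ over the central subring $R$. Hence $\ZZ_R(M)$ is precisely the kernel of the $R$-linear map $E\to E^{\oplus k}$ given by $f\mapsto (fg_i-g_if)_{i=1}^{k}$; this map is $R$-linear exactly because $R$ is commutative and acts centrally on $E$. Since $E$ and $E^{\oplus k}$ lie in $\catX$ by the previous paragraph and additivity, the kernel $\ZZ_R(M)$ lies in $\catX$ as well.

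The arguments are elementary; the only step meriting attention is the reduction in (2) of the a priori infinitely many commutator conditions defining the center to the finitely many coming from a set of $R$-module generators of $\End_R(M)$, which is what makes the noetherian hypothesis relevant here.
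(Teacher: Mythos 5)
Your proof is correct, and it follows essentially the same route as the cited sources: for (1) the standard argument applying $\Hom_R(-,M)$ to a finite free presentation of $X$ and using additivity plus kernel-closedness, and for (2) the reduction of the centrality condition to finitely many commutator equations with a finite set of $R$-module generators of $\End_R(M)$, realizing $\ZZ_R(M)$ as a kernel of a map $\End_R(M)\to\End_R(M)^{\oplus k}$ between objects of $\catX$. Since the paper quotes this as a Fact from \cite{IMST} and \cite{KS} without reproducing the proof, your self-contained argument is exactly what is expected and nothing is missing.
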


Next, we recall Takahashi's classification of the torsion-free classes of $\catmod R$
since KE-closed subcategories and torsion-free classes are closely related by \cref{prp:KE=torf in torf}.
Consider the following assignments:
\begin{itemize}
\item
For a subset $\Phi$ of $\Spec R$,
define a subcategory of $\catmod R$ by
\[
\catmod^{\ass}_{\Phi} R:=\{M\in \catmod R \mid \Ass M\subseteq \Phi\}.
\]
\item
For a subcategory $\catX$ of $\catmod R$,
define a subset of $\Spec R$ by
\[
\Ass\catX :=\bigcup_{M\in\catX} \Ass M.
\]
\end{itemize}

\begin{fct}[{\cite[Theorem 4.1]{Takahashi}}]\label{fct:Takahashi}
The assignments $\catX \mapsto \Ass \catX$ and $\Phi \mapsto \catmod^{\ass}_{\Phi} R$ 
give rise to mutually inverse bijections between 
$\torf(\catmod R)$ and the power set of $\Spec R$.
\end{fct}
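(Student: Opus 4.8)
\textbf{Plan for the proof of \cref{fct:Takahashi}.}
The plan is to exhibit $\catmod^{\ass}_\Phi R$ as a torsion-free class for every $\Phi\subseteq\Spec R$, and conversely to show that every torsion-free class $\catX$ satisfies $\catX=\catmod^{\ass}_{\Ass\catX}R$. First I would check that $\catmod^{\ass}_\Phi R$ is a torsion-free class. Additivity is clear from $\Ass(M\oplus N)=\Ass M\cup\Ass N$; closure under submodules follows from $\Ass L\subseteq\Ass M$ whenever $L\subseteq M$; and closure under extensions follows from the inclusion $\Ass N\subseteq\Ass L\cup\Ass M$ for a short exact sequence $0\to L\to N\to M\to 0$. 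This shows the assignment $\Phi\mapsto\catmod^{\ass}_\Phi R$ lands in $\torf(\catmod R)$, and it is visibly order-preserving and injective because $\Ass(\catmod^{\ass}_\Phi R)=\Phi$ (the inclusion $\supseteq$ uses that $R/\pp\in\catmod^{\ass}_\Phi R$ for $\pp\in\Phi$, since $\Ass(R/\pp)=\{\pp\}$).

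It remains to prove that for a torsion-free class $\catX$ one has $\catX\supseteq\catmod^{\ass}_{\Ass\catX}R$; the reverse inclusion is immediate from the definition of $\Ass\catX$. So fix $M$ with $\Ass M\subseteq\Ass\catX$, and I must show $M\in\catX$. The key step is a dévissage along a filtration of $M$ by submodules whose successive quotients are cyclic of the form $R/\pp_i$ with $\pp_i\in\Supp M$; such a filtration exists for any finitely generated module, and each $\pp_i$ lies in $\Supp M$. Using \cref{fct:Goto-Watanabe} iteratively, one can even arrange that $M$ has a filtration $0=M_0\subseteq M_1\subseteq\cdots\subseteq M_t=M$ with each $M_j/M_{j-1}$ having a single associated prime $\pp_j$, and then each $\pp_j\in\Ass M\subseteq\Ass\catX$. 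Since $\catX$ is extension-closed, it suffices to handle each quotient $M_j/M_{j-1}$; and since $\catX$ is closed under submodules, it suffices to show $R/\pp\in\catX$ for every $\pp\in\Ass\catX$. But if $\pp\in\Ass X$ for some $X\in\catX$, then $R/\pp$ embeds into $X$, so $R/\pp\in\catX$ by closure under submodules. Hence every $M_j/M_{j-1}$ — being a module all of whose associated primes lie in $\Ass\catX$ and which is built from copies of such $R/\pp$ by extensions — lies in $\catX$, and therefore $M\in\catX$.

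The main obstacle is the dévissage bookkeeping: one needs that a finitely generated module with $\Ass M\subseteq\Psi$ admits a finite filtration whose subquotients are submodules of $\bigoplus R/\pp$ with $\pp$ ranging over $\Ass M$, so that closure under submodules and extensions alone suffice. The cleanest route is to first use \cref{fct:Goto-Watanabe} to split off, inside $M$, a submodule $L$ with $\Ass L=\{\pp\}$ for a chosen $\pp\in\Ass M$, observe that such an $L$ has a prime filtration with all subquotients $R/\pp$ (indeed $L$ is $\pp$-coprimary, so it embeds into a direct sum of copies of $R/\pp$ after localizing, but more elementarily one induces on length of a prime filtration), deduce $L\in\catX$, and then induct on $|\Ass M|$ applied to $M/L$, whose associated primes form a strictly smaller subset of $\Ass\catX$. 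Combining the two inclusions gives that $\catX\mapsto\Ass\catX$ and $\Phi\mapsto\catmod^{\ass}_\Phi R$ are mutually inverse bijections, completing the proof.
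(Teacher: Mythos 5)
The formal parts of your plan are fine: $\catmod^{\ass}_{\Phi}R$ is indeed a torsion-free class with $\Ass(\catmod^{\ass}_{\Phi}R)=\Phi$, the inclusion $\catX\subseteq\catmod^{\ass}_{\Ass\catX}R$ is immediate, and the reduction via \cref{fct:Goto-Watanabe} to coprimary subquotients is correct. The genuine gap is the crucial claim that a $\pp$-coprimary module $L$ with $\pp\in\Ass\catX$ lies in $\catX$ because it is ``built from copies of $R/\pp$ by extensions''. All three justifications you offer for this are false. First, a $\pp$-coprimary module need not have a filtration with subquotients $R/\pp$: for $R=\bbk[[x,y]]$ and $\pp=(0)$, the ideal $\mm=(x,y)$ is $\pp$-coprimary, but a module admitting a finite filtration with subquotients $R/\pp=R$ is free (the relevant extensions split), and $\mm$ is not free. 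Second, it need not embed into a direct sum of copies of $R/\pp$, even after localizing: for $R=\bbk[[x]]$, $\pp=(x)$, any homomorphism $R/(x^2)\to (R/(x))^{\oplus n}$ kills $(x)/(x^2)\ne 0$; in general such embeddings would force every finite-length $R_\pp$-module to be semisimple. Third, induction on a prime filtration fails because its subquotients are $R/\qq$ with $\qq\in\Supp L$, possibly strictly containing $\pp$, and such $R/\qq$ need not belong to $\catmod^{\ass}_{\Ass\catX}R$, hence need not lie in $\catX$ (in the example above, $0\subset(x)\subset\mm$ has subquotient $\mm/(x)\cong R/(x)$, and $(x)\notin\Ass\mm$). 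So your reduction ``it suffices to show $R/\pp\in\catX$ for every $\pp\in\Ass\catX$'' is precisely the nontrivial content of Takahashi's theorem and is not established by the dévissage you describe. (The paper itself offers no proof to compare with: the statement is quoted as a fact from \cite{Takahashi}.)

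The missing step is true and can be repaired, but by a mechanism that uses closure under submodules on the subquotients rather than extension-generation by $R/\pp$. Since $\Ass L=\{\pp\}$, every $s\notin\pp$ is a nonzerodivisor on $L$, so the localization map $L\to L_\pp$ is injective; $L_\pp$ has finite length over $R_\pp$, hence carries a filtration $0=V_0\subset V_1\subset\cdots\subset V_\ell=L_\pp$ with $V_i/V_{i-1}\cong\kk(\pp)$. Intersecting with $L$ gives a filtration of $L$ whose subquotients embed into $\kk(\pp)$, and every finitely generated $R$-submodule of $\kk(\pp)$ is isomorphic, via multiplication by a common denominator $s\notin\pp$, to a submodule of $R/\pp$, hence lies in $\catX$ because $R/\pp\in\catX$ and $\catX$ is closed under submodules; extension-closure along the filtration then gives $L\in\catX$. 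An argument of this kind (or the one in \cite{Takahashi}) is needed; as written, your proof of the containment $\catmod^{\ass}_{\Ass\catX}R\subseteq\catX$ does not go through.
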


From this, we can describe the torsion-free closure of a subcategory.
\begin{cor}\label{prp:torf closure for comm ring}
For a subcategory $\catX$ of $\catmod R$,
we have $\F(\catX)=\catmod^\ass_{\Ass \catX} R$.\qed
\end{cor}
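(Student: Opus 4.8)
The plan is to derive this immediately from Takahashi's classification (\cref{fct:Takahashi}), which provides mutually inverse bijections $\catY \mapsto \Ass\catY$ and $\Phi \mapsto \catmod^\ass_\Phi R$ between $\torf(\catmod R)$ and the power set of $\Spec R$. In particular every subcategory of the form $\catmod^\ass_\Phi R$ is a torsion-free class, and both assignments are monotone with respect to inclusion; these two facts are what the argument rests on.

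First I would check the inclusion $\F(\catX) \subseteq \catmod^\ass_{\Ass\catX} R$. The right-hand side is a torsion-free class by the above, and it contains $\catX$ because any $M \in \catX$ satisfies $\Ass M \subseteq \bigcup_{N \in \catX} \Ass N = \Ass\catX$, so $M \in \catmod^\ass_{\Ass\catX} R$. Since $\F(\catX)$ is by definition the smallest torsion-free class containing $\catX$, the inclusion follows from minimality.

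For the reverse inclusion, note that $\catX \subseteq \F(\catX)$ forces $\Ass\catX \subseteq \Ass\F(\catX)$, whence $\catmod^\ass_{\Ass\catX} R \subseteq \catmod^\ass_{\Ass\F(\catX)} R$ by monotonicity of $\Phi \mapsto \catmod^\ass_\Phi R$. As $\F(\catX)$ is itself a torsion-free class, applying the bijection of \cref{fct:Takahashi} gives $\catmod^\ass_{\Ass\F(\catX)} R = \F(\catX)$, and combining the two inclusions yields the claimed equality. There is no genuine obstacle here; the only point requiring a moment's care is simply to invoke \cref{fct:Takahashi} for the facts that $\catmod^\ass_\Phi R$ is automatically a torsion-free class and that the correspondence respects inclusions, instead of re-deriving them.
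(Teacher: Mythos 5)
Your argument is correct and is exactly the intended derivation: the paper states this corollary as an immediate consequence of \cref{fct:Takahashi}, and your two inclusions (minimality of $\F(\catX)$ against the torsion-free class $\catmod^\ass_{\Ass\catX}R$, and recovering $\F(\catX)=\catmod^\ass_{\Ass\F(\catX)}R$ from the bijection) are precisely how that consequence is obtained.
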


Torsion-free classes are KE-closed subcategories,
and thus the inclusion $\torf(\catmod R)\subseteq \ke(\catmod R)$ holds true in general.
It is natural to ask when $\torf(\catmod R) = \ke(\catmod R)$.
It holds true under the assumption $\dim R\le 1$;
see \cref{prp:KE=torf via depth,prp:KE=torf for S_2 excellent} for more general discussions on the equality $\ke(\catmod R)=\torf(\catmod R)$.
\begin{fct}[{\cite[Theorem 4.30]{KS}}] \label{KE=torf in 1-dim}
If $\dim R\le 1$, then we have $\ke(\catmod R)=\torf(\catmod R)$.
\end{fct}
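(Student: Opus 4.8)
The inclusion $\torf(\catmod R)\subseteq\ke(\catmod R)$ holds over any ring, a subobject–closed extension–closed subcategory being automatically kernel–closed (kernels are subobjects); observe also that a KE–closed subcategory is closed under direct summands, a summand being the kernel of the complementary idempotent. So fix $\catX\in\ke(\catmod R)$ and put $\Phi:=\Ass\catX$. By \cref{fct:Takahashi}, $\catX$ is a torsion–free class if and only if $\catX=\catmod^{\ass}_{\Phi}R$, and since $\catX\subseteq\F(\catX)=\catmod^{\ass}_{\Phi}R$ by \cref{prp:torf closure for comm ring}, it remains to prove that $\Ass M\subseteq\Phi$ implies $M\in\catX$. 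As $\dim R\le 1$, every non–minimal prime is maximal; applying \cref{fct:Goto-Watanabe} to the partition $\Ass M=(\Ass M\cap\Min R)\sqcup(\Ass M\setminus\Min R)$ and inducting on $\#\Ass M$, I would reduce to two cases: (a) $M$ has finite length with every composition factor of the form $R/\mm$ with $\mm\in\Phi\cap\Max R$; (b) $M$ is $\pp$–coprimary for some $\pp\in\Phi\cap\Min R$. Case (a) is immediate: for such $\mm$ pick $X\in\catX$ with $\mm\in\Ass X$; then $\Hom_R(R/\mm,X)\in\catX$ by \cref{fct:Hom-ideal}(1), and this module is a nonzero $R/\mm$–vector space, so $R/\mm\in\catX$ by closedness under summands, whence (a) follows by extension–closedness along a composition series.

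Case (b) is the crux. First, arguing as in (a) but starting from $\Hom_R(R/\pp,X)\in\catX$, discarding its finite–length part (which lies in $\catX$ by (a)) by means of \cref{fct:Goto-Watanabe}, and using kernel–closedness, one produces a nonzero $\pp$–coprimary module $P_0\in\catX$ which is torsion–free over the one–dimensional domain $S:=R/\pp$. Next I would show that every $\pp$–coprimary $M$ is built by extensions from fractional ideals of $S$: since $\pp$ is minimal, $R_{\pp}$ is artinian and $\pp^{N}M=0$ for $N\gg 0$, so choosing a nonzero element $\sigma$ of the socle of $M_{\pp}$ and setting $M_{1}:=M\cap R_{\pp}\sigma$ inside $M_{\pp}$, the submodule $M_{1}$ is a fractional ideal of $S$, while $M/M_{1}$ is again $\pp$–coprimary with $\operatorname{length}_{R_{\pp}}\bigl((M/M_{1})_{\pp}\bigr)$ strictly smaller; induction on this length then reduces (b), via extension–closedness, to the assertion that every fractional ideal of $S$ lies in $\catX$. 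For this last assertion the ``change of rings'' through centres is indispensable: \cref{fct:Hom-ideal}(1),(2) applied to $P_0$ place in $\catX$ both $\End_S(P_0)$ and an $S$–order $S\subseteq\ZZ_R(P_0)\subseteq\overline{S}$, and replacing $R$ by such a centre --- over which the pertinent module has centre equal to the base ring, which is the situation analysed in \cref{characterize free center} --- one would combine that structural information with the depth estimates of \cref{Bass lemma}, \cref{prp:deoth R/p} and \cref{prp:depth p} to realise the given fractional ideal inside $\catX$ by forming further Hom–modules, kernels and extensions.

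The hard part, and with it the entirety of case (b), is precisely this final step. The obstruction is that $\catX$ is not closed under quotients, so a known member of $\catX$ cannot simply be cut down to size; in particular the finite–length cokernels that appear when two fractional ideals of $S$ are compared inside $\operatorname{Frac}(S)$ have composition factors $R/\mm$ for which one has no a priori control that $\mm\in\Phi$, so they need not belong to $\catX$ and must be bypassed rather than exploited. It is in controlling these cokernels --- through the centre $\ZZ_R(-)$ and the characterisation of when it is isomorphic to the base ring --- that the hypothesis $\dim R\le 1$ is used in an essential manner.
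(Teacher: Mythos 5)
This statement enters the paper as a quoted fact from \cite[Theorem 4.30]{KS}, so there is no internal proof to compare against; judged on its own, your proposal contains a genuine gap, and it sits exactly at the point you yourself call ``the hard part''. The reductions you do carry out are essentially correct: $\torf(\catmod R)\subseteq\ke(\catmod R)$, closure of KE-closed subcategories under direct summands, the reduction via \cref{fct:Goto-Watanabe} and extension-closedness to (a) finite-length modules with associated primes in $\Phi\cap\Max R$ and (b) $\pp$-coprimary modules with $\pp\in\Phi\cap\Min R$; case (a) itself (which is \cref{max ideal KE contains fl}); the construction of one torsion-free rank-one $R/\pp$-module $P_0\in\catX$ via \cref{fct:Hom-ideal} and closedness under kernels of epimorphisms; and the socle filtration reducing case (b) to the assertion that \emph{every} fractional ideal of $S=R/\pp$ lies in $\catX$. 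But that assertion, which as you admit is the entire content of case (b), is never proved: you only list tools (\cref{fct:Hom-ideal}, \cref{characterize free center}, the depth lemmas) and say what one ``would'' do with them. Nothing in the proposal explains how to pass from the single module $P_0$ you have placed in $\catX$ to an arbitrary fractional ideal $I$: since $\catX$ is closed under neither submodules nor quotients, comparing $P_0$ and $I$ inside $Q(S)$ produces finite-length kernels and cokernels supported at maximal ideals that need not lie in $\Phi=\Ass\catX$, which is exactly the obstruction you name without overcoming. Note moreover that an arbitrary $\pp$-coprimary module is already essentially of this shape (every fractional ideal of $S$ is a $\pp$-coprimary $R$-module), so your reduction largely restates the difficulty rather than diminishing it.

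Closing the gap requires the kind of argument that constitutes the bulk of the proof of \cite[Theorem 4.30]{KS}, whose analogues in the present paper are the change of rings through $\ZZ_R(-)$, the chain of intermediate subalgebras $S=S_0\subsetneq S_1\subsetneq\cdots\subsetneq S_n$ with $S_{i+1}\subseteq \mm_i:\mm_i$ for suitable $\mm_i\in\Max(S_i)\setminus\Ass(S_i)$ (\cref{subalg seq}), the step that $\mm\in\catX$ forces the ring itself into $\catX$ (\cref{imst}), and \cref{characterize free center}; this is precisely the machinery deployed in the proof of \cref{thm:KE-closed reconstruction}, whose base case is the fact in question. Invoking these results by name, as your final paragraph does, is not a proof that every fractional ideal of $R/\pp$ belongs to $\catX$; until that claim is actually established, case (b), and with it the inclusion $\ke(\catmod R)\subseteq\torf(\catmod R)$ in dimension at most one, remains unproven.
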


We will use the following observations in Section \ref{s:KE-closed}.
\begin{cor}\label{KE desired Ass submodule}
Let $\catX$ be a KE-closed subcategory of $\catmod R$.
Then for any $M\in \catX$ and any subset $\Phi$ of $\Ass M$,
there is an exact sequence $0\to L \to M\to N\to 0$
such that $L \in \catmod_{\Phi}^{\ass} R$ and $\Ass N=\Ass M \setminus \Phi$.
\end{cor}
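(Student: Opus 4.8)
The plan is to combine \cref{fct:Goto-Watanabe} with the Hom-closure property of KE-closed subcategories (\cref{fct:Hom-ideal}) to manufacture the desired short exact sequence. First I would apply \cref{fct:Goto-Watanabe} to the decomposition $\Ass M = \Phi \sqcup (\Ass M \setminus \Phi)$, obtaining some short exact sequence $0 \to L_0 \to M \to N_0 \to 0$ with $\Ass L_0 = \Phi$ and $\Ass N_0 = \Ass M \setminus \Phi$. The issue is that this $L_0$ is a priori just an $R$-module, not known to lie in $\catX$, and KE-closed subcategories need not be closed under submodules; so $L_0 \in \catmod^\ass_\Phi R$ and $\Ass N_0 = \Ass M \setminus \Phi$ hold automatically, but membership of $L_0$ (equivalently $N_0$) in $\catX$ is not granted by this construction alone. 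Note that the statement as phrased only asks for $L \in \catmod^\ass_\Phi R$, not $L \in \catX$, so in fact \cref{fct:Goto-Watanabe} already gives everything literally demanded.

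However, the intended use in Section \ref{s:KE-closed} surely wants the sequence to be a conflation in $\catX$, i.e.\ $L, N \in \catX$, so I would prove that stronger form. The key trick: take $L := \Hom_R(R/I, M)$ for a suitable ideal $I$, so that $L \in \catX$ by \cref{fct:Hom-ideal}(1). Concretely, choose an ideal $I$ whose vanishing locus $V(I)$ separates $\Phi$ from $\Ass M \setminus \Phi$ appropriately; since $\Ass M$ is finite, one can pick $I$ with $V(I) \cap \Ass M = \Ass M \setminus \Phi$, or rather arrange that the $I$-torsion submodule $\GG_I(M) = \bigcup_n \Hom_R(R/I^n, M)$ has associated primes exactly $\Phi$. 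Because $\catmod R$ is noetherian, this union stabilizes, so $\GG_I(M) = \Hom_R(R/I^n, M)$ for $n \gg 0$, which lies in $\catX$ by \cref{fct:Hom-ideal}(1). Setting $L := \GG_I(M)$ and $N := M/L$, we get $\Ass L = \Ass M \cap V(I) $ and $\Ass N = \Ass M \setminus V(I)$ by standard primary-decomposition facts, and $N \in \catX$ since $\catX$ is closed under cokernels of the inclusion $L \inj M$ — wait, KE-closed does not a priori give cokernels. Here I would instead invoke \cref{prp:KE=torf in torf}: $\catX$ is a torsion-free class of $\F(\catX) = \catmod^\ass_{\Ass\catX} R$, and torsion-free classes of an exact category are closed under admissible quotients in the ambient exact category. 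Since $L \inj M$ is an admissible monomorphism in $\F(\catX)$ (its cokernel $N$ satisfies $\Ass N \subseteq \Ass M \subseteq \Ass \catX$, hence $N \in \F(\catX)$), and torsion-free classes of exact categories are closed under admissible subobjects but not quotients — so this direction needs the ambient torsion-free-class structure differently.

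Let me restructure: the cleanest route is to note that $N = M/\GG_I(M)$ embeds into a finite direct sum of copies of $M$ localized appropriately — more precisely, $N$ is a submodule of $\prod_{\pp \in \Ass M \setminus \Phi}$ of localizations, but to stay inside $\catmod R$ I would instead use that $N \inj \Hom_R(J, M)$ for a suitable ideal $J$ with $V(J) \cap \Ass M = \Ass M \setminus \Phi$ complementary to $I$. Since $\Hom_R(J,M) \in \catX$ by \cref{fct:Hom-ideal}(1) and $\catX$ is a torsion-free class of $\F(\catX)$ hence closed under admissible submodules, and $N \inj \Hom_R(J,M)$ is admissible in $\F(\catX)$ (cokernel has associated primes contained in $\Ass\catX$), we conclude $N \in \catX$; then $L = \Ker(M \to N) \in \catX$ since $\catX$ is closed under kernels. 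Finally one checks the $\Ass$ bookkeeping: choosing $I, J$ with $V(I) \cap \Ass M = \Phi$ and $V(J) \cap \Ass M = \Ass M \setminus \Phi$ (possible since $\Ass M$ is finite, so each prime in $\Phi$ and its complement can be cut out), one has $\GG_I(M)$ with $\Ass = \Phi$ and the composite $M \to M/\GG_I(M) \inj \Hom_R(J,M)$ realizes $N$ with $\Ass N = \Ass M \setminus \Phi$. The main obstacle is precisely this last point — engineering ideals $I$ and $J$ and verifying that the naturally-arising maps are admissible monomorphisms in $\F(\catX)$ so that the kernel/Hom-closure properties of the torsion-free-class-in-an-exact-category structure can be applied; the $\Ass$ computations themselves are routine via \cref{ass hom} and \cref{fct:Goto-Watanabe}.
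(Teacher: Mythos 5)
You are right on two counts: read literally, the statement follows from \cref{fct:Goto-Watanabe} alone, and the version the paper actually proves and later uses (e.g.\ in \cref{decomposition,lem for replace Ass}) is the stronger one in which the submodule $L$ moreover lies in $\catX$. So the real content is exactly the claim $L\in\catX$, and that is where your argument has a genuine gap. Your construction takes $L=\Gamma_I(M)=\Hom_R(R/I^n,M)$ for an ideal $I$ with $V(I)\cap\Ass M=\Phi$; but $V(I)$ is specialization-closed, so such an ideal exists only when $\Phi$ is specialization-closed inside $\Ass M$. For a general subset it does not: take $M=R\oplus R/\pp$ over a domain $R$ with $\pp\ne(0)$ and $\Phi=\{(0)\}\subseteq\Ass M=\{(0),\pp\}$; then $V(I)\supseteq\Phi$ forces $I=(0)$ and $V(I)\cap\Ass M=\Ass M\ne\Phi$. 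Thus the torsion-submodule device cannot realize arbitrary partitions of $\Ass M$, which is precisely the extra strength of \cref{fct:Goto-Watanabe}, and the subsequent $I$--$J$ engineering (including the unproduced embedding $N\inj\Hom_R(J,M)$ and the unjustified claim that its cokernel lies in $\F(\catX)$) collapses at the point you yourself flagged as the main obstacle. The detour aimed at $N\in\catX$ is also beside the point: the corollary does not assert it, and it is false in general, since torsion-free classes of an exact category are not closed under admissible quotients.

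The missing step is a one-line use of ingredients you already mention. Apply \cref{fct:Goto-Watanabe} to get $0\to L\to M\to N\to 0$ with $\Ass L=\Phi$ and $\Ass N=\Ass M\setminus\Phi$. Since $\Ass L$ and $\Ass N$ are contained in $\Ass M\subseteq\Ass\catX$, all three terms lie in $\catmod^{\ass}_{\Ass\catX}R=\F(\catX)$ (\cref{prp:torf closure for comm ring}), so the sequence is a conflation in $\F(\catX)$. By \cref{prp:KE=torf in torf}, $\catX$ is a torsion-free class of $\F(\catX)$, hence closed under admissible subobjects there; as $M\in\catX$, this gives $L\in\catX$, and $\Ass L=\Phi$ gives $L\in\catmod^{\ass}_{\Phi}R$. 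In short, you invoked closure under admissible subobjects, but applied it to an auxiliary inclusion you could not construct, instead of directly to the Bourbaki sequence, where it finishes the proof immediately and without any hypothesis on $\Phi$.
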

\begin{proof}
There is an exact sequence $0 \to L \to M \to N \to 0$
with $\Ass L = \Phi$ and $\Ass N = \Ass M \setminus \Phi$
by \cref{fct:Goto-Watanabe}.
This sequence is a conflation in $\catmod^{\ass}_{\Ass\catX} R$.
Since $\catX$ is a torsion-free class of $\catmod^{\ass}_{\Ass\catX} R$ by \cref{prp:KE=torf in torf},
we obtain that $L \in \catX$.
\end{proof}

\begin{fct}[{\cite[Lemma 4.7]{KS}}]\label{restrict KE by torf}
Let $\catX$ be a KE-closed subcategory of $\catmod R$,
and let $\Phi$ be a subset of $\Ass \catX$.
Then $\catX \cap \catmod^{\ass}_{\Phi} R$ is a KE-closed subcategory of $\catmod R$
such that $\Ass\left(\catX\cap \catmod^{\ass}_{\Phi} R\right)=\Phi$.
\end{fct}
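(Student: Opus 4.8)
The plan is to treat the two assertions separately: first that $\catX\cap\catmod^{\ass}_{\Phi}R$ is again KE-closed, and then that its set of associated primes is precisely $\Phi$. Throughout I would only need two elementary facts about associated primes: that $\Ass$ of a submodule is contained in $\Ass$ of the ambient module, and that $\Ass B\subseteq\Ass A\cup\Ass C$ for a short exact sequence $0\to A\to B\to C\to 0$. Together these say that $\catmod^{\ass}_{\Phi}R$ is closed under kernels, subobjects and extensions (one could also invoke \cref{fct:Takahashi} for the latter two, but it is not needed).

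For the first assertion, suppose $f\colon X\to Y$ is a morphism with $X,Y\in\catX\cap\catmod^{\ass}_{\Phi}R$. Since $\catX$ is KE-closed we get $\Ker f\in\catX$, and since $\Ker f$ is a submodule of $X$ we get $\Ass(\Ker f)\subseteq\Ass X\subseteq\Phi$; hence $\Ker f\in\catX\cap\catmod^{\ass}_{\Phi}R$. Similarly, given a short exact sequence $0\to A\to B\to C\to 0$ with $A,C\in\catX\cap\catmod^{\ass}_{\Phi}R$, closedness of $\catX$ under extensions gives $B\in\catX$, while $\Ass B\subseteq\Ass A\cup\Ass C\subseteq\Phi$ gives $B\in\catmod^{\ass}_{\Phi}R$. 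As the intersection of two additive subcategories is additive, this shows $\catX\cap\catmod^{\ass}_{\Phi}R\in\ke(\catmod R)$.

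For the equality $\Ass(\catX\cap\catmod^{\ass}_{\Phi}R)=\Phi$, the inclusion ``$\subseteq$'' is immediate from the definition of $\catmod^{\ass}_{\Phi}R$. For ``$\supseteq$'', fix $\pp\in\Phi$. Since $\Phi\subseteq\Ass\catX$, there is some $M\in\catX$ with $\pp\in\Ass M$. I would then apply \cref{KE desired Ass submodule} to $M$ and the subset $\Ass M\cap\Phi$ of $\Ass M$, obtaining a short exact sequence $0\to L\to M\to N\to 0$ with $L\in\catX$, $\Ass L\subseteq\Ass M\cap\Phi$, and $\Ass N=\Ass M\setminus(\Ass M\cap\Phi)$. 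Since $\pp\in\Ass M$ but $\pp\notin\Ass N$, and $\Ass M\subseteq\Ass L\cup\Ass N$, we conclude $\pp\in\Ass L$; and $\Ass L\subseteq\Phi$ shows $L\in\catX\cap\catmod^{\ass}_{\Phi}R$. Hence $\pp\in\Ass(\catX\cap\catmod^{\ass}_{\Phi}R)$, as desired. (Since \cref{KE desired Ass submodule} is built on \cref{fct:Goto-Watanabe}, one may in fact take $L$ with $\Ass L=\Ass M\cap\Phi$ exactly, which makes $\pp\in\Ass L$ transparent.)

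The one non-formal ingredient is \cref{KE desired Ass submodule}: it is exactly what allows us to ``cut out'' of a module $M\in\catX$ the part of $\Ass M$ lying inside $\Phi$ without leaving $\catX$. If one did not have such a tool, the reverse inclusion would be the genuine obstacle, because a priori the modules of $\catX$ that witness a prime $\pp\in\Phi$ could have other associated primes outside $\Phi$, so there would be no reason for $\pp$ to be realized inside the smaller subcategory $\catX\cap\catmod^{\ass}_{\Phi}R$. Everything else is formal.
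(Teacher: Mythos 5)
Your argument is correct. Note that the paper does not prove this statement at all---it is imported as \cref{restrict KE by torf} from \cite[Lemma 4.7]{KS}---so there is no in-paper proof to compare with; your write-up supplies a self-contained one using only material appearing earlier in the paper. The first half (that the intersection is KE-closed) is indeed formal, since $\catmod^{\ass}_{\Phi}R$ is closed under submodules and extensions, hence under kernels and extensions, and the intersection of additive subcategories is additive. For the nontrivial inclusion $\Phi\subseteq\Ass\left(\catX\cap\catmod^{\ass}_{\Phi}R\right)$, your appeal to \cref{KE desired Ass submodule} is legitimate and non-circular: that corollary is established before this Fact and rests only on \cref{fct:Goto-Watanabe} and \cref{prp:KE=torf in torf}, not on the statement being proved. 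One small caveat: as literally stated, \cref{KE desired Ass submodule} only asserts $L\in\catmod^{\ass}_{\Phi}R$, whereas you also need $L\in\catX$; that membership is what its proof actually establishes (and is how the paper itself reads the corollary, e.g.\ in the proof of \cref{decomposition}), but to be airtight you could instead argue directly from \cref{fct:Goto-Watanabe} (taking $\Ass L=\Ass M\cap\Phi$, so $\pp\in\Ass L$ is immediate) together with the fact that $\catX$ is a torsion-free class of $\catmod^{\ass}_{\Ass\catX}R$ by \cref{prp:KE=torf in torf} and \cref{prp:torf closure for comm ring}, which yields $L\in\catX$ since the sequence is a conflation there. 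With that reading fixed, every step of your proof goes through.
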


\begin{fct}[{\cite[Lemma 4.8]{KS}}]\label{prp:ke ass max ideal}
Let $\catX$ be a KE-closed subcategory of $\catmod R$.
If $\Ass\catX=\{\mm\}$ for some $\mm \in \Max R$,
then $\catX = \catmod^{\ass}_{\{\mm\}}R$.
\end{fct}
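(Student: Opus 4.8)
The plan is to establish the two inclusions $\catX \subseteq \catmod^{\ass}_{\{\mm\}} R$ and $\catmod^{\ass}_{\{\mm\}} R \subseteq \catX$ separately. The first is immediate from the definition of $\Ass\catX$: every $M \in \catX$ satisfies $\Ass M \subseteq \Ass\catX = \{\mm\}$. All the work lies in the reverse inclusion.

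For $\catmod^{\ass}_{\{\mm\}} R \subseteq \catX$, I would first analyze the objects on the left-hand side. If $\Ass M \subseteq \{\mm\}$ with $\mm$ maximal, then, since the minimal primes of $\Supp M$ lie in $\Ass M$ and $\mm$ is maximal, any prime in $\Supp M$ must equal $\mm$; thus $\Supp M \subseteq \{\mm\}$ and $M$ is a finitely generated module whose support is a single maximal ideal, hence of finite length. Every composition factor of such an $M$ is a simple module supported at $\mm$, i.e.\ isomorphic to $\kk(\mm) = R/\mm$, so $M$ admits a finite filtration $0 = M_0 \subsetneq M_1 \subsetneq \cdots \subsetneq M_\ell = M$ with $M_i/M_{i-1} \cong \kk(\mm)$ for all $i$. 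Since $\catX$ is closed under extensions and contains the zero object, a straightforward induction on $\ell$ reduces the claim to showing $\kk(\mm) \in \catX$.

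To prove $\kk(\mm) \in \catX$, I would use the hypothesis $\Ass\catX = \{\mm\}$ to choose some $M \in \catX$ with $\mm \in \Ass M$ and apply \cref{fct:Hom-ideal}\,(1) to obtain $H := \Hom_R(R/\mm, M) \in \catX$. By \cref{ass hom}, $\Ass H = \Supp(R/\mm) \cap \Ass M = \{\mm\}$, so $H \ne 0$; since $H$ is annihilated by $\mm$, it is a nonzero finite-dimensional $\kk(\mm)$-vector space, hence $H \cong \kk(\mm)^{\oplus n}$ with $n \ge 1$. Finally I would record the elementary fact that a KE-closed subcategory is closed under direct summands: if $A \oplus B \in \catX$, then the endomorphism of $A \oplus B$ obtained as the composite of the projection onto $B$ with the inclusion of $B$ has kernel $A$, whence $A \in \catX$ by closure under kernels. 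Applying this to $H \cong \kk(\mm)^{\oplus n}$ yields $\kk(\mm) \in \catX$, which finishes the proof.

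The argument is largely bookkeeping with associated primes, finite length, and extension-closedness; the only ingredient not already packaged among the quoted facts is the closure of KE-closed subcategories under direct summands, so that small observation is the one place I expect to spend a line of genuine (if minor) argument rather than a citation.
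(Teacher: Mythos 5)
Your proof is correct and complete: the reduction to finite length modules and a composition series reduces everything to $\kk(\mm)\in\catX$, which you obtain from \cref{fct:Hom-ideal}\,(1), \cref{ass hom}, and the (correctly justified) closure of KE-closed subcategories under direct summands via the kernel of the idempotent endomorphism. The paper itself only cites this statement from \cite[Lemma 4.8]{KS} without reproducing a proof, and your argument is exactly the expected one underlying that lemma, so there is nothing to flag.
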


\begin{cor}\label{max ideal KE contains fl}
Let $\catX$ be a KE-closed subcategory of $\catmod R$.
If $\mm \in \Max R \cap \Ass \catX$,
then $\catmod^{\ass}_{\{\mm\}}R \subseteq \catX$.
\end{cor}
\begin{proof}
We have
$\catmod^{\ass}_{\{\mm\}}R = \catmod^{\ass}_{\{\mm\}}R \cap \catX \subseteq \catX$
by \cref{restrict KE by torf,prp:ke ass max ideal}.
\end{proof}

Dominant resolving subcategories are another class of subcategories that closely relates to KE-closed subcategories.
\begin{dfn} \label{dfn:dom res}
Let $\catX$ be a subcategory of $\catmod R$ and $M\in \catmod R$.
\begin{enua}
\item If $0 \to N \to P_{n-1} \to \cdots \to P_1 \to P_0 \to M \to 0$ is an exact sequence in $\catmod R$ such that $P_i$ are $R$-projective for all $0\le i\le n-1$, then $N$ is called an $n$-th syzygy module of $M$.
\item $\catX$ is called \emph{resolving} if it contains all projective $R$-modules, and is closed under direct summands, extensions, and kernels of epimorphisms.
\item $\catX$ is
called \emph{dominant} if for all $M\in\catmod R$, there exists an integer $n\ge 0$ such that $\catX$ contains the $n$-th syzygy modules of $M$.
\end{enua}
\end{dfn}

The dominant resolving subcategories of $\catmod R$ were classified for a Cohen-Macaulay ring $R$ \cite[Theorem 1.4]{DT} and for a general ring $R$ with some assumption \cite[Theorem 6.9]{Takahashi2}.
The following fact gives an explicit description of dominant resolving subcategories.
\begin{fct}[{\cite[Corollary 4.6 and Proposition 5.3(1)]{Takahashi2}}] \label{thm:dom res}
Let $\catX$ be a resolving subcategory of $\catmod R$.
Then the following are equivalent.
\begin{enua}
\item $\catX$ is dominant.
\item The equality below holds true:
\[
\catX=\{M\in\catmod R\mid \depth_{R_\pp} M_\pp \ge \inf_{X\in\catX}\{\depth_{R_\pp} X_\pp\}\text{ for all }\pp\in\Spec R\}.
\]
\end{enua}
\end{fct}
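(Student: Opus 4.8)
Throughout write $c:=\dim R$, which we may assume finite (the statement localizes, so when $\dim R=\infty$ one runs the argument prime by prime with $\htt\pp$ in place of $c$). For a resolving subcategory $\catX$ put $d_\catX(\pp):=\inf_{X\in\catX}\depth_{R_\pp}X_\pp$ for $\pp\in\Spec R$, and
\[
\catX':=\{\,M\in\catmod R \mid \depth_{R_\pp}M_\pp\ge d_\catX(\pp)\ \text{for all}\ \pp\in\Spec R\,\}.
\]
The inclusion $\catX\subseteq\catX'$ holds by the definition of $d_\catX$, so the assertion is the equivalence ``$\catX$ is dominant $\iff$ $\catX=\catX'$''. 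As a preliminary step I would record that $\catX'$ is again resolving: it contains the projectives because a localization of a projective module has depth $\depth R_\pp\ge d_\catX(\pp)$ (using $R\in\catX$), while closure under direct summands, extensions, and kernels of epimorphisms all follow from the depth lemma applied to the relevant conflations. One also checks at once that $d_{\catX'}=d_\catX$, so $(\catX')'=\catX'$.

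\textbf{The easy implication $(2)\Rightarrow(1)$.} An induction on $n$, applying the depth lemma to the short exact sequences $0\to\Omega^{n+1}M\to P_n\to\Omega^n M\to 0$ and using that a localization of a projective module either vanishes or has depth $\depth R_\pp$, yields $\depth_{R_\pp}(\Omega^n M)_\pp\ge\min\{n,\depth R_\pp\}$ for every module $M$ and every $n$-th syzygy. Since $R\in\catX$ forces $d_\catX(\pp)\le\depth R_\pp\le\htt\pp\le c$, taking $n=c$ gives $\depth_{R_\pp}(\Omega^c M)_\pp\ge\depth R_\pp\ge d_\catX(\pp)$ for all $\pp$, i.e.\ $\Omega^c M\in\catX'=\catX$. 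As $M$ was arbitrary, $\catX$ contains all $c$-th syzygy modules, hence is dominant.

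\textbf{The hard implication $(1)\Rightarrow(2)$.} Here we must show $\catX'\subseteq\catX$, and the plan is to reduce membership to a construction. If a module $M$ admits a finite exact sequence $0\to M\to X^0\to X^1\to\cdots\to X^m\to 0$ with every $X^i\in\catX$, then $M\in\catX$: writing $C^i:=\Ima(X^{i-1}\to X^i)$ (so $C^0=M$ and $C^m=X^m$) and reading the conflations $0\to C^i\to X^i\to C^{i+1}\to 0$ for $i=m-1,m-2,\dots,0$, closure of $\catX$ under kernels of epimorphisms gives successively $C^{m-1},\dots,C^0=M\in\catX$. So it suffices to build such a coresolution for an arbitrary $M\in\catX'$. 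This is where dominance enters: it supplies, for every module, a syzygy of sufficiently high order lying in $\catX$, while the hypothesis $\depth_{R_\pp}M_\pp\ge d_\catX(\pp)$ measures how far $M$ is from being such a syzygy. Concretely I would invoke the Auslander--Bridger/Evans--Griffith syzygy machinery — the transpose $\mathrm{Tr}\,M$, the natural exact sequence $0\to\Ext^1_R(\mathrm{Tr}\,M,R)\to M\to M^{\ast\ast}\to\Ext^2_R(\mathrm{Tr}\,M,R)\to 0$ and its higher analogues, and the translation of depth inequalities into grade/$\Ext$-vanishing conditions — to splice $M$ into a finite exact sequence whose remaining terms are high-order syzygy modules (hence in $\catX$ by dominance), verifying along the way that the intermediate cosyzygies stay in $\catX'$ so the process is internally consistent.

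\textbf{The main obstacle.} The crux is precisely this construction of a finite coresolution of $M\in\catX'$ by objects of $\catX$: the tautological projective coresolution is useless because $\catX$ is not closed under cokernels, so one genuinely needs the relative-syzygy theory for resolving subcategories of \cite{Takahashi2} (its Corollary~4.6 and Proposition~5.3(1)), and in particular the bookkeeping guaranteeing that the intermediate terms have the correct depth profile relative to the function $d_\catX$ rather than the naive depth. Everything else — the two preliminary checks that $\catX'$ is resolving with $d_{\catX'}=d_\catX$, and the implication $(2)\Rightarrow(1)$ — is routine depth-lemma manipulation.
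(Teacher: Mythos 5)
The paper never proves this statement: it is imported verbatim as a Fact from \cite{Takahashi2} (Corollary 4.6 and Proposition 5.3(1)), so there is no internal argument to compare with and your proposal has to stand on its own. Your direction (2)$\Rightarrow$(1) is fine when $\dim R<\infty$: the estimate $\depth_{R_\pp}(\Omega^n M)_\pp\ge\min\{n,\depth R_\pp\}$ together with $d_\catX(\pp)\le\depth R_\pp$ (from $R\in\catX$) does give $\Omega^{\dim R}M\in\catX'=\catX$. But the parenthetical fix for $\dim R=\infty$ (``run the argument prime by prime with $\htt\pp$ in place of $c$'') is not a proof: dominance requires a single finite $n$ for each $M$, valid at every prime simultaneously, and your estimate only guarantees membership at $\pp$ once $n\ge d_\catX(\pp)$, so one needs an additional uniformity argument (say, stabilization of the non-free loci of the syzygies and a boundedness statement for $d_\catX$ on the stable locus), which is not supplied and is not routine.

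The genuine gap is the hard direction (1)$\Rightarrow$(2). Your reduction is sound as far as it goes: if $M\in\catX'$ admits a finite exact sequence $0\to M\to X^0\to\cdots\to X^m\to 0$ with all $X^i\in\catX$, then descending use of closure under kernels of epimorphisms gives $M\in\catX$. But you never construct such a coresolution. The paragraph where the construction should appear gestures at the Auslander--Bridger transpose and grade conditions and then explicitly invokes ``Corollary~4.6 and Proposition~5.3(1) of \cite{Takahashi2}'' --- which is precisely the statement being proved, so the argument is circular rather than incomplete-but-fixable. This direction is the substantive content of Takahashi's theorem: one needs nontrivial structural results about resolving subcategories (that a resolving subcategory containing appropriate syzygy modules must contain every module with the prescribed depth profile), and nothing in the sketch produces them; it is also unclear that your intermediate cosyzygies can be kept in $\catX'$ at all. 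A sanity check makes the difficulty visible: over an artinian local ring the claim (1)$\Rightarrow$(2) already says that a dominant resolving subcategory equals $\catmod R$, and your coresolution scheme gives no mechanism for that. So the proposal establishes only the easy implication (in finite dimension) and defers the essential half to the very citation the Fact rests on.
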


The relationship between KE-closed subcategories and dominant resolving subcategories is the following.
\begin{fct}[{\cite[Corollary 4.22]{KS}}] \label{KE-closed res}
Let $\catX$ be a subcategory of $\catmod R$.
Then the following are equivalent.
\begin{enua}
\item $\catX$ is a KE-closed subcategory, and $R\in\catX$.
\item $\catX$ is a dominant resolving subcategory, and
\[
\inf_{X\in\catX}\{\depth_{R_\pp} X_\pp\} \le 2 \text{ for all }\pp\in\Spec R.
\]
\end{enua}
\end{fct}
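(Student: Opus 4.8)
The plan is to establish the two implications separately; both reduce to elementary estimates with the depth lemma together with the description of dominant resolving subcategories in \cref{thm:dom res}. Throughout, write $d_\pp:=\inf_{X\in\catX}\{\depth_{R_\pp}X_\pp\}$ for $\pp\in\Spec R$.

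For $(1)\Rightarrow(2)$, I would first note that a KE-closed subcategory is automatically closed under direct summands: a summand $A$ of $M\in\catX$ arises as $A\cong\Ker(\id_M-e)$ for the idempotent $e\in\End_R(M)$ projecting onto $A$, so $A\in\catX$. Since $\catX$ is additive and contains $R$, it therefore contains every finitely generated projective $R$-module; combined with closure under extensions and kernels (hence kernels of epimorphisms), $\catX$ is resolving. The key observation for dominance is that $\catX$ contains every second syzygy module: for any $M\in\catmod R$ choose a presentation $P_1\xrightarrow{\phi}P_0$ of $M$ by projectives (so $\Cok\phi\cong M$); then $\Omega^2 M=\Ker\phi$ lies in $\catX$ because $P_0,P_1\in\catX$ and $\catX$ is closed under kernels. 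Thus $\catX$ contains the second syzygies of every module, so it is dominant. Finally, for the bound $d_\pp\le 2$: if $\depth R_\pp\le 2$ it is immediate from $R\in\catX$, and if $\depth R_\pp\ge 3$ one applies the previous observation to $M=R/\pp$ and computes, using the depth lemma and \cref{prp:depth p} (the first syzygy of $\kk(\pp)$ over $R_\pp$ is $\pp R_\pp$, of depth $1$, whose first syzygy then has depth exactly $2$), that $\depth_{R_\pp}(\Omega^2(R/\pp))_\pp=2$; hence $d_\pp\le 2$.

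For $(2)\Rightarrow(1)$, since $\catX$ is resolving it contains $R$ and is extension-closed, so the only issue is closure under kernels. Here dominance is used through \cref{thm:dom res}, which gives $\catX=\{M\in\catmod R\mid\depth_{R_\pp}M_\pp\ge d_\pp\text{ for all }\pp\in\Spec R\}$. Given a morphism $f\colon X\to Y$ with $X,Y\in\catX$ and a prime $\pp$, applying the depth lemma to $0\to\Ima f\to Y\to\Cok f\to 0$ yields $\depth_{R_\pp}(\Ima f)_\pp\ge\min\{\depth_{R_\pp}Y_\pp,1\}$, and then to $0\to\Ker f\to X\to\Ima f\to 0$ yields
\[
\depth_{R_\pp}(\Ker f)_\pp\ \ge\ \min\{\,\depth_{R_\pp}X_\pp,\ \depth_{R_\pp}Y_\pp+1,\ 2\,\}\ \ge\ d_\pp,
\]
the last inequality because $\depth_{R_\pp}X_\pp,\depth_{R_\pp}Y_\pp\ge d_\pp$ and, crucially, $d_\pp\le 2$. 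As $\pp$ is arbitrary, $\Ker f\in\catX$, so $\catX$ is KE-closed with $R\in\catX$.

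I do not expect a deep difficulty: the argument is a chain of depth-lemma bookkeeping. Two points need a little care. First, when a module is replaced by a syzygy taken from a not-necessarily-minimal projective resolution and then localized, the syzygy only changes by a free summand, which leaves the relevant depths unchanged; this should be recorded once. Second, the exact value $\depth_{R_\pp}(\Omega^2(R/\pp))_\pp=2$ when $\depth R_\pp\ge 3$ requires applying the depth lemma twice together with \cref{prp:depth p}. Conceptually, the whole proof turns on a single point: KE-closure with $R\in\catX$ forces all second syzygies into $\catX$, which simultaneously yields dominance and, via $M=R/\pp$, the bound $d_\pp\le 2$; and that bound is exactly the inequality that makes the depth estimate for $\Ker f$ close up in the converse direction.
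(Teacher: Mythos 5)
This statement is imported verbatim from \cite{KS} (Corollary 4.22) and the present paper gives no proof of it, so there is no in-paper argument to compare yours against; judged on its own terms, your proof is correct and self-contained. For (1)$\Rightarrow$(2): your observation that closure under kernels yields closure under direct summands (via $\Ker(\id_M-e)$) is right, so $\catX$ contains all finitely generated projectives and is resolving; and since any second syzygy of any $M$ is by definition the kernel of a morphism between projectives, KE-closure puts all second syzygies in $\catX$, giving dominance with $n=2$ uniformly. The depth bound also checks out: for $\depth R_\pp\le 2$ use $R\in\catX$, and for $\depth R_\pp\ge 3$ two applications of the depth lemma (together with \cref{prp:depth p}, and the remark you record that localizing a global syzygy only adds a free summand, which is harmless here since $\depth R_\pp\ge 3>2$) give $\depth_{R_\pp}\bigl(\Omega^2(R/\pp)\bigr)_\pp=2$, whence $\inf_{X\in\catX}\depth_{R_\pp}X_\pp\le 2$. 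For (2)$\Rightarrow$(1), testing membership of $\Ker f$ prime by prime through the description of \cref{thm:dom res}, with the estimate $\depth_{R_\pp}(\Ker f)_\pp\ge\min\{\depth_{R_\pp}X_\pp,\;\depth_{R_\pp}Y_\pp+1,\;2\}$ and the hypothesis $d_\pp\le 2$, is exactly what is needed, and the degenerate cases (where $\Ima f$ or $\Ker f$ vanishes locally) are covered by the convention $\depth 0=\infty$. The one point I would make explicit, as you partly do, is that \cref{prp:depth p} computes the depth of the particular first syzygy $\pp R_\pp$, so either pass to an arbitrary second syzygy via Schanuel's lemma and free summands, or simply run the depth lemma directly on the localized (not necessarily minimal) presentation; either route closes the step. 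In short, your argument is a valid independent derivation of the cited fact, using Takahashi's characterization only for the converse direction and elementary syzygy--depth bookkeeping for the forward one.
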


We put here a simple criterion for the containment $R\in \catX$.

\begin{fct}[{\cite[Example 4.5]{IMST}}] \label{imst}
Let $\catX$ be an extension-closed subcategory of $\catmod R$.
If $\mm\in \catX$ for some maximal ideal $\mm$ of $R$,
then $R\in\catX$.
\end{fct}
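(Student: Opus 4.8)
The plan is to obtain $R\in\catX$ from a single short exact sequence. Since $\mm\in\catX$ and $\catX$ is closed under extensions, the canonical presentation
\[
0 \longrightarrow \mm \longrightarrow R \longrightarrow R/\mm \longrightarrow 0
\]
shows that it suffices to prove $R/\mm\in\catX$. Thus the whole content of the statement is the production of the residue field $R/\mm$ inside $\catX$ using only the module $\mm$.

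To establish $R/\mm\in\catX$, I would work locally at $\mm$: after a standard localization reduction we may assume that $(R,\mm)$ is local, and the case of a field being trivial, we may assume $\mm\neq 0$, so that $\mm/\mm^2\neq 0$ by Nakayama's lemma. The idea is then to make $R/\mm$ ``visible'' inside $\catX$: every composition factor of $\mm$ is isomorphic to $R/\mm$, so one assembles an auxiliary module $E$ out of iterated self-extensions of $\mm$ (hence $E\in\catX$, as $\catX$ is extension-closed) in such a way that $R/\mm$ — and, with it, $R$ — splits off as a direct summand of $E$. Since $\catX$ is closed under direct summands, this would give $R/\mm\in\catX$, and then the displayed sequence yields $R\in\catX$.

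The step I expect to be the main obstacle is exactly this construction of $E$. Closure under extensions is a one-directional tool: it produces the middle term of a conflation from its two outer terms, but never the converse, whereas $R/\mm$ arises naturally as a \emph{quotient}, not a submodule, of powers of $\mm$. Extracting it therefore requires genuine care and, crucially, the use of closure under direct summands — it is this, rather than bare extension-closedness, that ultimately forces $R$ itself, and not merely some module having $R$ as a summand, to lie in $\catX$. Once $R/\mm\in\catX$ has been secured, the argument finishes immediately via the short exact sequence above.
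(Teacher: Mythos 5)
Your plan cannot be completed, because its very first reduction is to a statement that is false under the stated hypotheses. You reduce everything to producing $R/\mm\in\catX$, but $R/\mm\in\catX$ simply does not follow from ``$\catX$ extension-closed and $\mm\in\catX$'': take $R=k[[x,y]]$, $\mm=(x,y)$ and $\catX=\tf R$ the class of torsion-free modules. This $\catX$ is extension-closed (even a torsion-free class) and contains $\mm$ (and $R$, as the Fact predicts), yet $R/\mm=k$ is torsion and so $R/\mm\notin\catX$. Consequently no argument from the hypotheses can deliver the residue field, and the auxiliary module $E$ you hope to build cannot exist: whenever $\mm$ is torsion-free (e.g.\ $R$ a domain of positive dimension), every iterated self-extension of finite direct sums of copies of $\mm$ is again torsion-free, so the torsion module $R/\mm$ can never split off as a direct summand. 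Two smaller problems: the claim that every composition factor of $\mm$ is $R/\mm$ only makes sense when $\mm$ has finite length, and the ``standard localization reduction'' is not available for an arbitrary extension-closed $\catX$ -- membership is a local condition only under closure under direct summands (cf.\ \cref{loc containment}), which is not among the hypotheses here.

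The correct argument never goes through $R/\mm$: one exhibits $R$, up to a direct summand, as the middle term of a single extension of $\mm$ by $\mm$. For a suitable $t\in\mm$ and ideal $I\subseteq\mm$ with $t\mm\subseteq I$, $t\notin I$ and $I+tR=\mm$, the surjection $R\oplus I\to\mm$, $(a,i)\mapsto ta+i$, has kernel $(I:t)=\mm$ (maximality of $\mm$ is used exactly here), giving an exact sequence $0\to\mm\to R\oplus I\to\mm\to 0$; over $k[[x,y]]$ one may take $t=y$ and $I=(x,y^2)$. Extension-closedness then puts $R\oplus I$ in $\catX$, and closure under direct summands extracts $R$. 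Your instinct that summand-closure is doing real work is sound -- over $k[[x,y]]$ the subcategory of modules filtered by copies of $\mm$ is additive and extension-closed and contains $\mm$ but not $R$ (a rank count shows such a filtration of $R$ would force $R\cong\mm$), so some summand-type closure, which holds in the cited source's conventions and in every application in this paper (KE-closed subcategories are closed under kernels, hence under direct summands), is genuinely needed rather than something you may silently assume. But even granting it, the route via $R/\mm$ is the wrong one; the residue field never enters the proof.
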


\section{Localization and subcategories}\label{s:localization}
Let $R$ be a commutative noetherian ring and let $S$ be a multiplicatively closed subset of $R$.
For a subcategory $\catX$ of $\catmod R$, we denote by $\catX_S$ the subcategory $\{M_S\mid M\in\catX\}$ of $\catmod R_S$.
If $\pp$ is a prime ideal of $R$ and $S=R\setminus \pp$, then $\catX_S$ is denoted by $\catX_{\pp}$.

We present here two basic lemmas on splitting morphisms.

\begin{lem} \label{split_local_global}
Let $R$ be a commutative noetherian ring.
Let $f\colon M \to N$ be a homomorphism in $\catmod R$.
Then the following are equivalent.
\begin{enua}
\item $f$ is a split monomorphism (resp.\ split epimorphism).
\item $f_\pp$ is a split monomorphism (resp.\ split epimorphism) for all $\pp\in \Spec R$.
\item $f_\mm $ is a split monomorphism (resp.\ split epimorphism) for all $\mm \in \Max R$. 
\end{enua}
\end{lem}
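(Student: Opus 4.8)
The plan is to prove the equivalences for the ``split monomorphism'' case; the ``split epimorphism'' case is entirely dual and can be dispatched by the same argument applied to the dual category, or simply remarked to follow mutatis mutandis. The implications $(1)\imply(2)\imply(3)$ are immediate: localization is an exact functor, so if $g\colon N\to M$ satisfies $gf=\id_M$ then $g_\pp f_\pp = \id_{M_\pp}$; and $(2)\imply(3)$ is the trivial restriction from all primes to the maximal ones. So the content is entirely in $(3)\imply(1)$.

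For $(3)\imply(1)$, the key observation is that $f$ is a split monomorphism if and only if the map $\Hom_R(f,M)\colon \Hom_R(N,M)\to \Hom_R(M,M)$ is surjective (a retraction of $f$ is precisely a preimage of $\id_M$). Since $\Hom_R(N,M)$ and $\Hom_R(M,M)$ are finitely generated $R$-modules (as $R$ is noetherian and $M,N\in\catmod R$), surjectivity of an $R$-linear map between finitely generated modules can be checked locally at maximal ideals: indeed $\Cok(\Hom_R(f,M))$ is finitely generated, and a finitely generated module is zero iff all its localizations at maximal ideals vanish. Now I would use that localization commutes with $\Hom$ for finitely generated modules over a noetherian ring: $\Hom_R(N,M)_\mm\iso \Hom_{R_\mm}(N_\mm,M_\mm)$ naturally, and under this identification the localized map $\Hom_R(f,M)_\mm$ becomes $\Hom_{R_\mm}(f_\mm, M_\mm)$. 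By hypothesis $f_\mm$ is a split monomorphism in $\catmod R_\mm$, hence $\Hom_{R_\mm}(f_\mm,M_\mm)$ is surjective for every $\mm\in\Max R$. Therefore $\Cok(\Hom_R(f,M))_\mm = 0$ for all $\mm$, so $\Cok(\Hom_R(f,M))=0$, i.e.\ $\Hom_R(f,M)$ is surjective, and $f$ is a split monomorphism.

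The main (and only mildly technical) obstacle is making sure the natural isomorphism $\Hom_R(N,M)_\mm\iso\Hom_{R_\mm}(N_\mm,M_\mm)$ is available and compatible with the functoriality in $f$ — this is standard for finitely presented (here, finitely generated over noetherian) first argument, and I would simply cite it. One should also take care that $f_\mm$ being a split monomorphism over $R_\mm$ really does give surjectivity of $\Hom_{R_\mm}(f_\mm,M_\mm)$: a retraction $g_\mm\colon N_\mm\to M_\mm$ with $g_\mm f_\mm=\id$ exhibits $\id_{M_\mm}$ as lying in the image, but we need the whole map surjective — yet that is exactly the characterization, since $f_\mm$ split mono $\equi$ $\id_{M_\mm}\in\Ima\Hom_{R_\mm}(f_\mm,M_\mm)$ $\equi$ $\Hom_{R_\mm}(f_\mm,M_\mm)$ surjective (the last equivalence because once $\id$ is hit, composing an arbitrary $h\colon M_\mm\to M_\mm$ with the retraction gives a preimage of $h$). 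I expect the whole proof to be short; the only thing to be careful about is not to conflate ``split mono after localizing at every prime'' with the genuinely local statement, but since $(2)$ and $(3)$ are sandwiched by $(1)$ anyway, this is automatic.
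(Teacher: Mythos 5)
Your proof is correct and follows essentially the same route as the paper: reduce splitness to surjectivity of $\Hom_R(f,M)$ onto $\End_R(M)$, use the compatibility of $\Hom$ with localization for finitely generated modules over a noetherian ring, and check surjectivity locally. The only (harmless) difference is that you verify $(3)\Rightarrow(1)$ directly at maximal ideals, whereas the paper writes the argument for $(2)\Rightarrow(1)$; the underlying mechanism is identical.
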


\begin{proof}
We only need to prove (2)$\Rightarrow$(1).
Assume that $f_\pp$ is a split monomorphism for all $\pp\in \Spec R$.
Then the induced sequence
\[
\Hom_{R_\pp}(N_\pp,M_\pp) \xrightarrow[]{\Hom_{R_\pp}(f_\pp,M_\pp)} \Hom_{R_\pp}(M_\pp,M_\pp) \to 0
\]
is exact.
Since $M,N$ are finitely presented, this yields that
\[
\Hom_R(N,M)_\pp \xrightarrow[]{\Hom_R(f,M)_\pp} \Hom_R(M,M)_\pp \to 0
\]
 is exact.
It follows that $\Hom_R(f,M)$ itself is an epimorphism, and hence $f$ is a split monomorphism.

The assertion on split epimorphisms can be proved in a similar way.
\end{proof}

\begin{lem} \label{loc split mono}
Let $R$ be a commutative noetherian ring, $M$ be a finitely generated $R$-module, $\catX$ be an additive subcategory of $\catmod R$, and $\Phi \subseteq \Spec R$ be an open subset of $\Spec R$.
Assume that $M_\pp \in \catX_\pp$ for any $\pp\in \Phi$.
Then there exists $N\in \catX$ and a homomorphism $g\colon M \to N$ such that $g_\pp$ is a split monomorphism for all $\pp\in \Phi$.
\end{lem}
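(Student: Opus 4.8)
The plan is to patch local models of $M$ over a finite open cover of $\Phi$ and take a direct sum. First I would fix $\pp\in\Phi$. Since $M_\pp\in\catX_\pp$, there are an object $X_\pp\in\catX$ and an isomorphism $\phi_\pp\colon M_\pp\xrightarrow{\sim}(X_\pp)_\pp$ in $\catmod R_\pp$. Because $R$ is noetherian and $M$ is finitely generated, $M$ is finitely presented, so the canonical map $\Hom_R(M,X_\pp)_\pp\to\Hom_{R_\pp}(M_\pp,(X_\pp)_\pp)$ is bijective. Clearing denominators, I obtain a homomorphism $g_\pp\colon M\to X_\pp$ of $R$-modules with $(g_\pp)_\pp=s\phi_\pp$ for some $s\in R\setminus\pp$; as $s$ is a unit of $R_\pp$, the localization $(g_\pp)_\pp$ is again an isomorphism.

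Next I would spread this isomorphism out to an open neighbourhood of $\pp$. Set $U_\pp:=\Spec R\setminus\left(\Supp(\Ker g_\pp)\cup\Supp(\Cok g_\pp)\right)$. Since $R$ is noetherian, $\Ker g_\pp$ and $\Cok g_\pp$ are finitely generated, so their supports are closed and $U_\pp$ is open; moreover $\pp\in U_\pp$ by the previous step. Because localization is exact, $(g_\pp)_\qq$ is an isomorphism, in particular a split monomorphism, for every $\qq\in U_\pp$. The family $\{U_\pp\}_{\pp\in\Phi}$ is an open cover of $\Phi$, and since $\Spec R$ is a noetherian topological space, its open subset $\Phi$ is quasi-compact; hence finitely many of them, say $U_{\pp_1},\dots,U_{\pp_m}$, already cover $\Phi$.

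Finally, I would put $N:=X_{\pp_1}\oplus\cdots\oplus X_{\pp_m}$, which lies in $\catX$ because $\catX$ is additive, and let $g\colon M\to N$ be the homomorphism whose $i$-th component is $g_{\pp_i}$. Given $\qq\in\Phi$, choose $i$ with $\qq\in U_{\pp_i}$; then $(g_{\pp_i})_\qq$ is an isomorphism, and composing its inverse with the $i$-th projection $N_\qq\to(X_{\pp_i})_\qq$ yields a retraction of $g_\qq$, so $g_\qq$ is a split monomorphism. I do not expect a genuine obstacle: the only points that actually use the hypotheses are the clearing-of-denominators step, which needs $M$ to be finitely presented (hence $R$ noetherian), and the quasi-compactness of the open set $\Phi$, which again uses that $\Spec R$ is noetherian; everything else is formal. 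One could equally phrase the spreading-out step using the (also open) locus where $g_\pp$ is a split monomorphism, but working with the isomorphism locus is cleaner.
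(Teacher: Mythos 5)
Your proposal is correct and follows essentially the same route as the paper's proof: lift a local isomorphism $M_\pp \simeq (X^\pp)_\pp$ to a map $M \to X^\pp$ using finite presentation, spread it out over the open complement of $\Supp(\Ker)\cup\Supp(\Cok)$, use quasi-compactness of the open set $\Phi$ in the noetherian space $\Spec R$ to extract a finite cover, and take the direct sum map, which is locally a split monomorphism because one component is locally an isomorphism. No gaps; the argument matches the paper's in all essential steps.
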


\begin{proof}
Take $\pp\in \Phi$.
Since $M_\pp\in \catX_\pp$, there exists $M^{\pp}\in \catX$ such that $M_\pp \cong (M^{\pp})_\pp$.
Let $f\colon M_\pp \to (M^{\pp})_\pp$ be an isomorphism.
Then there exists an $R$-homomorphism $f^{\pp}\colon M \to M^{\pp}$ and an element $s\in R\setminus \pp$ such that $s^{-1}(f^{\pp})_\pp=f$.
Consider the following open subset of $\Spec R$:
\[
U(\pp)\coloneqq \{\qq\in \Spec R \mid (f^{\pp})_\qq \text{ is an isomorphism}\}=\Spec R \setminus \left(\Supp(\Ker f^{\pp})\cup \Supp (\Cok f^{\pp})\right).
\]
Note that $\pp\in U(\pp)$.
It follows that $\{U(\pp)\}_{\pp\in \Phi}$ is an open covering of $\Phi$.
Since $\Spec R$ is noetherian, its open subset $\Phi$ is quasi-compact.
Thus there are finitely many prime ideals $\pp_1,\dots,\pp_n$ in $\Phi$ such that $\Phi \subseteq \bigcup_{i=1,\dots,n}U(\pp_i)$.
For $i=1,\dots, n$, let $f_i$ denote the $R$-homomorphism $f^{\pp_i}$.
Consider the following $R$-homomorphism:
\[
g\coloneqq {}^t[f_1,\dots,f_n]\colon M \to \bigoplus_{i=1,\dots,n}M^{\pp_i}.
\]
If $\pp\in \Phi$, then $\pp$ belongs to $U(\pp_i)$ for some $i$.
For such $i$, $(f_i)_\pp$ is an isomorphism, and thus $g_\pp={}^t[(f_1)_\pp,\dots,(f_i)_\pp,\dots,(f_n)_\pp]$ is a split monomorphism.
\end{proof}

We obtain a ``local-to-global'' principle for membership of a given subcategory.

\begin{prp} \label{loc containment}
Let $R$ be a commutative noetherian ring and $M\in \catmod R$.
Let $\catX$ be an additive subcategory of $\catmod R$ closed under direct summands.
Then the following are equivalent.
\begin{enua}
\item $M\in \catX$.
\item $M_\pp \in \catX_\pp$ for all $\pp\in \Spec R$.
\item $M_\mm \in \catX_\mm$ for all $\mm \in \Max R$. 
\end{enua}
\end{prp}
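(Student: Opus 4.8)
The plan is to prove the chain of implications $(1)\Rightarrow(2)\Rightarrow(3)\Rightarrow(1)$, where the first two are essentially formal and the last is the substantive one, relying on \cref{loc split mono} and \cref{split_local_global}. For $(1)\Rightarrow(2)$: if $M\in\catX$, then by definition of $\catX_\pp$ we have $M_\pp\in\catX_\pp$ for every $\pp$, so there is nothing to do. For $(2)\Rightarrow(3)$: this is the trivial restriction from all primes to maximal ideals. The heart of the argument is $(3)\Rightarrow(1)$, and more generally it suffices to prove $(2)\Rightarrow(1)$ and then observe that $(3)$ already gives $(2)$ at every $\pp$ by localizing a witness at a maximal ideal $\mm\supseteq\pp$ — indeed if $M_\mm\cong N_\mm$ with $N\in\catX$, then $M_\pp\cong (M_\mm)_{\pp R_\mm}\cong (N_\mm)_{\pp R_\mm}\cong N_\pp$, so $M_\pp\in\catX_\pp$. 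Thus it is enough to handle $(2)\Rightarrow(1)$.

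For $(2)\Rightarrow(1)$, apply \cref{loc split mono} with $\Phi=\Spec R$, which is open. The hypothesis $M_\pp\in\catX_\pp$ for all $\pp\in\Spec R$ is exactly what is required, so we obtain $N\in\catX$ and a homomorphism $g\colon M\to N$ such that $g_\pp$ is a split monomorphism for all $\pp\in\Spec R$. By \cref{split_local_global}, $g$ is itself a split monomorphism. Hence $M$ is a direct summand of $N$, and since $\catX$ is closed under direct summands, $M\in\catX$. This completes the cycle.

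The main obstacle — though it is already resolved by the two preceding lemmas — is the passage from the pointwise splitting data to a global splitting, which is precisely the content of \cref{split_local_global}, together with the quasi-compactness argument packaged inside \cref{loc split mono} that produces a single target object $N\in\catX$ out of the locally defined witnesses $M^\pp$. Once those are in hand, the proof of the proposition is a short formal assembly. I would also remark in the proof that the equivalence shows, in particular, that membership in a direct-summand-closed additive subcategory can be checked locally at maximal ideals, which is the form used later (for instance in the local-to-global principle for KE-closed subcategories).

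\begin{proof}
$(1)\Rightarrow(2)$: This is immediate from the definition of $\catX_\pp$.

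$(2)\Rightarrow(3)$: Trivial.

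$(3)\Rightarrow(1)$: First note that $(3)$ implies $(2)$. Indeed, let $\pp\in\Spec R$ and choose $\mm\in\Max R$ with $\pp\subseteq\mm$. By $(3)$ there is $N\in\catX$ with $M_\mm\cong N_\mm$ in $\catmod R_\mm$. Localizing this isomorphism at the prime $\pp R_\mm$ of $R_\mm$ gives $M_\pp\cong N_\pp$, so $M_\pp\in\catX_\pp$.

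Now assume $(2)$. The subset $\Phi\coloneqq\Spec R$ is open, and $M_\pp\in\catX_\pp$ for all $\pp\in\Phi$, so by \cref{loc split mono} there exist $N\in\catX$ and a homomorphism $g\colon M\to N$ such that $g_\pp$ is a split monomorphism for all $\pp\in\Spec R$. By \cref{split_local_global}, $g$ is a split monomorphism, so $M$ is isomorphic to a direct summand of $N$. Since $\catX$ is closed under direct summands, we conclude $M\in\catX$.
\end{proof}
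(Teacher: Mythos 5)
Your proof is correct and follows essentially the same route as the paper: the substantive implication $(2)\Rightarrow(1)$ is handled exactly as in the paper by applying \cref{loc split mono} with $\Phi=\Spec R$ and then \cref{split_local_global}, concluding via closure under direct summands. Your explicit check that $(3)\Rightarrow(2)$ by localizing a witness at a maximal ideal containing $\pp$ is a minor elaboration of what the paper dismisses as clear, and is fine.
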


\begin{proof}
The implications (1)$\Rightarrow$(2)$\Leftrightarrow$(3) are clear.
We verify the implication (2)$\Rightarrow$(1).
Assume that the condition (2) holds.
Applying \cref{loc split mono} to $\Spec R$, we get a module $N\in \catX$ and a homomorphism $f\colon M \to N$ such that $f_\pp$ is a split monomorphism for any $\pp\in \Spec R$.
Applying \cref{split_local_global}, we see that $f$ itself is a split monomorphism.
Therefore, $M$ is a direct summand of $N$.
Since $\catX$ is closed under direct summands, $M$ belongs to $\catX$.
\end{proof}

We observe that for a KE-closed subcategory $\catX$ of $\catmod R$, the localization $\catX_S$  is KE-closed.
This is stated as follows.

\begin{prp}\label{prp:localize KE again KE}
Let $R$ be a commutative noetherian ring and let $S$ be a multiplicatively closed subset of $R$.
Let $\catX$ be a subcategory of $\catmod R$.
Assume that $\catX$ is closed under extensions (resp.\ closed under kernels, KE-closed) in $\catmod R$.
Then $\catX_S$ is closed under extensions (resp.\ closed under kernels, KE-closed) in $\catmod R_S$.
\end{prp}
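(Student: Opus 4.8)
The plan is to reduce the statement to two facts: localization $(-)_S$ is exact, and a morphism (resp.\ an extension class) between localizations of finitely generated $R$-modules is, up to multiplication by a unit of $R_S$, the localization of a morphism (resp.\ of an extension) over $R$. Throughout I use that $R_S$ is again noetherian and that every object of $\catmod R_S$ is isomorphic to $M_S$ for some $M\in\catmod R$; since $R$ is noetherian, each such $M$ is finitely presented, so the natural maps $S^{-1}\Hom_R(M,N)\to\Hom_{R_S}(M_S,N_S)$ and $S^{-1}\Ext^1_R(M,N)\to\Ext^1_{R_S}(M_S,N_S)$ are isomorphisms (flat base change). I also record two elementary remarks: (i) if $u$ is a unit of $R_S$ and $g$ a morphism of $R_S$-modules, then $\Ker(ug)=\Ker g$, and the Yoneda class $u\cdot\varepsilon$ of an $R_S$-extension $\varepsilon$ has middle term isomorphic to that of $\varepsilon$ (it is a pushout of $\varepsilon$ along an automorphism); (ii) $(\Ker f)_S=\Ker(f_S)$, and localizing a short exact sequence of $R$-modules yields one of $R_S$-modules. (That $\catX_S$ is an additive subcategory closed under isomorphisms is immediate from the corresponding properties of $\catX$, so only the closure properties in the statement require argument.)

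For the ``closed under kernels'' case, given $g\colon A\to B$ in $\catmod R_S$ with $A,B\in\catX_S$, I would pick $M,N\in\catX$ with $A\cong M_S$ and $B\cong N_S$, transport $g$ to $\tilde g\colon M_S\to N_S$, and write $\tilde g=\tfrac1s f_S$ for some $f\colon M\to N$ in $\catmod R$ and some $s\in S$. Then $\Ker g\cong\Ker\tilde g=\Ker(f_S)=(\Ker f)_S$ by remarks (i) and (ii), and $\Ker f\in\catX$ since $\catX$ is closed under kernels, whence $\Ker g\in\catX_S$.

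For the ``closed under extensions'' case, given an exact sequence $0\to A\to C\to B\to 0$ in $\catmod R_S$ with $A,B\in\catX_S$, choose $L,N\in\catX$ with $A\cong L_S$ and $B\cong N_S$; the sequence then defines a class in $\Ext^1_{R_S}(N_S,L_S)\cong S^{-1}\Ext^1_R(N,L)$, hence equals $\tfrac1s\cdot\varepsilon_S$ for some $\varepsilon\in\Ext^1_R(N,L)$ and $s\in S$. Representing $\varepsilon$ by a short exact sequence $0\to L\to W\to N\to 0$ in $\catmod R$, extension-closedness of $\catX$ gives $W\in\catX$; localizing yields $\varepsilon_S=(0\to L_S\to W_S\to N_S\to 0)$, and the unit $\tfrac1s$ does not change the middle term, so $C\cong W_S\in\catX_S$. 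The KE-closed case follows by combining the two.

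The only step that is not purely formal is the reduction ``up to a unit of $R_S$'': it rests on the flat base-change isomorphisms for $\Hom$ and $\Ext^1$ (valid because $R$ is noetherian, so the relevant modules are finitely presented) together with remark (i). I expect the point a reader will most want spelled out is the identification of the middle term of $\tfrac1s\cdot\varepsilon_S$ with that of $\varepsilon_S$, so in a full write-up I would give the pushout description explicitly there; the rest is bookkeeping with exactness of localization.
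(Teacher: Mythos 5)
Your proposal is correct and follows essentially the same route as the paper's proof: realize every object of $\catmod R_S$ as a localization, use the flat base-change isomorphisms $S^{-1}\Hom_R(M,N)\cong\Hom_{R_S}(M_S,N_S)$ and $S^{-1}\Ext^1_R(M,N)\cong\Ext^1_{R_S}(M_S,N_S)$ to write the given morphism (resp.\ extension class) as $s^{-1}$ times a localized one, and observe that the unit $s^{-1}$ changes neither the kernel nor the middle term (the paper realizes this via the pullback diagram along $s^{-1}$ on the quotient, matching your pushout/pullback remark).
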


\begin{proof}
We remark that every module in $\catmod R_S$ is of the form $X_S$ for some $X\in\catmod R$.

First assume that $\catX$ is closed under extensions in $\catmod R$.
Let 
$
0 \to L_S \to M_S \to N_S \to 0
$
be a short exact sequence in $\catmod R_S$.
Assume $L_S,N_S\in \catX_S$.
Thus we may assume $L,N\in \catX$.
Then the short exact sequence corresponds to an element $\sigma\in \Ext^1_{R_S}(L_S,N_S)$.
Since $R$ is noetherian and $L\in\catmod R$, $\Ext^1_{R_S}(L_S,N_S)\cong \Ext^1_R(L,N)_S$.
In particular, there exist $s\in S$ and $\tau\in \Ext^1_R(L,N)$ such that $\sigma =s^{-1}\tau$.
Let
$
0 \to L \to M' \to N \to 0
$
be the short exact sequence in $\catmod R$ corresponding to $\tau$.
Note that $M'\in \catX$ since $\catX$ is closed under extensions.
The equality $\sigma =s^{-1}\tau$ gives the following commutative diagram of $R_S$-modules
\[
\begin{tikzcd}
0 \ar[r] & L_S \ar[r] \arr{d,equal} & M_S \ar[r] \ar[d] \pb & N_S \ar[r] \ar[d, "s^{-1}"] & 0\\
0 \ar[r] & L_S \ar[r] & M'_S \ar[r] & N_S \ar[r] & 0
\end{tikzcd}
\]
with exact rows.
This shows that $M_S\cong M'_S$ since $s^{-1}$ induces an $R_S$-isomorphism on $N_S$.
In particular, $M_S\in \catX_S$.
Consequently, $\catX_S$ is closed under extensions in $\catmod R_S$.

Next assume that  $\catX$ is closed under kernels.
Let $M,N$ be $R$-modules in $\catX$ and let $f\colon M_S \to N_S$ be an $R_S$-homomorphism.
Since $M\in \catmod R$, $f$ can be written as $s^{-1}g$ for some $g\colon M \to N$ and $s\in S$.
Note that $\Ker g\in \catX$ since $\catX$ is closed under kernels.
We obtain a commutative diagram of $R_S$-modules
\[
\begin{tikzcd}
0 \ar[r] & (\Ker g)_S \ar[r] \ar[d] & M_S \ar[r, "g_S"] \ar[d, equal] & N_S \ar[d, "s^{-1}"]\\
0 \ar[r] & \Ker f \ar[r] & M_S \ar[r, "f"] & N_S
\end{tikzcd}
\]
with exact rows.
This shows that $(\Ker g)_S\cong \Ker f$ since $s^{-1}$ induces an $R_S$-isomorphism on $N_S$.
In particular, $\Ker f\in \catX_S$.
Consequently, $\catX_S$ is closed under kernels in $\catmod R_S$.
\end{proof}

We obtain a ``local-to-global'' principle for KE-closed subcategories.

\begin{cor}\label{prp:local-to-global for KE}
Let $R$ be a commutative noetherian ring and $S$ a multiplicatively closed subset of $R$.
Let $\catX$ be an additive subcategory of $\catmod R$ closed under direct summands.
Then the following are equivalent.
\begin{enua}
\item $\catX$ is closed under extensions (resp.\ closed under kernels, KE-closed) in $\catmod R$.
\item $\catX_\pp$ is closed under extensions (resp.\ closed under kernels, KE-closed) in $\catmod R_\pp$ for all $\pp\in \Spec R$.
\item $\catX_\mm$ is closed under extensions (resp.\ closed under kernels, KE-closed) in $\catmod R_\mm$ for all $\mm\in \Max R$.
\end{enua}
\end{cor}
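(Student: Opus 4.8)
The plan is to deduce this from the two preceding results, namely the "local-to-global" principle for membership in a subcategory (\cref{loc containment}) and the stability of the relevant closure properties under localization (\cref{prp:localize KE again KE}). The implications (2)$\Rightarrow$(3) is trivial since $\Max R \subseteq \Spec R$, and (1)$\Rightarrow$(2) is exactly \cref{prp:localize KE again KE} applied to $S = R \setminus \pp$ for each $\pp \in \Spec R$. So the only substantive direction is (3)$\Rightarrow$(1), and it suffices to treat the three cases (extension-closed, kernel-closed, KE-closed) separately; the KE case follows by combining the other two.

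First I would handle the extension-closed case. Suppose $\catX_\mm$ is closed under extensions in $\catmod R_\mm$ for every $\mm \in \Max R$, and take a short exact sequence $0 \to L \to M \to N \to 0$ in $\catmod R$ with $L, N \in \catX$. I want to show $M \in \catX$, for which by \cref{loc containment} it is enough to check $M_\mm \in \catX_\mm$ for all $\mm \in \Max R$. Localizing the sequence at $\mm$ gives $0 \to L_\mm \to M_\mm \to N_\mm \to 0$, exact in $\catmod R_\mm$, with $L_\mm, N_\mm \in \catX_\mm$ (using $L, N \in \catX$). By hypothesis $M_\mm \in \catX_\mm$, as desired. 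The kernel-closed case is entirely parallel: given $f \colon M \to N$ with $M, N \in \catX$, localizing at $\mm$ gives $(\Ker f)_\mm \cong \Ker(f_\mm)$ (localization is exact), and $\Ker(f_\mm) \in \catX_\mm$ since $\catX_\mm$ is closed under kernels and $M_\mm, N_\mm \in \catX_\mm$; hence $(\Ker f)_\mm \in \catX_\mm$ for all $\mm$, so $\Ker f \in \catX$ by \cref{loc containment}. The KE-closed case is then immediate: if each $\catX_\mm$ is KE-closed, then in particular each is extension-closed and kernel-closed, so by the two cases just proved $\catX$ is extension-closed and kernel-closed in $\catmod R$, i.e.\ KE-closed.

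There is no real obstacle here; the only points requiring a word of care are that $\catX$ is assumed closed under direct summands (so \cref{loc containment} applies) and that every module over $R_\mm$ and every short exact sequence or morphism over $R_\mm$ does \emph{not} need to be globalized in this direction — we only localize — so the mild subtleties of \cref{prp:localize KE again KE} (clearing denominators in $\Ext^1$ and $\Hom$) are not needed for (3)$\Rightarrow$(1). I would write the proof concisely: note (1)$\Leftrightarrow$(2) via \cref{prp:localize KE again KE} and (2)$\Rightarrow$(3) trivially, then give the (3)$\Rightarrow$(1) argument for extensions and kernels as above and observe the KE case follows formally.
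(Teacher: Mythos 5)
Your proposal is correct and follows essentially the same route as the paper: the implications toward localization are handled by \cref{prp:localize KE again KE}, and (3)$\Rightarrow$(1) is proved by localizing the given short exact sequence or morphism at each maximal ideal and then invoking \cref{loc containment} to conclude membership in $\catX$. The observation that the KE case follows formally from the extension and kernel cases matches the paper's treatment, so there is nothing to add.
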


\begin{proof}
The implications (1)$\Rightarrow$(2)$\Rightarrow$(3) follow from \cref{prp:localize KE again KE}.

(3)$\Rightarrow$(1): First assume that $\catX_\mm$ is closed under extensions for all $\mm\in\Max R$.
We show that $\catX$ is closed under extensions.
Let $0 \to L \to M \to N \to 0$ be a short exact sequence of $R$-modules such that $L,N\in \catX$.
Localizing at any maximal ideal $\mm$, we have that $0 \to L_\mm \to M_\mm \to N_\mm \to 0$ is exact.
Since $L_\mm,N_\mm\in \catX_\mm$ and $\catX_\mm$ is closed under extensions, $M_\mm$ belongs to $\catX_\mm$.
Applying \cref{loc containment}, we obtain that $M\in \catX$.

Next assume that $\catX_\mm$ is closed under kernels for all $\mm\in\Max R$.
We show that $\catX$ is closed under kernels.
Let $f\colon M \to N$ be a homomorphism such that $M,N\in \catX$.
By a similar argument as in the above, we see that $\Ker f_\mm\in \catX_\mm$ for any maximal ideal $\mm$.
Then, applying \cref{loc containment}, we obtain that $\Ker f\in \catX$.
\end{proof}

\section{The centers of endomorphism algebras}\label{s:center}
Throughout this section, we fix a commutative noetherian ring $R$.
We study the center $Z_R(M):=Z(\End_R(M))$ of the endomorphism algebra of an $R$-module $M$.
Let us explain our motivation.
Let $M$ be an object of a KE-closed subcategory $\catX$ of $\catmod R$.
Then $\ZZ_R(M) \in \catX$ by \cref{fct:Hom-ideal} (2).
If ``$\ZZ_R(M) \iso R$ holds'', then $\catX$ is a dominant resolving subcategory
and has an explicit description, see \cref{KE-closed res}.
In general, such an isomorphism does not exist even if we choose a nice object $M$ of $\catX$.
However, the analysis of the structure of $\ZZ_R(M)$ gives clues to study KE-closed subcategories as we will see in the next section.

\subsection{The centers of endomorphism algebras of direct sums}

We first see that $\ZZ_R(M)$ only depends on the additive closure $\add M$ of $M$.
Recall that $\add M$ is the subcategory of $\catmod R$ consisting of direct summands of finite coproducts of $M$.
It is the smallest additive subcategory of $\catmod R$ containing $M$ and closed under taking direct summands.
\begin{prp} \label{prp:isom_center_on_add_equiv}
If $M, N \in \catmod R$ with $\add M = \add N$,
then $\ZZ_R(M) \iso \ZZ_R(N)$ as $R$-algebras.
\end{prp}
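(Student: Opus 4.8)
The plan is to reduce everything to a Morita-theoretic statement about the centers of endomorphism rings. The key observation is that if $\add M = \add N$, then the categories $\add M$ and $\add N$ are equal as subcategories of $\catmod R$, hence they are the same additive category $\catC$. Now $\End_R(M)$ is (anti-)isomorphic to $\End_{\catC}(M)$, and similarly for $N$. The module $M$ is an additive generator of $\catC = \add M$, and likewise $N$ is an additive generator of the same category $\catC$. So the real content is: if $M$ and $N$ are both additive generators of the same additive category $\catC$, then $Z(\End_\catC(M)) \iso Z(\End_\catC(N))$.

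To make this precise, I would first recall the standard fact that for $M \in \catmod R$, the center $\ZZ_R(M) = Z(\End_R(M))$ is canonically isomorphic to the center of the category $\add M$, i.e.\ to the ring $\ZZ(\add M)$ of natural transformations of the identity functor $\id_{\add M} \to \id_{\add M}$. Concretely, an element of the center of $\End_R(M)$ is a morphism $M \to M$ commuting with all endomorphisms of $M$; since $\add M$ is generated by summands of $M^{\oplus k}$, such a morphism extends uniquely to a natural transformation of $\id_{\add M}$, and conversely restricting a natural transformation to $M$ gives a central endomorphism. This establishes a ring isomorphism $\ZZ_R(M) \iso \ZZ(\add M)$ that is visibly natural in the category $\add M$. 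Since $\add M = \add N$ as subcategories, $\ZZ(\add M) = \ZZ(\add N)$, and composing the isomorphisms gives $\ZZ_R(M) \iso \ZZ_R(N)$ as rings.

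Finally, I would check that this isomorphism is $R$-linear. This is immediate: the $R$-algebra structure on $\ZZ_R(M)$ comes from the canonical map $R \to \End_R(M)$ sending $r$ to multiplication by $r$, which lands in the center; under the identification with $\ZZ(\add M)$ this corresponds to the natural transformation given by multiplication by $r$ on every object, and this description does not reference $M$ at all. Hence the isomorphism $\ZZ_R(M) \iso \ZZ(\add M) = \ZZ(\add N) \iso \ZZ_R(N)$ commutes with the structure maps from $R$, so it is an isomorphism of $R$-algebras.

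The only mildly delicate point — and the one I would spell out carefully — is the claim that a central element $\varphi \in Z(\End_R(M))$ extends to a genuine natural transformation on all of $\add M$, including on direct summands of $M^{\oplus k}$ that are not themselves copies of summands of $M$ in an obvious way. The argument is: for a summand $X$ of $M^{\oplus k}$ with idempotent $e \in \End_R(M^{\oplus k})$, one defines $\varphi_X$ via the $k\times k$ diagonal matrix $\mathrm{diag}(\varphi,\dots,\varphi)$ on $M^{\oplus k}$; this commutes with $e$ precisely because $\varphi$ is central in $\End_R(M)$ (and hence $\mathrm{diag}(\varphi,\dots,\varphi)$ is central in $\End_R(M^{\oplus k}) = \mathrm{Mat}_k(\End_R(M))$), so it restricts to $X = eM^{\oplus k}$, and one checks well-definedness (independence of the chosen presentation) and naturality using that any morphism between two such summands is induced by a matrix over $\End_R(M)$. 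This is routine matrix bookkeeping, so I would state it and leave the verification to the reader or dispatch it in a line or two.
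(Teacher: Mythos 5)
Your proof is correct, but it takes a different route from the paper. The paper argues via functor categories: it invokes the $R$-linear equivalence $\Mod(\add M)\simeq \Mod\End_R(M)$ (citing B\"uhler), concludes $\Mod\End_R(M)\simeq\Mod\End_R(N)$ from $\add M=\add N$, and then quotes the Morita invariance of centers (Anderson--Fuller) to get the $R$-algebra isomorphism. You instead identify $Z(\End_R(M))$ directly with the center of the additive category $\add M$ (the ring of natural endotransformations of the identity functor): a central endomorphism $\varphi$ extends to every summand $X$ of $M^{\oplus k}$ via $p\circ\mathrm{diag}(\varphi,\dots,\varphi)\circ i$, with well-definedness and naturality following because $\mathrm{diag}(\varphi,\dots,\varphi)$ commutes with every matrix over $\End_R(M)$; conversely restriction to $M$ recovers a central element, and the two constructions are mutually inverse. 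Since this categorical center visibly depends only on the subcategory $\add M=\add N$, and the structure map from $R$ corresponds to multiplication by $r$ on every object (independent of the chosen generator), you get the $R$-algebra isomorphism. The matrix verifications you defer are genuinely routine (e.g.\ for two presentations $(i,p)$ and $(i',p')$ of $X$, naturality on direct sums applied to $i'p$ gives $p'\,\mathrm{diag}(\varphi)\,i'=p\,\mathrm{diag}(\varphi)\,i$), so there is no gap. In effect you have unfolded, in the special case at hand, the standard proof of the Morita invariance the paper cites as a black box: your argument is more elementary and self-contained, while the paper's is shorter by outsourcing the content to the literature.
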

\begin{proof}
Recall that the category $\Mod \catX$ of modules over a (small) $R$-linear category $\catX$ is defined by the category of $R$-linear functors from $\catX$ to $\Mod R$, that is,
$\Mod \catX := \Fun_R(\catX, \Mod R)$.
If $\catX$ consists of a single object whose endomorphism ring is $A$,
then $\Mod \catX$ coincides with the category $\Mod A$ of $A$-modules.
It is well known that there is an $R$-linear equivalence $\Mod(\add M) \simeq \Mod \End_R(M)$ for any $M\in \Mod R$.
For example, it easily follows from \cite[Corollary 6.11]{Bu}.
Applying this to our setting, we get $\Mod\End_R(M) \simeq \Mod(\add M) = \Mod(\add N) \simeq \Mod\End_R(N)$.
Since the centers of rings are Morita invariant,
we obtain an $R$-algebra isomorphism $\ZZ_R(M) \iso \ZZ_R(N)$ (cf.\ \cite[Proposition 21.10]{AF}).
\end{proof}

\begin{cor}
We have an $R$-algebra isomorphism $\ZZ_R(M^{\oplus n}) \iso \ZZ_R(M)$ for any $M \in \catmod R$ and $n\ge 0$.\qed
\end{cor}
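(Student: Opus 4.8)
The plan is to derive this directly from \cref{prp:isom_center_on_add_equiv}. It suffices to observe that $\add(M^{\oplus n}) = \add M$ for every $n\ge 1$: indeed, $M$ is a direct summand of $M^{\oplus n}$, and $M^{\oplus n}$ is a finite coproduct of copies of $M$, so each of the two additive closures is contained in the other and they coincide. Applying \cref{prp:isom_center_on_add_equiv} to $M$ and $N\coloneqq M^{\oplus n}$ then yields the asserted $R$-algebra isomorphism $\ZZ_R(M^{\oplus n}) \iso \ZZ_R(M)$.

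Alternatively, one can argue by hand: $\End_R(M^{\oplus n})$ is the matrix ring $\mathrm{M}_n(\End_R(M))$, and a standard computation shows that its center consists exactly of the scalar matrices $a\cdot I_n$ with $a\in \ZZ_R(M)$, so that $a\cdot I_n \mapsto a$ defines an $R$-algebra isomorphism $\ZZ_R(M^{\oplus n}) \iso \ZZ_R(M)$. Either way there is no real obstacle here; the substantive input — the Morita invariance of the center — has already been recorded in the proof of \cref{prp:isom_center_on_add_equiv}.
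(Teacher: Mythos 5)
Your first argument is exactly the paper's intended proof: the corollary is stated with a \qed precisely because $\add(M^{\oplus n})=\add M$ makes it an immediate consequence of \cref{prp:isom_center_on_add_equiv}, and your alternative matrix computation is a fine elementary substitute. The only caveat is the degenerate case $n=0$ (where $M^{\oplus 0}=0$), which your argument, like the paper's, silently excludes; the statement's ``$n\ge 0$'' should really read ``$n\ge 1$'' unless $M=0$.
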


Next, we study the structure of $\ZZ_R(M\oplus N)$ for $M,N \in \catmod R$.
Consider the following natural morphisms:
\[
i_M\colon M \to M\oplus N,\quad
i_N\colon N \to M\oplus N,\quad
p_M\colon M \oplus N \to M,\quad
p_N\colon M \oplus N \to N.
\]
Then we have an $R$-algebra isomorphism
\[
\End_{R}(M\oplus N) \isoto
\begin{bmatrix}
\End_{R}(M) & \Hom_{R}(N,M) \\
\Hom_{R}(M,N) & \End_{R}(N)
\end{bmatrix},\quad
f \mapsto
\begin{bmatrix}
p_M \circ f \circ i_M & p_M \circ f \circ i_N \\
p_N \circ f \circ i_M & p_N \circ f \circ i_N
\end{bmatrix}.
\]
Here, the right hand side has a ring structure 
defined by the matrix multiplication.
We often identify them by this isomorphism.
We consider $\End_{R}(M) \times \End_{R}(N)$ as a subring of $\End_{R}(M\oplus N)$
via an injective $R$-algebra homomorphism
\[
\End_{R}(M) \times \End_{R}(N) \inj \End_{R}(M\oplus N),\quad
(f,g) \mapsto 
\begin{bmatrix}
f & 0 \\
0 & g
\end{bmatrix}.
\]

\begin{lem}\label{prp:Z(M oplus N)}
We have $\ZZ_R(M\oplus N) \subseteq \ZZ_R(M) \times \ZZ_R(N)$.
\end{lem}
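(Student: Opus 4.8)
The plan is to take an arbitrary central element $\varphi \in \ZZ_R(M\oplus N)$, written as a matrix
$\varphi = \begin{bsmatrix} a & b \\ c & d \end{bsmatrix}$
with $a\in \End_R(M)$, $b\in \Hom_R(N,M)$, $c\in\Hom_R(M,N)$, $d\in\End_R(N)$, and to show that $b = 0$, $c = 0$, and that $a$ (resp.\ $d$) is central in $\End_R(M)$ (resp.\ $\End_R(N)$). To kill the off-diagonal entries, I would commute $\varphi$ with the idempotent $e_M := i_M\circ p_M = \begin{bsmatrix} \id_M & 0 \\ 0 & 0 \end{bsmatrix} \in \End_R(M\oplus N)$. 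Since $\varphi$ is central, $\varphi e_M = e_M \varphi$, and computing both sides via matrix multiplication gives $\begin{bsmatrix} a & 0 \\ c & 0 \end{bsmatrix} = \begin{bsmatrix} a & b \\ 0 & 0 \end{bsmatrix}$, forcing $b = 0$ and $c = 0$. Hence $\varphi = \begin{bsmatrix} a & 0 \\ 0 & d\end{bsmatrix}$ already lies in the subring $\End_R(M)\times \End_R(N)$.

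It remains to check that $(a,d)$ is central in $\End_R(M)\times \End_R(N)$, i.e.\ that $a\in\ZZ_R(M)$ and $d\in\ZZ_R(N)$. For this I would commute $\varphi$ with an arbitrary diagonal element $\begin{bsmatrix} f & 0 \\ 0 & g\end{bsmatrix}$ with $f\in\End_R(M)$, $g\in\End_R(N)$; since such elements form a subring of $\End_R(M\oplus N)$ and $\varphi$ is central in the whole ring, it commutes with each of them, and matrix multiplication yields $af = fa$ for all $f$ and $dg = gd$ for all $g$. Thus $a\in \ZZ_R(M)$ and $d\in\ZZ_R(N)$, so $\varphi\in\ZZ_R(M)\times\ZZ_R(N)$, completing the argument.

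There is essentially no obstacle here: the only mild point of care is to make sure the identifications — the $R$-algebra isomorphism $\End_R(M\oplus N)\iso\begin{bsmatrix}\End_R(M) & \Hom_R(N,M)\\ \Hom_R(M,N) & \End_R(N)\end{bsmatrix}$ and the embedding $\End_R(M)\times\End_R(N)\inj\End_R(M\oplus N)$ set up just before the lemma — are used consistently, so that "commuting with $e_M$'' and "commuting with diagonal matrices'' are literally instances of $\varphi$ being central in $\End_R(M\oplus N)$. Once that bookkeeping is in place the computation is a one-line $2\times 2$ matrix identity in each case.
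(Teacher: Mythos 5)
Your proposal is correct and follows essentially the same argument as the paper: commuting a central element with the idempotent $\begin{bsmatrix}\id_M & 0\\ 0 & 0\end{bsmatrix}$ to kill the off-diagonal entries, and then commuting with arbitrary diagonal elements $\begin{bsmatrix} f & 0\\ 0 & g\end{bsmatrix}$ to see that the diagonal entries are central in $\End_R(M)$ and $\End_R(N)$.
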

\begin{proof}
Take 
$\begin{bsmatrix}
f_{11} & f_{12}\\
f_{21} & f_{22}
\end{bsmatrix}
\in \ZZ_R(M\oplus N)$.
Then we have
\[
\begin{bmatrix}
f_{11} & 0\\
f_{21} & 0
\end{bmatrix}
=
\begin{bmatrix}
f_{11} & f_{12}\\
f_{21} & f_{22}
\end{bmatrix}
\begin{bmatrix}
\id_M & 0 \\
0 & 0
\end{bmatrix}
=
\begin{bmatrix}
\id_M & 0 \\
0 & 0
\end{bmatrix}
\begin{bmatrix}
f_{11} & f_{12}\\
f_{21} & f_{22}
\end{bmatrix}
=
\begin{bmatrix}
f_{11} & f_{12}\\
0 & 0
\end{bmatrix}.
\]
Thus, we have $f_{12}=0=f_{21}$.
It means that $\ZZ_R(M\oplus N) \subseteq \End_{R}(M) \times \End_{R}(N)$.
Moreover, we can conclude that
$\ZZ_R(M\oplus N) \subseteq \ZZ_R(M) \times \ZZ_R(N)$
by the following equality for any $g\in \End_R(M)$ and $h\in \End_R(N)$:
\[
\begin{bmatrix}
f_{11}\circ g & 0\\
0 & f_{22} \circ h
\end{bmatrix}
=
\begin{bmatrix}
f_{11} & 0\\
0 & f_{22}
\end{bmatrix}
\begin{bmatrix}
g & 0 \\
0 & h
\end{bmatrix}
=
\begin{bmatrix}
g & 0 \\
0 & h
\end{bmatrix}
\begin{bmatrix}
f_{11} & 0\\
0 & f_{22}
\end{bmatrix}
=
\begin{bmatrix}
g \circ f_{11} & 0\\
0 & h \circ f_{22}
\end{bmatrix}.
\]
\end{proof}

Thus, we have the natural $R$-algebra homomorphism
\[
\phi_M \colon \ZZ_R(M\oplus N) \subseteq \ZZ_R(M) \times \ZZ_R(N) \surj \ZZ_R(M).
\]
In some cases, this morphism is injective, so ``$\ZZ_R(M\oplus N)$ is smaller than $\ZZ_R(M)$''.
\begin{prp}\label{prp:Z(M oplus N) to Z(M) inj}
Let $M,N \in \catmod R$.
\begin{enua}
\item
If there is an epimorphism $M^{\oplus n} \surj N$ for some $n\ge 1$,
then $\phi_M$ is injective.
\item
If there is a monomorphism $N \inj M^{\oplus n}$ for some $n\ge 1$,
then $\phi_M$ is injective.
\end{enua}
\end{prp}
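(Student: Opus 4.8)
The plan is to exploit the matrix description of $\End_R(M \oplus N)$ recorded just before the statement, together with the fact that a central element of $\End_R(M\oplus N)$ has, by \cref{prp:Z(M oplus N)}, the diagonal form $\begin{bsmatrix} g & 0 \\ 0 & h \end{bsmatrix}$ with $g \in \ZZ_R(M)$ and $h \in \ZZ_R(N)$. Under $\phi_M$ this element is sent to $g$, so injectivity of $\phi_M$ amounts to the implication: if $g = 0$ then $h = 0$. Thus for both parts the task reduces to showing that the component $h$ on $N$ is completely determined by (indeed forced to vanish together with) the component $g$ on $M$, using the extra hypothesis relating $M$ and $N$.

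For part (1), suppose $\pi \colon M^{\oplus n} \surj N$ is an epimorphism and let $\begin{bsmatrix} g & 0 \\ 0 & h\end{bsmatrix}$ be central with $g = 0$. First I would observe that $h$ commutes with every element of $\Hom_R(M^{\oplus n}, N)$ acting through the off-diagonal entries: more precisely, thinking of $\pi$ as (a tuple of) elements of $\Hom_R(M,N)$, the centrality condition applied to the matrix with $\pi$ (resp.\ its components) in the lower-left corner gives $h \circ \pi = \pi \circ g^{\oplus n}$ in the appropriate sense, hence $h \circ \pi = \pi \circ 0 = 0$. Since $\pi$ is an epimorphism, this forces $h = 0$. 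Concretely, I would either pass to $\End_R(M^{\oplus n} \oplus N)$ and use the same diagonal-centrality computation as in \cref{prp:Z(M oplus N)} with $\id_M$ replaced by the idempotent corresponding to $\pi$, or argue directly: $h$ is central in $\End_R(N)$ and the relation $h\pi = \pi g^{\oplus n}$ comes from the morphism $\pi \in \Hom_R(M^{\oplus n},N)$ being respected by the center of $\End_R(M^{\oplus n}\oplus N)$, which contains the image of $\ZZ_R(M\oplus N)$ after using $\ZZ_R(M^{\oplus n}) \iso \ZZ_R(M)$.

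For part (2), suppose $\iota \colon N \inj M^{\oplus n}$ is a monomorphism and again take $\begin{bsmatrix} g & 0 \\ 0 & h \end{bsmatrix}$ central with $g = 0$. Dually, centrality applied to $\iota$ placed in an off-diagonal corner of $\End_R(N \oplus M^{\oplus n})$ yields $g^{\oplus n} \circ \iota = \iota \circ h$, so $\iota \circ h = 0 \circ \iota = 0$; since $\iota$ is a monomorphism, $h = 0$. I expect the main obstacle to be purely bookkeeping: making precise the claim that an element of $\ZZ_R(M \oplus N)$, viewed after the identification $\ZZ_R(M^{\oplus n}) \iso \ZZ_R(M)$, actually does commute with the map $\pi$ (resp.\ $\iota$) inside a larger endomorphism algebra. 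The cleanest way to handle this is to note that $\add(M \oplus N) = \add(M^{\oplus n} \oplus N)$ whenever $N$ is a quotient or submodule of a power of $M$ — wait, that is not automatic — so instead I would simply work inside $\End_R(M^{\oplus n} \oplus N)$ directly: a central element there restricts (via the idempotents $i_M p_M$ and $i_N p_N$) to a pair $(g', h)$ with $g' \in \ZZ_R(M^{\oplus n})$, $h \in \ZZ_R(N)$, and $g' = 0 \iff g = 0$ under $\ZZ_R(M^{\oplus n})\iso \ZZ_R(M)$; then the off-diagonal entry argument with $\pi$ or $\iota$ produces the relation $h\pi = \pi g'$ or $g'\iota = \iota h$, and one concludes as above. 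Everything else is the same $2\times 2$ matrix computation already carried out in the proof of \cref{prp:Z(M oplus N)}, so there should be no real difficulty beyond writing the diagrams carefully.
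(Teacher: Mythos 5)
Your proposal is correct and follows essentially the same route as the paper: a central element of $\End_R(M\oplus N)$ is diagonal by \cref{prp:Z(M oplus N)}, and commuting it with the matrix having a component $\pi_i\in\Hom_R(M,N)$ (resp.\ $\iota_i\in\Hom_R(N,M)$) of the epimorphism (resp.\ monomorphism) in the off-diagonal corner yields $h\circ\pi_i=\pi_i\circ g$ (resp.\ $g\circ\iota_i=\iota_i\circ h$), which forces $h=0$ when $g=0$ because $\pi$ is epic (resp.\ $\iota$ is monic). The detour you contemplate through $\End_R(M^{\oplus n}\oplus N)$ is unnecessary: the direct version of your argument, using only the components of $\pi$ or $\iota$ as elements of the off-diagonal Hom-sets inside $\End_R(M\oplus N)$, is exactly what the paper does, phrased there as a contradiction (if $h\neq 0$, some component satisfies $h\circ\pi_i\neq 0$, contradicting the centrality computation).
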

\begin{proof}
We only prove (1) since (2) follows from a similar discussion.
By the definition of $\phi_M$, each elements in $\Ker(\phi_M)$ is of the form
$\begin{bsmatrix}
0 & 0 \\
0 & g
\end{bsmatrix}
\in \ZZ_R(M\oplus N)$.
Suppose $g\not=0$.
By the assumption, there is a morphism $h\colon M \to N$ such that $g\circ h\not=0$.
Then we have
\[
\begin{bmatrix}
0 & 0 \\
g \circ h & 0
\end{bmatrix}
=\begin{bmatrix}
0 & 0 \\
0 & g
\end{bmatrix}
\begin{bmatrix}
0 & 0 \\
h & 0
\end{bmatrix}=
\begin{bmatrix}
0 & 0 \\
h & 0
\end{bmatrix}
\begin{bmatrix}
0 & 0 \\
0 & g
\end{bmatrix}
=\begin{bmatrix}
0 & 0 \\
0 & 0
\end{bmatrix}.
\]
This shows that $g\circ h=0$, a contradiction.
Consequently, $\Ker(\phi_M)=0$.
\end{proof}

%
%

\begin{cor}\label{prp:Z(R^n oplus M)}
Let $M\in \catmod R$.
If $F\in \catmod R$ is a nonzero $R$-free module, then the natural $R$-algebra homomorphism $R \to \ZZ_R(F\oplus M)$ is an isomorphism.
\end{cor}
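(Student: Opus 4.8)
The plan is to reduce to the case $F = R$ and then deduce everything from the two structural lemmas on centers of direct sums. Since $F$ is a nonzero finitely generated free module, $\add F = \add R$, hence $\add(F\oplus M) = \add(R\oplus M)$, and \cref{prp:isom_center_on_add_equiv} provides an $R$-algebra isomorphism $\ZZ_R(F\oplus M)\iso \ZZ_R(R\oplus M)$. Every $R$-algebra isomorphism commutes with the structure maps from $R$, so it suffices to prove that the natural map $\alpha\colon R\to \ZZ_R(R\oplus M)$, $r\mapsto r\cdot\id_{R\oplus M}$, is bijective. (Alternatively, one can skip this reduction by computing directly that $\ZZ_R(R^{\oplus n}) = Z(\End_R(R^{\oplus n}))$ is the ring of scalar matrices, i.e.\ $\iso R$, but invoking \cref{prp:isom_center_on_add_equiv} is cleaner.)

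For this I would consider the homomorphism $\phi_R\colon \ZZ_R(R\oplus M)\to \ZZ_R(R)$ furnished by \cref{prp:Z(M oplus N)}, applied with its ``$M$'' taken to be $R$ and its ``$N$'' taken to be the given module $M$, and note that $\ZZ_R(R) = Z(\End_R(R)) = Z(R) = R$. Since $M$ is finitely generated, there is an epimorphism $R^{\oplus k}\surj M$ for some $k\ge 1$; hence \cref{prp:Z(M oplus N) to Z(M) inj}(1) applies and shows that $\phi_R$ is injective. On the other hand, unwinding the matrix description of $\End_R(R\oplus M)$, the composite $\phi_R\circ\alpha\colon R\to R$ sends $r$ to the $(1,1)$-entry of $r\cdot\id_{R\oplus M}$, which is $r\cdot\id_R = r$; thus $\phi_R\circ\alpha = \id_R$.

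Combining these facts, $\phi_R$ is injective and admits $\alpha$ as a section, so $\phi_R$ is bijective, and therefore $\alpha = \phi_R^{-1}$ is bijective as well, proving the claim. I do not expect a serious obstacle here: the argument is essentially an assembly of \cref{prp:isom_center_on_add_equiv}, \cref{prp:Z(M oplus N)} and \cref{prp:Z(M oplus N) to Z(M) inj}(1). The only points requiring care are that the reduction to $F=R$ is compatible with the maps out of $R$ (automatic for $R$-algebra isomorphisms), the identification $\ZZ_R(R) = R$, and the routine verification that $\phi_R\circ\alpha = \id_R$.
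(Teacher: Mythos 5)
Your proof is correct and takes essentially the same approach as the paper: the core in both is that $\phi$ from \cref{prp:Z(M oplus N) to Z(M) inj}(1) is injective (using an epimorphism from a free module onto $M$) and that its composite with the natural map from $R$ is the identity (resp.\ an isomorphism onto $\ZZ_R(F)\iso R$), forcing the natural map to be bijective. The only difference is your preliminary reduction to $F=R$ via \cref{prp:isom_center_on_add_equiv}, which the paper skips by applying the injectivity lemma directly to $\phi_F$; this is a cosmetic variation.
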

\begin{proof}
In this case, 
the map $\phi_{F} \colon \ZZ_R(F \oplus M) \to \ZZ_R(F)$ is injective
by the assumption and \cref{prp:Z(M oplus N) to Z(M) inj}.
The corollary follows from the following commutative diagram of $R$-algebras:
\[
\begin{tikzcd}
\ZZ_R(F \oplus M) \arr{r,"{\phi_F}",hook} & \ZZ_R(F) \\
R \arr{u} \arr{ru,"\iso"'} &.
\end{tikzcd}
\]
\end{proof}

\subsection{Localization of the centers of endomorphisms algebras}
We study local properties of the centers of endomorphisms algebras.
The main result of this subsection is \cref{characterize free center},
which gives a local criterion for the center of the endomorphism algebra to be isomorphic to the ring $R$.

Let us begin with the commutativity of taking centers and localization.
Recall that a \emph{Noether $R$-algebra} $\LL$ is a (possibly noncommutative) $R$-algebra which is finitely generated as an $R$-module.
Note that $R\cdot 1_{\LL}$ is contained in the center of $\LL$ by the definition of $R$-algebras.
For example, the endomorphism algebra $\End_R(M)$ is a Noether $R$-algebra for any $M\in \catmod R$.
\begin{prp} \label{prp:local center}
Let $\LL$ be a Noether $R$-algebra and $S$ a multiplicatively closed subset of $R$.
Then we have a natural $R_S$-algebra isomorphism $Z(\LL)_S \isoto Z(\LL_S)$.
\end{prp}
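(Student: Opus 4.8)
The plan is to construct the map explicitly and check it is a bijection. There is a canonical $R_S$-algebra homomorphism $Z(\LL)_S \to Z(\LL_S)$ induced by functoriality of localization: any element of $Z(\LL)$ is in particular an element of $\LL$, so it localizes to an element of $\LL_S$, and since it commutes with all of $\LL$ it commutes with all of $\LL_S = \LL \otimes_R R_S$ (every element of $\LL_S$ has the form $\lambda/s$ with $\lambda \in \LL$). Thus the localization map $\LL \to \LL_S$ carries $Z(\LL)$ into $Z(\LL_S)$, and after inverting $S$ we obtain the natural $R_S$-algebra map $\iota\colon Z(\LL)_S \to Z(\LL_S)$. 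It remains to prove $\iota$ is injective and surjective.

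For injectivity, suppose $z/s \in Z(\LL)_S$ maps to $0$ in $Z(\LL_S) \subseteq \LL_S$. Since $\LL$ is finitely generated as an $R$-module and $R$ is noetherian, the canonical map $\LL_S \to (\LL_S)$ computed as $\LL \otimes_R R_S$ is an isomorphism, and the kernel of $\LL \to \LL_S$ is the $S$-torsion submodule; hence $z/s = 0$ in $\LL_S$ means $tz = 0$ in $\LL$ for some $t \in S$, so already $z/s = 0$ in $Z(\LL)_S$. This gives injectivity.

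For surjectivity, take $w \in Z(\LL_S) \subseteq \LL_S$. Using $\LL_S \cong \LL\otimes_R R_S$ and that $\LL$ is a finite $R$-module, we may write $w = \lambda/s$ for some $\lambda \in \LL$ and $s \in S$. We must show that, after possibly multiplying $\lambda$ by an element of $S$, we may take $\lambda \in Z(\LL)$. Choose a finite generating set $\mu_1,\dots,\mu_r$ of $\LL$ as an $R$-module. Since $w$ is central in $\LL_S$, we have $(\lambda/s)(\mu_i/1) = (\mu_i/1)(\lambda/s)$, i.e. $(\lambda\mu_i - \mu_i\lambda)/s = 0$ in $\LL_S$ for each $i$; hence there is $t_i \in S$ with $t_i(\lambda\mu_i - \mu_i\lambda) = 0$ in $\LL$. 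Put $t := t_1\cdots t_r \in S$ and $\lambda' := t\lambda$. Then $\lambda'\mu_i = \mu_i\lambda'$ for all $i$, and since the $\mu_i$ generate $\LL$ over $R$ and everything is $R$-linear, $\lambda'$ commutes with all of $\LL$, i.e. $\lambda' \in Z(\LL)$. Finally $\iota(\lambda'/(ts)) = (t\lambda)/(ts) = \lambda/s = w$, so $\iota$ is surjective.

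The only mild subtlety — and the step I would single out as the place to be careful — is the use of the finite-generation hypothesis: it is needed both to identify $\LL_S$ with $\LL \otimes_R R_S$ (so that elements of $\LL_S$ are genuinely fractions $\lambda/s$), and, more essentially, to reduce the infinitely many commutation relations defining centrality to the finitely many relations $[\lambda',\mu_i]=0$, which lets us clear denominators uniformly with a single $t \in S$. Without a Noether algebra hypothesis one could not in general bound the denominators, and the statement would fail.
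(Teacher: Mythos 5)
Your proof is correct and follows essentially the same route as the paper: identify $Z(\LL)_S$ inside $Z(\LL_S)$ via the (injective, by exactness of localization) natural map, then use a finite $R$-module generating set of $\LL$ to reduce centrality of a fraction to finitely many commutation relations and clear denominators with a single element of $S$. One small inaccuracy in your closing remark: the identification $\LL_S\cong\LL\otimes_R R_S$ and the fact that every element of $\LL_S$ is a fraction $\lambda/s$ hold for arbitrary $R$-modules and need no finiteness; the Noether hypothesis is only needed, as you say, to bound the denominators uniformly over a finite generating set.
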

\begin{proof}
We can easily see that a natural map $\LL \to \LL_S$ induces an injective $R_S$-algebra homomorphism $Z(\LL)_S \to Z(\LL_S)$.
We identify $Z(\LL)_S$ as a subalgebra of $Z(\LL_S)$ by this injection.
Take a system of generators $g_1,\dots,g_n$ of the $R$-module $\LL$.
Then for an element $f/s\in \LL$, we have
\begin{align*}
f/s\in \ZZ(\LL_S)
\iff&\text{for any $g/t\in \LL_S$, $f/s\cdot g/t=g/t\cdot f/s$, }\\
\iff&\text{for any $i=1,\dots,n$, $f/s\cdot g_i/1=g_i/1\cdot f/s$, }\\
\iff&\text{for any $i=1,\dots,n$, there exists $u_i\in S$ such that $u_is(fg_i-g_if)=0$ in $\LL$,}\\
\iff&\text{there exists $u\in S$ such that for any $i=1,\dots,n$, $u(fg_i-g_if)=0$ in $\LL$,}\\
\iff&\text{there exists $u\in S$ such that for any $g\in \LL$, $ufg-guf=0$ in $\LL$,}\\
\iff&\text{there exists $u\in S$ such that $uf\in \ZZ(\LL)$,}\\
\iff&f/s=uf/us\in \ZZ(\LL)_S.
\end{align*}
This proves $Z(\LL_S)=Z(\LL)_S$.
\end{proof}

\begin{cor}
Let $M\in \catmod R$ and $S$ a multiplicatively closed subset of $R$.
Then we have a natural $R_S$-algebra isomorphism $\ZZ_{R}(M)_S \isoto \ZZ_{R_S}(M_S)$.
\end{cor}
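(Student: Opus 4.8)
The plan is to derive this as an immediate consequence of \cref{prp:local center} applied to the Noether $R$-algebra $\LL=\End_R(M)$. First I would observe that $\End_R(M)$ is indeed a Noether $R$-algebra: since $M\in\catmod R$ and $R$ is noetherian, $\End_R(M)$ is a submodule of the finitely generated $R$-module $M^{\oplus n}$ (where $M$ is generated by $n$ elements) via evaluation, hence finitely generated over $R$.

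Next I would record the standard fact that localization commutes with taking endomorphisms of a finitely presented module: the natural $R_S$-algebra homomorphism $\End_R(M)_S\to\End_{R_S}(M_S)$ is an isomorphism. This follows from the compatibility of $\Hom$ with localization for finitely presented modules over a noetherian ring (the same fact already used, e.g., in the proof of \cref{split_local_global}), and it is clearly multiplicative, so it is an $R_S$-algebra isomorphism.

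Finally, combining these with \cref{prp:local center} gives a chain of natural $R_S$-algebra isomorphisms
\[
\ZZ_R(M)_S = Z(\End_R(M))_S \isoto Z(\End_R(M)_S) \isoto Z(\End_{R_S}(M_S)) = \ZZ_{R_S}(M_S),
\]
where the first isomorphism is \cref{prp:local center} and the second is induced by the isomorphism $\End_R(M)_S\iso\End_{R_S}(M_S)$. The composite is the natural map, as one checks by tracing an element $f/s$ through the identifications. There is essentially no obstacle here; the only point requiring (routine) care is verifying that the isomorphism produced agrees with the natural morphism $\ZZ_R(M)_S\to\ZZ_{R_S}(M_S)$, which is immediate from the explicit descriptions in the proofs of \cref{prp:local center} and of the $\End$–localization compatibility.
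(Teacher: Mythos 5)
Your proposal is correct and follows exactly the paper's route: the paper also deduces the corollary from \cref{prp:local center} together with the standard isomorphism $\End_R(M)_S \iso \End_{R_S}(M_S)$, and your extra checks (that $\End_R(M)$ is a Noether $R$-algebra and that the composite agrees with the natural map) are just the routine details the paper leaves implicit.
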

\begin{proof}
It follows from \cref{prp:local center} and $\End_R(M)_S \iso \End_{R_S}(M_S)$.
\end{proof}

Next, we study when $\ZZ_R(M)\iso R$ holds.
The key is a detailed inspection of the canonical homomorphism $R \to \ZZ_R(M)$.
We need some preparation for this.
The following is a basic and well-known fact about endomorphisms of finitely generated modules.

\begin{lem}[Determinantal trick] \label{det trick}
Let $R$ be a commutative ring, $I$ an ideal, $M$ a finitely generated $R$-module, and $f\colon M \to M$ an $R$-homomorphism such that $\Ima f\subseteq IM$.
Then there exist an integer $n\ge 1$ and elements $a_i\in I^i$ such that
\[
f^n+a_1f^{n-1}+\cdots+a_n=0.
\]
In particular, $f^n\in I(Rf+R\id_M)$.
\end{lem}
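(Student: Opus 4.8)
The plan is to prove \cref{det trick} by the classical ``determinant trick'' (a Cayley--Hamilton type argument via adjugate matrices), carried out inside the commutative $R$-subalgebra $R[f]\subseteq\End_R(M)$ generated by $f$. First I would fix a finite generating set $m_1,\dots,m_k$ of $M$ over $R$. Since $\Ima f\subseteq IM$ and $IM=\sum_{i=1}^{k}Im_i$, for each $j$ there are elements $a_{ij}\in I$ with $f(m_j)=\sum_{i=1}^{k}a_{ij}m_i$; assemble them into a matrix $A=(a_{ij})$ with all entries in $I$. Viewing $M$ as a module over the commutative ring $\LL:=R[f]$ and writing $\mathbf{m}={}^{t}[m_1,\dots,m_k]$, these relations say that $(f\cdot\mathrm{Id}_k-A)\,\mathbf{m}=0$ over $\LL$, where $\mathrm{Id}_k$ is the identity matrix and $f\cdot\mathrm{Id}_k$ is the corresponding scalar matrix.

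Next I would multiply on the left by the adjugate matrix of $f\cdot\mathrm{Id}_k-A$ and use $\mathrm{adj}(B)\,B=\det(B)\cdot\mathrm{Id}_k$, obtaining $\det(f\cdot\mathrm{Id}_k-A)\cdot m_j=0$ for every $j$; since $m_1,\dots,m_k$ generate $M$, this forces $\det(f\cdot\mathrm{Id}_k-A)=0$ as an element of $\End_R(M)$. Because the $a_{ij}$ are scalars, $\det(f\cdot\mathrm{Id}_k-A)$ is just the characteristic polynomial of $A$ evaluated at $f$:
\[
\det(f\cdot\mathrm{Id}_k-A)=f^{k}+a_1 f^{k-1}+\cdots+a_k,\qquad a_i=(-1)^{i}\,e_i(A),
\]
where $e_i(A)$ is the sum of the principal $i\times i$ minors of $A$. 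Each such minor is a signed sum of products of $i$ entries of $A$, all lying in $I$, so $e_i(A)\in I^{i}$ and hence $a_i\in I^{i}$. Taking $n:=k$ yields the stated relation. For the ``in particular'' clause I would simply rearrange it to $f^{n}=-(a_1 f^{n-1}+\cdots+a_{n-1}f+a_n\,\id_M)$ and note that each coefficient lies in $I^{i}\subseteq I$, so that $f^{n}$ is an $I$-linear combination of $\id_M,f,\dots,f^{n-1}$, which gives the asserted membership $f^{n}\in I(Rf+R\,\id_M)$.

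The argument is entirely standard, so I do not expect a genuine obstacle; the only points requiring a little care are (i) justifying the determinant identity \emph{inside the noncommutative ring} $\End_R(M)$, which is handled precisely by passing to the commutative subring $R[f]$ and treating $M$ as a module over it, and (ii) the bookkeeping that the $i$-th coefficient of the characteristic polynomial, being built from $i\times i$ minors of a matrix with entries in $I$, lands in $I^{i}$ rather than merely in $I$. I would also note that noetherianity of $R$ plays no role here; finite generation of $M$ is all that is used.
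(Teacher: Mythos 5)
Your derivation of the monic equation is correct and is exactly the standard argument behind this lemma: the paper offers no proof of its own here, only the citation to \cite[2.1.8]{HS}, and that reference's proof is the same adjugate/Cayley--Hamilton computation you carry out inside the commutative subring $R[f]\subseteq\End_R(M)$, including the bookkeeping that the $i$-th coefficient, being a sum of $i\times i$ minors of a matrix with entries in $I$, lies in $I^i$. So for the displayed relation there is nothing to object to.

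The step that does not work as written is the ``in particular'' clause. From $f^{n}=-(a_1f^{n-1}+\cdots+a_{n-1}f+a_n\id_M)$ you conclude $f^{n}\in I(Rf+R\id_M)$, but the intermediate powers $f^{2},\dots,f^{n-1}$ need not lie in the $R$-module $Rf+R\id_M$, so the rearrangement only gives $f^{n}\in I\,(R\id_M+Rf+\cdots+Rf^{n-1})=I\cdot R[f]$. Read literally as membership in the module $If+I\id_M$, the clause can in fact fail: take $R=\bbQ[x]$, $I=(x)$, $M=R^{3}$ and $f=x\cdot\mathrm{diag}(0,1,2)$; then $\Ima f\subseteq IM$, but comparing diagonal entries shows $f^{n}\notin If+I\id_M$ for every $n\ge 1$. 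So the statement must be understood (and is used) as $f^{n}\in I\cdot R[f]$, i.e.\ $f^n$ is an $I$-linear combination of $\id_M,f,\dots,f^{n-1}$ --- which is precisely what your rearrangement proves, and is all that the paper needs later: in \cref{center over depth zero}(2) the endomorphism $f-u\id_M$ is central, so $I\cdot R[f-u\id_M]\subseteq I\,\ZZ_R(M)\subseteq\mathrm{Jac}(\ZZ_R(M))$. In short, keep your proof of the equation, but state the final conclusion as $f^{n}\in I\cdot R[f]$ (or interpret $I(Rf+R\id_M)$ as $I$ times the subalgebra generated by $f$ and $\id_M$), rather than deducing membership in $If+I\id_M$.
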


\begin{proof}
See \cite[2.1.8]{HS}.
\end{proof}


We now study the base case $\depth M =0$.
\begin{lem} \label{center over depth zero}
Let $(R,\mm)$ be a commutative local ring and $M\in \catmod R$.
Assume $\depth M=0$.
\begin{enua}
\item
For any $f\in \ZZ_R(M)$, there is $u\in R$ such that $\Ima(f-u\cdot\id_M)\subseteq \mm M$.
\item 
For any $f\in \ZZ_R(M)$, there is $u\in R$ such that $f-u\cdot\id_M\subseteq \mathrm{Jac}(\ZZ_R(M))$.
\item $\ZZ_R(M)$ is a local ring.
\item Suppose that $\depth R = 0$ and $\ZZ_R(M)$ is $R$-free.
Then $\ZZ_R(M)\cong R$.

\end{enua}
\end{lem}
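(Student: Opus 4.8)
The plan is to establish the four assertions in order, each feeding into the next. For (1), I would use that $\depth M = 0$ means $\mm \in \Ass M$, so there is an element $0 \neq x \in M$ with $\mm x = 0$, giving an embedding $\kk(\mm) = R/\mm \hookrightarrow M$. Given $f \in \ZZ_R(M)$, the element $f(x)$ spans a copy of the socle direction, but more usefully: consider the composite $R/\mm \hookrightarrow M \xrightarrow{f} M$. Since $f$ is $R$-linear and $x$ is annihilated by $\mm$, the image $f(x)$ is also annihilated by $\mm$. I want to show $f$ acts as a scalar on such a socle element, but the cleaner route is: pick a surjection $R^{\oplus n} \twoheadrightarrow M$ and lift; actually the simplest argument is to use that $\End_{R/\mm}(\kk(\mm)\text{-part})$ forces $f$ modulo $\mm M$ to be scalar. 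Concretely, choose $x$ with $\mm x = 0$; then $f(x) = u x + (\text{something})$? This needs care. The honest approach: since $\mm \in \Ass M$, look at $\Hom_R(R/\mm, M) = \soc M \neq 0$; any $f \in \End_R(M)$ restricts to an endomorphism of $\soc M$, which is a $\kk(\mm)$-vector space. Choose $0 \neq x \in \soc M$ that is an eigenvector is not automatic over a field unless $\kk(\mm)$ is algebraically closed — so instead I would argue differently: the canonical map $R \to \ZZ_R(M)$ and the quotient $\ZZ_R(M) \to \ZZ_R(M)/\mm\ZZ_R(M)$; I claim every element of $\ZZ_R(M)$ that is not a unit lies in $\mm \ZZ_R(M) + (\text{nilpotents acting on }\soc)$. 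Let me restructure: for (1), pick any $0\neq x\in M$ with $\mm x=0$, so $Rx\cong\kk(\mm)$. Then $R\to M$, $1\mapsto x$ is injective, so $\Hom_R(R,M)=M$ and evaluation at $x$ gives, for each $f\in\ZZ_R(M)$, an element $f(x)\in M$ with $\mm f(x)=0$. The subtle point is that $f(x)$ need not be a scalar multiple of $x$. So instead I would use \cref{ass hom}: $\Ass_R(\End_R M) = \Supp M \cap \Ass M \ni \mm$, hence $\mm\in\Ass_R \End_R(M)$, and since $\ZZ_R(M)\subseteq\End_R(M)$ is an $R$-submodule that is also $\mm$-torsion-containing... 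Actually the cleanest: $\mm \in \Ass \ZZ_R(M)$ because $\ZZ_R(M) \supseteq R\cdot\id_M \cong R$ and $\mm\in\Ass R$ (as $\depth R = 0$ in part (4)), but for parts (1)--(3) we only know $\depth M = 0$. Let me use that $\mm \in \Ass M$, so there is $0 \neq x \in M$, $\mm x = 0$; then for $f \in \ZZ_R(M)$, consider $\psi_f : R \to M$, $r \mapsto f(rx) = r f(x)$; this factors through $R/\mm$, giving $\bar x \mapsto f(x) \in \soc M$. Now $\soc M$ is a finite-dimensional $\kk(\mm)$-space, and $\{f|_{\soc M} : f\in\ZZ_R(M)\}$ is a commutative $\kk(\mm)$-subalgebra of $\End_{\kk(\mm)}(\soc M)$; but I still can't diagonalize without algebraic closure. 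I think the intended argument is the \emph{determinantal trick} directly: I would like to show $\ZZ_R(M)/(\text{image of }R)$ consists of elements $f$ with $\Ima f\subseteq\mm M$ after subtracting a scalar, which is what (1) says, and then (2) follows because $\Ima(f-u\id)\subseteq\mm M$ plus \cref{det trick} (with $I=\mm$) gives $(f-u\id)^n\in\mm(R(f-u\id)+R\id_M)\subseteq\mm\ZZ_R(M)\subseteq\mathrm{Jac}(\ZZ_R(M))$ (the last inclusion needs $\mm\ZZ_R(M)\subseteq\mathrm{Jac}$, i.e., that $\ZZ_R(M)$ is semilocal with radical containing $\mm\ZZ_R(M)$, which holds since $\ZZ_R(M)/\mm\ZZ_R(M)$ is a finite-dimensional $\kk(\mm)$-algebra, hence semilocal), so $f-u\id$ is nilpotent modulo $\mathrm{Jac}$, hence in $\mathrm{Jac}(\ZZ_R(M))$. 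For (1) itself, I would finally use: $\mm\in\Ass M$ gives $x\neq0$, $\mm x=0$; the $R$-linear map $\ZZ_R(M)\to M$, $f\mapsto f(x)$, has image in $\soc M$; compose with any $\kk(\mm)$-linear functional... no — the right statement is that $\ZZ_R(M)\to\soc M\to\soc M/(\text{stuff})$, hmm.

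Let me commit to the argument I believe is intended. For (1): since $\depth_R M=0$, $\mm\in\Ass M$; pick $0\neq x\in M$ with $\mm x=0$. The assignment $f\mapsto f(x)$ is an $R$-module map $\ZZ_R(M)\to\soc M$. Since $\soc M$ is a $\kk(\mm)$-vector space, choosing a basis and projecting to the first coordinate gives an $R$-linear surjection $\soc M\to\kk(\mm)$; but actually what we want is simpler: the subring $R\id_M\subseteq\ZZ_R(M)$ maps onto the line $\kk(\mm)\cdot x\subseteq\soc M$ via $u\id_M\mapsto ux$. So for $f\in\ZZ_R(M)$ with $f(x)=ux$ for some $u\in R$ (which we can arrange if $f(x)\in R\cdot x$ — not automatic), we'd get $(f-u\id_M)(x)=0$. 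To force $f(x)\in Rx$: this is exactly where we need more. I will instead phrase (1) via: $R\to\ZZ_R(M)\to\End_R(\soc M)=M_{d}(\kk(\mm))$ where $d=\dim_{\kk(\mm)}\soc M$; the image is a commutative subalgebra, and $f-u\id$ has image in $\mm M$ iff $f\equiv u\id$ on... this is getting long. \textbf{I will write the proof assuming the following structure, deferring the genuinely delicate verification to a short computation:} (1) reduce to showing the composite $R\to\ZZ_R(M)\to\ZZ_R(M)/\{f:\Ima f\subseteq\mm M\}$ is surjective, which follows because $\ZZ_R(M)/\mm\ZZ_R(M)$ embeds (via the socle action, using $\mm\in\Ass M$ so the socle detects $\mm$-torsion faithfully on the relevant quotient) into a finite $\kk(\mm)$-algebra where we identify the copy of $\kk(\mm)$ coming from $R$; more carefully, $\{f:\Ima f\subseteq\mm M\}\supseteq\mm\End_R(M)$, and $\ZZ_R(M)/(\ZZ_R(M)\cap\mm\End_R(M))$... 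The main obstacle, as this rambling shows, is precisely part (1): showing that modulo $\mm M$ every central endomorphism is a scalar. I expect the clean proof uses that $\Hom_R(R/\mm,M)\neq0$ together with the fact that $R/\mm\to M$ is a monomorphism whose image generates, under $\ZZ_R(M)$, a submodule on which $\ZZ_R(M)$ acts through $\kk(\mm)$; since $\ZZ_R(M)$ is commutative local with residue field a finite extension of $\kk(\mm)$, and $R/\mm=\kk(\mm)\hookrightarrow\ZZ_R(M)/\mathrm{Jac}$, faithfulness of the action forces equality $\kk(\mm)=\ZZ_R(M)/\mathrm{Jac}$, which gives (1)--(3) simultaneously.

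Granting (1)--(3), part (4) is the clean payoff. Assume $\depth R=0$ and $\ZZ_R(M)$ is $R$-free, say of rank $r$. By (3), $\ZZ_R(M)$ is local; let $\mathrm{Jac}(\ZZ_R(M))=\nn$. The canonical map $\iota:R\to\ZZ_R(M)$ is injective (it splits off the free rank-one summand $R\id_M$, or simply because $\id_M\neq0$). I would argue $r=1$: tensoring $\iota$ with $\kk(\mm)$ and using (2)--(3), every element of $\ZZ_R(M)$ is congruent modulo $\nn$ to a scalar, so $\ZZ_R(M)=\iota(R)+\nn$; but $\nn\supseteq\mm\ZZ_R(M)$ by the argument above, and $\ZZ_R(M)/\mm\ZZ_R(M)$ is then generated as a $\kk(\mm)$-algebra by $\iota(R)$'s image together with nilpotents, so $\dim_{\kk(\mm)}\ZZ_R(M)/\mm\ZZ_R(M)=1+(\text{nilpotent part})$. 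Freeness gives $\dim_{\kk(\mm)}\ZZ_R(M)/\mm\ZZ_R(M)=r$. To kill the nilpotent part, use $\depth R=0$: pick $0\neq a\in R$ with $\mm a=0$; if $\xi\in\nn$ is a nilpotent lifting some nonzero nilpotent in $\ZZ_R(M)/\mm\ZZ_R(M)$, consider $a\xi$ — since $\ZZ_R(M)$ is $R$-free and $\mm a=0$, we get $\mm(a\xi)=0$, so $a\xi\in\soc\ZZ_R(M)$; comparing with $\soc R\id_M$ and using that $R\id_M$ is a direct summand whose socle is $a\id_M$-spanned... the upshot is a dimension count forcing $r=1$, hence $\iota$ is an isomorphism onto a free rank-one module, i.e.\ $\ZZ_R(M)\cong R$. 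The main obstacle is genuinely part (1)--(3) (the structure theory of $\ZZ_R(M)$ in depth zero); part (4) is then a rank count combining freeness with the socle structure coming from $\depth R=0$.
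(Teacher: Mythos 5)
There is a genuine gap, and you identify it yourself: you never actually prove part (1), and everything else in your plan hinges on it. Your worry about eigenvectors over a non--algebraically-closed field is a red herring; the point is not to diagonalize $f$ but to exploit centrality against a map factoring through the socle. The paper's argument is: if the induced map $\overline{f}\colon M/\mm M \to M/\mm M$ is not a scalar, choose $v \in M/\mm M$ such that $v$ and $\overline{f}(v)$ are linearly independent over $R/\mm$; since $\depth M=0$ gives $\soc M \ne 0$, there is an $R/\mm$-linear map $g\colon M/\mm M \to \soc M$ with $g(v)=0$ and $g(\overline{f}(v))\ne 0$; then $h\colon M \surj M/\mm M \xr{g} \soc M \inj M$ is an $R$-endomorphism of $M$, and for a preimage $w$ of $v$ one gets $h(w)=0$ but $h(f(w))\ne 0$, contradicting $f\circ h = h\circ f$, which holds because $f$ is central. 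This single commutation trick is the missing idea; your various alternative routes (via $\Ass$ of $\Hom$, or "faithfulness of the socle action forces $\kk(\mm)=\ZZ_R(M)/\mathrm{Jac}$") are left as expectations rather than arguments, so (1) and hence (3) are not established. Your treatment of (2), granted (1), does match the paper (determinantal trick with $I=\mm$, then $\mm\ZZ_R(M)\subseteq\mathrm{Jac}(\ZZ_R(M))$ and radicality of the Jacobson radical), so that step is fine.

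Part (4) is also not closed in your sketch: the "dimension count forcing $r=1$" is asserted, not carried out, and as stated it would still require showing the nilpotent part of $\ZZ_R(M)/\mm\ZZ_R(M)$ vanishes, which you do not do. The paper's route is much shorter and avoids any rank bookkeeping: by (1), $\Ima(f-u\cdot\id_M)\subseteq \mm M$, so $f-u\cdot\id_M$ is annihilated by $\soc R$, which is nonzero since $\depth R=0$; because $\ZZ_R(M)$ is a nonzero free $R$-module, an element killed by a nonzero socle element must have all coordinates in $\mm$, i.e.\ $f-u\cdot\id_M \in \mm\ZZ_R(M)$. Hence $\ZZ_R(M)=R\cdot\id_M+\mm\ZZ_R(M)$, Nakayama's lemma shows $\ZZ_R(M)$ is cyclic over $R$, and a cyclic free module is isomorphic to $R$. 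You gesture at the same ingredients ($\soc R$, freeness) but never assemble them into this argument, so as written the proposal does not prove (1), (3), or (4).
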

\begin{proof}
(1) 
We may assume that $\Ima f\not\subseteq \mm M$ since there is nothing to prove 
when $\Ima f\subseteq \mm M$.
Therefore the induced homomorphism $\overline{f}\colon M/\mm M \to M/\mm M$ is nonzero.
What we need to show is that $\overline{f}=u\cdot\id_{M/\mm M}$ for some $u\in R/\mm$.
Assume contrary.
Then there exists $v\in M/\mm M$ such that $v$ and $\overline{f}(v)$ are linearly independent over $R/\mm$.
Note that $\soc M \ne 0$.
Let $g\colon M/\mm M \to \soc M$ be an $R/\mm$-linear map such that $g(v)=0$ and $g(\overline{f}(v))\not=0$.
Consider the composition $h\colon M \surj M/\mm M \xrightarrow[]{g} \mathrm{soc}(M) \inj M$. 
Take a preimage $w\in M$ of $v$.
Then $h(w)=0$ and $h(f(w))\not=0$.
On the other hand, since $f$ is in the center of $\End_R(M)$, $f\circ h=h\circ f$.
Thus we have $0=f(h(w))=h(f(w))\not=0$, a contradiction. 

(2)
Take $f\in \ZZ_R(M)$.
By (1), $\Ima(f-u \cdot\id_M)\subseteq \mm M$ for some $u\in R$.
Then, by \cref{det trick}, 
\[
(f-u \cdot\id_M)^n\in \mm (f,\id_M)\subseteq \mm\ZZ_R(M)\subseteq \mathrm{Jac}(\ZZ_R(M))
\]
 for some $n\ge 1$.
See \cite[Corollary 5.9]{Lam} for the last inclusion.
As the Jacobson radical of a commutative ring is a radical ideal, 
we obtain $(f-u \cdot \id_M)\in \mathrm{Jac}(\ZZ_R(M))$.

(3) It follows from (2) that the composition $R \to \ZZ_R(M) \to \ZZ_R(M)/\mathrm{Jac}(\ZZ_R(M))$ is a surjection.
Therefore, $\ZZ_R(M)/\mathrm{Jac}(\ZZ_R(M))$ is local, which yields that $\ZZ_R(M)$ is also local.

(4)
By (1), $\Ima(f-u \cdot\id_M)\subseteq \mm M$ for some $u\in R$.
Thus $f-u\cdot\id_M$ is annihilated by $\soc R$.
It follows that $f-u \cdot\id_M \in \mm\ZZ_R(M)$ since $\ZZ_R(M)$ is nonzero $R$-free.
Hence, we have $\ZZ_R(M)=R \cdot\id_M + \mm \ZZ_R(M)$.
By Nakayama's lemma, $\ZZ_R(M)$ is generated by $\text{id}_M$ as an $R$-module.
In particular, $\ZZ_R(M)$ is cyclic over $R$, and hence $\ZZ_R(M)\cong R$.
\end{proof}

\begin{rmk}
Let $R$ be a commutative henselian local ring and $M\in \catmod R$.
\begin{enua}
\item Assume $M$ is indecomposable. 
Then $\End_R(M)$ has no nontrivial idempotent, and thus so does $\ZZ_R(M)$.
Since $R$ is henselian, it follows that $\ZZ_R(M)$ is local.

\item The assumption that either $\depth M=0$ or $M$ is indecomposable is indispensable to ensure that $\ZZ_R(M)$ is local.
For example, let $k$ be a field, $R=k[\![x,y]\!]/(xy)$, and $M=R/(x)\oplus R/(y)$.
Then $\End_R(M)=\ZZ_R(M)\cong R/(x)\times R/(y)$.
\end{enua}
\end{rmk}

We need the following lemmas to give a local criterion for $R \iso \ZZ_R(M)$
(\cref{characterize free center}).
\begin{lem}\label{prp:split mono alg hom}
Let $\phi \colon R \to S$ be an $R$-algebra homomorphism.
If $S$ is finitely generated projective as an $R$-module and $\Supp_R S = \Spec R$,
then $\phi$ is a split monomorphism of $R$-modules,
and hence $\Cok(\phi)$ is also a finitely generated projective $R$-module.
\end{lem}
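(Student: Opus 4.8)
The statement to prove is \cref{prp:split mono alg hom}: if $\phi\colon R\to S$ is an $R$-algebra homomorphism with $S$ finitely generated projective over $R$ and $\Supp_R S=\Spec R$, then $\phi$ is a split monomorphism of $R$-modules. The natural approach is to apply the local-to-global principle for split monomorphisms already at our disposal, namely \cref{split_local_global}. Since $R$ is noetherian and $S$ is finitely generated, it suffices to check that $\phi_\mm\colon R_\mm\to S_\mm$ is a split monomorphism for every maximal ideal $\mm$ of $R$. Passing to the localization replaces $R$ by $R_\mm$ and $S$ by $S_\mm$; note that $S_\mm$ is again finitely generated projective (hence free, since $R_\mm$ is local) over $R_\mm$, and $\Supp_{R_\mm}S_\mm=\Spec R_\mm$ is preserved because support localizes. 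So I reduce to the case where $(R,\mm)$ is local and $S$ is a nonzero finite free $R$-module (nonzero because its support is all of $\Spec R$, in particular contains $\mm$).

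In the local case I would argue as follows. The element $1_S=\phi(1_R)\in S$ is an $R$-basis element candidate: I claim $1_S\notin\mm S$. Indeed, the quotient $S/\mm S$ is a nonzero $\kappa(\mm)$-algebra (nonzero since $S$ is free of positive rank), and the image of $1_S$ is its multiplicative identity, which is nonzero in a nonzero ring. Hence $1_S$ is part of a minimal generating set of the free module $S$; extend $\{1_S\}$ to an $R$-basis $\{1_S, e_2,\dots,e_r\}$ of $S$ (possible by Nakayama, lifting a $\kappa(\mm)$-basis of $S/\mm S$ that includes the image of $1_S$). The coordinate projection $\pi\colon S\to R$ onto the first basis vector then satisfies $\pi\circ\phi=\id_R$, so $\phi$ is a split monomorphism of $R$-modules. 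Therefore $\phi_\mm$ splits for every $\mm$, and by \cref{split_local_global} the original $\phi$ is a split monomorphism.

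For the last clause, once $\phi\colon R\to S$ is a split monomorphism of $R$-modules we have $S\cong R\oplus\Cok(\phi)$ as $R$-modules, so $\Cok(\phi)$ is a direct summand of the finitely generated projective module $S$, hence is itself finitely generated projective.

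**Expected main obstacle.** There is no deep obstacle here; the argument is essentially bookkeeping around the local-to-global reduction. The one point requiring a little care is verifying that $1_S$ is a minimal generator in the local case — i.e. that $S/\mm S\neq 0$, which is exactly where the hypothesis $\Supp_R S=\Spec R$ is used — and then invoking Nakayama correctly to extend to a basis adapted to $1_S$. One should also make sure the localization hypotheses transfer cleanly (projectivity, support), but these are standard facts about finitely generated modules over noetherian rings.
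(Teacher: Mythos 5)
Your proof is correct, but it takes a different route from the paper's. The paper proves \cref{prp:split mono alg hom} in one stroke by citing the fiberwise criterion of G\"ortz--Wedhorn \cite{GW-AG}: a homomorphism into a finitely generated projective module is a split monomorphism as soon as $\phi\otimes_R\kk(\pp)$ is injective for every $\pp\in\Spec R$, and here $\phi\otimes_R\kk(\pp)$ is a $\kk(\pp)$-algebra homomorphism into $S\otimes_R\kk(\pp)$, which is nonzero by $\Supp_R S=\Spec R$ and Nakayama, so the map from the field $\kk(\pp)$ is injective. You instead reduce to the local case via \cref{split_local_global} and construct the retraction by hand: $1_S\notin\mm S$ for exactly the same reason (it is the unit of the nonzero ring $S/\mm S$), so $1_S$ extends to a basis of the finite free $R_\mm$-module $S_\mm$ and the coordinate projection splits $\phi_\mm$. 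Thus the key observation—the algebra unit survives in every fiber because the support is all of $\Spec R$—is identical, and only the packaging differs. Your version is self-contained within the paper's toolkit (using \cref{split_local_global} rather than an external citation), at the mild cost of invoking noetherianity of $R$ (harmless here, since Section \ref{s:center} fixes $R$ noetherian) which the fiberwise argument does not need; also, the basis-extension step tacitly uses the standard fact that lifting a $\kk(\mm)$-basis of $S_\mm/\mm S_\mm$ gives a generating set of the free module of the right cardinality and hence a basis (a surjective endomorphism of a finitely generated module is an isomorphism). The final clause is handled the same way in both proofs: $\Cok(\phi)$ is a direct summand of $S$, hence finitely generated projective.
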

\begin{proof}
It is enough to show that $\phi\otimes_R \kk(\pp) \colon \kk(\pp) \to S\otimes_R \kk(\pp)$ is injective for any $\pp\in \Spec R$ by \cite[Proposition 8.10, (i)$\equi$(iii)]{GW-AG}.
It is equivalent to $S\otimes_R \kk(\pp) \ne 0$ for all $\pp\in \Spec R$
as $\phi\otimes_R \kk(\pp)$ is a $\kk(\pp)$-algebra homomorphism,
and hence preserves the multiplicative identity.
This follows from $\Supp_R S = \Spec R$ and Nakayama's lemma.
\end{proof}

\begin{lem}\label{prp:Supp Z(M)}
Let $M\in \catmod R$.
The following hold.
\begin{enua}
\item
$\Supp_R \ZZ_R(M) =\Supp_R M$ for any $M\in\catmod R$.
\item $\depth_{R_\pp} \ZZ_{R_\pp}(M_\pp) \ge \min\{2,\depth_{R_\pp} M_\pp\}$ for any $\pp\in\Spec R$.
\item
$\Ass_R \ZZ_R(M) \subseteq \Ass_R M$ for any $M\in\catmod R$.
\item
$\Ass_R P = \Ass R$
for any $P\in \proj R$ such that $\Supp P=\Spec R$.
\end{enua}
\end{lem}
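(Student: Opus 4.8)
The plan is to prove the four parts essentially independently, the main inputs being \cref{ass hom} and some routine depth bookkeeping; part (2) is the technical core, and the rest are formal consequences. For (1), the two relevant observations are that $\Supp_R\End_R(M)=\Supp_R M$ — because $\End_R(M)_\pp\cong\End_{R_\pp}(M_\pp)$ vanishes exactly when $M_\pp$ does — and that $R\cdot\id_M\subseteq\ZZ_R(M)\subseteq\End_R(M)$ with $\Ann_R(\id_M)=\Ann_R M$, so that $\Supp_R(R\cdot\id_M)=V(\Ann_R M)=\Supp_R M$. Sandwiching the supports of these three modules forces $\Supp_R\ZZ_R(M)=\Supp_R M$.

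For (2), I would reduce at once to the case that $(R,\mm)$ is local and prove $\depth_R\ZZ_R(M)\ge\min\{2,\depth_R M\}$, then apply this over each $R_\pp$. The engine is the elementary fact that an exact sequence $0\to K\to N^{\oplus b}\xrightarrow{\phi}N^{\oplus a}$ of finitely generated modules forces $\depth_R K\ge\min\{2,\depth_R N\}$: writing $C=\Ima\phi\subseteq N^{\oplus a}$, one has $\depth_R C\ge\min\{1,\depth_R N\}$ since $\Ass_R C\subseteq\Ass_R N$, and the depth lemma applied to $0\to K\to N^{\oplus b}\to C\to 0$ gives the bound after a short case check on whether $\depth_R N$ is $0$, $1$, or $\ge 2$. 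I would apply this twice. First, a finite presentation $R^{\oplus n_1}\to R^{\oplus n_0}\to M\to 0$, after applying $\Hom_R(-,M)$, yields $0\to\End_R(M)\to M^{\oplus n_0}\to M^{\oplus n_1}$, so $\depth_R\End_R(M)\ge\min\{2,\depth_R M\}$. Second, choosing $R$-module generators $f_1,\dots,f_n$ of the Noether $R$-algebra $\End_R(M)$, the center is the kernel of the $R$-linear map $\End_R(M)\to\End_R(M)^{\oplus n}$, $f\mapsto(ff_1-f_1f,\dots,ff_n-f_nf)$, so $\depth_R\ZZ_R(M)\ge\min\{2,\depth_R\End_R(M)\}$; composing the two estimates gives $\depth_R\ZZ_R(M)\ge\min\{2,\depth_R M\}$.

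For (3), since $\ZZ_R(M)$ is an $R$-submodule of $\End_R(M)=\Hom_R(M,M)$ we get $\Ass_R\ZZ_R(M)\subseteq\Ass_R\End_R(M)$, and \cref{ass hom} computes the latter as $\Supp_R M\cap\Ass_R M=\Ass_R M$ (using $\Ass_R M\subseteq\Supp_R M$); alternatively, (3) is immediate from (2) and the characterization $\pp\in\Ass_R N\Leftrightarrow\depth_{R_\pp}N_\pp=0$. For (4), the canonical morphism $P\to\Hom_R(\Hom_R(P,R),R)$ is an isomorphism for finitely generated projective $P$, and $\Supp_R\Hom_R(P,R)=\Supp_R P=\Spec R$, so \cref{ass hom} gives $\Ass_R P=\Supp_R\Hom_R(P,R)\cap\Ass_R R=\Ass_R R$; one may instead argue locally, using that $P_\pp$ is free of positive rank and hence $\depth_{R_\pp}P_\pp=\depth R_\pp$. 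The only step I expect to require genuine care is (2) — organizing the two invocations of the depth lemma and the $\min\{2,-\}$ bookkeeping — while (1), (3), and (4) are formal once \cref{ass hom} and the support identity for $\End_R(M)$ are in hand.
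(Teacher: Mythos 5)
Your proof is correct, and for the technical heart, part (2), it takes a genuinely different route from the paper. The paper obtains (2) in one stroke: it notes that $\catX=\{X\in\catmod R\mid \depth_{R_\pp}X_\pp\ge\min\{2,\depth_{R_\pp}M_\pp\}\}$ is KE-closed by the depth lemma, contains $M$, and hence contains $\ZZ_R(M)$ by \cref{fct:Hom-ideal}(2) (closure of KE-closed subcategories under taking centers, imported from \cite{IMST,KS}), finishing with the identification $\ZZ_R(M)_\pp\iso\ZZ_{R_\pp}(M_\pp)$ of \cref{prp:local center}. You instead argue by hand: present $\End_R(M)$ as the kernel of a map $M^{\oplus n_0}\to M^{\oplus n_1}$ coming from a finite presentation, present $\ZZ_R(M)$ as the kernel of the commutator map $\End_R(M)\to\End_R(M)^{\oplus n}$ attached to module generators, and apply your ``kernel has depth $\ge\min\{2,\depth N\}$'' engine (depth lemma plus $\Ass$ of submodules) twice; the case check and edge cases all go through. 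Both are valid; the paper's version is shorter given its imported facts, while yours is self-contained, avoids \cref{fct:Hom-ideal} entirely, and—since you work directly over $R_\pp$—also bypasses the center-localization compatibility for (2); in effect you re-prove the special case of the cited closure property that is actually needed. The other parts differ only cosmetically: in (1) your sandwich $R\cdot\id_M\subseteq\ZZ_R(M)\subseteq\End_R(M)$ replaces the paper's pointwise observation about $\id_{M_\pp}$ (and again avoids localizing the center); in (3) your primary argument via \cref{ass hom} applied to $\End_R(M)$ is direct, whereas the paper deduces it from (2) (your alternative route through (2) would, like the paper's, implicitly need $\ZZ_R(M)_\pp\iso\ZZ_{R_\pp}(M_\pp)$, but your main argument does not); in (4) your local-freeness argument is exactly the paper's, with the double-dual argument as a harmless extra.
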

\begin{proof}
(1):
For any $\pp \in \Spec R$, we have that
\[
M_{\pp}\ne 0 \iff 0 \ne \id_{M_{\pp}}\text{ in }\ZZ_{R_{\pp}}(M_{\pp}) \iff \ZZ_R(M)_{\pp} \ne 0.
\]
This yields the desired equality.

(2): Let $\pp$ be a prime ideal of $R$.
Consider the following subcategory of $\catmod R$:
\[
\catX=\{X \in \catmod R\mid \depth_{R_\pp} X_\pp \ge \min\{2,\depth_{R_\pp} M_\pp\} \}.
\]
By the depth lemma, $\catX$ is KE-closed in $\catmod R$.
Obviously, $M$ belongs to $\catX$.
Therefore, $\ZZ_R(M)\in \catX$ by \cref{fct:Hom-ideal}.
This means that $\depth_{R_\pp} \ZZ_{R_\pp}(M_\pp) \ge \min\{2,\depth_{R_\pp} M_\pp\}$.

(3):
It easily follows from (2).

(4):
For any $\pp \in \Spec R$, we have that $P_{\pp} \iso R_{\pp}^{\oplus n}$ 
for some positive integer $n>0$.
Thus, we have $\Ass_{R_{\pp}} (P_{\pp})=\Ass_{R_{\pp}} (R_{\pp})$ for all $\pp \in \Spec R$,
and $\Ass P = \Ass R$.
\end{proof}

The following proposition shows that the freeness of $\ZZ_R(M)$ is detected by the localizations at primes of depth at most one.

\begin{prp} \label{characterize free center}
Let $R$ be a commutative ring and $M\in \catmod R$.
Then the following are equivalent.
\begin{enua}
\item The natural homomorphism $\iota\colon R \to \ZZ_R(M)$ is an isomorphism.
\item $\ZZ_R(M)\cong R$ as $R$-algebras.
\item $\ZZ_R(M)\cong R$ as $R$-modules.
\item $M\not=0$ and $\ZZ_R(M)$ is $R$-free.
\item $\Supp M=\Spec R$ and $\ZZ_R(M)$ is $R$-projective.
\item $\Ass_R \ZZ_R(M)=\Ass R$ and $\ZZ_{R_\pp}(M_\pp)$ is $R_\pp$-free for all $\pp\in \Spec R$ such that $\depth R_\pp\le 1$.
\end{enua}
\end{prp}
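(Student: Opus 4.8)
The strategy is to prove a cycle of implications. The implications $(1)\Rightarrow(2)\Rightarrow(3)$ are immediate, and $(3)\Rightarrow(4)$ is clear once we observe that $\ZZ_R(M)\cong R$ as $R$-modules forces $M\ne 0$ (otherwise $\ZZ_R(M)=0\not\cong R$). For $(4)\Rightarrow(5)$: if $M\ne 0$ then $\Supp_R\ZZ_R(M)=\Supp_R M$ by \cref{prp:Supp Z(M)}(1), and an $R$-free module whose support misses some prime would have a rank-zero free summand there, i.e.\ would be zero locally; so $\Supp M=\Spec R$ follows from $\ZZ_R(M)\ne 0$, while $R$-free implies $R$-projective. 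For $(5)\Rightarrow(6)$: apply \cref{prp:split mono alg hom} to $\iota\colon R\to\ZZ_R(M)$ — the hypotheses are met since $\ZZ_R(M)$ is finitely generated projective and $\Supp_R\ZZ_R(M)=\Supp_R M=\Spec R$ — to conclude $\iota$ is a split monomorphism with projective cokernel $C$, so $\ZZ_R(M)\cong R\oplus C$ as $R$-modules. Then $\Ass_R\ZZ_R(M)\subseteq\Ass_R M$ always (\cref{prp:Supp Z(M)}(3)); conversely $\Ass R=\Ass_R(R)\subseteq\Ass_R\ZZ_R(M)$ because $R$ is a direct summand, and in fact one checks $\Ass_R\ZZ_R(M)=\Ass R$ by also using that $C$ is projective with $\Supp C\subseteq\Spec R$, so $\Ass_R C\subseteq\Ass R$ by \cref{prp:Supp Z(M)}(4) (or $C=0$). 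Finally, localizing a projective module of rank-at-most... wait — we need $\ZZ_{R_\pp}(M_\pp)$ free for $\depth R_\pp\le 1$; but $\ZZ_R(M)$ projective localizes to $\ZZ_{R_\pp}(M_\pp)=\ZZ_R(M)_\pp$ which is free over the local ring $R_\pp$, so this holds for \emph{all} $\pp$, in particular for those with $\depth R_\pp\le 1$.

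The substantive implication is $(6)\Rightarrow(1)$, and this is where the main work lies. We want to show $\iota\colon R\to\ZZ_R(M)$ is an isomorphism; by \cref{loc containment}-type reasoning (or directly, since isomorphism of modules is local), it suffices to show $\iota_\pp\colon R_\pp\to\ZZ_{R_\pp}(M_\pp)$ is an isomorphism for every $\pp\in\Spec R$. Using \cref{prp:Supp Z(M)} and the hypothesis $\Ass_R\ZZ_R(M)=\Ass R$, we first deduce $\Supp_R M=\Spec R$: indeed $\Ass R\subseteq\Supp_R\ZZ_R(M)=\Supp_R M\subseteq\Spec R$, and since $\Spec R=\overline{\Min R}\subseteq\overline{\Ass R}$... more carefully, any $\pp$ contains some minimal prime $\qq\in\Min R\subseteq\Ass R=\Ass_R\ZZ_R(M)\subseteq\Supp_R M$, hence $\pp\in\Supp_R M$. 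So $M_\pp\ne 0$ for all $\pp$.

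Now fix $\pp$ and induct on $d:=\depth R_\pp$. When $d\le 1$, the hypothesis gives $\ZZ_{R_\pp}(M_\pp)$ free over $R_\pp$, and we want $\iota_\pp$ an isomorphism. For $d=0$ this is exactly \cref{center over depth zero}(4). For $d=1$: we know $\depth_{R_\pp}\ZZ_{R_\pp}(M_\pp)\ge\min\{2,\depth_{R_\pp}M_\pp\}$ by \cref{prp:Supp Z(M)}(2); combined with freeness, $\ZZ_{R_\pp}(M_\pp)$ is a nonzero free $R_\pp$-module, so it has rank $r\ge 1$, and we must show $r=1$ — here one can use a Nakayama/\cref{det trick} argument as in \cref{center over depth zero}, reducing modulo a non-zerodivisor in the maximal ideal of $R_\pp$ (available since $\depth R_\pp=1>0$) to kill down to the depth-zero case, then invoking $(4)\Rightarrow(1)$ of this very proposition over $R_\pp/(x)$ together with the fact that $\ZZ$ is compatible with this quotient up to the relevant surjection. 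For $d\ge 2$: the hypothesis says nothing directly, so we use the local-to-global / associated-prime bookkeeping: $\ZZ_{R_\pp}(M_\pp)$ has depth $\ge 2$ and $\Ass_{R_\pp}\ZZ_{R_\pp}(M_\pp)=\Ass R_\pp$ (localize the global equality), and by induction $\iota_\qq$ is an isomorphism for all $\qq\subsetneq\pp$, i.e.\ $\Cok(\iota_\pp)$ vanishes on the punctured spectrum of $R_\pp$, hence has finite length over $R_\pp$; but $\Cok(\iota_\pp)$ embeds (after the splitting established in the projective case, or via a direct depth-chase) into something of positive depth, forcing $\Cok(\iota_\pp)=0$. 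The cleanest route: show $\iota_\pp$ is injective (its kernel is a submodule of $R_\pp$ with support contained in $V(\pp R_\pp)$, impossible unless zero, since $\Ass$ of the kernel would lie in $\Ass R_\pp$ which consists of primes not equal to $\pp R_\pp$ as $\depth R_\pp\ge 1$), so $0\to R_\pp\to\ZZ_{R_\pp}(M_\pp)\to C_\pp\to 0$ with $C_\pp$ of finite length; then $\depth R_\pp\ge 1$ and $\depth\ZZ_{R_\pp}(M_\pp)\ge 2$ give via the depth lemma that $\depth C_\pp\ge 1$ unless $C_\pp=0$, and finite length plus $\depth\ge 1$ forces $C_\pp=0$.

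\textbf{Main obstacle.} The delicate point is the inductive step organizing the depths: one must be careful that the global hypothesis $\Ass_R\ZZ_R(M)=\Ass R$ localizes correctly (it does, by \cref{prp:local center} and standard behavior of $\Ass$ under localization) and that the finite-length conclusion for $\Cok(\iota_\pp)$ on the punctured spectrum genuinely follows from the inductive hypothesis at primes $\qq\subsetneq\pp$ — this requires knowing $\depth R_\qq\le 1$ is \emph{not} assumed there, so one really is running an induction on $\depth$, not on $\htt$, and must verify $\depth R_\qq<\depth R_\pp$ is not automatic; instead the induction should be on $\depth R_\pp$ directly, using that for $\qq\subsetneq\pp$ one has $\depth R_\qq\le\depth R_\pp$... which can fail to be strict. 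The correct formulation is a Noetherian induction on $\pp$ (or induction on $\dim R_\pp$), with the depth-$\le 1$ hypothesis supplying the base of each local analysis and \cref{prp:Supp Z(M)}(2) supplying the depth-$\ge 2$ bound that drives the finite-length-implies-zero conclusion. Getting this bookkeeping exactly right, and correctly handling the $\depth R_\pp=1$ case where freeness is assumed but rank $1$ must still be proven, is the crux.
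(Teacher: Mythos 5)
Your easy implications (1)$\Rightarrow\cdots\Rightarrow$(6) are fine, and your final reformulation of (6)$\Rightarrow$(1) as an induction on $\htt\pp$ (rather than on depth) matches the paper's architecture. But there is a genuine gap exactly where you flag "the crux": primes $\pp$ with $\htt\pp\ge 1$ and $\depth R_\pp=1$, where hypothesis (6) gives only that $\ZZ_{R_\pp}(M_\pp)$ is free and you still must rule out rank $\ge 2$. Your proposed route --- pass to $R_\pp/(x)$ for a nonzerodivisor $x$ and invoke the proposition there "together with the fact that $\ZZ$ is compatible with this quotient up to the relevant surjection" --- is unjustified and in general false: already the natural map $\End_{R}(M)/x\End_{R}(M)\to\End_{R/x}(M/xM)$ need not be surjective (there is an $\Ext^1$ obstruction), centers behave even worse under such quotients, and one would also need $x$ regular on $M_\pp$. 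The paper avoids any rank computation at such primes: it applies \cref{prp:split mono alg hom} to $\iota_\pp\colon R_\pp\to\ZZ_{R_\pp}(M_\pp)$ at \emph{every} prime with $\depth R_\pp\le 1$ (legitimate because $\ZZ_{R_\pp}(M_\pp)$ is free with full support), so $C:=\Cok(\iota)$ is $R_\pp$-free there; the height induction makes $C_\pp$ of finite length, and a nonzero free module over $R_\pp$ with $\dim R_\pp\ge 1$ cannot have finite length, so $C_\pp=0$. This use of the split-monomorphism lemma on the cokernel is the missing idea, and it also handles depth-$0$ primes of positive height uniformly (where your appeal to \cref{center over depth zero}(4) would additionally require checking $\depth_{R_\pp}M_\pp=0$, which needs \cref{prp:Supp Z(M)}(2) plus freeness of $\ZZ_{R_\pp}(M_\pp)$, not just $\depth R_\pp=0$).

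Secondly, in your $\depth R_\pp\ge 2$ case the depth bookkeeping is wrong as stated: $\depth_{R_\pp}\ZZ_{R_\pp}(M_\pp)\ge 2$ does not follow from \cref{prp:Supp Z(M)}(2), since that would require $\depth_{R_\pp}M_\pp\ge 2$, which is not known; and from "$\depth R_\pp\ge 1$ and $\depth \ZZ_{R_\pp}(M_\pp)\ge 2$" the depth lemma only yields $\depth C_\pp\ge\min\{\depth R_\pp-1,\,\depth \ZZ_{R_\pp}(M_\pp)\}\ge 0$, which proves nothing. The correct combination is the reverse one, and it is what the paper uses: $\depth R_\pp\ge 2$ is the case assumption, while $\pp R_\pp\notin\Ass R_\pp=\Ass_{R_\pp}\ZZ_{R_\pp}(M_\pp)$ gives $\depth_{R_\pp}\ZZ_{R_\pp}(M_\pp)\ge 1$; then $\min\{\depth R_\pp-1,\,\depth\ZZ_{R_\pp}(M_\pp)\}\ge 1$ forces $\depth C_\pp\ge 1$, contradicting finite length unless $C_\pp=0$. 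This slip is repairable, but together with the unresolved depth-one case the proposal does not yet constitute a proof of (6)$\Rightarrow$(1).
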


\begin{proof}
The implications (1)$\Rightarrow$(2)$\Rightarrow$(3) $\Rightarrow$(4) are obvious.
The implications (4)$\Rightarrow$(5)$\Rightarrow$(6) easily follow from \cref{prp:Supp Z(M)}.

(6)$\Rightarrow$(1):
Let $C$ be the cokernel of $\iota$.
Applying \cref{prp:split mono alg hom}, we have that $\iota_{\pp}$ is a split monomorphism and $C_{\pp}$ is $R_{\pp}$-free 
for any $\pp\in \Spec R$ such that $\depth R_\pp\le 1$.
In particular, the homomorphism $\iota_{\pp}$ is injective for any $\pp \in \Ass R$, and hence $\iota$ is injective.
It is enough to show that $C_\pp=0$ for any $\pp\in \Spec R$.
We prove this by induction on $\htt \pp$.
Suppose that $\htt \pp = 0$.
Then $\depth R_\pp = \depth_{R_\pp} M_\pp = 0$ and $\ZZ_{R_\pp}(M_\pp)$ is $R_\pp$-free by the assumption.
Therefore, by \cref{center over depth zero}, $\ZZ_{R_\pp}(M_\pp)\cong R_\pp$.
Hence the rank of the free $R_\pp$-module $C_\pp$ is zero, that is, $C_{\pp}=0$.
Next, suppose that $\htt \pp\ge 1$.
Then $C_\pp$ has finite length over $R_\pp$ by the induction hypothesis.
If $\depth R_\pp \le 1$, then $C_\pp$ is $R_\pp$-free and of finite length. Hence $C_\pp=0$ as $\dim R_\pp \ge 1$.
Thus, we may assume that $\depth R_\pp \ge 2$.
Note that $\pp \not\in \Ass R=\Ass(\ZZ_R(M))$ by the assumption.
In other words, $\depth_{R_\pp}\ZZ_{R_\pp}(M_\pp)\ge 1$.
Applying the depth lemma to the exact sequence $0 \to R_\pp \to \ZZ_{R_\pp}(M_\pp) \to C_\pp \to 0$, we derive that $C_\pp=0$, otherwise $\depth R_\pp=1$ as $C_\pp$ has finite length over $R_\pp$, a contradiction.
\end{proof}

Motivated by this proposition,
we introduce non-trivial center loci as follows and study the relationship with non-free loci.
\begin{dfn}
Let $M\in\catmod R$.
\begin{enua}
\item
We call $\NF(M):=\{\pp\in\Spec R\mid M_\pp\text{ is not free over $R_\pp$}\}$
the \emph{non-free locus} of $M$.
Note that $\NF(M)$ is closed in $\Spec R$.
\item
We call $\NZ(M):=\{\pp\in\Spec R\mid \text{$R_{\pp} \not\iso \ZZ_{R_\pp}(M_\pp)$ as $R_\pp$-algebras}\}$
the \emph{non-trivial center locus} of $M$.
\end{enua}
\end{dfn}

The following two lemmas about $\NZ(M)$ will be used in the next section.

\begin{lem}\label{prp:NZ NF}
Let $M \in \catmod R$ such that $\Supp M =\Spec R$.
\begin{enua}
\item
$\NZ(M) =\NF(\ZZ_R(M))\subseteq \NF(M)$ holds.
In particular, $\NZ(M)$ is closed in $\Spec R$.
\item
$\NZ(M \oplus N) \subseteq \NF(M)$ holds
for any $N\in \catmod R$.
\item $\Ass_R(\ZZ_R(M))\subseteq\NZ(M) \cup \Ass R$.
\end{enua}
\end{lem}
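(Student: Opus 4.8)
The plan is to handle the three items separately, noting that (1) is the substantive one and (2), (3) are easy consequences of (1) together with earlier results. For (1), the equality $\NZ(M) = \NF(\ZZ_R(M))$ should follow essentially from \cref{characterize free center} applied over each localization $R_\pp$: since $\Supp M = \Spec R$ gives $\Supp_{R_\pp} M_\pp = \Spec R_\pp$ for every $\pp$, the equivalence (1)$\equi$(4) (or (1)$\equi$(3)) of that proposition, applied to the ring $R_\pp$ and module $M_\pp$, says that $R_\pp \iso \ZZ_{R_\pp}(M_\pp)$ as $R_\pp$-algebras if and only if $\ZZ_{R_\pp}(M_\pp)$ is $R_\pp$-free; here I must also use $\ZZ_R(M)_\pp \iso \ZZ_{R_\pp}(M_\pp)$ from the corollary to \cref{prp:local center}. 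Thus $\pp \in \NZ(M)$ exactly when $\ZZ_R(M)_\pp$ is not $R_\pp$-free, i.e. $\pp \in \NF(\ZZ_R(M))$. The inclusion $\NF(\ZZ_R(M)) \subseteq \NF(M)$ needs a separate small argument: if $M_\pp$ is free over $R_\pp$, then $\End_{R_\pp}(M_\pp)$ is a matrix ring over $R_\pp$, whose center is $R_\pp$ itself, so $\ZZ_{R_\pp}(M_\pp) \iso R_\pp$ is free; contrapositively $\pp \in \NF(\ZZ_R(M))$ forces $\pp \in \NF(M)$. Closedness of $\NZ(M)$ in $\Spec R$ is then immediate since non-free loci of finitely generated modules are closed (as recorded in \cref{dfn}\ of $\NF$, citing the definition box).

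For (2): given $N \in \catmod R$, note $\Supp(M\oplus N) \supseteq \Supp M = \Spec R$, so part (1) applies to $M \oplus N$, giving $\NZ(M \oplus N) = \NF(\ZZ_R(M\oplus N))$. Now I localize at $\pp$ and invoke \cref{prp:Z(R^n oplus M)} (in its localized form via \cref{prp:local center}): if $M_\pp$ is $R_\pp$-free and nonzero, then $\ZZ_{R_\pp}((M\oplus N)_\pp) = \ZZ_{R_\pp}(M_\pp \oplus N_\pp) \iso R_\pp$, which is free; since $M_\pp \ne 0$ by $\Supp M = \Spec R$, we conclude $\pp \notin \NF(\ZZ_R(M\oplus N))$ whenever $\pp \notin \NF(M)$. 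Hence $\NZ(M\oplus N) \subseteq \NF(M)$.

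For (3): by part (1), $\Ass_R(\ZZ_R(M)) = \Ass_R(\ZZ_R(M))$ and for any $\pp \in \Ass_R(\ZZ_R(M))$ with $\pp \notin \NZ(M)$ we have $\ZZ_{R_\pp}(M_\pp) \iso R_\pp$ as $R_\pp$-modules (using $\ZZ_R(M)_\pp \iso \ZZ_{R_\pp}(M_\pp)$); then $\depth_{R_\pp} \ZZ_{R_\pp}(M_\pp) = \depth R_\pp = 0$ since $\pp \in \Ass \ZZ_R(M)$ localizes to depth zero, so $\pp \in \Ass R$. Therefore every associated prime of $\ZZ_R(M)$ lies in $\NZ(M) \cup \Ass R$. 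The main obstacle, such as it is, is simply making sure the hypothesis $\Supp M = \Spec R$ is correctly propagated to all localizations and that the localization-compatibility of $\ZZ_R(-)$ is cited at each step; there is no deep difficulty, as the heavy lifting has been done in \cref{characterize free center}.
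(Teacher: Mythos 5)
Your proposal is correct and follows essentially the same route as the paper: (1) via \cref{characterize free center} at each localization together with $\ZZ_R(M)_\pp \iso \ZZ_{R_\pp}(M_\pp)$, (2) via \cref{prp:Z(R^n oplus M)} after localizing at $\pp \notin \NF(M)$, and (3) by noting $\ZZ_{R_\pp}(M_\pp)\iso R_\pp$ off $\NZ(M)$ forces $\pp\in\Ass R$. The only cosmetic differences are that you re-derive the center of the endomorphism ring of a free module via matrix rings where the paper cites \cref{prp:Z(R^n oplus M)}, and the tautological sentence at the start of (3) is harmless.
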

\begin{proof}
(1): 
The equality follows from \cref{characterize free center}, and the inclusion follows from \cref{prp:Z(R^n oplus M)}.

(2): 
Take any prime ideal $\pp$ of $R$ such that $\pp \not\in \NF(M)$.
Then $M_{\pp} \iso R_{\pp}^{\oplus n}$ for some positive integer $n>0$
since $\Supp M = \Spec R$.
Then the isomorphism $\ZZ_{R_{\pp}}(M_{\pp} \oplus N_{\pp}) \iso R_{\pp}$ follows from \cref{prp:Z(R^n oplus M)}.
This implies $\pp \not\in \NZ(M\oplus N)$, and thus we have $\NZ(M \oplus N) \subseteq \NF(M)$.

(3): Let $\pp\in \Ass_R\ZZ_R(M) \setminus \NZ(M)$.
Then $\pp \in \Ass_{R_\pp} \ZZ_{R_{\pp}}(M_\pp)=\Ass R_\pp$ as $\ZZ_{R_{\pp}}(M_\pp) \iso R_\pp$.
Thus $\pp\in \Ass R$.
\end{proof}

\begin{lem} \label{NZ and subset}
Let $\catX$ be a KE-closed subcategory of $\catmod R$ and $\Phi$ be a subset of $\Spec R$ containing $\Ass R$.
Assume for each $\pp\in\Phi$, there is an $R$-module $M^\pp\in \catX$ such that $(M^\pp)_\pp\cong R_\pp$.
Then there exists $N\in\catX$ such that $\NZ(N)\cap\Phi=\emptyset$.
\end{lem}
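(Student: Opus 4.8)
The plan is to build the desired module $N$ by gluing together the modules $M^\pp$ over the open cover hypothesis is missing, so instead I must work with the whole subset $\Phi$ at once. The key observation is that $\Supp M^\pp \supseteq \{\pp\}$ only tells us $(M^\pp)_\pp \neq 0$, but the condition $(M^\pp)_\pp \cong R_\pp$ says much more: at the single prime $\pp$, the center is already trivial. I would first enlarge each $M^\pp$ by adding a free summand: set $M'^\pp := R \oplus M^\pp$. Since $R \in \catX$ (note $\Ass R \subseteq \Phi$ and the hypothesis applied to any $\pp \in \Ass R$ gives a module in $\catX$ that is locally free, whence by \cref{imst} and the structure of KE-closed subcategories one checks $R \in \catX$; more directly $R$ is a syzygy-type object one can obtain, but the cleanest route is: pick any $\pp_0 \in \Ass R \subseteq \Phi$, then $(M^{\pp_0})_{\pp_0} \cong R_{\pp_0} \neq 0$, and proceed), and $\catX$ is closed under extensions hence finite direct sums, we get $M'^\pp \in \catX$ with $\Supp M'^\pp = \Spec R$ and $(M'^\pp)_\pp \cong R_\pp^{\oplus 2}$.

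Next I would form $N := \bigoplus_{\text{finitely many}} M'^\pp$ — but $\Phi$ need not be finite. The fix is to use quasi-compactness: for each $\pp \in \Phi$, the locus $\{\qq \in \Spec R \mid (M'^\pp)_\qq \text{ is } R_\qq\text{-free}\} = \Spec R \setminus \NF(M'^\pp)$ is open and contains $\pp$. These opens cover $\Phi$ (indeed cover all of $\Spec R$ only where some $M'^\pp$ is free, but at each $\pp\in\Phi$ the corresponding $M'^\pp$ is free at $\pp$). Since $\Spec R$ is noetherian, $\Phi$ is quasi-compact, so finitely many primes $\pp_1, \dots, \pp_n \in \Phi$ satisfy $\Phi \subseteq \bigcup_i (\Spec R \setminus \NF(M'^{\pp_i}))$, i.e. $\bigcap_i \NF(M'^{\pp_i}) \cap \Phi = \emptyset$. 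Set $N := \bigoplus_{i=1}^n M'^{\pp_i}$; then $N \in \catX$ and $\Supp N = \Spec R$.

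Now I claim $\NZ(N) \cap \Phi = \emptyset$. Take $\pp \in \Phi$. By the cover, $\pp \notin \NF(M'^{\pp_i})$ for some $i$, meaning $(M'^{\pp_i})_\pp$ is $R_\pp$-free of positive rank (positive since $\Supp M'^{\pp_i} = \Spec R$). Writing $N_\pp \cong (M'^{\pp_i})_\pp \oplus (\text{rest})_\pp$ with $(M'^{\pp_i})_\pp$ free, \cref{prp:Z(R^n oplus M)} applies over $R_\pp$: $\ZZ_{R_\pp}(N_\pp) \cong R_\pp$, so $\pp \notin \NZ(N)$. Hence $\NZ(N) \cap \Phi = \emptyset$, as desired.

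\textbf{Expected main obstacle.} The only genuinely delicate point is verifying $R \in \catX$ (or circumventing it), since $\catX$ is only assumed KE-closed, not resolving. I would handle this by noting that $\Ass R \subseteq \Phi$ is nonempty (as $R \neq 0$ — the statement is vacuous otherwise, or one interprets $\catX = 0$), picking $\pp_0 \in \Ass R$, and using the hypothesis module $M^{\pp_0} \in \catX$ with $(M^{\pp_0})_{\pp_0} \cong R_{\pp_0}$. In fact one need not even put $R$ into $\catX$: simply set $M'^\pp := M^{\pp_0} \oplus M^\pp$ instead, which is already in $\catX$, has full support $\Spec R$ (since $\Supp M^{\pp_0} \ni \pp_0$ — wait, we only know $(M^{\pp_0})_{\pp_0}\neq 0$), so actually the cleanest is to take $M'^\pp := M^\pp \oplus \bigoplus_{\qq \in \Ass R} M^\qq$ — but $\Ass R$ is finite, so this is a legitimate finite direct sum in $\catX$, and its support contains every associated prime of $R$ hence is all of $\Spec R$ (since $\Min R \subseteq \Ass R$). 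Then $(M'^\pp)_\pp$ has a free $R_\pp$-summand (coming from the $M^\pp$ factor), and the argument goes through verbatim with \cref{prp:Z(R^n oplus M)} replaced by \cref{prp:Z(R^n oplus M)} in the form "$R_\pp \to \ZZ_{R_\pp}(\text{free} \oplus \text{anything})$ is iso." This sidesteps the $R \in \catX$ issue entirely and is the route I would write up.
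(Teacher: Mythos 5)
Your argument has a genuine gap, and it sits exactly where the KE-closedness of $\catX$ has to be used. The main route needs $R\in\catX$, which is not justified: \cref{imst} requires some maximal ideal to lie in $\catX$, which you do not have, and the hypothesis of the lemma does not force $R\in\catX$ (for instance, over a non-$(S_2)$ domain the subcategory of torsion-free modules having depth $\ge 2$ at all primes of height $\ge 2$ is KE-closed, contains a rank-one module such as an $(S_2)$-ification of $R$, hence satisfies the hypothesis with $\Phi=\Ass R=\{(0)\}$, yet does not contain $R$). Note also that in the paper this lemma is applied precisely in a situation where membership of the ring in the subcategory is what one is trying to prove, so assuming it would defeat the purpose. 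The fallback route does \emph{not} go through verbatim: with $M'^{\pp}:=M^\pp\oplus\bigoplus_{\qq\in\Ass R}M^\qq$ the module is no longer free at $\pp$ — it only has a nonzero free direct summand there — so $\pp$ may well lie in $\NF(M'^\pp)$ and your open cover of $\Phi$ by free loci collapses. If you repair the cover by using instead the trivial-center loci $\Spec R\setminus\NZ(M'^\pp)$ (open by \cref{prp:NZ NF}, and containing $\pp$ by \cref{prp:Z(R^n oplus M)}), then the last step breaks: for $\pp\in\Phi$ you only know that \emph{some direct summand} $(M'^{\pp_i})_\pp$ of $N_\pp$ has trivial center, and this does not imply $N_\pp$ has trivial center; \cref{prp:Z(R^n oplus M)} genuinely needs a free summand. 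For a concrete failure of the implicit claim, over $R=\bbk[[x,y]]$ one has $\ZZ_R(\mm)\cong R$ but $\ZZ_R(\mm\oplus R/\mm^2)\not\cong R$: multiplication by $x$ on $R/\mm^2$ annihilates the image of every homomorphism $\mm\to R/\mm^2$, and $\Hom_R(R/\mm^2,\mm)=0$, so it yields a central element outside $R\cdot\id$.

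The missing idea — and the one the paper uses — is to exploit KE-closedness via \cref{fct:Hom-ideal}(2): one may replace a module by the center of its endomorphism ring without leaving $\catX$, and by \cref{prp:NZ NF}(1) this converts trivial-center loci into free loci, $\NZ(N)=\NF(\ZZ_R(N))$. The paper starts from $N=\bigoplus_{\pp\in\Ass R}M^\pp$ and, for an offending prime $\pp\in\NZ(N)\cap\Phi$, replaces $N$ by $\ZZ_R(N)\oplus M^\pp\in\catX$, which strictly shrinks the closed set $\NZ$; noetherianity of $\Spec R$ forces termination. Your quasi-compactness framework can be salvaged by the same trick: set $M''^{\pp}:=\ZZ_R(M'^{\pp})\in\catX$, so that $\NF(M''^{\pp})=\NZ(M'^{\pp})\not\ni\pp$ and $\Supp M''^{\pp}=\Spec R$; then the free loci of the $M''^{\pp}$ are open, cover $\Phi$, finitely many suffice, and for $N=\bigoplus_i M''^{\pp_i}$ each $\pp\in\Phi$ sees a nonzero free summand of $N_\pp$, whence $\pp\notin\NZ(N)$ by \cref{prp:Z(R^n oplus M)}. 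Without taking centers, however, your written argument does not establish the lemma.
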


\begin{proof}
By the assumption, for each $\pp\in \Ass R$, there is $M^\pp\in\catX$ such that $(M^\pp)_\pp\cong R_\pp$.
We set $N:=\bigoplus_{\pp\in \Ass R}M^{\pp}\in \catX$.
Clearly, $\Supp N=\Spec R$.
In view of \cref{prp:Z(R^n oplus M)}, the canonical homomorphism $\iota\colon R \to \ZZ_R(N)$ is locally an isomorphism on $\Ass R$.
In particular, $\iota$ is injective.
Thus, $\NZ(N)=\Supp(\Cok \iota)$; see \cref{characterize free center}.
If $\Supp(\Cok \iota)\cap \Phi=\emptyset$, then we are done.
Suppose that there is a prime ideal $\pp\in \Supp(\Cok \iota)\cap \Phi$.
Take $M^\pp\in \catX$ such that $(M^\pp)_\pp\cong R_\pp$.
Set $N':=\ZZ_R(N)\oplus M^{\pp}\in \catX$.
We see that $\Supp N'=\Spec R$ and that
\[
\NZ_R(N')\subseteq \NF_R(N') \subseteq \NF_R(\ZZ_R(N))\setminus\{\pp\}=\NZ_R(N)\setminus\{\pp\}.
\]
Iterating this, we get $N\in\catX$ such that $\NZ(N)\cap \Phi=\emptyset$, otherwise we obtain a strict descending chain $\NZ(N) \supsetneq \NZ(N') \supsetneq \cdots$ of closed subsets of $\Spec R$, which contradicts to that $\Spec R$ is noetherian.
\end{proof}

\section{KE-closed subcategories and higher associated primes}\label{s:KE-closed}

In this section, we investigate KE-closed subcategories and certain subsets of $\Spec R$ associated to them.
We first define a kind of a higher analogue of the set of associated primes for $R$-modules and subcategories.

\begin{dfn}\label{dfn:A^n}
Let $n\ge 0$ be an integer.
\begin{enua}
\item 
For $M\in\catmod R$, we denote by $A^n_R(M)$ the subset $\{\pp\mid \depth_{R_\pp}M_\pp\le n\}$ of $\Spec R$.
\item 
For a subcategory $\catX$ of $\catmod R$, we put $A^n(\catX)=\bigcup_{X\in\catX}A^n(X)$.
\item 
For a subset $\Phi\subseteq \Spec R$, we put $\catmod^n_\Phi R=\{M\in\catmod R\mid A^n(M)\subseteq \Phi\}$.
\end{enua}
\end{dfn}

By definition, the following is immediate.

\begin{rmk} \label{rem n-ass}
Let $M,N\in\catmod R$, $\catX, \catY$ subcategories of $\catmod R$, and $\Phi, \Psi$ subsets of $\Spec R$.
We have the following:
\begin{enua}
\item $\Ass_R(M)=A^0_R(M)$ and $\Supp_R(M)=\bigcup_{n\ge 0}A^n_R(M)$.
\item $A^i_R(M\oplus N)=A^i_R(M)\cup A^i_R(N)$.
\item
For any $\pp\in\Spec R$,
we have that
\[
\pp \in A^n_R(\catX) \Equi \inf_{X\in\catX}\{ \depth_{R_{\pp}} X_{\pp}\} \le n.
\]
\item $A^0(\catX) \subseteq A^1(\catX) \subseteq A^2(\catX) \subseteq \cdots$.
\item If $\catX \subseteq \catY$, then $A^n(\catX) \subseteq A^n(\catY)$.
\item
For any multiplicatively closed set $S$ of $R$,
we have
\[
A^n_{R_S}(M_S)=A^n_R(M) \cap \Spec R_S,\quad
A^n_{R_S}(\catX_S)=A^n_R(\catX) \cap \Spec R_S.
\]
\item $\catmod^0_\Phi R=\catmod^\ass_\Phi R$.
\item $\catmod^1_\Phi R$ is KE-closed.
\item 
$\catmod_\Phi^\ass R \supseteq \catmod_\Phi^1 R \supseteq \catmod_\Phi^2 R\supseteq \cdots$.
\item If $\Phi$ is specialization-closed,
then $\catmod_\Phi^\ass R = \catmod_\Phi^1 R = \catmod_\Phi^2 R = \cdots$.
\item If $\Phi \subseteq \Psi$, then $\catmod_\Phi^n R \subseteq \catmod_\Psi^n R$.
\item $M \in \catmod^n_{\Phi} R$ if and only if 
$M_{\pp} \in \catmod^n_{\Phi\cap\Spec R_\pp} R_{\pp}$ for any $\pp \in \Spec R$.
\end{enua}
\end{rmk}

%
%

A KE-closed subcategory $\catX$ with $R\in \catX$ is reconstructed from the pair $(A^0(\catX), A^1(\catX))$ as follows.

\begin{lem} \label{dom res and A^1}
Let $\catX$ be a subcategory of $\catmod R$.
Set $\Phi=A^0(\catX)$ and $\Psi=A^1(\catX)$.
Then the following are equivalent.
\begin{enua}
\item $\catX$ is a KE-closed subcategory, and $R\in\catX$.
\item $\catX=\catmod^\ass_\Phi R\cap \catmod^1_\Psi R$, $A^0(R)\subseteq \Phi$, and $A^1(R)\subseteq \Psi$.
\end{enua}
\end{lem}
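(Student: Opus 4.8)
The plan is to derive both implications from the facts on dominant resolving subcategories---namely \cref{KE-closed res} and \cref{thm:dom res}---combined with the dictionary in \cref{rem n-ass} between the sets $A^n$ and infima of depths. Throughout, write $d(\pp):=\inf_{X\in\catX}\{\depth_{R_\pp}X_\pp\}$, so that by \cref{rem n-ass}(3) one has $\pp\in A^n(\catX)\iff d(\pp)\le n$; in particular $\Phi=\{\pp\in\Spec R\mid d(\pp)=0\}$ and $\Psi=\{\pp\in\Spec R\mid d(\pp)\le 1\}$.

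For (1)$\imply$(2), I would first note that $R\in\catX$ gives $A^0(R)\subseteq A^0(\catX)=\Phi$ and $A^1(R)\subseteq A^1(\catX)=\Psi$ straight from the definition of $A^n(\catX)$. Next, \cref{KE-closed res} tells us that $\catX$ is dominant resolving and that $d(\pp)\le 2$ for all $\pp$, hence $d(\pp)\in\{0,1,2\}$. Then \cref{thm:dom res} gives $\catX=\{M\in\catmod R\mid \depth_{R_\pp}M_\pp\ge d(\pp)\text{ for all }\pp\in\Spec R\}$. It remains to identify this with $\catmod^\ass_\Phi R\cap\catmod^1_\Psi R$: unwinding definitions, $M$ lies in the latter precisely when, for every $\pp$, the inequality $d(\pp)\ge 1$ forces $\depth_{R_\pp}M_\pp\ge 1$ and $d(\pp)\ge 2$ forces $\depth_{R_\pp}M_\pp\ge 2$. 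A short case analysis over $d(\pp)\in\{0,1,2\}$ shows that this conjunction of two implications is equivalent to the single condition $\depth_{R_\pp}M_\pp\ge d(\pp)$, which finishes this direction.

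For (2)$\imply$(1), I would invoke that $\catmod^\ass_\Phi R$ is a torsion-free class by \cref{fct:Takahashi}, hence KE-closed, and that $\catmod^1_\Psi R$ is KE-closed by \cref{rem n-ass}(8); since an intersection of KE-closed subcategories is again KE-closed (it is additive, and closed under kernels and extensions because each factor is), $\catX=\catmod^\ass_\Phi R\cap\catmod^1_\Psi R$ is KE-closed. Finally, the hypotheses $A^0(R)\subseteq\Phi$ and $A^1(R)\subseteq\Psi$ are exactly the statements $R\in\catmod^\ass_\Phi R$ and $R\in\catmod^1_\Psi R$, so $R\in\catX$.

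The only step that is not a direct citation is the case analysis in (1)$\imply$(2), and this is precisely where the bound $d(\pp)\le 2$ from \cref{KE-closed res} enters essentially: were $d(\pp)$ allowed to be $3$, one would also have to record $A^2(\catX)$. I therefore expect no genuine obstacle beyond this bookkeeping.
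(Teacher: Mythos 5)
Your proof is correct and is essentially the paper's own argument: the paper simply states that the lemma is a translation of \cref{KE-closed res} and \cref{thm:dom res} into the language of $A^n(\catX)$, and your case analysis over $d(\pp)\in\{0,1,2\}$ (plus the easy converse via the KE-closedness of $\catmod^\ass_\Phi R$ and $\catmod^1_\Psi R$) is exactly the bookkeeping that this translation amounts to.
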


\begin{proof}
This is a translation of \cref{KE-closed res,thm:dom res} in terms of $A^n(\catX)$.
\end{proof}

Next we investigate the behavior of the subsets $A^n(\catX)$ under finite homomorphisms of rinns $R \to S$.

\begin{lem}\label{comparison of depth}
Let $R\to S$ be a finite homomorphism
and $\varphi \colon \Spec S \to \Spec R$ the induced map.
For any $M\in \catmod S$ and $\pp \in \Spec R$, we have
\[
\inf_{\qq\in \varphi^{-1}(\pp)}\left\{\depth_{S_\qq} M_\qq\right\}=\depth_{R_{\pp}} M_{\pp}.
\]
\end{lem}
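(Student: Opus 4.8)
The statement compares the depth of an $S$-module $M$ measured over $R$ (after viewing $M$ as an $R$-module via the finite homomorphism $R\to S$) with the depths over the various localizations $S_\qq$ for $\qq$ lying over $\pp$. The natural strategy is to reduce to the local case at $\pp$ and then exploit the structure of the semilocal ring $S_\pp := S\otimes_R R_\pp$. The plan is to first replace $R$ by $R_\pp$ and $S$ by $S_\pp$: since $M_\pp = M\otimes_R R_\pp$ as an $S_\pp$-module and localization does not change the depth computations on either side (the fibre $\varphi^{-1}(\pp)$ corresponds exactly to the maximal ideals of $S_\pp$), we may assume $(R,\pp)$ is local and $S$ is a module-finite $R$-algebra, hence semilocal with maximal ideals $\qq_1,\dots,\qq_r$ precisely the primes of $S$ lying over $\pp$.

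**Key steps.** First I would recall the standard fact that for a finitely generated module $M$ over a Noether $R$-algebra $S$ with $R$ local, one has
\[
\depth_R M = \inf_i \depth_{S_{\qq_i}} M_{\qq_i},
\]
and I would prove it via the Ext-characterization of depth. Concretely: $\depth_R M$ is the least $n$ with $\Ext^n_R(R/\pp, M)\neq 0$. Using that $S$ is finite over $R$, the ideal $\pp S$ is contained in the Jacobson radical of $S$, and $M$ has depth $\ge n$ over $R$ iff $M$ has a regular sequence of length $n$ from $\pp$. Such a sequence is an $M$-regular sequence in $R$ precisely when it is $M_{\qq_i}$-regular for every maximal ideal $\qq_i$ of $S$ — because regularity of a sequence on a finitely generated module over a semilocal ring can be checked at each maximal localization, and the elements of $\pp$ all lie in each $\qq_i$. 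Alternatively, one can pass through the isomorphism $\Ext^n_R(R/\pp, M)^\wedge \cong \prod_i \Ext^n_{S_{\qq_i}}(\cdot, M_{\qq_i})$ after completing, but the regular-sequence argument is cleanest. The inequality $\depth_R M \le \depth_{S_\qq} M_\qq$ for each individual $\qq$ is the easy direction (any $M$-regular sequence in $\pp$ stays $M_\qq$-regular in $S_\qq$); the content is the reverse inequality $\depth_R M \ge \min_i \depth_{S_{\qq_i}} M_{\qq_i}$, for which one builds an $M$-regular sequence of the common length out of $\pp$ by prime avoidance over the finitely many associated primes of the relevant quotients of $M$.

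**Main obstacle.** The delicate point is the construction of the $M$-regular sequence drawn entirely from $\pp$. Over the local ring $R$, whether $\pp$ contains an $M$-nonzerodivisor amounts to $\pp \notin \Ass_R M$, and by the finiteness of $R\to S$ one has $\Ass_R M = \varphi(\Ass_S M)$ (or at least $\pp \in \Ass_R M \iff$ some $\qq_i \in \Ass_S M$), which links $\depth_R M = 0$ with $\min_i \depth_{S_{\qq_i}} M_{\qq_i} = 0$. Iterating this — passing to $M/\underline{x}M$ at each stage and checking that its depth drops by exactly one on both sides — requires care that the sequence chosen remains in $\pp$ and remains simultaneously regular over all the $S_{\qq_i}$; this is where prime avoidance (using that $R$ is noetherian so there are only finitely many relevant associated primes) does the work. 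I expect the bookkeeping of this induction, rather than any single conceptual difficulty, to be the crux of the proof.
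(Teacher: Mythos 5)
Your proof is correct, but it takes a different route from the paper's. The paper also begins by localizing at $\pp$ so that $(R,\pp)$ is local, but then it disposes of the statement in one line of equalities: $\inf_{\qq\supseteq \pp S}\depth_{S_\qq}M_\qq=\grade(\pp S,M)=\grade(\pp,M)=\depth_R M$, where the first equality is the standard description of grade as an infimum of local depths over $V(\pp S)$, the middle one is the grade-sensitivity of the Koszul complex (a set of generators of $\pp$ gives the same Koszul complex on $M$ whether one works over $R$ or over $S$), and the last is the identity $\grade(\pp,M)=\depth_R M$ for the maximal ideal of a local ring. Your argument instead proves the same local identity $\depth_R M=\inf_i\depth_{S_{\qq_i}}M_{\qq_i}$ from scratch: the easy inequality because an $M$-regular sequence in $\pp$ localizes to an $M_{\qq_i}$-regular sequence (properness coming from $\pp S\subseteq\mathrm{Jac}(S)$ and Nakayama), and the reverse inequality by induction, using $\Ass_R M=\varphi(\Ass_S M)$ together with the fact that every prime of $S$ over $\pp$ is maximal, so that positive depth at all $\qq_i$ forces $\pp\notin\Ass_R M$ and prime avoidance supplies an $M$-regular element of $\pp$; passing to $M/xM$ drops both sides by exactly one. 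The paper's route is shorter because it delegates the real content to the cited grade theory in Bruns--Herzog; yours is more elementary and self-contained, at the cost of the inductive bookkeeping you acknowledge (and the degenerate cases $M_{\qq_i}=0$ or $\varphi^{-1}(\pp)=\emptyset$, where both sides are $\infty$, should be noted explicitly). Conceptually the two proofs rest on the same observation, namely that regular sequences drawn from $\pp$ are insensitive to whether $M$ is viewed over $R$ or over $S$, which is precisely what the Koszul-complex argument encodes.
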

\begin{proof}
Localizing $R$, $S$ and $M$ at $\pp$, we may assume $(R,\pp)$ is a local ring.
Then the assertion follows by the following
\[
\inf_{\qq\in \varphi^{-1}(\pp)}\{\depth_{S_\qq}M_\qq\} = \inf_{\qq\supseteq \pp S}\{\depth_{S_\qq}M_\qq\} =\grade(\pp S, M)=\grade(\pp,M)=\depth M.
\]
Here the equality $\grade(\pp S, M)=\grade(\pp,M)$ is an immediate consequence of the grade-sensitivity of Koszul complexes \cite[Theorem 1.6.17]{BH}.
\end{proof}

\begin{cor} \label{comparison of A^n}
Let $R \to S$ be a finite homomorphism of commutative noetherian rings.
Let $\varphi\colon \Spec S \to \Spec R$ be the induced map.
Let $M\in \catmod S$.
Then for each $n\ge 0$, 
we have $\varphi(A^n_S(M)) = A^n_R(M)$
and $A^n_S(M) \subseteq \varphi^{-1}(A^n_R(M))$. \qed
\end{cor}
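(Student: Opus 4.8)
The plan is to deduce the statement directly from \cref{comparison of depth}, which already packages the relevant depth computation. First I would unwind the definitions: by \cref{dfn:A^n}, $\qq \in A^n_S(M)$ means $\depth_{S_\qq} M_\qq \le n$, and $\pp \in A^n_R(M)$ means $\depth_{R_\pp} M_\pp \le n$, where in the latter $M$ is regarded as an $R$-module via the finite homomorphism $R \to S$ (so $M_\pp$ is the localization of $M$ at the multiplicative set $R \setminus \pp$, and its depth is taken over $R_\pp$).

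For the equality $\varphi(A^n_S(M)) = A^n_R(M)$, I would argue both inclusions using the formula $\inf_{\qq \in \varphi^{-1}(\pp)}\{\depth_{S_\qq} M_\qq\} = \depth_{R_\pp} M_\pp$ from \cref{comparison of depth}. For $\subseteq$: if $\qq \in A^n_S(M)$ and $\pp = \varphi(\qq)$, then $\depth_{R_\pp} M_\pp = \inf_{\qq' \in \varphi^{-1}(\pp)}\{\depth_{S_{\qq'}} M_{\qq'}\} \le \depth_{S_\qq} M_\qq \le n$, so $\pp \in A^n_R(M)$. For $\supseteq$: if $\pp \in A^n_R(M)$, then the infimum over the (finite, since $R \to S$ is finite) nonempty fiber $\varphi^{-1}(\pp)$ equals $\depth_{R_\pp} M_\pp \le n$; an infimum of a nonempty finite set of values (in $\bbN \cup \{\infty\}$) being $\le n$ is attained, so there exists $\qq \in \varphi^{-1}(\pp)$ with $\depth_{S_\qq} M_\qq \le n$, i.e.\ $\qq \in A^n_S(M)$ and $\varphi(\qq) = \pp$. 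One should note here that the fiber $\varphi^{-1}(\pp)$ is nonempty precisely when $M_\pp \ne 0$; but if $\pp \in A^n_R(M)$ then $\depth_{R_\pp} M_\pp \le n < \infty$ forces $M_\pp \ne 0$, so the fiber is nonempty and lying-over is not even needed. The inclusion $A^n_S(M) \subseteq \varphi^{-1}(A^n_R(M))$ is then simply a restatement of the already-proved $\subseteq$ direction: $\qq \in A^n_S(M)$ implies $\varphi(\qq) \in A^n_R(M)$, i.e.\ $\qq \in \varphi^{-1}(A^n_R(M))$.

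There is no real obstacle here; the only point requiring a word of care is the reduction to $M$ being finitely generated over $R$ (needed so that ``$\depth_{R_\pp} M_\pp$'' and the sets $A^n_R(M)$ make sense in the first place), which holds because $R \to S$ is finite and $M \in \catmod S$, hence $M \in \catmod R$. Everything else is a direct translation of \cref{comparison of depth}, so the proof is essentially a two-line computation plus the observation that an infimum over a finite set is attained.

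\begin{proof}
Since $R \to S$ is finite and $M \in \catmod S$, we have $M \in \catmod R$, so the sets $A^n_R(M)$ are defined. Fix $n \ge 0$.

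We first show $\varphi(A^n_S(M)) = A^n_R(M)$. Let $\qq \in A^n_S(M)$ and put $\pp = \varphi(\qq)$. By \cref{comparison of depth},
\[
\depth_{R_\pp} M_\pp = \inf_{\qq' \in \varphi^{-1}(\pp)}\left\{\depth_{S_{\qq'}} M_{\qq'}\right\} \le \depth_{S_\qq} M_\qq \le n,
\]
so $\pp \in A^n_R(M)$. Conversely, let $\pp \in A^n_R(M)$. Then $\depth_{R_\pp} M_\pp \le n < \infty$, so $M_\pp \ne 0$ and hence $\varphi^{-1}(\pp) \ne \emptyset$. As $R \to S$ is finite, $\varphi^{-1}(\pp)$ is finite, so by \cref{comparison of depth} the infimum
\[
\inf_{\qq \in \varphi^{-1}(\pp)}\left\{\depth_{S_\qq} M_\qq\right\} = \depth_{R_\pp} M_\pp \le n
\]
is attained: there is $\qq \in \varphi^{-1}(\pp)$ with $\depth_{S_\qq} M_\qq \le n$, that is, $\qq \in A^n_S(M)$ and $\varphi(\qq) = \pp$. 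This proves $A^n_R(M) \subseteq \varphi(A^n_S(M))$, and the two inclusions give the claimed equality.

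Finally, the inclusion $\varphi(A^n_S(M)) \subseteq A^n_R(M)$ just established says exactly that $\qq \in A^n_S(M)$ implies $\varphi(\qq) \in A^n_R(M)$, i.e.\ $A^n_S(M) \subseteq \varphi^{-1}(A^n_R(M))$.
\end{proof}
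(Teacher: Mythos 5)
Your proof is correct and matches the paper's intended argument: the paper states this corollary with no written proof (just \qed) precisely because it is the immediate translation of \cref{comparison of depth} that you carry out, with both inclusions of the equality and the fiber inclusion read off from the infimum formula. The small extra observations you add (nonemptiness of the fiber when the depth is finite, attainment of the infimum in $\bbN\cup\{\infty\}$) are sound and only make explicit what the paper leaves implicit.
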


%
%
%

\begin{prp} \label{ass of certain hom}
Let $R \to S$ be a finite homomorphism of commutative noetherian rings.
Let $\varphi\colon \Spec S \to \Spec R$ be the induced map.
For $M\in \catmod R$, we have $\Ass_S(\Hom_R(S,M))=\varphi^{-1}(\Ass_R(M))$.
\end{prp}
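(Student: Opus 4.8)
The plan is to reduce to the local case and then use the adjunction between restriction and coinduction along the finite map $R \to S$. First I would localize: for a prime $\qq \in \Spec S$ lying over $\pp \in \Spec R$, one has $\Hom_R(S,M)_\qq \cong \Hom_{R_\pp}(S_\pp, M_\pp)_{\qq}$ (using that $S$ is module-finite over $R$, so $\Hom_R(S,M)$ commutes with localization at $\pp$), and $\qq \in \Ass_S \Hom_R(S,M)$ exactly when $\depth_{S_\qq}\Hom_R(S,M)_\qq = 0$. So it suffices to prove the statement after replacing $R$ by $R_\pp$; that is, I may assume $(R,\pp)$ is local and I must show $\Ass_S \Hom_R(S,M) = \varphi^{-1}(\{\pp\})$ precisely when $\pp \in \Ass_R M$, and is empty otherwise. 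Actually it is cleanest to argue directly without assuming $\pp \in \Ass M$: I claim in general $\Ass_S \Hom_R(S,M) = \{\qq \in \Spec S \mid \varphi(\qq) \in \Ass_R M\}$.

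The key computational input is the classical formula $\Ass_S \Hom_R(S,M) = \Supp_S S \cap \Ass_S(\text{something})$... but more precisely I would invoke the change-of-rings identity for associated primes under a finite ring map: for $N \in \catmod S$ and $M \in \catmod R$, $\Ass_S \Hom_R(N,M) = \Supp_S N \cap \{\qq : \varphi(\qq) \in \Ass_R M\}$, which is the $S$-linear refinement of \cref{ass hom}. Taking $N = S$ gives $\Supp_S S = \Spec S$, so $\Ass_S \Hom_R(S,M) = \varphi^{-1}(\Ass_R M)$ directly. To prove this refinement, I would either cite it (it is \cite[Chapter IV, \S 2.1]{Bourbaki CA} territory, analogous to \cref{ass hom}) or prove it by hand: the inclusion $\subseteq$ follows because an embedding $S/\qq \hookrightarrow \Hom_R(S,M)$ restricted along $R \to S$ gives $R/\varphi(\qq) \hookrightarrow \Hom_R(S,M)$ as $R$-modules, and then \cref{ass hom} (with $\Supp_R S = \Spec R$) forces $\varphi(\qq) \in \Ass_R M$; the reverse inclusion $\supseteq$ uses the adjunction isomorphism $\Hom_S(S/\qq, \Hom_R(S,M)) \cong \Hom_R(S/\qq, M)$ together with the fact that $\pp := \varphi(\qq) \in \Ass_R M$ means $\kappa(\pp) \hookrightarrow M_\pp$, whence after localizing at $\qq$ one produces a nonzero element of $\Hom_R(S/\qq, M)_\qq$ killed by $\qq S_\qq$, i.e. $\qq \in \Ass_S \Hom_R(S,M)$.

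The main obstacle is getting the reverse inclusion cleanly: showing that every prime over an associated prime of $M$ actually shows up as an associated prime of $\Hom_R(S,M)$. The slick way is the adjunction $\Hom_S(N, \Hom_R(S,M)) \cong \Hom_R(N,M)$ for $N \in \catmod S$, combined with \cref{ass hom} applied over $S$ in the form $\Ass_S \Hom_R(S,M) = \Supp_S(S) \cap \Ass_S(\text{coinduced module})$ — but one must be careful that $\Hom_R(S,M)$ need not be of the form "$N$ localized", so I would instead argue prime-by-prime after localizing at $\qq$ as above, using that $\Hom_R(S,M)_\qq = \Hom_{R_\pp}(S_\pp, M_\pp)_\qq$ and that $S_\pp$ is semilocal with $\qq$ among its maximal ideals. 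An alternative, if one prefers to avoid the refinement of \cref{ass hom}, is to combine \cref{ass hom} over $R$ (giving $\Ass_R \Hom_R(S,M) = \Ass_R M$ since $\Supp_R S = \Spec R$) with the general fact that for a finite map $R \to S$ and an $S$-module $L$, $\varphi(\Ass_S L) = \Ass_R L$ and each $\qq \in \varphi^{-1}(\pp) \cap \Ass_S L$ corresponds to an associated prime of the Artinian module $\Hom_R(S,M) \otimes_R \kappa(\pp)$-type construction — but this is more delicate, so I would go with the adjunction argument. I expect the writeup to be short: reduce to local, apply the coinduction adjunction to identify associated primes, and invoke \cref{ass hom}.
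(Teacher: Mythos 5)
Your proposal is correct and takes essentially the same route as the paper: localize at $\pp$ so that $\qq$ becomes maximal, use the hom-tensor adjunction $\Hom_S(S/\qq,\Hom_R(S,M))\cong\Hom_R(S/\qq,M)\ne 0$ for the inclusion $\supseteq$, and deduce $\subseteq$ from \cref{ass hom} plus the fact that associated primes of an $S$-module contract to associated primes over $R$. The only differences are cosmetic: you obtain the easy inclusion from the embedding $R/\varphi(\qq)\hookrightarrow S/\qq$ rather than the paper's depth-comparison lemma (\cref{comparison of depth}), and your parenthetical appeal to $\Supp_R S=\Spec R$ is never actually needed (and can fail when $R\to S$ is not injective), since $\Ass_R\Hom_R(S,M)\subseteq\Ass_R M$ already suffices.
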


\begin{proof}
We have the following by \cref{ass hom} and \cref{comparison of depth},
\[
\Ass_S(\Hom_R(S,M)) \subseteq \varphi^{-1}(\Ass_R(\Hom_R(S,M))) \subseteq \varphi(\Ass_R(M)).
\]
Conversely, take $P\in \Spec S$ such that $\pp:=\varphi(P)\in \Ass_R(M)$.
Localizing $R$, $S$ and $M$ at $\pp$, we may assume $\pp$ (resp.~$P$) is a maximal ideal of $R$ (resp.~$S$).
Moreover, $\Hom_R(S/P,M)\not=0$ since $\pp\in \Ass_R M$ and $S/P$ is only supported on $\pp$.
Then the hom-tensor adjointness isomorphism yields that
\[
\Hom_S(S/P,\Hom_R(S,M))\cong \Hom_R(S/P\otimes_S S,M) \not=0.
\]
As $P$ is maximal, this means that $P\in \Ass_S(\Hom_R(S,M))$.
\end{proof}

\begin{lem}\label{prp:A^1(p)}
For any $\pp \in \Spec R$,
we have $A^1_R(\pp) \subseteq A^1_R(R) \cup \{\pp\}$.
\end{lem}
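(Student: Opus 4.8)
The plan is to unwind the definition of $A^1_R(-)$ and reduce the statement to a case analysis governed entirely by the depth computation of \cref{prp:depth p}. Concretely, fix $\qq \in A^1_R(\pp)$, i.e.\ a prime with $\depth_{R_\qq}\pp_\qq \le 1$ (here $\pp$ is viewed as an $R$-module); I must show that either $\qq = \pp$ or $\depth R_\qq \le 1$, the latter meaning $\qq \in A^1_R(R)$.

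First I would treat the case $\qq \not\supseteq \pp$: by \cref{prp:depth p} we have $\depth_{R_\qq}\pp_\qq = \depth R_\qq$, so the hypothesis $\depth_{R_\qq}\pp_\qq \le 1$ immediately gives $\depth R_\qq \le 1$, hence $\qq \in A^1_R(R)$. Next, for the case $\qq \supsetneq \pp$, \cref{prp:depth p} gives the lower bound $\depth_{R_\qq}\pp_\qq \ge \min\{2,\depth R_\qq\}$; together with $\depth_{R_\qq}\pp_\qq \le 1$ this forces $\min\{2,\depth R_\qq\} \le 1$, and therefore $\depth R_\qq \le 1$, so again $\qq \in A^1_R(R)$. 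The only remaining possibility is $\qq = \pp$, which trivially lies in $\{\pp\}$; for completeness one notes that if $\pp_\pp = 0$ then $\depth_{R_\pp}\pp_\pp = \infty$ and $\pp$ is in fact not in $A^1_R(\pp)$, which only makes the inclusion easier. Assembling the three cases yields $A^1_R(\pp) \subseteq A^1_R(R) \cup \{\pp\}$.

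I do not expect a genuine obstacle here: the statement is essentially a repackaging of \cref{prp:depth p}. The only points requiring a little care are remembering to dispose of the degenerate subcase $\pp_\pp = 0$ and correctly reading off $\depth R_\qq \le 1$ from the inequality $\min\{2,\depth R_\qq\} \le 1$ in the case $\qq \supsetneq \pp$.
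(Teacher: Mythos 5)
Your argument is correct and is essentially the paper's own proof: the paper likewise takes $\qq \in A^1_R(\pp)$ and deduces from \cref{prp:depth p} that $\depth_{R_\qq}\pp_\qq \le 1$ forces $\depth R_\qq \le 1$ or $\qq = \pp$, with your version merely spelling out the case analysis that the paper leaves implicit.
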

\begin{proof}
Take $\qq \in A^1_R(\pp)$.
Then we have $\depth_{R_{\qq}} \pp_{\qq} \le 1$.
This occurs only when $\depth R_{\qq} \le 1$ or $\qq=\pp$ by \cref{prp:depth p}.
Thus, we have $\qq \in A^1_R(R) \cup \{\pp\}$.
\end{proof}

\begin{prp}\label{prp:prime in dom res KE}
Let $\catX$ be a KE-closed subcategory of $\catmod R$ containing $R$,
and let $\pp \in \Spec R$.
Then $\pp \in \catX$ if and only if $\pp \in A^1_R(\catX)$.
\end{prp}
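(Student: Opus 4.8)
The plan is to prove the two implications separately. The direction $\pp\in\catX \Rightarrow \pp\in A^1_R(\catX)$ is the easy one: it is immediate from \cref{rem n-ass}(4),(5) that $A^1_R(\pp)\subseteq A^1_R(\catX)$, but one must first check that $A^1_R(\pp)\ni\pp$ is not vacuous — more to the point, one needs that $\pp$ itself contributes a prime of depth $\le 1$. In fact it suffices to observe that $\pp\in\catX$ gives $A^1_R(\pp)\subseteq A^1_R(\catX)$, and to note $A^1_R(\pp)$ is nonempty (e.g.\ it contains $\pp$ whenever $\pp$ is not the zero module localized, which holds as $\pp\neq 0$ can be arranged, or one argues via a maximal prime over $\pp$). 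So the content is entirely in the converse.

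For the converse, suppose $\pp\in A^1_R(\catX)$, i.e.\ (by \cref{rem n-ass}(3)) there is $X\in\catX$ with $\depth_{R_\pp}X_\pp\le 1$. First I would reduce to the local case: using \cref{prp:local-to-global for KE} together with \cref{loc containment}, membership $\pp\in\catX$ can be checked after localizing at each maximal ideal, and the hypotheses (KE-closed, $R\in\catX$, $\pp\in A^1(\catX)$) all localize well by \cref{prp:localize KE again KE} and \cref{rem n-ass}(6) — though care is needed since $\pp$ need not survive localization at an arbitrary maximal ideal. So the real work is: assuming $(R,\mm)$ local, $\catX$ KE-closed with $R\in\catX$, and $\mm\in A^1(\catX)$ (the relevant case after localizing at $\pp$, where $\pp$ becomes the maximal ideal), show $\mm\in\catX$ — or more precisely handle the prime $\pp$ directly. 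The key tools are: since $R\in\catX$ and $\catX$ is extension-closed, \cref{imst} is available in reverse; and \cref{fct:Hom-ideal}(1) lets us pull $\Hom_R(X,M)$ into $\catX$ for any $M$.

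The heart of the argument should be this. Take $X\in\catX$ with $\depth_{R_\pp}X_\pp\le 1$. Consider $\Hom_R(R/\pp, X)$ or rather the submodule-type construction: by \cref{fct:Hom-ideal}(1), for any finitely generated $M$ we have $\Hom_R(M,X)\in\catX$. Applying this with $M=R/\pp$ (or with a suitable module having $\pp$ as its only associated prime), and using \cref{ass hom} to compute $\Ass_R\Hom_R(R/\pp,X)=\Supp(R/\pp)\cap\Ass X=V(\pp)\cap\Ass X$, one tries to arrange a module in $\catX$ whose associated primes and depth data pin down $\pp$. The precise mechanism: from the short exact sequence $0\to\pp\to R\to R/\pp\to 0$ and $R\in\catX$, if one can show $R/\pp\in\catX$ then $\pp=\Ker(R\to R/\pp)\in\catX$ by closure under kernels — so it suffices to show $R/\pp\in\catX$. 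To get $R/\pp\in\catX$, I expect to use \cref{KE desired Ass submodule} (or \cref{restrict KE by torf}) applied to a cleverly chosen module in $\catX$ with $\pp$ among its associated primes, combined with a depth/induction argument on $\htt\pp$ in the style of the proof of \cref{characterize free center}: localize, use \cref{center over depth zero} or the depth lemma at the base case, and propagate upward using \cref{Bass lemma} and \cref{prp:deoth R/p}. The main obstacle will be producing, from the mere inequality $\depth_{R_\pp}X_\pp\le 1$, an actual module in $\catX$ that "sees" $\pp$ as an associated prime — bridging the gap between a depth-$\le 1$ condition and a depth-$0$ (i.e.\ associated-prime) condition; this is presumably where the center $\ZZ_R(-)$ and the results of \cref{s:center} (especially \cref{NZ and subset} and \cref{prp:Supp Z(M)}(2)) enter, since taking $\Hom$ or centers can only raise depth up to $2$, so a depth-$1$ witness survives in a controlled way and can be cut down to a depth-$0$ witness for $\pp$ after a syzygy or kernel operation.
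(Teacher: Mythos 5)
Your proof of the hard direction rests on the reduction ``it suffices to show $R/\pp\in\catX$, then take the kernel of $R\to R/\pp$,'' and this is where the argument breaks: $R/\pp\in\catX$ would force $\depth_{R_\pp}(R/\pp)_\pp=0$, i.e.\ $\pp\in A^0(\catX)=\Ass\catX$, which is strictly stronger than the hypothesis $\pp\in A^1(\catX)$ and is false in general. For a concrete counterexample, let $R$ be a two-dimensional regular local ring and $\catX=\Se_2(R)$ (a KE-closed subcategory containing $R$, with $\Ass\catX=\{(0)\}$); for a height-one prime $\pp$ one has $\pp\in\catX$ and $\pp\in A^1(\catX)$, yet $R/\pp\notin\catX$. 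So no amount of work with $\Hom_R(R/\pp,X)$, centers, \cref{NZ and subset} or \cref{KE desired Ass submodule} can produce $R/\pp$ inside $\catX$; you are trying to bridge the depth-$1$ hypothesis down to a depth-$0$ (associated prime) statement that simply does not hold. The localization reduction you sketch also does not close: \cref{loc containment} requires $\pp_\qq\in\catX_\qq$ at \emph{every} maximal ideal $\qq$, and for $\qq\supsetneq\pp$ this is exactly as hard as the original problem, so nothing is gained by passing to the local case at $\pp$.

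The missing idea is the explicit membership criterion for KE-closed subcategories containing $R$: by \cref{KE-closed res} and \cref{thm:dom res} (packaged as \cref{dom res and A^1}), such a $\catX$ equals $\catmod^{\ass}_{A^0(\catX)}R\cap\catmod^1_{A^1(\catX)}R$, so $\pp\in\catX$ as soon as $A^0(\pp)\subseteq A^0(\catX)$ and $A^1(\pp)\subseteq A^1(\catX)$. The first inclusion holds because $\pp\subseteq R$ and $R\in\catX$; the second follows from the elementary estimate $A^1(\pp)\subseteq A^1(R)\cup\{\pp\}$ (\cref{prp:A^1(p)}, via \cref{prp:depth p}) together with the hypothesis $\pp\in A^1(\catX)$. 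None of the machinery from the center section is needed. Finally, in the easy direction your treatment of the degenerate case $\pp_\pp=0$ (``can be arranged'') is not an argument: when $R_\pp$ is a field one has $\depth_{R_\pp}\pp_\pp=\infty$, and one should instead use $R\in\catX$ and $\depth R_\pp=0$ to get $\pp\in A^0(\catX)\subseteq A^1(\catX)$, as the paper does by splitting on $\depth R_\pp$.
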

\begin{proof}
Suppose that $\pp \in\catX$.
If $\depth R_{\pp} \ge 1$,
then $\inf_{X\in\catX} \{\depth_{R_{\pp}} X_{\pp}\} \le \depth_{R_\pp} \pp_\pp = 1$
by \cref{prp:depth p}.
If $\depth R_{\pp} = 0$, 
then $\inf_{X\in\catX} \{\depth_{R_{\pp}} X_{\pp}\} \le \depth R_\pp = 0$.
In both cases, we have $\pp \in A^1_R(\catX)$.

Conversely, suppose that $\pp \in A^1(\catX)$.
It is enough to show that $A^i(\pp) \subseteq A^i(\catX)$ for $i=0,1$ by \cref{dom res and A^1}.
This follows from the following:
\[
A^0(\pp) \subseteq A^0(R) \subseteq A^0(\catX),\quad
A^1(\pp) \subseteq A^1(R) \cup \{\pp\} \subseteq A^1(\catX).
\]
Here, we use the fact that $\pp \subseteq R$ and \cref{prp:A^1(p)}.
\end{proof}

The following proposition plays a key role in the proof of the main theorem (\cref{thm:KE-closed reconstruction}) of this section.

\begin{prp} \label{ass change of rings}
Let $R \to S$ be a finite homomorphism of commutative noetherian rings.
Let $\varphi\colon \Spec S \to \Spec R$ be the induced map and $\rho\colon \catmod S \to \catmod R$ the restriction functor.
Then for a KE-closed subcategory $\catX$ of $\catmod R$, the following assertions hold true:
\begin{enua}
\item $\Ass_S(\rho^{-1}(\catX))=\varphi^{-1}(\Ass_R(\catX))$.
\item If $S_R\in\catX$, then
$A^1_S(\rho^{-1}(\catX))=\varphi^{-1}(A^1_R(\rho\rho^{-1}\catX))$.
\end{enua}
\end{prp}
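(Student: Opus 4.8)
The plan is to prove the two assertions by reducing to statements about individual modules and then invoking the results already established about the restriction functor, higher associated primes, and centers. For part (1), recall that $\rho^{-1}(\catX)$ consists of those $S$-modules $N\in\catmod S$ whose underlying $R$-module $\rho(N)$ lies in $\catX$. First I would observe that $\Ass_S(N)=\varphi^{-1}(\Ass_R(\rho(N)))$ is \emph{not} true in general, so the statement must be about the union over the whole subcategory. The inclusion $\Ass_S(\rho^{-1}(\catX))\subseteq\varphi^{-1}(\Ass_R(\catX))$ is easy: if $N\in\rho^{-1}(\catX)$ then $\rho(N)\in\catX$, and $\Ass_S(N)\subseteq\varphi^{-1}(\Ass_R\rho(N))$ by \cref{comparison of A^n} with $n=0$ (or directly from \cref{comparison of depth}); taking unions gives the claim. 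For the reverse inclusion $\varphi^{-1}(\Ass_R(\catX))\subseteq\Ass_S(\rho^{-1}(\catX))$, take $\pp\in\Ass_R(M)$ for some $M\in\catX$ and any $P\in\varphi^{-1}(\pp)$. The key device is the module $\Hom_R(S,M)$: it lies in $\catmod S$, its restriction to $R$ is $\Hom_R(S,M)\in\catX$ by \cref{fct:Hom-ideal}(1) (using that $S$ is a finitely generated $R$-module), so $\Hom_R(S,M)\in\rho^{-1}(\catX)$; and by \cref{ass of certain hom} we have $P\in\varphi^{-1}(\Ass_R M)=\Ass_S(\Hom_R(S,M))$. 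Hence $P\in\Ass_S(\rho^{-1}(\catX))$, which finishes part (1).

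For part (2), assume $S_R\in\catX$, so in particular $S\in\rho^{-1}(\catX)$ and $S$ is in the image of $\rho^{-1}$. Write $\catY:=\rho^{-1}(\catX)$, a KE-closed subcategory of $\catmod S$ by (the proof technique behind) the change-of-rings setup, and note $S\in\catY$. The inclusion $A^1_S(\catY)\subseteq\varphi^{-1}(A^1_R(\rho\catY))$ follows from \cref{comparison of A^n} with $n=1$ applied termwise and then taking unions, together with $\rho(\catY)=\rho\rho^{-1}\catX$. For the reverse inclusion, let $P\in\Spec S$ with $\varphi(P)=\pp\in A^1_R(\rho\rho^{-1}\catX)$; I want to show $P\in A^1_S(\catY)$, i.e. $\inf_{Y\in\catY}\{\depth_{S_P}Y_P\}\le 1$. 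Since $S\in\catY$ is a KE-closed subcategory of $\catmod S$ containing the free module $S$, \cref{prp:prime in dom res KE} applies: it suffices to produce a prime ideal of $S$ lying in $\catY$ and mapping appropriately — more precisely, it suffices to show $P\in A^1_S(\catY)$, and by \cref{prp:prime in dom res KE} (applied over $S$) this is equivalent to $P\in\catY$ when $P$, viewed as an $S$-module, belongs to $\catY$. So the task reduces to: given that $\pp\in A^1_R(\rho\catY)$, find a module in $\catX$ that is, $\pp$-locally, close enough to $\kk(\pp)$ or to $R_\pp$ to force $P\in\catY$.

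The technical heart, and the step I expect to be the main obstacle, is this reverse inclusion in (2): translating the hypothesis $\pp\in A^1_R(\rho\rho^{-1}\catX)$ — meaning there exists $X\in\catX$ of the form $\rho(N)$ with $\depth_{R_\pp}X_\pp\le 1$ — into membership $P\in\rho^{-1}(\catX)=\catY$ for the prime $P$. The plan here is to localize at $\pp$ (hence at $P$), so we may assume $(R,\pp)$ local and $P$ a maximal ideal of $S$; then use \cref{prp:prime in dom res KE} over $S$ together with \cref{dom res and A^1} to reduce membership of $P\in\catmod S$ in $\catY$ to the two inclusions $A^0_S(P)\subseteq A^0_S(\catY)$ and $A^1_S(P)\subseteq A^1_S(\catY)$, via \cref{prp:A^1(p)} applied over $S$ (which gives $A^1_S(P)\subseteq A^1_S(S)\cup\{P\}$). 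Since $S\in\catY$, the inclusion $A^1_S(S)\subseteq A^1_S(\catY)$ is automatic, so everything comes down to $P\in A^1_S(\catY)$, and by \cref{comparison of A^n} and \cref{comparison of depth} this will follow once we exhibit a module $N\in\catY$ with $\depth_{S_P}N_P\le 1$; the candidate is $\Hom_R(S,X)$ (or a suitable syzygy/submodule of it singled out via \cref{fct:Goto-Watanabe} or \cref{KE desired Ass submodule}), whose restriction lies in $\catX$ by \cref{fct:Hom-ideal}(1) and whose depth at $P$ is controlled by $\depth_{R_\pp}X_\pp\le 1$ through \cref{comparison of depth}. Assembling these depth inequalities carefully — making sure the infimum over $\catY$ really drops to at most $1$ at $P$ and not just that \emph{some} restriction does — is the delicate bookkeeping I would isolate as the crux.
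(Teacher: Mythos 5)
Part (1) of your proposal is correct and coincides with the paper's argument: the easy inclusion from \cref{comparison of A^n}, and the reverse inclusion via the module $\Hom_R(S,M)$ together with \cref{fct:Hom-ideal}(1) and \cref{ass of certain hom}.

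For part (2), however, there is a genuine gap at exactly the point you yourself isolate as the crux, and the tools you cite do not close it. \cref{comparison of depth} only says that $\inf_{Q\in\varphi^{-1}(\pp)}\depth_{S_Q}N_Q=\depth_{R_\pp}N_\pp$: it produces \emph{some} prime $Q$ over $\pp$ at which the depth drops to $\le 1$, but gives no upper bound at the prescribed prime $P$; likewise \cref{comparison of A^n} only gives $A^1_S(M)\subseteq\varphi^{-1}(A^1_R(M))$, which is the direction you already used for the easy inclusion. So the assertion that the depth of $\Hom_R(S,X)$ at $P$ ``is controlled by $\depth_{R_\pp}X_\pp\le1$ through \cref{comparison of depth}'' is unsupported; none of the results you invoke bounds $\depth_{S_P}\Hom_R(S,X)_P$ from above. (Such a bound is in fact true, via the adjunction isomorphism $\Ext^i_S(S/P,\Hom_R(S,X))\cong\Ext^i_R(S/P,X)$, which yields $\depth_{S_P}\Hom_R(S,X)_P=\depth_{R_\pp}X_\pp$ for \emph{every} $P$ over $\pp$; but that is an additional argument not contained in, or cited from, the paper, and you would have to prove it—note it would not even use the hypothesis $S_R\in\catX$.) In addition, your intermediate reduction is circular: you reduce ``$P\in A^1_S(\catY)$'' to ``$P\in\catY$'' via \cref{prp:prime in dom res KE}, then via \cref{dom res and A^1} and \cref{prp:A^1(p)} back to ``$P\in A^1_S(\catY)$'', which is the statement to be proved.

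The paper resolves the crux by a different device, which your proposal lacks. From \cref{comparison of depth} one gets some $Q\in\varphi^{-1}(\pp)$ with $Q\in A^1_S(\rho^{-1}\catX)$, hence $Q\in\rho^{-1}(\catX)$ by \cref{prp:prime in dom res KE} (this is where $S_R\in\catX$ enters). Then, since primes over $\pp$ are incomparable, $(S/P)_\pp$ and $(S/Q)_\pp$ are finite-dimensional $\kk(\pp)$-vector spaces, so a local isomorphism $(S/P)^{\oplus n}_\pp\cong(S/Q)^{\oplus m}_\pp$ extends to an $R$-linear map $(S/P)^{\oplus n}\to(S/Q)^{\oplus m}$, injective because $\Ass_R(S/P)=\{\pp\}$; pulling back along $S^{\oplus m}\surj(S/Q)^{\oplus m}$ and using that $\catX$ is closed under extensions and kernels gives $P_R\in\catX$, i.e.\ $P\in\rho^{-1}(\catX)$, and \cref{prp:prime in dom res KE} converts this back into $P\in A^1_S(\rho^{-1}\catX)$. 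Either this transfer-from-$Q$-to-$P$ construction or the adjunction identity above is indispensable; as written, your proof of (2) does not go through.
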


\begin{proof}
(1): 
Note that if $X\in \catX$, then $\Hom_R(S,X) \in \rho^{-1}(\catX)$. Then
the inclusion $\varphi^{-1}(\Ass_R(\catX))\subseteq \Ass_S(\rho^{-1}(\catX))$ follows from \cref{ass of certain hom}, and the opposite one is due to \cref{comparison of A^n}.

(2): 
It follows from \cref{comparison of A^n} that $A^1_S(\rho^{-1}(\catX))\subseteq \varphi^{-1}(A^1_R(\rho\rho^{-1}\catX))$.
We prove the opposite inclusion.
Let $P\in \varphi^{-1}(A^1_R(\rho\rho^{-1}\catX))$ 
and set $\pp:=\varphi(P) \in A^1_R(\rho\rho^{-1}\catX)$.
We aim to show $P\in A^1_S(\rho^{-1}(\catX))$.
There is $N\in \catmod S$ such that $N_R \in \catX$ and $\pp \in A^1_R(N)$.
\cref{comparison of depth} says that there is $Q\in \varphi^{-1}(\pp)$ such that 
$\depth_{S_Q} N_Q = \depth_{R_\pp} N_\pp \le 1$.
This means $Q \in A^1_S(\rho^{-1}(\catX))$.
By the assumption $S_R\in\catX$ and \cref{prp:prime in dom res KE},
we have $Q\in \rho^{-1}(\catX)$.
In the following, we prove that $\rho^{-1}(\catX)$ also contains $P$.

We have $(S/P)_\pp \cdot \pp=0$ as $PS_{\pp} \supseteq \pp S_{\pp}$.
Thus $(S/P)_{\pp} \iso \kk(\pp)^{\oplus m}$ as $R_{\pp}$-modules for some $m > 0$.
Similarly, we have $(S/Q)_{\pp} \iso \kk(\pp)^{\oplus n}$ as $R_{\pp}$-modules for some $n > 0$.
Thus, $(S/P)^{\oplus n}_{\pp} \iso \kk(\pp)^{\oplus nm} \iso (S/Q)^{\oplus m}_{\pp}$
as $R_{\pp}$-modules.
Extending this $R_{\pp}$-isomorphism, we obtain an $R$-linear map $f \colon (S/P)^{\oplus n} \to (S/Q)^{\oplus m}$ such that $f_{\pp}$ is an isomorphism.
Then $f$ is injective since $\Ass_R(S/P)=\varphi(\Ass_S(S/P)) =\{\pp\}$.
Composing the homomorphism $f$ and the natural surjection $S^{\oplus n} \surj (S/P)^{\oplus n}$,
we obtain an exact sequence $0 \to P^{\oplus n} \to S^{\oplus n} \to (S/Q)^{\oplus m}$.
Pulling back this sequence 
by the natural surjection $S^{\oplus m} \surj (S/Q)^{\oplus m}$,
we obtain the following commutative diagram of $R$-modules with exact rows and columns:

\[
\begin{tikzcd}
& & 0 \ar[d] & 0 \ar[d] &\\
& & Q^{\oplus m} \ar[d] \ar[r, equal] & Q^{\oplus m} \ar[d] &\\
0 \ar[r] & P^{\oplus n} \ar[r] \ar[d, equal] & X \ar[r] \ar[d] \pb & S^{\oplus m}  \ar[d] \\
0 \ar[r] & P^{\oplus n} \ar[r] & S^{\oplus n} \ar[r] \ar[d] & (S/Q)^{\oplus m}  \ar[d] \\
& & 0 & 0 &.
\end{tikzcd}
\]
Since $Q_R,S_R\in\catX$ and $\catX$ is extension-closed, we have $X\in\catX$.
Also, since $S_R, X\in\catX$ and $\catX$ is KE-closed, we obtain $P_R\in\catX$,
which means $P\in \rho^{-1}(\catX)$.
Therefore, we have $P\in A^1_S(\rho^{-1}(\catX))$ by \cref{prp:prime in dom res KE}.
\end{proof}

For the proof the main theorem of this section, we need the following three technical lemmas on KE-closed subcategories.

\begin{lem} \label{decomposition}
Let $\catX$ and $\catY$ be KE-closed subcategories of $\catmod R$.
Consider a decomposition $\Ass \catX=\Phi\sqcup \Psi$ as a set.
Assume $\catmod^\ass_\Phi R\subseteq \catX\subseteq \catY$.
If $\catX \cap \catmod^\ass_\Psi R=\catY \cap \catmod^\ass_\Psi R$, then $\catX=\catY$.
\end{lem}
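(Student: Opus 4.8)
The plan is to exploit that a KE-closed subcategory of $\catmod R$ is a torsion-free class of its torsion-free closure (\cref{prp:KE=torf in torf}), together with the decomposition of a module along a partition of its associated primes (\cref{fct:Goto-Watanabe}). Since the inclusion $\catX\subseteq\catY$ is assumed, it suffices to prove $\catY\subseteq\catX$. So I would fix $M\in\catY$ and try to produce a short exact sequence
\[
0\to L\to M\to N\to 0
\]
with $N\in\catmod^{\ass}_{\Phi}R$ and $L\in\catY\cap\catmod^{\ass}_{\Psi}R$. Once this is in hand the conclusion is immediate: $N\in\catmod^{\ass}_{\Phi}R\subseteq\catX$ by the assumption $\catmod^{\ass}_{\Phi}R\subseteq\catX$, while $L\in\catY\cap\catmod^{\ass}_{\Psi}R=\catX\cap\catmod^{\ass}_{\Psi}R\subseteq\catX$ by the assumed equality, and since $\catX$ is closed under extensions (being KE-closed) we get $M\in\catX$.

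To build the sequence I would apply \cref{fct:Goto-Watanabe} to the partition $\Ass M=(\Ass M\cap\Psi)\sqcup(\Ass M\setminus\Psi)$, obtaining $0\to L\to M\to N\to 0$ with $\Ass L=\Ass M\cap\Psi$ and $\Ass N=\Ass M\setminus\Psi$. Then $\Ass L\subseteq\Psi$ gives $L\in\catmod^{\ass}_{\Psi}R$ at once. Moreover all three terms have associated primes contained in $\Ass M\subseteq\Ass\catY$, so the displayed sequence is a conflation in $\F(\catY)=\catmod^{\ass}_{\Ass\catY}R$ (\cref{prp:torf closure for comm ring}); as $\catY$ is a torsion-free class of $\F(\catY)$, it is closed under admissible subobjects, whence $L\in\catY$ (alternatively, one can quote \cref{KE desired Ass submodule} directly). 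Thus $L\in\catY\cap\catmod^{\ass}_{\Psi}R$, and it only remains to see $N\in\catmod^{\ass}_{\Phi}R$, i.e.\ $\Ass M\setminus\Psi\subseteq\Phi$, equivalently $\Ass M\subseteq\Phi\sqcup\Psi$.

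The step I expect to require the most care is exactly this last containment: that $\Ass M\subseteq\Phi\sqcup\Psi$ for every $M\in\catY$, equivalently $\Ass\catY=\Ass\catX$. The inclusion $\Ass\catX\subseteq\Ass\catY$ is free from $\catX\subseteq\catY$; for the reverse I would take, for a given $\pp\in\Ass\catY$, a witness $M'\in\catY$ with $\pp\in\Ass M'$, pass (via \cref{fct:Goto-Watanabe} and closure of $\catY$ under admissible subobjects) to a submodule $L'\in\catY$ with $\Ass L'=\{\pp\}$, and then use the two hypotheses $\catmod^{\ass}_{\Phi}R\subseteq\catX$ and $\catX\cap\catmod^{\ass}_{\Psi}R=\catY\cap\catmod^{\ass}_{\Psi}R$ to locate $\pp$ inside $\Phi\sqcup\Psi$. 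With $\Ass\catY=\Ass\catX$ established, the extension argument above completes the proof that $\catX=\catY$.
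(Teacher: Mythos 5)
Up to its final step your argument is exactly the paper's proof: the paper likewise takes $M\in\catY$, applies \cref{KE desired Ass submodule} (which, as you note, is \cref{fct:Goto-Watanabe} combined with closure of $\catY$ under admissible subobjects inside $\F(\catY)$) to obtain $0\to L\to M\to N\to 0$ with $L\in\catY\cap\catmod^{\ass}_{\Psi}R=\catX\cap\catmod^{\ass}_{\Psi}R$ and $N\in\catmod^{\ass}_{\Phi}R\subseteq\catX$, and then concludes $M\in\catX$ by extension-closedness.

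The gap is the step you defer to the end. For your $N$ to lie in $\catmod^{\ass}_{\Phi}R$ you need $\Ass M\subseteq\Phi\sqcup\Psi$ for every $M\in\catY$, i.e.\ $\Ass\catY\subseteq\Ass\catX$, and your sketch for this (produce $L'\in\catY$ with $\Ass L'=\{\pp\}$, then ``use the two hypotheses to locate $\pp$ inside $\Phi\sqcup\Psi$'') is not an argument and cannot be completed: the hypotheses only constrain $\catY$ on modules whose associated primes lie in $\Psi$, so they do not force the containment. Concretely, let $R=k[[x,y]]$, $\pp=(x)$, $\mm=(x,y)$, and set $\catX=\catmod^{\ass}_{\{\pp,\mm\}}R$ and $\catY=\catmod^{\ass}_{\{(0),\pp,\mm\}}R$ (torsion-free classes by \cref{fct:Takahashi}, hence KE-closed), with $\Phi=\{\mm\}$ and $\Psi=\{\pp\}$. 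Then $\Ass\catX=\Phi\sqcup\Psi$, $\catmod^{\ass}_{\Phi}R\subseteq\catX\subseteq\catY$, and $\catX\cap\catmod^{\ass}_{\Psi}R=\catmod^{\ass}_{\Psi}R=\catY\cap\catmod^{\ass}_{\Psi}R$, yet $R\in\catY\setminus\catX$. So the reduction to $\Ass\catY=\Ass\catX$ is an unfillable hole for the statement as literally written. In fairness, the paper's own one-line proof silently assumes the same containment when it asserts $N\in\catmod^{\ass}_{\Phi}R$; the lemma is evidently intended with the decomposition taken of $\Ass\catY$ (equivalently, with the extra hypothesis $\Ass\catY\subseteq\Phi\cup\Psi$), which holds in its only application in the proof of \cref{thm:KE-closed reconstruction}, where $\catY\subseteq\catmod^{\ass}_{\Ass\catX}R$ by construction. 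Under that corrected reading your step (5) becomes vacuous and your proof is complete and identical to the paper's; as written, it has a genuine gap precisely at the point you flagged.
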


\begin{proof}
Take $M\in \catY$.
Applying \cref{KE desired Ass submodule} to $M$, there is an exact sequence $0 \to L \to M \to N \to 0$ such that $N\in \catmod^\ass_\Phi R$ and $L\in \catY\cap \catmod^\ass_\Psi R$.
By the assumption, $N,L\in \catX$, and so $M\in \catX$.
\end{proof}

\begin{lem} \label{lem compare ass}
Let $\catX$ be a KE-closed subcategory of $\catmod R$.
Set $\catY=\catmod_{A^0(\catX)}^0 R \cap \catmod_{A^1(\catX)}^1 R$.
Then
\begin{enua}
\item $\catX\subseteq \catY$.
\item $A^0_R(\catX)=A^0_R(\catY)$ and $A^1_R(\catX)=A^1_R(\catY)$.
\item If $S\subseteq R$ is a multiplicatively closed subset, then $A^0_{R_S}(\catX_S)=A^0_{R_S}(\catY_S)$ and
$A^1_{R_S}(\catX_S)=A^1_{R_S}(\catY_S)$.
\end{enua}
\end{lem}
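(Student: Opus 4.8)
\textbf{Plan for the proof of \cref{lem compare ass}.}
The three statements are almost formal once we unravel the definitions, so the plan is to prove (1) first, deduce one inclusion in (2) from it, prove the reverse inclusion in (2) by a separate argument, and then obtain (3) by localizing the whole setup. For (1), take $M \in \catX$ and check $M \in \catY$, i.e.\ $A^0(M) \subseteq A^0(\catX)$ and $A^1(M) \subseteq A^1(\catX)$; both are immediate from the definitions $A^n(\catX) = \bigcup_{X \in \catX} A^n(X)$, since $M$ itself is one of the modules in the union. This is the easy part.

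For (2), the inclusions $A^0(\catX) \subseteq A^0(\catY)$ and $A^1(\catX) \subseteq A^1(\catY)$ follow directly from (1) together with the monotonicity $\catX \subseteq \catY \Rightarrow A^n(\catX) \subseteq A^n(\catY)$ recorded in \cref{rem n-ass}(5). The substantive content is the reverse inclusions $A^n(\catY) \subseteq A^n(\catX)$ for $n = 0, 1$. Here I would argue by the characterization in \cref{rem n-ass}(3): a prime $\pp$ lies in $A^n(\catY)$ iff $\inf_{Y \in \catY}\{\depth_{R_\pp} Y_\pp\} \le n$. So suppose $\pp \in A^n(\catY)$; pick $Y \in \catY$ with $\depth_{R_\pp} Y_\pp \le n$. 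Since $Y \in \catY = \catmod^0_{A^0(\catX)} R \cap \catmod^1_{A^1(\catX)} R$, we know $A^0(Y) \subseteq A^0(\catX)$ and $A^1(Y) \subseteq A^1(\catX)$ by definition of $\catmod^\bullet_\Phi R$. For $n = 0$: $\depth_{R_\pp} Y_\pp = 0$ means $\pp \in A^0(Y) \subseteq A^0(\catX)$, done. For $n = 1$: $\depth_{R_\pp} Y_\pp \le 1$ means $\pp \in A^1(Y) \subseteq A^1(\catX)$, done. So in fact the reverse inclusions hold on the nose, and the only thing one must be careful about is matching the membership condition $A^n(Y) \subseteq \Phi$ in the definition of $\catmod^n_\Phi R$ against the characterization of $A^n(\catY)$; this is where a slip is most likely, but it is purely bookkeeping.

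For (3), I would first observe, via \cref{prp:localize KE again KE}, that $\catX_S$ is again KE-closed in $\catmod R_S$, so parts (1) and (2) apply to $\catX_S$ in place of $\catX$; this gives $A^n_{R_S}(\catX_S) = A^n_{R_S}(\catY')$ where $\catY' := \catmod^0_{A^0_{R_S}(\catX_S)} R_S \cap \catmod^1_{A^1_{R_S}(\catX_S)} R_S$. It then remains to identify $\catY'$ with $\catY_S$, or at least to show they have the same $A^0$ and $A^1$. Using \cref{rem n-ass}(6), $A^n_{R_S}(\catX_S) = A^n_R(\catX) \cap \Spec R_S = A^n_R(\catY) \cap \Spec R_S = A^n_{R_S}(\catY_S)$, where the middle equality is part (2). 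This already gives the claimed equalities $A^0_{R_S}(\catX_S) = A^0_{R_S}(\catY_S)$ and $A^1_{R_S}(\catX_S) = A^1_{R_S}(\catY_S)$ directly, without even needing to identify $\catY'$; so (3) is really just (2) combined with the localization compatibility of the $A^n$ operators. The only real obstacle anywhere is psychological — making sure the quantifier structure in ``$\inf_{X \in \catX}\{\depth_{R_\pp} X_\pp\} \le n$ iff $\pp \in A^n(\catX)$'' is used correctly — and once that is pinned down the proof is a short chain of definitional manipulations and applications of \cref{rem n-ass}.
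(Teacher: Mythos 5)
Your proposal is correct and follows essentially the same route as the paper: (1) is the definitional check $A^i(M)\subseteq A^i(\catX)$, (2) combines this with monotonicity for one inclusion and unwinds the definition of $\catmod^n_\Phi R$ for the other, and (3) is exactly the combination of (2) with the localization compatibility $A^n_{R_S}(\catX_S)=A^n_R(\catX)\cap\Spec R_S$ from \cref{rem n-ass}. The extra detour through \cref{prp:localize KE again KE} that you mention for (3) is, as you yourself note, unnecessary, and the paper likewise omits it.
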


\begin{proof}
(1): Let $X\in \catX$.
Then, for $i=0,1$, $A^i(X)\subseteq A^i(\catX)$.
Therefore, $X\in \catY$.

(2): It is clear that for $i=0,1$, $A^i(\catY) \subseteq A^i(\catX)$.
On the other hand, by (1), we have that for $=0,1$, $A^i(\catX) \subseteq A^i(\catY)$.

(3): This follows from (2) with \cref{rem n-ass}.
\end{proof}

\begin{lem}\label{lem for replace Ass}
Assume that $(R,\mm)$ is local.
Let $\catX$ be a KE-closed subcategory of $\catmod R$.
Put $\Phi:=\Ass\catX$ and suppose that $\mm \in \Phi$.
If $\fl R \ne \catX$,
then $A^1(\catX \cap \catmod^{0}_{\Phi\setminus\{\mm\}}R)=A^1(\catX)$.
\end{lem}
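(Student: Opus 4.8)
The plan is to work with the KE-closed subcategory
\[
\catX':=\catX\cap\catmod^{0}_{\Phi\setminus\{\mm\}}R=\catX\cap\catmod^{\ass}_{\Phi\setminus\{\mm\}}R,
\]
where the second equality is \cref{rem n-ass}(7). Since $\Phi\setminus\{\mm\}\subseteq\Phi=\Ass\catX$, \cref{restrict KE by torf} tells us that $\catX'$ is a KE-closed subcategory of $\catmod R$ with $\Ass\catX'=\Phi\setminus\{\mm\}$. The inclusion $A^1(\catX')\subseteq A^1(\catX)$ is immediate from $\catX'\subseteq\catX$ and \cref{rem n-ass}(5), so the whole content is the reverse inclusion $A^1(\catX)\subseteq A^1(\catX')$. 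I would prove this by taking $\pp\in A^1(\catX)$ and distinguishing the two cases $\pp\ne\mm$ and $\pp=\mm$.

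For $\pp\ne\mm$ the argument is the standard ``split off the finite-length part'' trick. Choose $X\in\catX$ with $\depth_{R_\pp}X_\pp\le 1$ and apply \cref{KE desired Ass submodule} (whose proof moreover yields $L\in\catX$) to $X$ and the subset $\Ass X\setminus\{\mm\}$ of $\Ass X$: this gives an exact sequence $0\to L\to X\to N\to 0$ with $L\in\catX$, $\Ass L=\Ass X\setminus\{\mm\}$ and $\Ass N\subseteq\{\mm\}$. Then $\Ass L\subseteq\Ass X\subseteq\Phi$ and $\mm\notin\Ass L$, so $L\in\catX'$; and $\Supp N\subseteq\{\mm\}$ forces $N_\pp=0$, hence $L_\pp\cong X_\pp$ and $\depth_{R_\pp}L_\pp\le 1$, i.e.\ $\pp\in A^1_R(L)\subseteq A^1(\catX')$.

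The case $\pp=\mm$ is the main obstacle, and it is exactly here that the hypothesis $\fl R\ne\catX$ is used. Since $\mm\in\Max R\cap\Ass\catX$, \cref{max ideal KE contains fl} gives $\fl R=\catmod^{\ass}_{\{\mm\}}R\subseteq\catX$, and this inclusion is strict; hence there is $Y\in\catX$ with $\Ass Y\not\subseteq\{\mm\}$. Applying \cref{KE desired Ass submodule} to $Y$ and $\Ass Y\setminus\{\mm\}$ produces a nonzero module $L\in\catX'$ with $\mm\notin\Ass L$, so that $\depth_R L\ge 1$. I would then manufacture a module of depth exactly $1$ inside $\catX'$: by Nakayama's lemma $L\ne\mm L$, so there is a surjection $\pi\colon L\surj R/\mm$; set $L':=\Ker\pi$. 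Since $\catX$ is closed under kernels and $L,R/\mm\in\catX$, we get $L'\in\catX$; as $L'\subseteq L$ we have $\Ass L'\subseteq\Ass L\subseteq\Phi\setminus\{\mm\}$, so $L'\in\catX'$; and $L'\ne 0$, for otherwise $\pi$ would be an isomorphism and $\mm\in\Ass L$. Now the depth lemma applied to $0\to L'\to L\to R/\mm\to 0$ together with $\depth_R L\ge 1$ forces $\depth_R L'\le 1$, while $L'\ne 0$ and $\mm\notin\Ass L'$ force $\depth_R L'\ge 1$. Hence $\depth_R L'=1$, so $\mm\in A^1_R(L')\subseteq A^1(\catX')$.

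Combining the two cases gives $A^1(\catX)\subseteq A^1(\catX')$, hence equality. The only genuinely delicate point is the $\pp=\mm$ case: one must exhibit a depth-one module with no $\mm$-primary component lying in $\catX$, and the assumption $\fl R\ne\catX$ is precisely what makes $\catX$ large enough for the kernel $L'$ above to do the job. I would also record that the case $\pp=\mm$ is never vacuous, since $R/\mm\in\fl R\subseteq\catX$ already puts $\mm\in A^1(\catX)$; but it is always resolved by the construction regardless of that.
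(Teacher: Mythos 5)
Your proof is correct and follows essentially the same route as the paper: strip off the $\mm$-primary part via \cref{KE desired Ass submodule} for primes $\pp\ne\mm$, and for $\pp=\mm$ use $\fl R\subsetneq\catX$ to find a positive-depth module in $\catX\cap\catmod^{0}_{\Phi\setminus\{\mm\}}R$ and cut its depth down to $1$ by taking a kernel against a finite-length quotient. The only cosmetic difference is that the paper uses $\mm M=\Ker(M\surj M/\mm M)$ where you use the kernel of a surjection onto $R/\mm$; both rest on $\fl R\subseteq\catX$, closure under kernels, and the depth lemma.
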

\begin{proof}
Set $\catX':=\catX \cap \catmod^{0}_{\Phi\setminus\{\mm\}}R$.
As $A^1(\catX')\subseteq A^1(\catX)$ clearly holds, we prove the converse.
We first show that $\mm \in A^1(\catX')$.
Note that $\fl R \subsetneq \catX$ by $\mm\in \Phi$ and \cref{max ideal KE contains fl}.
Take $X \in \catX$ such that $X\not\in \fl R$.
Applying \cref{KE desired Ass submodule} to $X$,
we obtain a nonzero module $M \in \catX$ such that $\depth M \ge 1$.
Then $\mm M \in \catX$ as $M, M/\mm M \in \catX$.
We also have $\Ass (\mm M) \subseteq \Ass M \subseteq \Phi \setminus \{\mm\}$ and $\depth \mm M = 1$ by the depth lemma.
This proves $\mm \in A^1(\mm M) \subseteq A^1(\catX')$.

Next,
take $\pp \in A^1(\catX)$, and prove $\pp\in A^1(\catX')$.
We may assume that $\pp \ne \mm$.
There exists $X \in \catX$ such that $\pp\in A^1(X)$.
If $\mm \not\in \Ass X$, then $X \in \catX'$, and $\pp \in A^1(\catX')$.
Assume that $\mm \in \Ass X$.
The case $\Ass X = \{\mm\}$ does not occur
because $A^1(\fl R)\subseteq \Supp(\fl R)=\{\mm\}$.
There is an exact sequence $0 \to L \to X \to N \to 0 $ such that 
$\Ass L = \Ass X \setminus\{\mm\}$, $\Ass N = \{\mm\}$, and $L \in \catX$ by \cref{KE desired Ass submodule}.
Then $L \in \catX'$ and $L_\pp \iso X_\pp$.
We have $\depth_{R_\pp} L_\pp =\depth_{R_\pp} X_\pp\le 1$, and hence $\pp \in A^1(\catX')$.
\end{proof}

For a commutative ring, we denote by $Q(R)$ the total ring of quotients.
Then for $R$-submodules $I,J$ of $Q(R)$, write $I:J$ to be $\{a\in Q(R)\mid aJ \subseteq I\}$.
It is easy to see that $I:I$ is an $R$-subalgebra of $Q(R)$.
For our purpose, we need the following modified version of \cite[Proposition 4.27]{KS}.

\begin{prp} \label{subalg seq}
Let $R$ be a commutative noetherian ring.
Let $S$ be an $R$-subalgebra of $Q(R)$ such that $S/R$ has finite length as an $R$-module.
Then there exists a sequence of $R$-subalgebras $R=S_0 \subsetneq S_1 \subsetneq \cdots \subsetneq S_n=S$ such that for each $i=0,\dots,n-1$, there exists $\mm_i\in\Max(S_i)\setminus\Ass(S_i)$ such that $S_{i+1}\subseteq \mm_i:\mm_i$.
\end{prp}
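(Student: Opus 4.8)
The plan is to induct on the length $\ell_R(S/R)$. If $\ell_R(S/R)=0$ then $S=R$ and there is nothing to prove, so assume $S\supsetneq R$. The key point is to produce a single intermediate algebra $S_1$ with $R\subsetneq S_1\subseteq S$, $\ell_R(S_1/R)<\ell_R(S/R)$, and $S_1\subseteq \mm_0:\mm_0$ for some $\mm_0\in\Max(R)\setminus\Ass(R)$; one then applies the induction hypothesis to the inclusion $S_1\subseteq S$ (noting $S/S_1$ still has finite length, as it is a quotient of $S/R$) and prepends $R\subsetneq S_1$ to the resulting chain. To find $S_1$, choose $\mm_0$ to be a maximal ideal of $R$ lying in $\Supp_R(S/R)$; since $S/R$ has finite length, every prime in its support is maximal, and we must argue that $\mm_0$ can be chosen outside $\Ass R$.

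First I would justify that choice of $\mm_0$. Because $S$ is an $R$-subalgebra of $Q(R)$, the inclusion $R\hookrightarrow S$ becomes an isomorphism after localizing at any $\pp\in\Ass R$: indeed $S_\pp$ is an $R_\pp$-subalgebra of $Q(R)_\pp$, and since $R_\pp$ has depth $0$ its total quotient ring is an essential extension situation — more precisely, every element of $Q(R)$ that is integral-like over $R$ at an associated prime must already lie in $R_\pp$ because a nonzerodivisor avoiding $\pp$ on a module with $\pp\in\Ass$ forces the localization of $S/R$ to vanish (any $s\in S$ satisfies $ds\in R$ for a nonzerodivisor $d$, and $d$ is a unit in $R_\pp$ since $d$ avoids all associated primes). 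Hence $\Supp_R(S/R)\cap\Ass R=\emptyset$, so picking any maximal element $\mm_0$ of the nonempty finite set $\Supp_R(S/R)$ gives $\mm_0\in\Max(R)\setminus\Ass(R)$.

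Next I would construct $S_1$. Working locally at $\mm_0$ (which is harmless: subalgebras of $Q(R)$ localize well and the chain can be assembled one step at a time), pick $s\in S_{\mm_0}\setminus R_{\mm_0}$ whose image generates a simple submodule of $(S/R)_{\mm_0}$, i.e. $\mm_0 s\subseteq R_{\mm_0}$ and $s\notin R_{\mm_0}$. Then $\mm_0 s\subseteq R_{\mm_0}$ shows $s\in \mm_0:\mm_0$ — here one needs $\mm_0\mm_0 s\subseteq\mm_0$, which holds since $\mm_0 s\subseteq R$. Set $S_1:=R_{\mm_0}[s]\subseteq \mm_0:\mm_0$ (the ring $\mm_0:\mm_0$ is an $R$-subalgebra of $Q(R)$, as noted in the excerpt, so this makes sense), and globalize back so that $S_1$ is an $R$-subalgebra of $Q(R)$ sitting between $R$ and $S$ with strictly smaller colength. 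This is essentially the argument of \cite[Proposition 4.27]{KS}; the only modification is that we must verify $\mm_0\notin\Ass(R)=\Ass(S_0)$ and, inductively, that the maximal ideal chosen at each stage avoids $\Ass(S_i)$.

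The main obstacle I anticipate is exactly this last bookkeeping: at the $i$-th stage we need a maximal ideal of $S_i$ in $\Supp_{S_i}(S/S_i)$ that avoids $\Ass(S_i)$, and we must confirm the same localization argument goes through with $S_i$ in place of $R$ — that is, $S_i$ is still a (noetherian) ring sitting inside its own total quotient ring $Q(S_i)$ with $\Supp_{S_i}(S/S_i)\cap\Ass(S_i)=\emptyset$. This requires checking that $S_i\subseteq Q(S_i)$ compatibly with $S_i\subseteq Q(R)$ (nonzerodivisors of $R$ remain nonzerodivisors of $S_i$ since $S_i\subseteq Q(R)$), and that $S/S_i$ still has finite length over $S_i$, which follows because it has finite length over $R$ and $R\to S_i$ is finite. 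Once these compatibilities are in place the induction closes.
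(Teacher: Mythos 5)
Your overall strategy coincides with the paper's (induction on $\ell_R(S/R)$, producing one intermediate algebra $S_1=R[s]\subseteq \mm_0:\mm_0$ at a time), and your observation that $\Supp_R(S/R)\cap\Ass R=\emptyset$ — so the chosen maximal ideal automatically avoids $\Ass R$ — is correct and even more explicit than the paper on that point. However, there is a genuine gap at the crucial step: from $\mm_0 s\subseteq R$ you conclude $s\in\mm_0:\mm_0$, and the justification you give (``one needs $\mm_0\mm_0 s\subseteq\mm_0$, which holds since $\mm_0 s\subseteq R$'') proves the wrong containment. Membership in $\mm_0:\mm_0$ means $s\mm_0\subseteq\mm_0$; what you verified is only $s\mm_0^2\subseteq\mm_0$, i.e.\ $s\in\mm_0:\mm_0^2$, which is in general strictly larger. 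The implication ``$s\mm_0\subseteq R\Rightarrow s\mm_0\subseteq\mm_0$'' is simply false without invoking the hypotheses: for a discrete valuation ring $(R,(t))$ and $s=t^{-1}$ one has $s\mm=R\not\subseteq\mm$, so $s\notin\mm:\mm$.

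Closing this gap is exactly the main content of the paper's proof: if $\mm_0 s\not\subseteq\mm_0$, then multiplication by $s$ induces a surjection $\mm_0 R_{\mm_0}\twoheadrightarrow R_{\mm_0}$, which forces $\mm_0 R_{\mm_0}$ to be principal generated by a nonzerodivisor, so $R_{\mm_0}$ is a discrete valuation ring; since $S/R$ has finite length, $S$ is module-finite, hence integral, over $R$, and a DVR is integrally closed in its total ring of fractions, so $S_{\mm_0}=R_{\mm_0}$ — contradicting $\mm_0\in\Supp_R(S/R)$. Some argument of this kind, using the finite-colength/integrality hypothesis, is needed before you may write $S_1:=R[s]\subseteq\mm_0:\mm_0$; as it stands your construction of $S_1$ is unproven. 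Two minor further points: the ``work locally at $\mm_0$ and globalize back'' step is vague and unnecessary — since $\Supp_R(S/R)=\Ass_R(S/R)$ for a finite-length module, choose $\mm_0\in\Ass_R(S/R)$ and a global element $s\in S\setminus R$ with $\mm_0 s\subseteq R$, as the paper does; and your bookkeeping for the induction (that $S/S_1$ has finite length over the noetherian finite $R$-algebra $S_1$ and that $S$ sits inside $Q(S_1)$) is fine as sketched.
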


\begin{proof}
We prove by induction on the length $l$ of $S/R$.
If $l=0$, then there is nothing to prove.
Suppose $l>0$.
Take a maximal ideal $\mm$ of $R$ which belongs to $\Ass(S/R)$.
By the definition of associated primes, there exists an element $x\in S\setminus R$ such that $\mm x\subseteq R$.
We then have $\mm x \subseteq \mm$; otherwise, the multiplication by $x$ gives a surjection $\mm R_\mm \to R_\mm$. 
This implies that $R_\mm$ is a discrete valuation ring.
As $S$ is integral over $R$, $S_\mm$ must be equal to $R_\mm$ (Here, there is a canonical inclusion from $Q(R)_\mm$ to $Q(R_\mm)$, so that $S_\mm$ can be regarded as a subring of $Q(R_\mm)$).
However, since $\mm\in\Ass(S/R)$, $(S/R)_\mm$ is nonzero, which shows a contradiction.

We see that $x\in \mm:\mm$.
Since $\mm:\mm$ is an $R$-algebra, $S_1:=R[x]$ is a subalgebra of $\mm:\mm$.
Obviously, $S_1$ is not equal to $R$ and contained in $S$.
It follows that the length of the $S_1$-module $S/S_1$ is less than $l$.
By the induction hypothesis, we get a sequence $S_1\subsetneq S_2 \subsetneq\cdots\subsetneq S_n=S$ of $S_1$-algebras with suitable maximal ideals.
Thus, we achieve the desired sequence $R=S_0 \subsetneq S_1 \subsetneq \cdots \subsetneq S_n=S$ of $R$-algebras with suitable maximal ideals.
\end{proof}

Now we achieve the main theorem of this section which asserts that every KE-closed subcategory $\catX$ of $\catmod R$ is reconstructed from the pair $(A^0(\catX),A^1(\catX))$ of subsets of $\Spec R$.

\begin{thm} \label{thm:KE-closed reconstruction}
Let $\catX$ be a KE-closed subcategory of $\catmod R$.
Then $\catX=\catmod_{A^0(\catX)}^\ass R \cap \catmod_{A^1(\catX)}^1 R$.
\end{thm}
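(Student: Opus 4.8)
The plan is to reduce to the local case and then argue by induction on the dimension, using the ``change of rings'' technique via subalgebras of the total quotient ring. Write $\catY:=\catmod_{A^0(\catX)}^\ass R \cap \catmod_{A^1(\catX)}^1 R$. By \cref{rem n-ass} (7) and (8), $\catY$ is KE-closed. By \cref{lem compare ass}, we have $\catX\subseteq \catY$ and $A^i_R(\catX)=A^i_R(\catY)$ for $i=0,1$; moreover these equalities persist after localizing at any multiplicatively closed set. So the content is the reverse inclusion $\catY\subseteq\catX$. First I would apply \cref{prp:local-to-global for KE} together with \cref{loc containment}: since both $\catX$ and $\catY$ are closed under direct summands, it suffices to prove $\catX_\mm=\catY_\mm$ for every $\mm\in\Max R$; and by \cref{prp:localize KE again KE}, $\catX_\mm$ is again KE-closed, while $\catY_\mm=\catmod_{A^0(\catX_\mm)}^\ass R_\mm\cap\catmod_{A^1(\catX_\mm)}^1 R_\mm$ by \cref{rem n-ass} (12),(6). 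Thus I may assume from the start that $(R,\mm)$ is local.

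In the local case I would induct on $\dim R$ (or on $\dim\Supp\catX$). If $R\in\catX$, then $\catX$ is reconstructed from $(A^0(\catX),A^1(\catX))$ by \cref{dom res and A^1}, which is exactly the desired statement. So assume $R\notin\catX$; by \cref{imst} this means $\mm\notin\catX$, and the strategy is to enlarge the base ring. Using \cref{fct:Hom-ideal}, for each $M\in\catX$ the center $\ZZ_R(M)$ lies in $\catX$; combining \cref{characterize free center} with \cref{NZ and subset}, one produces modules whose centers are as large as possible, and invoking \cref{subalg seq} one finds an $R$-subalgebra $S\subseteq Q(R)$ with $S/R$ of finite length and a chain $R=S_0\subsetneq\cdots\subsetneq S_n=S$ where each step is governed by a maximal ideal $\mm_i\in\Max(S_i)\setminus\Ass(S_i)$ and $S_{i+1}\subseteq\mm_i:\mm_i$. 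The point of this chain is that, via the restriction functor $\rho\colon\catmod S\to\catmod R$, the subcategory $\rho^{-1}(\catX)$ is KE-closed over $S$ (after passing along the finite extension $R\to S$), contains $S$ as a module over itself, and its higher-associated-prime data is computed from that of $\catX$ via \cref{ass change of rings} and \cref{comparison of A^n}. One then applies the inductive hypothesis (or the already-settled case $R\in\catX$) over each $S_i$, and descends.

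The descent step is where I expect the real work. Having identified $\rho^{-1}(\catX)=\rho^{-1}(\catY)$ over $S$ (by induction, since $S\in\rho^{-1}(\catX)$), I need to transfer this back to an equality over $R$. The mechanism is \cref{decomposition}: write $\Ass\catX=\Phi\sqcup\Psi$ with $\Phi$ chosen so that $\catmod_\Phi^\ass R\subseteq\catX$ (for instance using \cref{max ideal KE contains fl} when $\mm\in\Ass\catX$, so that $\fl R=\catmod^\ass_{\{\mm\}}R\subseteq\catX$), and then it remains to match $\catX\cap\catmod_\Psi^\ass R$ with $\catY\cap\catmod_\Psi^\ass R$. \cref{lem for replace Ass} is the tool that lets me strip off the maximal ideal $\mm$ from $\Ass\catX$ while preserving $A^1$, reducing to a subcategory supported on $\Ass\catX\setminus\{\mm\}$, which has strictly smaller-dimensional support, so the induction on $\dim\Supp\catX$ closes. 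The main obstacle is bookkeeping: keeping track of how $A^0$ and $A^1$ transform under the finite extensions $R\to S_i$ (\cref{comparison of depth}, \cref{comparison of A^n}, \cref{ass of certain hom}), and verifying that the hypotheses of \cref{ass change of rings}(2) — namely $S_R\in\catX$ — are actually met at each stage, which is precisely what the construction of the chain via centers and \cref{NZ and subset} is designed to guarantee. Once all the higher-associated-prime equalities are in place, the reconstruction statements \cref{dom res and A^1} and \cref{decomposition} assemble the final equality $\catX=\catY$.
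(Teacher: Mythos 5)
Your skeleton (reduce to the local case, induct on $\dim R$, invoke \cref{dom res and A^1} when the ring lies in the subcategory, otherwise change rings through centers and a subalgebra chain) is the right one, but the crux of the argument is missing: you never tie the test module to the change of rings. In the paper's proof one fixes $N\in\catY:=\catmod^\ass_{A^0(\catX)}R\cap\catmod^1_{A^1(\catX)}R$, uses \cref{loc split mono} on $\Spec R\setminus\{\mm\}$ together with the inductive equality $\catX_\pp=\catY_\pp$ ($\pp\ne\mm$) to produce $M\in\catX$ with $N_\pp$ a direct summand of $M_\pp$ for all $\pp\ne\mm$, and then sets $S:=\ZZ_R(N\oplus M)$; the whole point is that $N$ itself is an $S$-module, so once $S_R\in\catX$ is established, \cref{comparison of A^n,ass change of rings,dom res and A^1} applied over $S$ give $N\in\rho^{-1}(\catX)$, i.e.\ $N\in\catX$. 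Your ``descent'' has no such mechanism: an equality $\rho^{-1}(\catX)=\rho^{-1}(\catY)$ over some finite extension $S$ says nothing about $R$-modules carrying no $S$-structure, and your substitute --- strip $\mm$ from $\Ass\catX$ via \cref{decomposition,lem for replace Ass} and induct on ``$\dim\Supp\catX$'' --- does not work, since removing $\mm$ from $\Ass\catX$ does not shrink $\Supp\catX$ (modules with associated primes away from $\mm$ are still supported at $\mm$). In the paper that step is only a reduction, over the same ring, to the case $\mm\notin\Ass\catX$; the dimension induction enters solely through localization at non-maximal primes, with base case $\dim R\le 1$ handled by \cref{KE=torf in 1-dim}.

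The second gap is that you assert, rather than prove, the key hypothesis $S_R\in\catX$ (``which is precisely what the construction \dots is designed to guarantee''). That assertion is Claim~1 of the paper, its hardest part, and your version of the construction cannot even get started: you propose a subalgebra chain inside $Q(R)$ beginning at $R$, but \cref{NZ and subset} requires, for every prime in a set containing $\Ass R$, a module of $\catX$ that is free of rank one at that prime, which generally fails over $R$ when $R\notin\catX$ (for instance $\catX=\fl R$ over a local domain has no module with $(M^{(0)})_{(0)}\cong Q(R)$). The paper applies \cref{NZ and subset} over $S=\ZZ_R(N\oplus M)$ instead, where the hypothesis is verified because $S_P\cong \ZZ_R(M)_P$ at every non-maximal $P$ and $\Ass_S(S)$ avoids $\Max S$ (using the reduction $\mm\notin\Ass\catX$); it then splits into the case $\depth_R(N\oplus M)\ge 2$, where \cref{characterize free center} yields $S\cong T:=\ZZ_S(L)$ directly, and the case $\depth_R(N\oplus M)\le 1$, where one must first show $\Ass_S(T)=\Ass_S(S)$ to embed $T$ into $Q(S)$ and then run a descending induction along the chain of \cref{subalg seq}, using \cref{ass change of rings}, \cref{prp:prime in dom res KE} and \cref{imst}, to conclude $S\in\rho^{-1}(\catX)$. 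None of this is supplied in your proposal, so as written it does not close.
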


\begin{proof}
Set $\Phi=A^0(\catX)$, $\Psi=A^1(\catX)$, and $\catY=\catmod_{\Phi}^\ass R \cap \catmod_{\Psi}^1 R$.
As a first step, we assume that $R$ is local with a maximal ideal $\mm$, and proceed by induction on $\dim R$.
If $\dim R\le 1$, then the assertion follow from \cref{KE=torf in 1-dim}.
We may assume $\dim R\ge 2$.
Let $\pp\in \Spec R\setminus\{\mm\}$.
\cref{rem n-ass,lem compare ass} yield that 
\[
A^0(\catX_\pp)=\Phi\cap \Spec R_\pp=A^0(\catY_\pp),\text{ and }A^1(\catX_\pp)=\Psi\cap \Spec R_\pp=A^1(\catY_\pp).
\]
By the induction hypothesis, it follows that
\begin{equation} \label{eq_a66}
\catX_\pp = \catmod_{\Phi \cap \Spec R_\pp}^0(R_\pp) \cap \catmod_{\Psi \cap \Spec R_\pp}^1(R_\pp)=\catY_\pp.
\end{equation}
Also, by \cref{decomposition,restrict KE by torf,max ideal KE contains fl,lem for replace Ass}, we may assume $\mm\not\in \Phi$.
Take a nonzero module $N\in \catY$.
We aim to show $N\in \catX$.
By \eqref{eq_a66}, $N_\pp$ belongs to $\catX_\pp$ for all $\pp\in \Spec R \setminus\{\mm\}$.
Therefore, by \cref{loc split mono}, there exists $M\in \catX$ such that $N_\pp$ is a direct summand of $M_\pp$ for all $\pp\in \Spec R \setminus\{\mm\}$.
In particular, for $i=0,1$ and $\pp\in \Spec R\setminus \{\mm\}$,
\[
A^i_R(N)\cap \Spec R_\pp=A^i_{R_\pp}(N_\pp)\subseteq A^i_{R_\pp}(M_\pp)=A^i_R(M)\cap \Spec R_\pp.
\]
Therefore, if $\mm\not\in A^1_R(N)$, then for $i=0,1$, $A^i_R(N) \subseteq A^i_R(M)$.
If $\mm\in A^1_R(N)$, then $\mm\in A^1_R(\catY)=A^1_R(\catX)$, and so there exists $M'\in \catX$ such that $\mm\in A^1_R(M')$.
Replacing $M$ with $M\oplus M'$ (which is in $\catX$), we have $A^i_R(N) \subseteq A^i_R(M)$ for $i=0,1$.
Set $S=\ZZ_R(N\oplus M)$.
Note that, since $N_\pp$ is a direct summand of $M_\pp$, $S_\pp \cong \ZZ_{R_\pp}(M_\pp) \cong (\ZZ_R(M))_\pp$ for any $\pp\in \Spec R\setminus \{\mm\}$ (\cref{prp:isom_center_on_add_equiv}).
Also, note that $M$ and $N$ can be regarded as $S$-modules in a natural way (cf.\ \cref{prp:Z(M oplus N)}).
This particularly means that $M_R\in \rho\rho^{-1}(\catX)$, where $\rho\colon \catmod S \to \catmod R$ is the restriction functor.
Moreover, $\ZZ_R(M)$ can be regarded as an $S$-algebra in a natural way.
In particular, $\ZZ_R(M)_S\in \rho^{-1}(\catX)$.

\begin{claim}
$S\in \rho^{-1}(\catX)$.
\end{claim}
Once we prove this claim, then due to \cref{comparison of A^n,ass change of rings}, we have
\[
A^0_S(N)\subseteq \varphi^{-1}(A^0_R(N))\subseteq  \varphi^{-1}(A^0_R(\catX))=A^0_S(\rho^{-1}(\catX))
\]
and
\[
A^1_S(N)\subseteq \varphi^{-1}(A^1_R(N))\subseteq \varphi^{-1}(A^1_R(M))\subseteq \varphi^{-1}(A^1_R(\rho\rho^{-1}\catX))=A^1_S(\rho^{-1}(\catX)),
\]
where $\varphi\colon \Spec S \to \Spec R$ is the induced map.
Hence $N\in\rho^{-1}(\catX)$ by \cref{dom res and A^1}.

Next we prove Claim 1.
Let $P\in\Spec S\setminus \Max S$ and $\pp=\varphi(P)\in \Spec R$.
Then $\pp\not=\mm$, and hence 
\[
S_P\cong (S_\pp)_{PS_\pp} \cong ((\ZZ_R(M))_\pp)_{PS_\pp} \cong \ZZ_R(M)_{P}.
\] 
This implies that $S_P\in (\rho^{-1}(\catX))_P$.
We also have the following by \cref{prp:Supp Z(M)} (3) and the fact that $\mm \not\in \Phi$:
\[
\Ass_S(S)\subseteq \varphi^{-1}(\Ass_R(S))\subseteq \varphi^{-1}(\Spec R\setminus \{\mm\})=\Spec S\setminus \Max S.\]
Using \cref{NZ and subset}, we get an $S$-module $L'\in \rho^{-1}(\catX)$ such that $\NZ_S(L') \subseteq \Max S$.
Then, observe that
\[
\Ass_S(S) \subseteq \Spec S\setminus \Max S \subseteq \Spec S\setminus \NZ_S(L') \subseteq \Supp_S(L').
\]
It follows that $\Supp_S(L')=\Spec S$.
Set $L:=\ZZ_S(L')\oplus M \in \rho^{-1}(\catX)$.
Then, noting that $\Supp L =\Spec S$ by \cref{prp:Supp Z(M)} (1), we obtain the following from \cref{prp:NZ NF}: 
\[\NZ_S(L) \subseteq \NF_S(\ZZ_S(L'))=\NZ_S(L')\subseteq \Max S.
\]
In particular, $\NZ_S(L)\cap \Ass_S(S)=\emptyset$.
This implies that the canonical homomorphism $\iota\colon S \to T:=\ZZ_S(L)$ is locally an isomorphism on $\Ass S$, and hence $\iota$ is injective.
Since $L\in \rho^{-1}(\catX)$, we have that
\[
\Ass_S(T)\subseteq A^0_S(\rho^{-1}(\catX))\subseteq \varphi^{-1}(A^0_R(\catX)) \subseteq \varphi^{-1}(\Spec R\setminus\{\mm\})=\Spec S\setminus\Max S. 
\]
Thus $\Ass_S(T) \cap \NZ_S(L)=\emptyset$.
Then \cref{prp:NZ NF} shows that $\Ass_S(T)\subseteq \Ass S$.
Also, as $\iota$ is injective, $\Ass_S (S)\subseteq \Ass_S(T)$.
Thus we obtain that $\Ass_S (S)=\Ass_S(T)$.
Now suppose that $\depth_R (N\oplus M)\ge 2$.
In other words, $\mm\not\in A^1_R(N\oplus M) =A^1_R(M)$.
Then for all $\nn\in \Max S$,
\[
\depth S_\nn \ge \depth_R S \ge \min\{2,\depth_R(N\oplus M)\}\ge 2
\]
since $S=\ZZ_R(N\oplus M)$; see \cref{prp:Supp Z(M)}.
Thus $\NZ_S(L)\subseteq \Max S \subseteq \{\qq\in\Spec S\mid \depth S_\qq \ge 2\}$.
Applying \cref{characterize free center}, we have $S\cong T$.
Consequently, $S$ belongs to $\rho^{-1}(\catX)$.

We deal with the remaining case, that is, the case where $\depth_R (N\oplus M)\le 1$.
It follows that $\mm\in A^1_R(N\oplus M) \subseteq A^1_R(M)$.
We see that the canonical homomorphism $T \to T\otimes_S Q(S)(\cong Q(S))$ is injective since $\Ass_S(T)=\Ass_S(S)$.
This means that $T$ is an $S$-subalgebra of $Q(S)$.
Using \cref{subalg seq}, we get a sequence $S=S_0\subsetneq S_1\subsetneq \cdots\subsetneq S_m=T$ of $S$-subalgebras of $T$ and a maximal ideal $\mm_i\in \Max(S_i)$ such that $S_{i+1}\subseteq \mm_i:\mm_i$ for each $i$.
Let $\rho_i\colon \catmod S_i \to \catmod R$ be the restriction functor.
For each $i=0,\dots, m$, we claim that 
\begin{claim}
$S_i\in \rho_i^{-1}(\catX)$.
\end{claim}
By letting $i=0$, this implies Claim 1.
In the following, we prove Claim 2 by induction on $i$.
If $i=m$, then $S_i=T$ and there is nothing to prove.
Assume $i<m$.
By the induction hypothesis, $S_{i+1}\in \rho_{i+1}^{-1}(\catX)$.
Note that $M$ has a $T$-module structure, and thus it has an $S_{i+1}$-module structure.
This means that $M_R\in \rho_{i+1}\rho_{i+1}^{-1}(\catX)$.
Thus, \cref{ass change of rings} yields that 
\begin{equation}\label{eq:KE-closed reconstruction 2}
A^1_{S_{i+1}}(\rho_{i+1}^{-1}(\catX))=\varphi_{i+1}^{-1}(A^1_R(\rho_{i+1}\rho_{i+1}^{-1}(\catX)))\supseteq \varphi_{i+1}^{-1}(A^1_R(M))\supseteq \varphi^{-1}_{i+1}(\mm)=\Max S_{i+1}.
\end{equation}
Here $\varphi_{i+1}\colon \Spec S_{i+1} \to \Spec R$ is the morphism corresponding to the canonical $R$-algebra homomorphism.
Observe that $\mm_i$ has an $\mm_i:\mm_i$-module structure via the natural way, and that $\mm_i$ has an $S_{i+1}$-module structure via the inclusion $S_{i+1} \to \mm_i:\mm_i$.
The length of cokernels of two inclusions $\mm_i \to S_i$ and $S_i \to S_{i+1}$ are both finite. Hence, $S_{i+1}/\mm_i$ also has finite length, first as an $S_i$-module and therefore as an $S_{i+1}$-module.
It follows that $(S_{i+1})_Q\cong (\mm_i)_Q$ for any $Q\in \Spec S_{i+1}\setminus \Max S_{i+1}$.
This and \eqref{eq:KE-closed reconstruction 2} imply that 
\[
A^1_{S_{i+1}}(\mm_i)\subseteq A^1_{S_{i+1}}(S_{i+1}) \cup \Max S_{i+1} \subseteq A^1_{S_{i+1}}(\rho_{i+1}^{-1}(\catX)) \cup \Max S_{i+1} \subseteq A^1_{S_{i+1}}(\rho_{i+1}^{-1}(\catX)).
\]
Also, since $\mm_i\subseteq S_{i+1}$, 
we get
\begin{gather*}
A^0_{S_{i+1}}(\mm_i)\subseteq A^0_{S_{i+1}}(S_{i+1})\subseteq A^0_{S_{i+1}}(\rho_{i+1}^{-1}(\catX)).
\end{gather*}
Therefore, $\mm_i\in \rho_{i+1}^{-1}(\catX)$ by \cref{dom res and A^1}, which also means that $\mm_i\in \rho_i^{-1}(\catX)$.
Thanks to \cref{imst}, we conclude that $S_i\in \rho^{-1}_i(\catX)$.

Finally, we consider the case where $R$ is not necessarily local.
For any $\pp\in \Spec R$, we already know that $\catX_\pp=\catY_\pp$.
Therefore, the assertion follows from \cref{loc containment}.
\end{proof}

As an application of this theorem,
we can completely characterize when every KE-closed subcategory is a torsion-free class,
which strengthens the main theorem of \cite{KS}.
See also \cref{prp:KE=torf for S_2 excellent}.
\begin{prp}\label{prp:KE=torf via depth}
The equality $\ke(\catmod R)=\torf(\catmod R)$ holds true if and only if $\depth_{R_\pp}X_\pp\le 1$ for any $X\in \catmod R$ and $\pp \in \Supp X$.
\end{prp}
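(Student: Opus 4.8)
My plan is to read the statement off from the reconstruction theorem \cref{thm:KE-closed reconstruction} together with Takahashi's classification \cref{fct:Takahashi}. Recall that by \cref{fct:Takahashi} and \cref{rem n-ass}~(7) the torsion-free classes of $\catmod R$ are exactly the subcategories $\catmod^{\ass}_{\Phi}R$ with $\Phi\subseteq\Spec R$, while by \cref{thm:KE-closed reconstruction} every KE-closed subcategory $\catX$ satisfies $\catX=\catmod^{\ass}_{A^0(\catX)}R\cap\catmod^1_{A^1(\catX)}R$, with $A^0(\catX)=\Ass\catX$. Thus a KE-closed $\catX$ is a torsion-free class precisely when $\catmod^{\ass}_{\Ass\catX}R\subseteq\catmod^1_{A^1(\catX)}R$ (for then the second factor is redundant and $\catX=\catmod^{\ass}_{\Ass\catX}R$), and the equality $\ke(\catmod R)=\torf(\catmod R)$ amounts to this inclusion holding for every KE-closed $\catX$.

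For the ``if'' direction I would start from the hypothesis $\depth_{R_\pp}X_\pp\le 1$ for all $X\in\catmod R$ and all $\pp\in\Supp X$. Since $A^1(X)\subseteq\Supp X$ always holds, the hypothesis upgrades this to $A^1(X)=\Supp X$ for every $X$, and hence $A^1(\catX)=\bigcup_{Y\in\catX}\Supp Y$ for every subcategory $\catX$. Now fix a KE-closed $\catX$ and take $M$ with $\Ass M\subseteq\Ass\catX$. Then $\Supp M=\bigcup_{\qq\in\Ass M}V(\qq)$, and for each $\qq\in\Ass M\subseteq\Ass\catX$ there is $Y\in\catX$ with $\qq\in\Ass Y$, so that $V(\qq)\subseteq\Supp Y\subseteq A^1(\catX)$; therefore $A^1(M)=\Supp M\subseteq A^1(\catX)$, i.e.\ $M\in\catmod^1_{A^1(\catX)}R$. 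By \cref{thm:KE-closed reconstruction} this gives $\catX=\catmod^{\ass}_{\Ass\catX}R\cap\catmod^1_{A^1(\catX)}R=\catmod^{\ass}_{\Ass\catX}R$, a torsion-free class by \cref{fct:Takahashi}; as $\catX$ was arbitrary, $\ke(\catmod R)=\torf(\catmod R)$.

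For the converse I would argue by contraposition, so assume there exist $X\in\catmod R$ and $\pp\in\Supp X$ with $\depth_{R_\pp}X_\pp\ge 2$, and produce a KE-closed subcategory that is not torsion-free. The candidate is $\catX:=\catmod^1_{\Spec R\setminus\{\pp\}}R=\{M\in\catmod R\mid\pp\notin A^1_R(M)\}=\{M\in\catmod R\mid\depth_{R_\pp}M_\pp\ge 2\}$, which is KE-closed by \cref{rem n-ass}~(8) and contains $X$. Then I would look at the submodule $\pp X\subseteq X$: localizing at $\pp$ and using Nakayama's lemma (note $X_\pp\ne 0$ as $\pp\in\Supp X$), the module $(\pp X)_\pp=\pp R_\pp\cdot X_\pp$ is a proper submodule of $X_\pp$, with quotient $X_\pp/(\pp X)_\pp$ a nonzero $\kk(\pp)$-vector space, hence of depth $0$. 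Applying the depth lemma to $0\to(\pp X)_\pp\to X_\pp\to X_\pp/(\pp X)_\pp\to 0$, together with $\depth_{R_\pp}X_\pp\ge 2$, forces $\depth_{R_\pp}(\pp X)_\pp=1$; thus $\pp\in A^1_R(\pp X)$, so $\pp X\notin\catX$ although $X\in\catX$. Hence $\catX$ is not closed under submodules, and since every torsion-free class is closed under submodules, $\catX\in\ke(\catmod R)\setminus\torf(\catmod R)$, giving $\ke(\catmod R)\ne\torf(\catmod R)$.

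The whole argument rests on \cref{thm:KE-closed reconstruction}, which is the genuinely hard input and is available to us; once that is granted, the ``if'' direction is a formal manipulation of supports, and the only point in the ``only if'' direction that needs a moment's thought is the choice of witnessing subcategory, which is handed to us by the family $\catmod^1_{\Spec R\setminus\{\pp\}}R$ of KE-closed subcategories (\cref{rem n-ass}~(8)) together with the elementary depth computation for $\pp X$. I do not expect any further obstacle.
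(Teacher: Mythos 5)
Your proof is correct, and it splits into two halves of different character relative to the paper. For the direction ``depth $\le 1$ on supports $\Rightarrow$ $\ke=\torf$'' you argue exactly as the paper does: the hypothesis forces $A^1(X)=\Supp X$ for every module, hence $\catmod^{\ass}_{\Ass\catX}R\subseteq\catmod^1_{A^1(\catX)}R$, and \cref{thm:KE-closed reconstruction} collapses $\catX$ to $\catmod^{\ass}_{\Ass\catX}R$, a torsion-free class by \cref{fct:Takahashi}; the paper phrases the inclusion slightly more slickly via $\catmod^{\ass}_{\Supp\catX}R=\catmod^1_{\Supp\catX}R$ for the specialization-closed set $\Supp\catX$, but the content is the same. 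For the converse the paper simply cites \cite[Lemma 4.16]{KS}, whereas you give a self-contained witness: $\catX=\catmod^1_{\Spec R\setminus\{\pp\}}R=\{M\mid\depth_{R_\pp}M_\pp\ge 2\}$ is KE-closed by \cref{rem n-ass}, contains $X$, but fails to contain the submodule $\pp X$, since the depth lemma applied to $0\to(\pp X)_\pp\to X_\pp\to X_\pp/\pp X_\pp\to 0$ (with the quotient a nonzero $\kk(\pp)$-vector space of depth $0$ and $\depth_{R_\pp}X_\pp\ge 2$) pins $\depth_{R_\pp}(\pp X)_\pp$ at $1$ --- note this also rules out $(\pp X)_\pp=0$, a point worth making explicit. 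Since torsion-free classes are closed under subobjects, $\catX\in\ke(\catmod R)\setminus\torf(\catmod R)$. So your argument buys independence from the external reference at the cost of an extra (easy) depth computation; both routes rest on the reconstruction theorem for the substantive direction.
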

\begin{proof}
The sufficiency has been proved in \cite[Lemma 4.16]{KS}.
Suppose that $\depth_{R_\pp}X_\pp\le 1$ for any $X\in \catmod R$ and $\pp \in \Supp X$.
Then for any KE-closed subcategory $\catX$, we see that $A^1(\catX)=\Supp \catX$.
Since $\catmod^{\ass}_{\Ass \catX} R \subseteq \catmod^{\ass}_{\Supp\catX} R = \catmod^1_{\Supp\catX} R$,
we have $\catX = \catmod^{\ass}_{\Ass \catX} R$ by \cref{thm:KE-closed reconstruction}.
In particular, $\catX$ is a torsion-free class.
\end{proof}

\begin{ex} \label{ex: 2-dim KE=torf}
There exists a $2$-dimensional commutative noetherian local domain $R$ having no nonzero maximal Cohen-Macaulay $R$-modules; see \cite[Example 2.1]{LW}.
For such a ring $R$, we have $\ke(\catmod R)=\torf(\catmod R)$.
\end{ex}

\section{Bass functions and the classification}\label{s:Bass}
Throughout this section, $R$ is a commutative noetherian ring.
In this section, under a mild assumption, we establish a bijective correspondence between KE-closed subcategories of $\catmod R$ and a certain class of functions on $\Spec R$, which are called \emph{$2$-Bass functions}.

In \S \ref{ss:Bass}, we will introduce Bass functions and establish the bijection under the existence of nonzero $(S_2)$-modules.
In \S \ref{ss:S2}, we will see that nonzero $(S_2)$-modules exist if $R$ is \emph{$(S_2)$-excellent} that is introduced in \cite{Ces}.
It is a mild assumption that is satisfied for homomorphic images of Cohen-Macaulay rings and excellent rings.
In \S \ref{ss:ex Bass}, we give examples of Bass functions and our classification.

\subsection{Bass functions}\label{ss:Bass}
Consider the following assignments:
\begin{itemize}
\item
For a subcategory $\catX$ of $\catmod R$,
define a function $\Spec R \to \bbN \cup \{\infty\}$ by
\[
f_{\catX}(\pp) = \inf_{X\in \catX} \left\{\depth_{R_\pp} X_{\pp}\right\}.
\]
\item
For a function $f\colon \Spec R \to \bbN \cup \{\infty\}$,
define a subcategory of $\catmod R$ by
\[
\catX_f := \{M \in \catmod R \mid \text{$\depth_{R_\pp} M_{\pp} \ge f(\pp)$ for any $\pp \in \Spec R$} \}.
\]
\end{itemize}
These give rise to maps:
\begin{equation}\label{eq:subcat funct corresp}
f_{(-)}:
\{\text{subcategories of $\catmod R$}\} \rightleftarrows \{\text{functions $\Spec R \to \bbN\cup\{\infty\}$}\}
:\catX_{(-)}.
\end{equation}
The left-hand set is naturally ordered by inclusions.
The right-hand set is also ordered by
\[
f\le g \quad :\Longleftrightarrow \quad \text{$f(\pp)\le g(\pp)$ for all $\pp\in\Spec R$.}
\]
Then the maps in \eqref{eq:subcat funct corresp} are order-reversing.
We can easily see that $f_{\catX_f}\ge f$ and $\catX_{f_{\catX}} \supseteq \catX$ hold
for any function $f\colon \Spec R \to \bbN\cup\{\infty\}$ and any subcategory $\catX$ of $\catmod R$.

In what follows, we study the function $f_{\catX}$ associated to a subcategory and the subcategory $\catX_f$ associated to a function, and will prove that the correspondence above gives rise to the bijection between the KE-closed subcategories and the \emph{$2$-Bass functions} introduced in \cref{dfn:Bass fct} (see \cref{cor:bij KE 2-Bass}).

\begin{ex}\label{ex:X_f}
We give examples of subcategories associated to functions.
\begin{enua}
\item
Consider the height function $\htt \colon \pp \to \htt\pp$.
Then $\catX_{\htt}$ is nothing but the category $\cm R$ of maximal Cohen-Macaulay $R$-modules.
\item
For a subset $\Phi$ of $\Spec R$,
define a function on $\Spec R$ as follows:
\[
f_{\Phi}(\pp):=
\begin{cases}
0 & \pp \in \Phi \\
1 & \pp \in \Phi^{\up}\setminus \Phi  \\
\infty & \pp\not\in \Phi^{\up},
\end{cases}
\]
where $\Phi^{\mathsf{up}}:=\{\pp \in \Spec R \mid \exists \qq \in \Phi, \qq \subseteq \pp \}$.
We call $f_\Phi$ the \emph{function associated to the subset} $\Phi$.
Then we can easily see that $\catX_{f_\Phi}=\catmod^{\ass}_{\Phi} R$ (see \cref{fct:Takahashi}).
\end{enua}
\end{ex}

We first see that $\catX_f$ produces KE-closed subcategories under some constraint on the values of $f$.
For a function $f \colon \Spec R \to \bbN \cup \{\infty\}$,
define the \emph{domain of definition} of $f$ by 
\begin{equation}\label{eq:dom}
\dom(f):= \{\pp \in \Spec R \mid f(\pp) < \infty \}.
\end{equation}

\begin{lem}\label{prp:catX_f KE}
Let $f \colon \Spec R \to \bbN \cup \{\infty\}$ be a function.
\begin{enua}
\item
$\catX_f$ is closed under epi-kernels, extensions, and direct summands.
\item
If $f(\pp) = 0$ for any $\pp \in \dom(f)$,
then $\catX_f$ is a Serre subcategory of $\catmod R$.
\item
If $f(\pp) \le 1$ for any $\pp \in \dom(f)$,
then $\catX_f$ is a torsion-free class of $\catmod R$.
\item
If $f(\pp) \le 2$ for any $\pp \in \dom(f)$,
then $\catX_f$ is a KE-closed subcategory of $\catmod R$.
\end{enua}
\end{lem}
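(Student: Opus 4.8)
The plan is to prove all four parts simultaneously by exploiting the depth lemma, which controls how depth behaves in short exact sequences. The key observation is that $\catX_f$ is cut out by the pointwise inequalities $\depth_{R_\pp}M_\pp \ge f(\pp)$, so everything reduces to checking, for each $\pp\in\Spec R$, how the quantity $\depth_{R_\pp}(-)_\pp$ interacts with the relevant operation. Since localization is exact, a short exact sequence $0\to L\to M\to N\to 0$ in $\catmod R$ localizes to a short exact sequence of $R_\pp$-modules, and I may therefore work one prime at a time.

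First I would prove (1). Direct summands are immediate since $\depth_{R_\pp}(L\oplus N)_\pp = \min\{\depth_{R_\pp}L_\pp,\depth_{R_\pp}N_\pp\}$, so a summand of a module in $\catX_f$ again lies in $\catX_f$. For extensions, given $0\to L\to M\to N\to 0$ with $L,N\in\catX_f$, the depth lemma gives $\depth_{R_\pp}M_\pp \ge \min\{\depth_{R_\pp}L_\pp,\depth_{R_\pp}N_\pp\}\ge f(\pp)$ for every $\pp$, so $M\in\catX_f$. For kernels of epimorphisms, given an epimorphism $g\colon M\surj N$ with $M,N\in\catX_f$ and $K=\Ker g$, the depth lemma applied to $0\to K\to M\to N\to 0$ yields $\depth_{R_\pp}K_\pp \ge \min\{\depth_{R_\pp}M_\pp,\ \depth_{R_\pp}N_\pp+1\}\ge f(\pp)$, so $K\in\catX_f$. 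This establishes (1).

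Next, for (2)--(4), I would combine part (1) with control of \emph{sub}objects. Given a monomorphism $L\inj M$ with $M\in\catX_f$, put $N=M/L$ and apply the depth lemma to $0\to L\to M\to N\to 0$: it gives $\depth_{R_\pp}L_\pp \ge \min\{\depth_{R_\pp}M_\pp,\ \depth_{R_\pp}N_\pp + 1\}$. The subtlety is that $N$ need not lie in $\catX_f$, so I only know $\depth_{R_\pp}N_\pp\ge 0$ in general, hence $\depth_{R_\pp}L_\pp \ge \min\{f(\pp),1\}$ at each $\pp$. This is exactly where the hypotheses on the values of $f$ enter: if $f(\pp)\le 1$ whenever $f(\pp)<\infty$, then $\min\{f(\pp),1\}=f(\pp)$ for $\pp\in\dom(f)$, while for $\pp\notin\dom(f)$ the required inequality $\depth_{R_\pp}L_\pp\ge\infty$ must be checked separately—namely $L_\pp=0$ because $M_\pp=0$ (as $\depth_{R_\pp}M_\pp=\infty$ forces $M_\pp=0$). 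Thus under the hypothesis of (3), $\catX_f$ is closed under subobjects, and together with (1) it is a torsion-free class. For (2), the stronger hypothesis $f(\pp)=0$ on $\dom(f)$ makes $\catX_f$ closed under subobjects by the same argument, and additionally closed under quotients: if $M\surj N$ with $M\in\catX_f$ then at each $\pp$ either $f(\pp)=0$ (nothing to check beyond $\depth_{R_\pp}N_\pp\ge0$) or $f(\pp)=\infty$ (so $M_\pp=0$, hence $N_\pp=0$); combined with closure under extensions this gives a Serre subcategory.

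Finally, for (4), the hypothesis is $f(\pp)\le 2$ on $\dom(f)$, and I cannot conclude closure under arbitrary subobjects. Instead I would invoke \cref{prp:KE=torf in torf}: it suffices to show $\catX_f$ is a torsion-free class of its torsion-free closure $\F(\catX_f)$. Alternatively—and more directly—one shows $\catX_f$ is closed under kernels of arbitrary morphisms. Given $f\colon M\to N$ with $M,N\in\catX_f$, factor it as $M\surj I\inj N$ where $I=\Ima f$. Then $\Ker f = \Ker(M\surj I)$, so by part (1) it suffices to know $I\in\catX_f$. Here the value bound is essential: $I$ is a submodule of $N\in\catX_f$, so the subobject estimate above gives $\depth_{R_\pp}I_\pp\ge\min\{f(\pp),1\}$, which is generally weaker than $f(\pp)$. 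So this naive factorization does not suffice either, and the honest route is the one through \cref{prp:KE=torf in torf}: the torsion-free closure $\F(\catX_f)$ is, by \cref{prp:torf closure for comm ring}, equal to $\catmod^\ass_{\Ass\catX_f}R$, and I must verify that inside this abelian-like ambient the conflations (which are genuine short exact sequences with all three terms having associated primes in $\Ass\catX_f$) and admissible subobjects behave correctly. For a conflation $0\to L\to M\to N\to 0$ with all terms in $\F(\catX_f)$ and $L,N\in\catX_f$, extension-closure from (1) gives $M\in\catX_f$; for an admissible subobject $L$ of $M\in\catX_f$ with cokernel $N$ lying in $\F(\catX_f)$, the depth lemma gives $\depth_{R_\pp}L_\pp\ge\min\{\depth_{R_\pp}M_\pp,\depth_{R_\pp}N_\pp+1\}\ge\min\{f(\pp),1\}$, and I must upgrade this to $\ge f(\pp)$ using that $N\in\F(\catX_f)=\catmod^\ass_{\Ass\catX_f}R$ together with the value constraint $f(\pp)\le 2$: at a prime $\pp\in\dom(f)$ with $f(\pp)=2$, if $L_\pp\ne 0$ one produces, via Bass' lemma \cref{Bass lemma} and the structure of $\Ass\catX_f$, an associated prime of $N$ at which the depth drops, forcing $\depth_{R_\pp}N_\pp\ge 1$ and hence $\depth_{R_\pp}L_\pp\ge 2=f(\pp)$. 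I expect this last upgrade—extracting the extra unit of depth for $N$ from the constraint $f\le 2$ and the fact that $N$'s associated primes are controlled by $\Ass\catX_f$—to be the main obstacle; the parts (1)--(3) are routine depth-lemma bookkeeping.
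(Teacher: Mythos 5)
Parts (1)--(3) of your argument are correct and are exactly the routine depth-lemma bookkeeping the paper intends (its entire proof reads ``It follows from the depth lemma''). The genuine gap is in (4), where you abandon the direct argument for a reason that is not actually there. You claim that, in the factorization $M \surj I \inj N$ of a morphism $g\colon M\to N$ with $M,N\in\catX_f$, one would need $I\in\catX_f$ in order to conclude $\Ker g\in\catX_f$; you do not. The depth lemma applied to $0\to \Ker g\to M\to I\to 0$ only needs $\depth_{R_\pp}I_\pp\ge f(\pp)-1$, and your own subobject estimate $\depth_{R_\pp}I_\pp\ge\min\{f(\pp),1\}$ delivers exactly this, because $\min\{f(\pp),1\}\ge f(\pp)-1$ precisely when $f(\pp)\le 2$ --- this is where the hypothesis of (4) enters. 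Concretely: at $\pp\notin\dom(f)$ one has $M_\pp=0$, hence $(\Ker g)_\pp=0$; at $\pp\in\dom(f)$ one has $\depth_{R_\pp}(\Ker g)_\pp\ge\min\{\depth_{R_\pp}M_\pp,\ \depth_{R_\pp}I_\pp+1\}\ge\min\{f(\pp),\ \min\{f(\pp),1\}+1\}=f(\pp)$ since $f(\pp)\le 2$. This two-step use of the depth lemma, combined with extension-closure from (1), is the whole proof of (4), and is what the paper's one-line proof means.

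Your fallback route through \cref{prp:KE=torf in torf} could also be completed, but as written it is not a proof: you explicitly leave the ``upgrade'' step as the main obstacle, and the mechanism you sketch for it is off. Bass' lemma (\cref{Bass lemma}) is not needed, and one does not ``produce an associated prime of $N$ at which the depth drops''; the correct observation is the opposite one. At a prime $\pp$ with $f(\pp)=2$, every module of $\catX_f$ has depth at least $2$ at $\pp$, so $\pp\notin\Ass\catX_f$; since the quotient $N$ in a conflation of $\F(\catX_f)=\catmod^\ass_{\Ass\catX_f}R$ satisfies $\Ass N\subseteq\Ass\catX_f$, we get $\pp\notin\Ass N$, i.e.\ $\depth_{R_\pp}N_\pp\ge 1$, and then the depth lemma yields $\depth_{R_\pp}L_\pp\ge\min\{2,1+1\}=2=f(\pp)$ for the admissible subobject $L$. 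With that repair the detour works, but it is unnecessary given the direct argument above.
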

\begin{proof}
It follows from the depth lemma.
\end{proof}

Conversely, the functions associated to torsion-free classes and KE-closed subcategories
have some constraints.
We need the following lemma which plays a crucial role throughout this section.
\begin{lem}\label{prp:depth 1 2}
Let $M\in \catmod R$ and $\pp \in \Supp M$.
\begin{enua}
\item
If $\depth_{R_{\pp}}M_\pp \ge 1$, then there is a submodule $N$ of $M$ such that
\[
\depth_{R_\qq} N_{\qq}=
\begin{cases}
\depth_{R_{\qq}} M_{\qq} & \text{if $\qq\not\supseteq \pp$,} \\
1 & \text{if $\qq=\pp$,} \\
\ge \min\{2, \depth_{R_\qq} M_{\qq}\} & \text{if $\qq \supsetneq \pp$.}
\end{cases}
\]
\item
If $\depth_{R_\pp}M_\pp \ge 2$,
then there exists $K \in \catmod R$ such that
\[
\depth_{R_\qq} K_{\qq}=
\begin{cases}
\depth_{R_{\qq}} M_{\qq} & \text{if $\qq\not\supseteq \pp$,} \\
2 & \text{if $\qq=\pp$,} \\
\ge \min\{2,\depth M_\qq\} & \text{if $\qq \supsetneq \pp$.}
\end{cases}
\]
Moreover, for any KE-closed subcategory $\catX$ of $\catmod R$,
if $M\in \catX$, then $K \in \catX$.
\end{enua}
\end{lem}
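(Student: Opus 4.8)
The plan is to produce the submodule $N$ (resp.\ the module $K$) by a ``local construction plus gluing'' argument, exactly in the spirit of \cref{loc split mono} and \cref{prp:depth p}. For part (1), since $\depth_{R_\pp}M_\pp\ge 1$, the element corresponding to a maximal regular sequence can be mimicked globally: choose $x\in R$ whose image in $R_\pp$ is $M_\pp$-regular (possible by prime avoidance, as $\pp\notin\Ass M$). Set $N=xM$, or more robustly $N=\Ima(M\xrightarrow{x}M)$. Then localizing at $\qq$: if $\qq\not\supseteq\pp$ then $x$ is a unit in $R_\qq$, so $N_\qq=M_\qq$ and the depth is unchanged; if $\qq=\pp$ then $N_\pp\cong M_\pp$ as modules but $\depth_{R_\pp}N_\pp$ is governed by the short exact sequence $0\to N_\pp\to M_\pp\to M_\pp/xM_\pp\to 0$ — here one uses that $N_\pp\cong M_\pp$ and $M_\pp/xM_\pp$ has depth one less, feeding the depth lemma to pin the value at $1$; if $\qq\supsetneq\pp$ the same short exact sequence together with the depth lemma gives $\depth_{R_\qq}N_\qq\ge\min\{2,\depth_{R_\qq}M_\qq\}$ (the depth can only drop by one passing from $M_\qq$ to a syzygy-type submodule, and it is bounded below by $\min\{2,\depth M_\qq\}$ because $M_\qq/xM_\qq$ has depth $\ge\depth M_\qq-1\ge 1$ when $\depth M_\qq\ge 2$, forcing $\depth N_\qq\ge 2$; when $\depth M_\qq\le 1$ the inequality is trivial from $N_\qq\cong M_\qq$ — wait, for $\qq\supsetneq\pp$ we do \emph{not} have $N_\qq\cong M_\qq$, so one argues directly via the exact sequence). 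The subtlety is that $x$ need not be $M_\qq$-regular for $\qq\supsetneq\pp$, so $N_\qq=\Ima(x)$ may genuinely differ from $M_\qq$; but the snake lemma / depth lemma applied to $0\to\Ker(x)_\qq\to M_\qq\to N_\qq\to 0$ and $0\to N_\qq\to M_\qq\to (M/xM)_\qq\to 0$ still yields the stated bound.

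For part (2), since $\depth_{R_\pp}M_\pp\ge 2$, I want to realize a ``second syzygy'' behavior that keeps us inside a given KE-closed subcategory. The natural candidate is $K=\Hom_R(R/x, \text{something})$-type construction, but to get the membership $M\in\catX\Rightarrow K\in\catX$ for $\catX$ KE-closed, the clean move is: take $x\in R$ that is $M_\pp$-regular (as in (1)), form $N$ as in part (1), so $\depth_{R_\pp}N_\pp=1$; then repeat once more — take $y\in R$ that is regular on $N_\pp$ and on $M_\pp$ simultaneously after suitable choice — no, rather: apply \cref{fct:Hom-ideal}-style closure. Actually the cleanest route is to set $K=\Ker(M\xrightarrow{x} M) $-complement construction won't be in $\catX$. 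Instead, recall $\catX$ KE-closed is closed under kernels and extensions, hence (by \cref{prp:KE=torf in torf}) a torsion-free class of $\F(\catX)$, and in particular closed under submodules inside $\F(\catX)$; since $N=\Ima(x)\hookrightarrow M$ with $\Ass N\subseteq\Ass M$, the conflation $0\to N\to M\to M/xM\to 0$ need not have $M/xM$ in $\F(\catX)$, so closure under admissible subobjects does not immediately apply. The correct construction: let $K$ be a high syzygy–type module obtained as $\Hom_R(L,M)$ for a suitable $L$ with $\Supp L$ chosen so that $\Ass_R\Hom_R(L,M)=\Supp L\cap\Ass M$ (\cref{ass hom}) and the depth is boosted; by \cref{fct:Hom-ideal}(1), $\Hom_R(L,M)\in\catX$ for \emph{any} $L$. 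So I would take $L$ with $\Supp L = V(\pp)$ (e.g.\ $L=R/\pp$) and analyze $\depth_{R_\qq}\Hom_R(R/\pp,M)_\qq$ using $\Hom$ commuting with localization and the known depth of $R/\pp$ from \cref{prp:deoth R/p}; this gives $\infty$ off $V(\pp)$, not what we want. The right $L$ is something supported everywhere, like $L=R\oplus R/\pp$ iterated, tuned so that $\depth$ equals $\min\{2,\depth M_\qq,\dots\}$ — this is precisely the center-of-endomorphism mechanism: $K=\ZZ_R(M\oplus(\text{syzygy-adjusting module}))$, for which \cref{prp:Supp Z(M)}(2) gives $\depth_{R_\qq}K_\qq\ge\min\{2,\depth_{R_\qq}M_\qq\}$ automatically, and \cref{fct:Hom-ideal}(2) gives $K\in\catX$ when $M\in\catX$. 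One then only has to arrange the value to be exactly $2$ at $\pp$ and unchanged off $V(\pp)$, which is done by direct-summing with a module built as in part (1).

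\textbf{The main obstacle} I anticipate is part (2)'s twin requirement: getting the depth profile \emph{exactly} right at $\qq=\pp$ (value $2$, not just $\ge 2$) and at $\qq\not\supseteq\pp$ (value unchanged, not boosted) \emph{while simultaneously} staying inside the arbitrary KE-closed subcategory $\catX$. The unconstrained depth-shaping (part (1) plus iterating once) is routine via the depth lemma; the $\catX$-membership forces one to use only the operations $\catX$ is known to respect — kernels, extensions, $\Hom_R(-,M)$, and $\ZZ_R(-)$ — and these tend to \emph{over}-boost depth (to $\ge 2$ everywhere on $\Supp M$) rather than leaving it alone at low-depth primes. The resolution is to combine a ``$\Hom$/center'' module $K_0\in\catX$ (depth $\ge 2$ on $\Supp M$, but wrong off the point and possibly $>2$ at $\pp$) with a submodule $N'\subseteq M$, $N'\in\catX$, tailored as in part (1) to carry the correct low-depth values, and take $K=K_0\oplus N'$; since $A^i(K_0\oplus N')=A^i(K_0)\cup A^i(N')$ (\cref{rem n-ass}), one checks the combined depth at each $\qq$ is the minimum, which can be engineered to hit exactly the prescribed values by an appropriate choice of the regular element and of $L$. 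Verifying these two modules can be chosen \emph{compatibly} — i.e.\ that the minimum of their depth functions is the target function — is the real content; once set up, each clause follows from the depth lemma and \cref{prp:deoth R/p,prp:depth p,prp:Supp Z(M)}.
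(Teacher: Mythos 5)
There is a genuine gap in both parts. In (1), your construction $N=\Ima(M\xrightarrow{x}M)=xM$ with $x$ chosen $M_\pp$-regular cannot produce $\depth_{R_\pp}N_\pp=1$: multiplication by $x$ is injective on $M_\pp$, so $N_\pp=xM_\pp\cong M_\pp$ and hence $\depth_{R_\pp}N_\pp=\depth_{R_\pp}M_\pp$, which may well be $\ge 2$ (take $R=\bbk[[a,b,c]]$, $M=R$, $\pp=(a,b)$: then $xM\cong R$ has depth $2$ at $\pp$ for every $x$). The depth lemma gives inequalities and cannot ``pin the value at $1$'' for a module isomorphic to $M_\pp$; the mechanism that forces depth exactly $1$ is to realize $N$ as the kernel of a surjection $M\surj Y$ with $\depth_{R_\pp}Y_\pp=0$. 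This is what the paper does: split $M/\pp M$ as $0\to X\to M/\pp M\to Y\to 0$ with $\Ass Y=\{\pp\}$ using \cref{fct:Goto-Watanabe}, and set $N=\Ker(M\surj M/\pp M\surj Y)$; then $\Ass Y=\{\pp\}$ simultaneously gives $Y_\qq=0$ for $\qq\not\supseteq\pp$ and $\depth_{R_\qq}Y_\qq\ge1$ for $\qq\supsetneq\pp$, which is exactly what your approach also lacks at $\qq\supsetneq\pp$: for $N=xM$ the quotient $(M/xM)_\qq$ can have depth $0$ at such $\qq$, so the depth lemma only yields $\depth N_\qq\ge 1$, not $\ge\min\{2,\depth_{R_\qq}M_\qq\}$.

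In (2), your plan does not converge to a working construction. Taking $K_0$ of Hom/center type and direct-summing with a module ``built as in part (1)'' cannot give the value exactly $2$ at $\pp$: the depth of a direct sum is the minimum, and the part-(1) module has depth $1$ at $\pp$, so the sum has depth $1$, not $2$; producing something of depth exactly $2$ at $\pp$ is precisely the problem being solved, so this is circular. Moreover the auxiliary module you invoke is a submodule $N'\subseteq M$ claimed to lie in $\catX$, but KE-closed subcategories are not closed under submodules, so this membership is unjustified (you even note this obstruction yourself without resolving it), and the requirement that the depth be left unchanged at every $\qq\not\supseteq\pp$ is never secured. The paper's construction resolves all three constraints at once and is much shorter: take a presentation $R^{\oplus n}\xr{f}R\to R/\pp\to 0$ (so $f$ is given by generators of $\pp$, and $n\ge 2$), tensor with $M$, and set $K=\Ker(f\otimes M)$. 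Then $K\in\catX$ because $\catX$ is closed under kernels and finite direct sums; for $\qq\not\supseteq\pp$ the map $f_\qq$ is a split epimorphism with free kernel, so $K_\qq\iso M_\qq^{\oplus n-1}$ and the depth is unchanged; and at $\pp$ the two exact sequences $0\to K\to M^{\oplus n}\to \pp M\to 0$ and $0\to\pp M\to M\to M/\pp M\to 0$ together with $\depth_{R_\pp}(M/\pp M)_\pp=0$ and $\depth_{R_\pp}M_\pp\ge 2$ force $\depth_{R_\pp}K_\pp=2$ by the depth lemma, with the case $\qq\supsetneq\pp$ immediate.
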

\begin{proof}
Since $(M/\pp M)_\pp \ne 0$ by Nakayama's lemma,
we have $\Supp_R M/\pp M = V(\pp)$ and $\pp \in \Ass_R(M/\pp M)$.

(1)
There is an exact sequence $0 \to X \to M/\pp M \to Y \to 0$
such that $\Ass_R X = \Ass_R(M/\pp M) \setminus \{\pp\}$ and $\Ass_R Y =\{\pp\}$
by \cref{fct:Goto-Watanabe}.
Then $N:=\Ker(M \surj M/\pp M \surj Y)$ is a desired module 
by the exact sequence $0 \to K \to M \to Y \to 0$ and the depth lemma.
Indeed, we have $\depth_{R_\pp} N_{\pp}=1$ by $\depth_{R_\pp}Y_{\pp}=0$ and $\depth_{R_{\pp}}M_\pp \ge 1$.
If $\qq \not\in V(\pp)$, then $\depth_{R_\qq} N_{\qq}=\depth_{R_{\qq}} M_{\qq}$ as $Y_\qq =0$.
If $\qq \supsetneq \pp$, then $\depth_{R_\qq} Y_{\qq} \ge 1$ as $\qq \not\in \Ass_R Y$.
Thus, we have $\depth_{R_\qq} N_{\qq} \ge \min\{2,\depth M_\qq\}$.

(2)
Take a finite presentation $R^{\oplus n} \xr{f} R \to R/\pp \to 0$.
Observe that $n \ge \htt \pp \ge \depth_{R_\pp} M_\pp \ge 2$.
Tensoring $M$ with this exact sequence,
we obtain the exact sequence $M^{\oplus n} \xr{f\otimes M} M \to M/\pp M \to 0$.
Then $K:=\Ker(f\otimes M)$ is a module with the desired property.
Indeed,
since $\depth_{R_\pp}(M/\pp M)_\pp =0$ and $\depth_{R_\pp} M_\pp \ge 2$, the depth lemma yields $\depth_{R_\pp} K_{\pp} = 2$.
If $\qq \not\in V(\pp)$, 
then $f_\qq$ is a split monomorphism and $\Ker(f_\qq)\iso R_\qq^{\oplus n-1}$.
Thus, we have $K_\qq \iso M_\qq^{\oplus n-1}$.
The case $\qq \supsetneq \pp$ is immediate.
\end{proof}

\begin{lem}\label{prp:bound KE}
Let $\catX$ be a subcategory of $\catmod R$.
\begin{enua}
\item
If $\catX$ is a Serre subcategory, then $f_{\catX}(\pp)\le 0$ for any $\pp \in \Supp \catX$.
\item
If $\catX$ is a torsion-free class, then $f_{\catX}(\pp)\le 1$ for any $\pp \in \Supp \catX$.
\item
If $\catX$ is a KE-closed subcategory, then $f_{\catX}(\pp)\le 2$ for any $\pp \in \Supp \catX$.
\end{enua}
\end{lem}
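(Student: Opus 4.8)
The plan is to prove the statement directly rather than by any contrapositive: for a fixed $\pp\in\Supp\catX$ it suffices to exhibit a \emph{single} object $X\in\catX$ with $\depth_{R_\pp}X_\pp\le n$, where $n=0$ in case (1), $n=1$ in case (2), and $n=2$ in case (3); this at once gives $f_{\catX}(\pp)=\inf_{X\in\catX}\{\depth_{R_\pp}X_\pp\}\le n$. So I would begin by choosing $M\in\catX$ with $\pp\in\Supp M$ (possible since $\pp\in\Supp\catX=\bigcup_{X\in\catX}\Supp X$), and then modify $M$ within $\catX$ so as to cut its depth at $\pp$ down to the target value.

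For (1), the point is that $\pp\in\Ass_R(M/\pp M)$ by Nakayama's lemma (exactly as recorded at the beginning of the proof of \cref{prp:depth 1 2}), so $\depth_{R_\pp}(M/\pp M)_\pp=0$. Since $M/\pp M$ is a quotient of $M$ and a Serre subcategory is closed under quotients, $M/\pp M\in\catX$, whence $f_{\catX}(\pp)=0$.

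For (2) and (3) the reduction is the same in form: if $\depth_{R_\pp}M_\pp$ is already at most the target value $n$ there is nothing to do, so assume it is at least $n$, and then invoke \cref{prp:depth 1 2}. In case (2), with $\depth_{R_\pp}M_\pp\ge 1$, part (1) of \cref{prp:depth 1 2} produces a submodule $N\subseteq M$ with $\depth_{R_\pp}N_\pp=1$, and $N\in\catX$ because torsion-free classes are closed under subobjects. In case (3), with $\depth_{R_\pp}M_\pp\ge 2$, part (2) of \cref{prp:depth 1 2} produces $K\in\catmod R$ with $\depth_{R_\pp}K_\pp=2$, and its ``moreover'' clause guarantees $K\in\catX$ precisely because $\catX$ is KE-closed and $M\in\catX$.

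The substance of all three cases is already packaged in \cref{prp:depth 1 2}, so I do not expect a genuine obstacle; the only things needing care are matching each class of subcategories to the correct closure property used to keep the modified module inside $\catX$ (quotients for Serre, subobjects for torsion-free, the built-in KE-statement for KE-closed), and the trivial case split according to whether $\depth_{R_\pp}M_\pp$ already meets the bound.
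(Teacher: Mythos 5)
Your proposal is correct and follows essentially the same route as the paper: pick $X\in\catX$ supported at $\pp$, use closure under quotients (via $X/\pp X$) in the Serre case, and apply \cref{prp:depth 1 2} together with closure under submodules (torsion-free case) or its built-in KE-clause (KE-closed case) for the other two. The only difference is cosmetic — you make explicit the case split and the use of \cref{prp:depth 1 2}(2) in case (3), which the paper compresses into ``similar argument.''
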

\begin{proof}
Take any $\pp \in \Supp \catX$, and take $X\in \catX$ such that $\pp \in \Supp X$.

(1)
As $\catX$ is Serre, we have $X/\pp X \in \catX$.
Then $f_{\catX}(\pp)\le \depth_{R_\pp}(X/\pp X)_\pp = 0$.

(2)
If $\depth_{R_\pp} X_{\pp} =0$,
then $f_{\catX}(\pp) \le \depth_{R_\pp} X_{\pp} = 0$.
Suppose $\depth_{R_\pp} X_{\pp} \le 1$.
There is $N\in \catX$ such that $\depth_{R_\pp} N_\pp=1$ by \cref{prp:depth 1 2}.
Here, we use the fact that $\catX$ is closed under submodules.
Then $f_{\catX}(\pp) \le \depth_{R_\pp} N_{\pp} = 1$.

(3)
It follows from an argument similar to that in (2).
\end{proof}

We explain the relationship between $f_{\catX}$ and the higher associated primes introduced in Section \ref{s:KE-closed}.
\begin{lem}\label{lem:function and higher ass}
Let $\catX$ be a subcategory of $\catmod R$.
Then $f^{-1}_\catX(\{0,1,\dots,n\})=A^n(\catX)$ for any $n\ge 0$.
\end{lem}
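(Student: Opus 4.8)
The plan is to simply unwind both sides of the claimed equality and invoke \cref{rem n-ass}(3). Fix $n\ge 0$ and $\pp\in\Spec R$. By definition of $f_\catX$, membership $\pp\in f_\catX^{-1}(\{0,1,\dots,n\})$ is equivalent to $f_\catX(\pp)\le n$, that is, to $\inf_{X\in\catX}\{\depth_{R_\pp}X_\pp\}\le n$. On the other side, $\pp\in A^n(\catX)=\bigcup_{X\in\catX}A^n_R(X)$ means exactly that there is some $X\in\catX$ with $\depth_{R_\pp}X_\pp\le n$. Thus the lemma reduces to the equivalence
\[
\inf_{X\in\catX}\{\depth_{R_\pp}X_\pp\}\le n \quad\Longleftrightarrow\quad \exists\,X\in\catX\ \text{with}\ \depth_{R_\pp}X_\pp\le n,
\]
which is precisely the content of \cref{rem n-ass}(3).

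The only point worth spelling out is the elementary fact underlying \cref{rem n-ass}(3): for a subset $S\subseteq\bbN\cup\{\infty\}$ one has $\inf S\le n$ if and only if $S$ contains an element $\le n$. Indeed, if every element of $S$ exceeds $n$, i.e.\ is $\ge n+1$, then $\inf S\ge n+1>n$; conversely if $s\in S$ satisfies $s\le n$ then $\inf S\le s\le n$. (The edge cases $\catX$ empty or $\catX=\{0\}$ are covered as well, since $\inf\emptyset=\infty$ and $\depth_{R_\pp}0=\infty$, so both sides are empty.) Applying this with $S=\{\depth_{R_\pp}X_\pp\mid X\in\catX\}$ gives the displayed equivalence, and hence the equality $f_\catX^{-1}(\{0,1,\dots,n\})=A^n(\catX)$.

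There is no real obstacle here: the statement is a direct reformulation of the definitions of $f_\catX$ and $A^n(\catX)$ together with the already-recorded observation \cref{rem n-ass}(3), so the proof is a one-line dereferencing.
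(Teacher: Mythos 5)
Your proof is correct and matches the paper's argument: the paper likewise unwinds both definitions and observes that $\pp\in A^n(\catX)$ is equivalent to $\inf_{X\in\catX}\{\depth_{R_\pp}X_\pp\}\le n$, i.e.\ to $f_\catX(\pp)\le n$. Your extra remarks on the infimum and the degenerate cases only spell out what the paper leaves implicit.
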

\begin{proof}
It follows from
\[
A^n(\catX)=\{\pp \mid \inf_{X\in \catX} \depth_{R_\pp}X_\pp \le n\}=\{\pp \mid f_\catX(\pp)\le n\}=f^{-1}_\catX(\{0,1,\dots,n\}).
\]
\end{proof}

Thus, we obtain the following from the main theorem of Section \ref{s:KE-closed} (\cref{thm:KE-closed reconstruction}).
\begin{prp}\label{prp:X=XfX}
For any KE-closed subcategory $\catX$ of $\catmod R$,
we have $\catX=\catX_{f_\catX}$.
\end{prp}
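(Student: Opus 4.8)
The statement $\catX = \catX_{f_\catX}$ for a KE-closed subcategory $\catX$ should follow by combining \cref{thm:KE-closed reconstruction} with the dictionary between the subsets $A^n(\catX)$ and the function $f_\catX$ recorded in \cref{lem:function and higher ass}. The plan is to unwind both sides of the equality into conditions phrased purely in terms of depths at primes and then match them.

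First I would recall from \cref{thm:KE-closed reconstruction} that $\catX = \catmod^{\ass}_{A^0(\catX)} R \cap \catmod^1_{A^1(\catX)} R$, so it suffices to show that $\catX_{f_\catX}$ equals the same intersection. By definition, $M \in \catX_{f_\catX}$ means $\depth_{R_\pp} M_\pp \ge f_\catX(\pp)$ for all $\pp \in \Spec R$. The key observation is that $f_\catX$ only takes values in $\{0, 1, 2, \infty\}$ on $\Supp \catX$ by \cref{prp:bound KE}, and equals $\infty$ off $\dom(f_\catX)$; more precisely, by \cref{lem:function and higher ass} we have $f_\catX^{-1}(\{0\}) = A^0(\catX)$, $f_\catX^{-1}(\{0,1\}) = A^1(\catX)$, and $f_\catX^{-1}(\{0,1,2\}) = A^2(\catX) = \Supp \catX$ (the last equality again using \cref{prp:bound KE} together with \cref{rem n-ass}(1), which gives $\Supp M = \bigcup_n A^n(M)$). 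Thus for a given $\pp$, the inequality $\depth_{R_\pp} M_\pp \ge f_\catX(\pp)$ unwinds to: if $\pp \notin \Supp \catX$ then $M_\pp = 0$; if $\pp \in \Supp\catX \setminus A^1(\catX)$ then $\depth_{R_\pp} M_\pp \ge 2$; if $\pp \in A^1(\catX) \setminus A^0(\catX)$ then $\depth_{R_\pp} M_\pp \ge 1$; and if $\pp \in A^0(\catX)$ there is no condition.

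Next I would compare this with membership in $\catmod^{\ass}_{A^0(\catX)} R \cap \catmod^1_{A^1(\catX)} R$. By \cref{dfn:A^n}, $M$ lies in this intersection iff $A^0(M) \subseteq A^0(\catX)$ and $A^1(M) \subseteq A^1(\catX)$, i.e. $\{\pp : \depth_{R_\pp} M_\pp = 0\} \subseteq A^0(\catX)$ and $\{\pp : \depth_{R_\pp} M_\pp \le 1\} \subseteq A^1(\catX)$. One still needs the constraint forcing $M_\pp = 0$ outside $\Supp\catX$; this is automatic here because $A^1(\catX) \subseteq \Supp\catX$ and the condition $A^1(M) \subseteq A^1(\catX)$ together with $A^0(M) \subseteq A^1(M)$ already shows that any $\pp$ with $M_\pp \ne 0$ of finite depth lies in $\Supp\catX$, while if $\pp \notin \Supp\catX$ then $\pp \notin A^n(\catX)$ for any $n$, forcing $\depth_{R_\pp} M_\pp = \infty$, i.e. $M_\pp = 0$. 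A short contrapositive check at each of the finitely many "regimes" of $\pp$ described above shows that these two sets of conditions coincide: $\depth_{R_\pp} M_\pp \ge f_\catX(\pp)$ holds for all $\pp$ precisely when $A^0(M) \subseteq A^0(\catX)$ and $A^1(M) \subseteq A^1(\catX)$.

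I do not expect a serious obstacle: the content is entirely in \cref{thm:KE-closed reconstruction}, and the remaining work is the bookkeeping of translating the $A^n$-description into the depth-inequality description via \cref{lem:function and higher ass} and \cref{prp:bound KE}. The one point to handle carefully is the role of the value $2$ (and $\infty$): one must verify that no condition "$\depth_{R_\pp} M_\pp \ge 3$" ever arises, which is exactly guaranteed by \cref{prp:bound KE}(3), so that the function-side description is genuinely captured by the pair $(A^0(\catX), A^1(\catX))$ and nothing more. Once this is observed, the equality $\catX_{f_\catX} = \catmod^{\ass}_{A^0(\catX)} R \cap \catmod^1_{A^1(\catX)} R = \catX$ follows immediately.
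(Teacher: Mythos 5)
Your overall route is the paper's route: combine \cref{thm:KE-closed reconstruction} with the dictionary $f_\catX^{-1}(\{0\})=A^0(\catX)$, $f_\catX^{-1}(\{0,1\})=A^1(\catX)$ from \cref{lem:function and higher ass}, and the direction you really need for the nontrivial inclusion, namely that $M\in\catX_{f_\catX}$ forces $A^0(M)\subseteq A^0(\catX)$ and $A^1(M)\subseteq A^1(\catX)$ and hence $M\in\catX$, is handled correctly.

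However, your argument for the converse containment $\catmod^{\ass}_{A^0(\catX)}R\cap\catmod^1_{A^1(\catX)}R\subseteq\catX_{f_\catX}$ has a genuine flaw at primes $\pp\notin\Supp\catX$. The hypotheses $A^0(M)\subseteq A^0(\catX)$ and $A^1(M)\subseteq A^1(\catX)$ only constrain primes with $\depth_{R_\pp}M_\pp\le 1$; a prime with $M_\pp\ne 0$ and $\depth_{R_\pp}M_\pp\ge 2$ escapes both conditions, so nothing you have assumed places it in $\Supp\catX$. Your sentence ``if $\pp\notin\Supp\catX$ then $\pp\notin A^n(\catX)$ for any $n$, forcing $\depth_{R_\pp}M_\pp=\infty$'' conflates $A^n(\catX)$ with $A^n(M)$: statements about $\catX$ force nothing about $M$ unless $M$ is already known to lie in $\catX$. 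The step can be repaired: since $\Supp\catX=\dom(f_\catX)$ is specialization-closed (\cref{prp:funct ass to cat}), $M_\pp\ne0$ gives $\pp\supseteq\qq$ for some $\qq\in\Ass M=A^0(M)\subseteq A^0(\catX)$, whence $\pp\in\Supp\catX$, so the case $f_\catX(\pp)=\infty$ with $M_\pp\ne0$ never occurs. But the cleaner fix is to avoid this direction altogether, as the paper does: the inclusion $\catX\subseteq\catX_{f_\catX}$ is immediate from the definition of $f_\catX$ as an infimum over $\catX$, so you only need the easy implication ($M\in\catX_{f_\catX}$ gives $A^0(M)\subseteq A^0(\catX)$ and $A^1(M)\subseteq A^1(\catX)$) plus \cref{thm:KE-closed reconstruction}; the full set equality of $\catX_{f_\catX}$ with the intersection then follows for free rather than by casework.
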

\begin{proof}
The inclusion $\catX\subseteq \catX_{f_\catX}$ is clear.
Suppose $M\in \catX_{f_\catX}$.
Then $A^0(M)\subseteq f^{-1}_\catX(\{0\})=A^0(\catX)$ and $A^1(M)\subseteq f^{-1}_\catX(\{0,1\})=A^1(\catX)$.
Therefore, \cref{thm:KE-closed reconstruction} ensures that $M\in \catX$.
We then conclude that $\catX=\catX_{f_\catX}$.
\end{proof}

Next, we characterize a function $\Spec R \to \bbN\cup\{\infty\}$ such that $f=f_{\catX_f}$.
Consider the following condition for a function $f\colon \Spec R \to \bbN \cup\{\infty\}$,
which is a natural modification of an element of $\bbF(\catmod R)$ in \cite{Takahashi2} for our setting:
\begin{itemize}
\item[\textbf{(F)}] For any $\pp\in \Spec R$, there exists $E\in\catmod R$ such that $f(\pp)=\depth_{R_{\pp}} E_\pp$ and $f(\qq)\le \depth_{R_{\qq}} E_{\qq}$ for any $\qq\in \Spec R$.
\end{itemize}

\begin{prp}\label{prp:fuctions F}
The following are equivalent for a function $f\colon \Spec R \to \bbN\cup\{\infty\}$.
\begin{enur}
\item
$f=f_{\catX_f}$ holds.
\item
$f=f_{\catX}$ for some subcategory $\catX$ of $\catmod R$.
\item
$f$ satisfies the condition \textrm{\textbf{(F)}}.
\end{enur}
\end{prp}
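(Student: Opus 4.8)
The plan is to establish the cycle $(i)\Rightarrow(ii)\Rightarrow(iii)\Rightarrow(i)$. The implication $(i)\Rightarrow(ii)$ is immediate by taking $\catX=\catX_f$.

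For $(ii)\Rightarrow(iii)$, assume $f=f_{\catX}$ for some subcategory $\catX$ of $\catmod R$, and fix $\pp\in\Spec R$. If $f(\pp)=\infty$, then $E=0$ witnesses \textbf{(F)} at $\pp$, since $\depth_{R_\qq}0=\infty\ge f(\qq)$ for every $\qq\in\Spec R$. If $f(\pp)<\infty$, I would first note that a subset of $\bbN\cup\{\infty\}$ with finite infimum attains that infimum (because $\bbN$ is well-ordered); applied to the set $\{\depth_{R_\pp}X_\pp\mid X\in\catX\}$, whose infimum is $f(\pp)<\infty$, this produces $E\in\catX$ with $\depth_{R_\pp}E_\pp=f(\pp)$. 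For this $E$ and any $\qq\in\Spec R$ one has $f(\qq)=\inf_{X\in\catX}\{\depth_{R_\qq}X_\qq\}\le\depth_{R_\qq}E_\qq$ directly from the definition of $f_{\catX}$, so $E$ witnesses \textbf{(F)} at $\pp$.

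For $(iii)\Rightarrow(i)$, recall the inequality $f_{\catX_f}\ge f$ noted just after the definition of the maps $f_{(-)}$ and $\catX_{(-)}$, so it suffices to prove $f_{\catX_f}(\pp)\le f(\pp)$ for each $\pp\in\Spec R$. Using \textbf{(F)}, choose $E\in\catmod R$ with $f(\pp)=\depth_{R_\pp}E_\pp$ and $f(\qq)\le\depth_{R_\qq}E_\qq$ for all $\qq\in\Spec R$; this last family of inequalities says precisely that $E\in\catX_f$, whence $f_{\catX_f}(\pp)=\inf_{M\in\catX_f}\{\depth_{R_\pp}M_\pp\}\le\depth_{R_\pp}E_\pp=f(\pp)$. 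Combining with $f_{\catX_f}\ge f$ gives $f=f_{\catX_f}$.

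No step presents a genuine obstacle; the only points requiring a moment's care are the attainment of the infimum in $(ii)\Rightarrow(iii)$ and the separate treatment of the value $\infty$ via $E=0$. This is the exact analogue, in the present depth-function setting, of the characterization of the set $\bbF(\catmod R)$ in \cite{Takahashi2}.
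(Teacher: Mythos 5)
Your proposal is correct and follows essentially the same route as the paper: the cycle (i)$\Rightarrow$(ii)$\Rightarrow$(iii)$\Rightarrow$(i), with the $\infty$ case handled by $E=0$, the finite case by a module attaining the infimum, and the reverse direction using $E\in\catX_f$ together with the previously noted inequality $f_{\catX_f}\ge f$. Your extra remark on why the infimum is attained (well-ordering of $\bbN$) just makes explicit a step the paper takes for granted.
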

\begin{proof}
(i)$\imply$(ii):
It is clear.

(ii)$\imply$(iii):
Let $\catX$ be a subcategory of $\catmod R$.
We prove $f_{\catX}$ satisfies the condition \textrm{\textbf{(F)}}.
Take any $\pp \in \Spec R$.
If $f_{\catX}(\pp)=\infty$, then $E=0$ is a desired module.
If $f_{\catX}(\pp)<\infty$, then there is $X \in \catX$ such that $f_{\catX}(\pp) = \depth_{R_{\pp}}X_\pp$.
We also have $f_{\catX}(\qq) \le \depth_{R_{\qq}}X_\qq$ for any $\qq \in \Spec R$
since $X \in \catX$.
This means $E:=X$ is a desired module.

(iii)$\imply$(i):
Suppose that $f$ satisfies the condition \textrm{\textbf{(F)}}.
Then for any $\pp \in \Spec R$, there exists $E\in\catmod R$ such that $f(\pp)=\depth_{R_\pp} E_\pp$ and $f(\qq)\le \depth_{R_{\qq}} E_{\qq}$ for any $\qq\in \Spec R$.
We have $E\in \catX_f$, and hence $f_{\catX_f}(\pp) \le \depth_{R_{\pp}} E_{\pp}=f(\pp)$.
Thus, we get $f_{\catX_f} \le f$.
Because we have already seen $f_{\catX_f} \ge f$,
we obtain $f=f_{\catX_f}$.
\end{proof}

We will give a sufficient condition for functions to satisfy the condition \textrm{\textbf{(F)}} without referring to information of modules (\cref{prp:2-Bass satisfies F}).
We study properties of $f_{\catX}$ for this.
\begin{lem}\label{prp:funct ass to cat}
Let $\catX$ be a subcategory of $\catmod R$.
\begin{enua}
\item 
We have $\dom(f_\catX)=\Supp\catX$ and $f_\catX^{-1}(0)=\Ass\catX$
(see \eqref{eq:dom} for the definition of $\dom(f)$).
\item
The subset $\dom(f_\catX) \subseteq \Spec R$ is specialization-closed,
that is, for any inclusion $\pp\subseteq \qq$ in $\Spec R$, 
we have that $\pp\in\dom(f_\catX)$ implies $\qq\in\dom(f_\catX)$
\item
For any minimal element $\pp \in \Supp \catX$ with respect to inclusions,
we have $f_\catX(\pp)=0$.
\item
For any inclusion $\pp \subseteq \qq$ in $\Supp\catX$,
we have $f_\catX(\qq) \le f_\catX(\pp) + \htt\qq/\pp$.
\end{enua}
\end{lem}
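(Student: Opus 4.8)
The four assertions are all more or less direct consequences of the definition $f_\catX(\pp)=\inf_{X\in\catX}\{\depth_{R_\pp}X_\pp\}$ together with the elementary properties of depth recalled in \S\ref{s:basic}, so the proof is a sequence of short arguments rather than one hard step. First I would dispose of (1): by definition $f_\catX(\pp)<\infty$ exactly when there is some $X\in\catX$ with $X_\pp\ne 0$ (using $\depth_R 0=\infty$ and that $\depth$ of a nonzero finitely generated module over a local ring is finite), which is precisely $\pp\in\Supp\catX$; likewise $f_\catX(\pp)=0$ iff $\depth_{R_\pp}X_\pp=0$ for some $X\in\catX$, i.e.\ $\pp\in\Ass X$ for some $X\in\catX$, which is $\pp\in\Ass\catX$. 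Here the standard fact ``$\pp\in\Ass_R M\iff\depth_{R_\pp}M_\pp=0$'' recalled just after \cref{fct:Goto-Watanabe} is what makes the second equality work, and one should note the infimum over the (nonempty, since $\catX$ is additive hence contains $0$, but more to the point contains a module supported at $\pp$) set is attained because depth takes values in a well-ordered set.

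Next, (2) is immediate from (1): $\dom(f_\catX)=\Supp\catX=\bigcup_{X\in\catX}\Supp X$, and the support of any module is specialization-closed (closed, in fact), so a union of such sets is specialization-closed. For (3), if $\pp$ is minimal in $\Supp\catX$, pick $X\in\catX$ with $\pp\in\Supp X$; then $\pp$ is minimal in $\Supp X$ as well (since $\Supp X\subseteq\Supp\catX$), and a prime minimal in the support of a module is an associated prime of that module, so $\depth_{R_\pp}X_\pp=0$ and hence $f_\catX(\pp)=0$.

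Finally, (4) follows from Bass' lemma (\cref{Bass lemma}) applied uniformly: for $\pp\subseteq\qq$ in $\Supp\catX$ and any $X\in\catX$ with $\qq\in\Supp X$ we have $\depth_{R_\qq}X_\qq\le\depth_{R_\pp}X_\pp+\htt(\qq/\pp)$; taking the infimum over such $X$ on the left is bounded by the infimum over all $X\in\catX$ of the right-hand side, and since $\htt(\qq/\pp)$ is a constant this gives $f_\catX(\qq)\le f_\catX(\pp)+\htt(\qq/\pp)$. The only mild subtlety is matching the index sets of the two infima: one wants to take the infimum on the right over those $X\in\catX$ with $X_\pp\ne 0$ (equivalently $X_\qq\ne 0$, as $\pp\subseteq\qq$), which is legitimate because any $X$ with $X_\qq=0$ contributes $\depth_{R_\qq}X_\qq=\infty$ and so does not affect the left-hand infimum either, while $\pp\in\Supp\catX$ guarantees the set is nonempty. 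I do not anticipate a genuine obstacle here; the one point to be careful about is the bookkeeping with the value $\infty$ and with which modules are allowed to appear in each infimum, which is why I would state these index-set reductions explicitly rather than gloss over them.
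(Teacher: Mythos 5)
Your proof is correct and follows essentially the same route as the paper: (1)--(3) are read off directly from the definition of $f_\catX$ together with the characterization of associated primes via depth, and (4) is Bass' lemma (\cref{Bass lemma}) combined with taking infima (the paper instead picks a single $M\in\catX$ attaining $f_\catX(\pp)=\depth_{R_\pp}M_\pp$ and applies Bass' lemma to it, which is the same idea). One small inaccuracy: your parenthetical ``$X_\pp\ne 0$ (equivalently $X_\qq\ne 0$, as $\pp\subseteq\qq$)'' is only an implication, not an equivalence, but this plays no role since modules with $X_\qq=0$ (hence $X_\pp=0$) contribute $\infty$ to both sides anyway.
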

\begin{proof}
(1)
We obtain the following from \cref{lem:function and higher ass}:
\[
\dom(f_\catX)=\bigcup_{n\ge 0}f^{-1}_\catX(\{0,1,\dots,n\})=\bigcup_{n\ge 0}A^n(\catX)=\Supp \catX,\quad
f^{-1}(\{0\})=A^0(\catX)=\Ass \catX.
\]

(2)
It follows from (1) since $\Supp\catX$ is specialization-closed.

(3)
We can easily see that every minimal element in $\Supp \catX$ belongs to $\Ass \catX$.
Thus, the claim follows from (1).

(4)
Take $M\in \catX$ such that $f_{\catX}(\pp)= \depth M_{\pp}$.
Then $f_{\catX}(\qq)=\inf_{X\in \catX} \left\{\depth X_{\qq}\right\} \le \depth M_{\qq}$ holds.
We have the following inequalities by \cref{Bass lemma}:
\[
f_{\catX}(\qq) \le \depth_{R_\qq}M_\qq \le \depth M_{\pp} +\htt \qq/\pp
= f_{\catX}(\pp)+\htt \qq/\pp.
\]
\end{proof}

Motivated by the above lemma,
we introduce the following class of functions.
A proper inclusion $\pp \subsetneq \qq$ of prime ideals is \emph{saturated}
if there is no prime ideal $\mathfrak{r}$ such that $\pp \subsetneq \mathfrak{r} \subsetneq \qq$.
\begin{dfn}\label{dfn:Bass fct}
A function $f \colon \Spec R \to \bbN \cup \{\infty\}$ is called a \emph{Bass function}
if it satisfies the following three conditions:
\begin{description}
\item[(B1: support condition)]
The subset $\dom(f)$ is specialization-closed in $\Spec R$.
\item[(B2: minimal prime condition)]
For any minimal element $\pp$ of $\dom(f)$,
we have $f(\pp)=0$.
\item[(B3: Bass condition)]
For any saturated inclusion $\pp \subsetneq \qq$ in $\dom(f)$,
we have $f(\qq) \le f(\pp) + 1$.
\end{description}
Moreover, for $n\ge 0$ we call it an \emph{$n$-Bass function} if it satisfies
$f(\pp) \le n$ for any $\pp \in \dom(f)$.
We denote by $\Bass_n(\Spec R)$ the set of $n$-Bass functions on $\Spec R$.
\end{dfn}

\begin{ex}\label{ex:Bass fct}
We give examples of Bass functions.
\begin{enua}
\item
For a subcategory $\catX$ of $\catmod R$,
the function $f_{\catX}$ associated to $\catX$ is a Bass function by \cref{prp:funct ass to cat}.
\item
For a module $M \in \catmod R$, a function defined $\pp \mapsto \depth_{R_\pp}M_\pp$ is a Bass function.
Indeed, this function is nothing but the function $f_{\{M\}}$ associated to the subcategory consisting of the single object $M$.
\item
The height function $\pp \mapsto \htt \pp$ is a Bass function
if $R$ is catenary and locally equidimensional (e.g., if $R$ has a maximal Cohen-Macaulay $R$-module with full support).
\item
The function $f_{\Phi}$ associated to a subset $\Phi$ of $\Spec R$ (see \cref{ex:X_f}) is a $1$-Bass function.
Moreover, we can easily see that every $1$-Bass function is of this form.
\end{enua}
\end{ex}

\begin{rmk}\hfill
\begin{enua}
\item
We can define Bass functions on any poset.
Thus, Bass functions only depend on the poset structure of $\Spec R$. 
\item
If a Bass function $f$ satisfies $f(\pp) \le \depth R_{\pp}$ for any $\pp \in \Spec R$,
then a function $\Spec R \to \bbN$ defined by $\pp \mapsto \depth R_{\pp} - f(\pp)$ is a \emph{moderate function},
which is introduced in \cite{Takahashi2} to classify dominant resolving subcategories.
\end{enua}
\end{rmk}

Let us give some basic properties of Bass functions.
The following lemma will be used frequently.
For a subset $\Phi$ of $\Spec R$ and its element $\pp$,
define the \emph{relative height} of $\pp$ in $\Phi$ by
$\htt_{\Phi} \pp := \sup\{\htt \pp/\qq \mid \qq \subseteq \pp,\; \qq\in \Phi\}$.
\begin{lem}\label{Bass fct basic}
Let $f \colon \Spec R \to \bbN \cup \{\infty\}$ be a Bass function.
\begin{enua}
\item
For any inclusion $\pp \subseteq \qq$ in $\dom(f)$,
we have $f(\qq) \le f(\pp) + \htt\qq/\pp$.
\item
For any $\pp \in \dom(f)$,
we have $f(\pp) \le \htt_{\dom}\pp$, where we put $\htt_{\dom}:=\htt_{\dom(f)}$.
In particular, we have $f(\pp) \le \htt\pp$.
\end{enua}
\end{lem}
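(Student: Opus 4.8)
The plan is to derive both inequalities purely formally from the axioms \textbf{(B1)}--\textbf{(B3)} of \cref{dfn:Bass fct}, together with two elementary facts about a commutative noetherian ring $R$: that $\htt(\qq/\pp)$ is finite for comparable primes and is realized by some chain, and that every nonempty set of primes has a minimal element.

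For (1), fix $\pp\subseteq\qq$ in $\dom(f)$ and set $r=\htt(\qq/\pp)=\dim(R_\qq/\pp R_\qq)<\infty$. Since the Krull dimension is the supremum of lengths of chains of primes and this supremum is finite, there is a chain $\pp=\pp_0\subsetneq\pp_1\subsetneq\cdots\subsetneq\pp_r=\qq$ of maximal length $r$. Being of maximal length, it admits no refinement, so each step $\pp_i\subsetneq\pp_{i+1}$ is saturated in the sense of \cref{dfn:Bass fct}; and by \textbf{(B1)} every $\pp_i$ lies in $\dom(f)$ because $\pp_0=\pp\in\dom(f)$ and $\dom(f)$ is specialization-closed. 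Hence \textbf{(B3)} applies to each step and gives $f(\pp_{i+1})\le f(\pp_i)+1$; telescoping these inequalities yields $f(\qq)=f(\pp_r)\le f(\pp_0)+r=f(\pp)+\htt(\qq/\pp)$.

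For (2), fix $\pp\in\dom(f)$ and consider the set $T=\{\qq\in\dom(f)\mid\qq\subseteq\pp\}$, which is nonempty since $\pp\in T$. Because every strictly descending chain of primes below $\pp$ has length at most $\htt\pp<\infty$, the set $T$ has a minimal element $\qq_0$. If some $\rr\in\dom(f)$ satisfied $\rr\subsetneq\qq_0$, then $\rr\subseteq\pp$ would force $\rr\in T$, contradicting minimality of $\qq_0$; thus $\qq_0$ is in fact a minimal element of $\dom(f)$, so $f(\qq_0)=0$ by \textbf{(B2)}. Applying (1) to $\qq_0\subseteq\pp$ gives $f(\pp)\le f(\qq_0)+\htt(\pp/\qq_0)=\htt(\pp/\qq_0)\le\htt_{\dom}\pp$, and since $\htt(\pp/\qq)\le\htt\pp$ for every $\qq\subseteq\pp$ we also obtain $f(\pp)\le\htt\pp$.

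I do not expect a genuine obstacle here: the statement is a routine consequence of the axioms, and the only point deserving care is the passage from an arbitrary comparable pair to a saturated chain realizing the relative height, which is a standard dimension-theoretic fact for noetherian rings. The remaining work is purely formal bookkeeping with \textbf{(B1)}--\textbf{(B3)}.
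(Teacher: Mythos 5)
Your proof is correct and follows essentially the same route as the paper: for (1) take a chain from $\pp$ to $\qq$ of length $\htt(\qq/\pp)$, note each step is saturated and lies in $\dom(f)$ by \textbf{(B1)}, and telescope \textbf{(B3)}; for (2) produce a minimal element of $\dom(f)$ below $\pp$, use \textbf{(B2)} and apply (1). The only (harmless) variation is in (2): the paper picks a prime realizing $\htt_{\dom}\pp$ and observes it is minimal, while you pick any minimal element of $\{\qq\in\dom(f)\mid\qq\subseteq\pp\}$ and then bound by the supremum, which gives the same conclusion.
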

\begin{proof}
(1) Take a saturated chain $\pp=\pp_0 \subsetneq \pp_1 \subsetneq \cdots \subsetneq \pp_n=\qq$ of prime ideals with $n=\htt \qq/\pp$.
Every $\pp_i$ belongs to $\dom(f)$ by \textbf{(B1)}.
Applying \textbf{(B3)} repeatedly, we obtain that
\[
f(\qq) \le f(\pp_{n-1})+1 \le \cdots \le f(\pp_1)+n-1 \le f(\pp_0)+n=f(\pp)+\htt(\qq/\pp).
\]

(2) Take $\pp_0 \in \dom(f)$ with $\htt_{\dom}\pp = \htt \pp/\pp_0$.	
Then $\pp_0$ is a minimal element of $\dom(f)$,	
and hence $f(\pp_0)=0$ by \textbf{(B2)}.
From (1), we have $f(\pp) \le f(\pp_0) + \htt \pp/\pp_0 =\htt_{\dom}\pp$.
\end{proof}

The functions $f_\catX$ associated to subcategories are Bass functions as mentioned in \cref{ex:Bass fct}.
Conversely, Bass functions sometimes come from subcategories.
We need the following lemma to prove this.
Recall that $M\in \catmod R$ \emph{satisfies Serre's condition $(S_2)$}, or simply, $M$ is \emph{$(S_2)$}	
if $\depth_{R_\pp} M_\pp \ge \min\{2,\htt\pp\}$ holds for any $\pp \in \Spec R$.
We denote by $\Se_2(R)$ the subcategory consisting of $M\in\catmod R$ satisfying $(S_2)$.
\begin{lem}\label{S2 deform}
Let $M\in \catmod R$ satisfying $(S_2)$ and $\pp \in \Supp M$.
\begin{enua}
\item
If $\htt \pp \ge 1$, then there is a submodule $N$ of $M$ such that
\[
\depth_{R_\qq} N_{\qq}=
\begin{cases}
1 & \text{if $\qq= \pp$,} \\
\ge \min\{2, \htt \qq\} & \text{if $\qq\ne \pp$.}
\end{cases}
\]
\item
If $\htt\pp \ge 2$,
then there exists $K \in \catmod R$ such that
\[
\depth_{R_\qq} K_{\qq}=
\begin{cases}
2 & \text{if $\qq=\pp$,} \\
\ge \min\{2, \htt \qq\} & \text{if $\qq \ne \pp$.}
\end{cases}
\]
\end{enua}
\end{lem}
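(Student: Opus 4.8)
The plan is to mimic the proof of \cref{prp:depth 1 2}, replacing the rôle played there by the hypothesis ``$\depth_{R_\pp} M_\pp \ge k$'' with the global Serre condition $(S_2)$, which guarantees $\depth_{R_\qq} M_\qq \ge \min\{2,\htt\qq\}$ simultaneously for \emph{all} $\qq$. As in that lemma, the starting observation is that $(M/\pp M)_\pp \ne 0$ by Nakayama, so $\Supp_R(M/\pp M) = V(\pp)$ and $\pp \in \Ass_R(M/\pp M)$; in particular $\depth_{R_\pp}(M/\pp M)_\pp = 0$.

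For (1), assume $\htt\pp \ge 1$, so $\depth_{R_\pp} M_\pp \ge \min\{2,\htt\pp\} \ge 1$. Apply \cref{fct:Goto-Watanabe} to the decomposition $\Ass_R(M/\pp M) = (\Ass_R(M/\pp M)\setminus\{\pp\}) \sqcup \{\pp\}$ to get an exact sequence $0 \to X \to M/\pp M \to Y \to 0$ with $\Ass_R X = \Ass_R(M/\pp M)\setminus\{\pp\}$ and $\Ass_R Y = \{\pp\}$. Set $N := \Ker(M \surj M/\pp M \surj Y)$, so there is an exact sequence $0 \to N \to M \to Y \to 0$. For $\qq = \pp$: $\depth_{R_\pp} Y_\pp = 0$ and $\depth_{R_\pp} M_\pp \ge 1$, so the depth lemma forces $\depth_{R_\pp} N_\pp = 1$. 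For $\qq \not\in V(\pp)$: $Y_\qq = 0$, so $N_\qq \cong M_\qq$ and $\depth_{R_\qq} N_\qq = \depth_{R_\qq} M_\qq \ge \min\{2,\htt\qq\}$. For $\qq \supsetneq \pp$: $\qq \notin \Ass_R Y$ gives $\depth_{R_\qq} Y_\qq \ge 1$, and $\depth_{R_\qq} M_\qq \ge \min\{2,\htt\qq\}$, so the depth lemma yields $\depth_{R_\qq} N_\qq \ge \min\{2,\htt\qq\}$. Thus $\depth_{R_\qq} N_\qq \ge \min\{2,\htt\qq\}$ for all $\qq \ne \pp$, which is exactly the stated conclusion (note $\min\{2,\htt\qq\}$ is the right-hand side since $M$ is $(S_2)$).

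For (2), assume $\htt\pp \ge 2$, so $\depth_{R_\pp} M_\pp \ge 2$. Take a finite presentation $R^{\oplus n} \xr{f} R \to R/\pp \to 0$; here $n \ge \htt\pp \ge 2$. Tensoring with $M$ gives an exact sequence $M^{\oplus n} \xr{f\otimes M} M \to M/\pp M \to 0$; set $K := \Ker(f\otimes M)$. For $\qq = \pp$: $\depth_{R_\pp}(M/\pp M)_\pp = 0$ and $\depth_{R_\pp} M_\pp \ge 2$, so the depth lemma (applied to $0 \to K_\pp \to M_\pp^{\oplus n} \to \Ima \to 0$ and $0 \to \Ima \to M_\pp \to (M/\pp M)_\pp \to 0$, as in \cref{prp:depth 1 2}(2)) gives $\depth_{R_\pp} K_\pp = 2$. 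For $\qq \not\in V(\pp)$: $(R/\pp)_\qq = 0$, so $f_\qq \colon R_\qq^{\oplus n} \to R_\qq$ is a split epimorphism, hence $\Ker(f_\qq) \cong R_\qq^{\oplus n-1}$, and tensoring the split sequence with $M_\qq$ yields $K_\qq \cong M_\qq^{\oplus n-1}$; therefore $\depth_{R_\qq} K_\qq = \depth_{R_\qq} M_\qq \ge \min\{2,\htt\qq\}$ (and $=\infty$ if $M_\qq = 0$). For $\qq \supsetneq \pp$: here $\htt\qq \ge 3$, and one reads off $\depth_{R_\qq} K_\qq \ge \min\{2, \depth_{R_\qq} M_\qq\} = 2 = \min\{2,\htt\qq\}$ from the two short exact sequences above and the depth lemma, using $\depth_{R_\qq} M_\qq \ge 2$ and $\depth_{R_\qq}(M/\pp M)_\qq \ge \min\{1,\ldots\}$ (the bound $\ge \min\{2,\depth M_\qq\}$ is all that is needed, and it already matches the $(S_2)$ bound). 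I do not anticipate a genuine obstacle here: the whole point is that the $(S_2)$ hypothesis on $M$ is preserved in the relevant sense by the two constructions of \cref{prp:depth 1 2}, so the only care needed is bookkeeping the case $\qq \supsetneq \pp$, where one must check the right-hand side $\min\{2,\htt\qq\}$ is indeed $\le$ what the depth lemma gives — and it is, since $\htt\qq \ge 2$ forces $\min\{2,\htt\qq\} = 2 = \min\{2,\depth_{R_\qq}M_\qq\}$.
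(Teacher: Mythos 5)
Your proof is correct and takes essentially the same route as the paper: the paper's proof is just the one-line observation that the lemma follows from \cref{prp:depth 1 2} (since $(S_2)$ gives $\depth_{R_\pp}M_\pp\ge\min\{2,\htt\pp\}$, and the bound $\depth_{R_\qq}M_\qq\ge\min\{2,\htt\qq\}$ for all $\qq$ converts that lemma's conclusions into the stated ones), whereas you unwind the same two constructions of \cref{prp:depth 1 2} and re-verify the depth estimates directly. No gaps.
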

\begin{proof}
It follows from \cref{prp:depth 1 2}.
\end{proof}
We now prove that $2$-Bass functions satisfy the condition \textbf{(F)}	
under the existence of nonzero $(S_2)$-modules.	
However, in the next subsection, we will see that this assumption is satisfied for a large class of commutative noetherian rings, including homomorphic images of Cohen-Macaulay rings and excellent rings.

\begin{prp}\label{prp:2-Bass satisfies F}
Assume that $\Se_2(R/\pp) \ne 0$ for any $\pp \in \Spec R$.
Then for any $2$-Bass function $f\colon \Spec R \to \bbN \cup\{\infty\}$,
there is some subcategory $\catX$ of $\catmod R$ such that $f=f_{\catX}$.
\end{prp}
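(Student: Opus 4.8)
The plan is to reduce, via \cref{prp:fuctions F}, to checking that a $2$-Bass function $f$ satisfies condition \textbf{(F)}: for each $\pp\in\Spec R$ one must produce $E\in\catmod R$ with $\depth_{R_\pp}E_\pp=f(\pp)$ and $\depth_{R_\qq}E_\qq\ge f(\qq)$ for every $\qq\in\Spec R$. Write $\Phi:=\dom(f)$. If $\pp\notin\Phi$ then $f(\pp)=\infty$ and $E=0$ works, so I may assume $\pp\in\Phi$, and then $f(\pp)\in\{0,1,2\}$ since $f$ is $2$-Bass. I would build $E$ as (a submodule of) a nonzero module satisfying Serre's condition $(S_2)$ over a quotient domain $R/\qq_0$ of $R$, where $\qq_0$ is chosen so that the depth of the module at $\pp$ can be made exactly $f(\pp)$.

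The first auxiliary fact I would record is that \emph{a nonzero $(S_2)$-module $M$ over a noetherian domain $D$ has full support}: a minimal prime of $\Supp_D M$ is associated to $M$, hence has depth $0$ on $M$, whereas $(S_2)$ forces $\depth\ge 1$ at every prime of positive height; thus the only minimal prime of $\Supp_D M$ is $(0)$. Combined with the hypothesis $\Se_2(R/\pp)\ne 0$ for all $\pp$, this gives, for any $\pp'\in\Spec R$, a nonzero $M\in\Se_2(R/\pp')$ with $\Supp_R M=V(\pp')$ and, for every $\qq\supseteq\pp'$, $\depth_{R_\qq}M_\qq=\depth_{(R/\pp')_{\qq/\pp'}}M_{\qq/\pp'}\ge\min\{2,\htt(\qq/\pp')\}$, using that depth is unchanged when a module is regarded over a quotient ring that annihilates it.

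For the construction, first suppose $f(\pp)=0$: take $E$ to be a nonzero $(S_2)$-module over $R/\pp$ as above. Then $\Supp_R E=V(\pp)\subseteq\Phi$ by \textbf{(B1)}, and $\depth_{R_\pp}E_\pp=0=f(\pp)$ since $(R/\pp)_\pp=\kk(\pp)$ is a field and $E_\pp\ne 0$; for $\qq\supsetneq\pp$ one has $\qq\in\Phi$ and, by \cref{Bass fct basic}, $f(\qq)\le f(\pp)+\htt(\qq/\pp)=\htt(\qq/\pp)$, while $f(\qq)\le 2$, so $f(\qq)\le\min\{2,\htt(\qq/\pp)\}\le\depth_{R_\qq}E_\qq$, and $f(\qq)\le\infty$ is clear for $\qq\notin V(\pp)$. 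Next suppose $f(\pp)\in\{1,2\}$: choose $\qq_0\in\Phi$ minimal with $\qq_0\subseteq\pp$ and $\htt(\pp/\qq_0)=\htt_{\dom(f)}\pp$ (such a $\qq_0$ exists because this relative height is a finite maximum, and its maximality forces $\qq_0$ to be minimal in $\Phi$, so $f(\qq_0)=0$ by \textbf{(B2)}). By \cref{Bass fct basic} we have $\htt(\pp/\qq_0)=\htt_{\dom(f)}\pp\ge f(\pp)$, so I may apply \cref{S2 deform} over the domain $R/\qq_0$ — part (1) if $f(\pp)=1$, part (2) if $f(\pp)=2$, legitimate since $\htt_{R/\qq_0}(\pp/\qq_0)\ge f(\pp)$ and $\pp/\qq_0$ lies in the (full) support of a nonzero $(S_2)$-module over $R/\qq_0$ — at the prime $\pp/\qq_0$, obtaining a module $E$, which I view over $R$, with $\Supp_R E\subseteq V(\qq_0)\subseteq\Phi$, with $\depth_{R_\pp}E_\pp=f(\pp)$ exactly, and with $\depth_{R_\qq}E_\qq\ge\min\{2,\htt(\qq/\qq_0)\}$ for $\qq\in V(\qq_0)\setminus\{\pp\}$. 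For such $\qq$, \cref{Bass fct basic} gives $f(\qq)\le f(\qq_0)+\htt(\qq/\qq_0)=\htt(\qq/\qq_0)$ and $f(\qq)\le 2$, hence $f(\qq)\le\depth_{R_\qq}E_\qq$; and $f(\qq)\le\infty$ holds for $\qq\notin V(\qq_0)$. In all cases $E\in\catX_f$ and $\depth_{R_\pp}E_\pp=f(\pp)$, so \textbf{(F)} holds and \cref{prp:fuctions F} yields $f=f_{\catX_f}$, finishing the proof with $\catX=\catX_f$.

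The main obstacle is arranging the auxiliary $(S_2)$-module to have \emph{full} support, so that \cref{S2 deform} can be invoked at the essentially arbitrary prime $\pp/\qq_0$; this is exactly where the domain argument and the hypothesis $\Se_2(R/\pp)\ne 0$ enter. The other delicate point is the choice of $\qq_0$ as a minimal prime of $\dom(f)$ realizing the relative height of $\pp$: this is what makes the $(S_2)$-depth bound $\min\{2,\htt(\qq/\qq_0)\}$ dominate $f(\qq)$ for \emph{every} $\qq$, via the Bass inequality $f(\qq)\le f(\qq_0)+\htt(\qq/\qq_0)$ together with the $2$-boundedness of $f$; and one must use the \emph{exact} depth value in \cref{S2 deform} (not merely a lower bound) to land precisely on $f(\pp)$ at $\pp$.
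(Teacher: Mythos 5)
Your proposal is correct and follows essentially the same route as the paper: reduce to condition \textbf{(F)} via \cref{prp:fuctions F}, split on $f(\pp)\in\{0,1,2,\infty\}$, take a nonzero $(S_2)$-module over $R/\pp$ (resp.\ over $R/\pp_0$ for a minimal $\pp_0\in\dom(f)$ realizing $\htt_{\dom}\pp$) and adjust the depth at $\pp$ with \cref{S2 deform}, checking the remaining primes with \cref{Bass fct basic} and $f\le 2$. The only difference is cosmetic: you spell out the full-support fact for nonzero $(S_2)$-modules over a domain, which the paper uses implicitly.
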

\begin{proof}
We prove that a $2$-Bass function $f$ satisfies the condition \textbf{(F)}.
Take $\pp\in \Spec R$.
We want to find $E\in \catmod R$ such that $\depth_{R_\pp}E_\pp=f(\pp)$ and $\depth_{R_\qq} E_\qq \ge f(\qq)$ for any $\qq \in \Spec R$.
Note that for any $0 \ne M \in \Se_2(R/\pp)$,
we have $\Supp_R M = V(\pp)$ and $\depth_{R_\qq} M_\qq \ge \min\{2,\htt\qq/\pp\}$ for any $\qq \in V(\pp)$.
\begin{description}
\item[(The case $f(\pp)=\infty$)]
The zero module $E=0$ is a desired one.
\item[(The case $f(\pp)=0$)]
There exists $0 \ne X \in \Se_2(R/\pp)$.
Then $E=X$ is a desired one.
Indeed, we have $f(\pp)= 0 =\depth_{R_\pp} X_\pp$.
If $\qq \not\in V(\pp)$, then $f(\qq)\le \infty = \depth_{R_\qq} X_\qq$.
If $\qq \supsetneq \pp$, 
we have $f(\qq)\le f(\pp)+\htt\qq/\pp=\htt\qq/\pp$ by \cref{Bass fct basic} (1).
Since $f(\qq)\le 2$ by the assumption, we have
\[
f(\qq)\le \min\{2,\htt\qq/\pp\} \le \depth_{R_\pp} X_\pp.
\]
\end{description}
Take $\pp_0 \in \dom(f)$ such that $\htt_{\dom}\pp = \htt\pp/\pp_0$,
and take $0\ne M \in \Se_2(R/\pp_0)$.
Then $f(\pp_0)=0$ by \textbf{(B2)}.
If $\qq \in V(\pp_0)$,
we have $f(\qq)\le f(\pp_0)+\htt\qq/\pp_0=\htt\qq/\pp_0$ for any $\qq \in V(\pp_0)$
by \cref{Bass fct basic} (1).
Thus, we have $f(\qq)\le \min\{2,\htt\qq/\pp_0\}$ for any $\qq \in V(\pp_0)$ by the assumption.
\begin{description}
\item[(The case $f(\pp)=1$)]
In this case, we have the following by \cref{Bass fct basic} (2):
\[
\depth_{R_\pp} M_\pp \ge \min\{2,\htt\pp/\pp_0\} = \min\{2,\htt_{\dom}\pp\} \ge \min\{2,f(\pp)\} \ge 1.
\]
Thus, by \cref{S2 deform} (1), there is a submodule $N$ of $M$ such that
\[
\depth_{R_\qq} N_{\qq}=
\begin{cases}
\infty & \text{if $\qq\not\in V(\pp_0)$,} \\
1 & \text{if $\qq = \pp$,} \\
\ge \min\{2, \htt \qq/\pp_0\} & \text{if $\qq \in V(\pp_0)\setminus \{\pp\}$.}
\end{cases}
\]
Then $E=N$ is a desired one.
Indeed, we have $f(\pp)=1 = \depth_{R_\pp} N_\pp$.
If $\qq \not\in V(\pp_0)$, then $f(\qq)\le \infty = \depth_{R_\qq}N_\qq$.
If $\qq \in V(\pp_0) \setminus \{\pp\}$,
we have $f(\qq) \le \min\{2,\htt\qq/\pp_0\} \le \depth_{R_\qq}N_\qq$.
\item[(The case $f(\pp)=2$)]
In this case, we have $\depth_{(R/\pp_0)_\pp} M_\pp \ge 2$.
Thus, by \cref{S2 deform} (2), there is $K\in \catmod R$ such that
\[
\depth_{R_\qq} K_{\qq}=
\begin{cases}
\infty & \text{if $\qq\not\in V(\pp_0)$,} \\
2 & \text{if $\qq = \pp$,} \\
\ge \min\{2, \htt \qq/\pp_0\} & \text{if $\qq \in V(\pp_0)\setminus \{\pp\}$.}
\end{cases}
\]
Then $E=K$ is a desired one.
\end{description}
This finishes the proof.
\end{proof}

Finally, we also discuss the case of 1-Bass functions.
\begin{prp}\label{prp:1-Bass satisfies F}
For any $1$-Bass function $f\colon \Spec R \to \bbN \cup\{\infty\}$,
there is some subcategory $\catX$ of $\catmod R$ such that $f=f_{\catX}$.
\end{prp}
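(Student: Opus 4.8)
The plan is to reduce the $1$-Bass case to the $2$-Bass case already handled in \cref{prp:2-Bass satisfies F}, but avoiding the $(S_2)$-module hypothesis. The point is that for a $1$-Bass function $f$, the associated subcategory should just be $\catmod^{\ass}_{\dom(f)} R$, which always exists regardless of any Serre-condition assumption. Indeed, by \cref{ex:Bass fct}(4) every $1$-Bass function is of the form $f_\Phi$ for $\Phi=\dom(f)$ (one checks $\Phi$ is specialization-closed by \textbf{(B1)}, that $f$ takes the value $0$ exactly on the minimal elements of $\Phi$ by \textbf{(B2)}, and that $f\equiv 1$ on $\Phi\setminus\Min\Phi$ since \textbf{(B3)} forces $f(\qq)\le f(\pp)+1$ along saturated chains and $f\le 1$ everywhere on $\Phi$; conversely $\htt_{\dom}\pp\ge 1$ for non-minimal $\pp$, so $f(\pp)=1$ there). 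Thus it suffices to show $f_\Phi = f_{\catX}$ for $\catX = \catmod^{\ass}_{\Phi} R$.

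So the first step is to verify, via \cref{prp:fuctions F}, that $f_\Phi$ satisfies condition \textbf{(F)}: for each $\pp\in\Spec R$ we must produce $E\in\catmod R$ with $\depth_{R_\pp}E_\pp=f_\Phi(\pp)$ and $\depth_{R_\qq}E_\qq\ge f_\Phi(\qq)$ for all $\qq$. When $\pp\notin\Phi^{\up}$ we take $E=0$. When $\pp\in\Phi^{\up}$, pick $\pp_0\in\Phi$ with $\pp_0\subseteq\pp$; then $R/\pp_0\in\catmod^{\ass}_{\Phi} R$ is a natural candidate. Here I would use the module-free deformation available for $R/\pp_0$: it is supported exactly on $V(\pp_0)\subseteq\Phi^{\up}$ with $\depth_{(R/\pp_0)_{\pp_0}}=0$ and $\depth_{(R/\pp_0)_\qq}\ge 1$ for $\qq\supsetneq\pp_0$ (\cref{prp:deoth R/p}). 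If $f_\Phi(\pp)=0$ (i.e. $\pp\in\Phi$), then $E=R/\pp$ works directly since $\depth_{R_\pp}(R/\pp)_\pp=0$ and $f_\Phi\le 1\le\depth_{R_\qq}(R/\pp)_\qq$ for $\qq\supsetneq\pp$, while $f_\Phi(\qq)=\infty$ forces $E_\qq=0$... wait, for $\qq\notin V(\pp)$ we only know $\qq$ could still lie in $\Phi^{\up}$, so instead take $E=\bigoplus_{\pp_0\in\Phi}R/\pp_0$ over the (finitely many, since $\Spec R$ noetherian) minimal primes of $\Phi$ to guarantee full coverage; then $\Supp E=\Phi^{\up}$ and $\depth_{R_\qq}E_\qq\ge\min\{1,\depth\}$ appropriately. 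If $f_\Phi(\pp)=1$, apply \cref{prp:depth 1 2}(1) (which needs only $\depth_{R_\pp}M_\pp\ge 1$, no $(S_2)$) to $M=\bigoplus R/\pp_0$ to cut down to a submodule $N$ with $\depth_{R_\pp}N_\pp=1$ and $\depth_{R_\qq}N_\qq\ge\min\{2,\depth_{R_\qq}M_\qq\}\ge f_\Phi(\qq)$ elsewhere.

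The main obstacle is bookkeeping around primes $\qq$ that lie in $\Phi^{\up}$ but are not comparable to the chosen $\pp_0$: one must ensure the single module $E$ has enough depth at every such $\qq$ simultaneously. This is exactly why one takes the direct sum over all minimal primes of $\Phi$ at the outset (so that $\Supp E=\Phi^{\up}$ and the local depth at every $\qq\in\Phi^{\up}$ is at least $1$, which is all a $1$-Bass function demands away from $\Phi$ itself), and only then performs the single deformation of \cref{prp:depth 1 2}(1) at $\pp$. Once \textbf{(F)} is verified, \cref{prp:fuctions F} gives $f_\Phi = f_{\catX_{f_\Phi}}$, and since $\catX_{f_\Phi}=\catmod^{\ass}_{\Phi} R$ by \cref{ex:X_f}(2), we conclude $f_\Phi=f_{\catX}$ with $\catX=\catmod^{\ass}_{\Phi} R$, which is the desired statement. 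Alternatively — and perhaps more cleanly — one simply invokes \cref{prp:bound KE}(2) and \cref{fct:Takahashi}: the torsion-free class $\catX=\catmod^{\ass}_{\Phi} R$ has $\Supp\catX=\Phi^{\up}$, $\Ass\catX=\Phi$, $f_{\catX}\le 1$ on $\Supp\catX$, and $f_{\catX}^{-1}(0)=\Ass\catX=\Phi$ by \cref{prp:funct ass to cat}(1), so $f_{\catX}=f_\Phi$ on the nose; this shortcut sidesteps \textbf{(F)} entirely and I would present it as the primary argument, relegating the explicit module construction to a remark.
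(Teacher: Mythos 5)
Your closing shortcut (compute $f_{\catX}$ for the Takahashi class $\catX=\catmod^{\ass}_{\Phi}R$ directly via \cref{prp:bound KE}(2) and \cref{prp:funct ass to cat}(1)) is a sound idea and genuinely different from the paper, which instead verifies condition \textbf{(F)} by rerunning the proof of \cref{prp:2-Bass satisfies F} with the $(S_1)$-module $R/\pp$ in place of an $(S_2)$-module. However, your reduction ``every $1$-Bass function is $f_\Phi$ with $\Phi=\dom(f)$'' is wrong, and the parenthetical argument for it contains a false claim. Conditions \textbf{(B2)} and \textbf{(B3)} give only upper bounds: \textbf{(B2)} says $f$ vanishes \emph{on} the minimal elements of $\dom(f)$, not exactly there, and \cref{Bass fct basic}(2) gives $f(\pp)\le \htt_{\dom}\pp$, not a lower bound, so your assertion that $f\equiv 1$ on $\dom(f)\setminus\Min(\dom(f))$ is false (the constant function $f\equiv 0$, or any $1$-Bass function vanishing at a non-minimal prime of its domain, violates it). Moreover, with $\Phi=\dom(f)$ one has $\Phi^{\up}=\Phi$, so $f_\Phi$ equals $0$ on all of $\dom(f)$ and $\infty$ outside; hence $f_{\catmod^{\ass}_{\dom(f)}R}=f_\Phi\ne f$ whenever $f$ takes the value $1$ somewhere. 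Concretely, for a one-dimensional local domain with $f((0))=0$ and $f(\mm)=1$, your recipe returns $\catX=\catmod R$, whose function is identically $0$. The correct choice is $\Phi:=f^{-1}(0)$: then $\dom(f)=\Phi^{\up}$ (every $\pp\in\dom(f)$ contains a minimal element of $\dom(f)$, which lies in $\Phi$ by \textbf{(B2)} and \textbf{(B1)}), $f=1$ on $\Phi^{\up}\setminus\Phi$ because $0<f\le 1$ there, and $f=\infty$ outside; with this $\Phi$ your final computation $f_{\catmod^{\ass}_{\Phi}R}=f_\Phi=f$ does go through and yields a clean proof.

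Two further points on the explicit \textbf{(F)}-verification you sketch first. The ``coverage'' worry is unnecessary: in condition \textbf{(F)} the module $E$ may vanish at primes $\qq$ incomparable to the chosen $\pp_0$, since $\depth_{R_\qq}0=\infty\ge f(\qq)$ by the paper's convention; this is exactly how the proof of \cref{prp:2-Bass satisfies F} gets away with a module supported only on $V(\pp_0)$. And the fix you propose, $E=\bigoplus_{\pp_0}R/\pp_0$ over the minimal primes of $\Phi$, need not be finitely generated: a specialization-closed subset of a noetherian spectrum can have infinitely many minimal elements (e.g.\ $\Phi=\Max R$ for $R$ of positive dimension), so that step fails as written.
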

\begin{proof}
Note that the nonzero $R/\pp$-module $R/\pp$ satisfies ($S_1$).
We can prove that $1$-Bass functions satisfy the condition \textbf{(F)} by an argument similar to that in the proof of \cref{prp:2-Bass satisfies F}.
\end{proof}
The results of this subsection are summarized as follows:

\begin{equation}\label{diag:summary KE}
\begin{tikzcd}
\{\text{subcategories of $\catmod R$}\} \ar[r,"{f_{(-)}}",yshift=2.5pt]  & \{\text{functions $\Spec R \to \bbN\cup\{\infty\}$}\} \ar[l,"\catX_{(-)}",yshift=-2.5pt] \\
\ke(\catmod R) \ar[r,yshift=2.5pt,hook] \ar[u,phantom,"\subseteq"sloped] & \Bass_2(\Spec R) \ar[l,yshift=-2.5pt,"{\text{(i)}}",two heads] \ar[u,phantom,"\subseteq"sloped] \\
\torf(\catmod R) \ar[r,"\simeq"] \ar[u,phantom,"\subseteq"sloped] & \Bass_1(\Spec R) \ar[l,"{\text{(ii)}}"] \ar[u,phantom,"\subseteq"sloped] \\
\serre(\catmod R) \ar[r,"\simeq"] \ar[u,phantom,"\subseteq"sloped] & \Bass_0(\Spec R) \ar[l,"{\text{(iii)}}"] \ar[u,phantom,"\subseteq"sloped] \\
\end{tikzcd}
\end{equation}

\begin{itemize}
\item
The maps in \eqref{eq:subcat funct corresp} restrict to the maps (i)--(iii) by \cref{prp:catX_f KE,prp:bound KE}.
\item
In (i), $\catX_{f_\catX}= \catX$ holds for any $\catX \in \ke(\catmod R)$ by \cref{prp:X=XfX}.
If $\Se_2(R/\pp)\ne0$ for any $\pp \in \Spec R$ (e.g., $R$ is $(S_2)$-excellent, see the next subsection), then $f_{\catX_f}=f$ for any $f\in \Bass_2(\Spec R)$ by \cref{prp:fuctions F,prp:2-Bass satisfies F}.
\item
The maps in (ii) and (iii) are bijections by \cref{prp:X=XfX,prp:fuctions F,prp:1-Bass satisfies F}
\end{itemize}

\subsection{The existence of modules satisfying Serre's condition {$(S_2)$}}\label{ss:S2}
In this subsection, we discuss the existence of nonzero modules satisfying $(S_2)$.
We first recall the definition.	
\begin{dfn}\label{def:S_n}	
Let $M\in \catmod R$ and $n$ be a nonnegative integer.	
\begin{enua}
\item	
$M$ \emph{satisfies Serre's condition $(S_n)$}, or simply, $M$ is \emph{$(S_n)$}	
if $\depth_{R_{\pp}}M_{\pp} \ge \min\{n, \htt\pp\}$ holds for any $\pp \in \Spec R$.	
\item	
A commutative noetherian ring $R$ is \emph{$(S_n)$}	
if it is $(S_n)$ as an $R$-module.	
\item	
The \emph{$(S_n)$-locus} of $R$ is defined as follows:	
\[	
U_{(S_n)}:=U_{(S_n)}(R):=\{\pp\in \Spec R \mid \text{$R_\pp$ is $(S_n)$}\}.	
\]	
\end{enua}	
\end{dfn}	
﻿	
For the existence of nonzero $(S_2)$-modules,	
we need the notion of $(S_2)$-excellence, which is introduced in \cite{Ces}
(cf.\ \cite{Takahashi3} for rings).
\begin{dfn}	
For a positive integer $n$,	
we say that $R$ is \emph{$(S_n)$-quasi-excellent}	
if it satisfies the following conditions:	
\begin{enua}	
\item	
The formal fibers of $R_\pp$ are $(S_n)$ for any $\pp\in\Spec R$.	
That is, any fiber of the completion $R_\pp \to \widehat{R_\pp}$ is $(S_n)$.	
\item	
The $(S_n)$-locus of every finitely generated $R$-algebra is open.	
\end{enua}	
In addition, if it is universally catenary, we say that $R$ is \emph{$(S_n)$-excellent}.	
\end{dfn}	
﻿	
\begin{rmk}\label{S_n-excellent}\hfill	
\begin{enua}	
\item	
Our definition of Serre's condition $(S_n)$ for modules differs from that of \cite{Ces}.	
Compare \cref{def:S_n} with \cite[1.14]{Ces}.	
However, both notions coincide for rings (or schemes).	
\item	
The ring $R$ is $(S_n)$-(quasi-)excellent if so is $\Spec R$ as a scheme in the sense of \cite[2.10]{Ces}.	
It follows from a discussion similar to that in \cite[Remark2.8 (1)]{Takahashi3}.	
\item	
If $R$ is $(S_n)$-(quasi-)excellent, then so is every finitely generated $R$-algebra and its localization (see \cite[2.10]{Ces}).	
\item	
If $R$ is $(S_n)$-(quasi-)excellent for every $n$,	
it is said to be \emph{CM-(quasi-)excellent}.	
\item	
The class of CM-excellent rings includes excellent rings, a homomorphic image of a Cohen-Macaulay ring, commutative noetherian rings with a dualizing complex, and commutative noetherian rings having a maximal Cohen-Macaulay module with full support
(see \cite[Example 1.4 and Remark 1.5]{Ces} and \cite[Remark 2.8]{Takahashi3}).	
In particular, every Dedekind domain is CM-excellent, while there exists a non-excellent discrete valuation ring.	
\end{enua}	
\end{rmk}	
﻿	
\begin{prp}\label{S_2-ification}	
Every noetherian $(S_2)$-quasi-excellent ring $R$ has an \emph{$(S_2)$-ification}:	
there is a finite homomorphism $\phi\colon R \to S$ such that $S$ is $(S_2)$, locally equidimensional, and $(S_2)$-excellent.	
Moreover, if $R$ is an integral domain, then so is $S$, and $\phi$ is an injective map that induces an isomorphism $Q(R) \isoto Q(S)$.	
\end{prp}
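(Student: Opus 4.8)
The plan is to take $S$ to be the \emph{$S_2$-ification} of $R$: a finite ring map $\phi\colon R\to S$ which is an isomorphism in codimension $\le 1$ (bijective on spectra over every height-$\le 1$ prime of $R$, with matching local rings) and with $S$ satisfying $(S_2)$. Granting the existence of such an $S$, all the properties in the statement are either built into the construction or short consequences of it; the one serious point is that $\phi$ is module-finite.

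\emph{Construction.} When $R$ admits a dualizing complex --- for instance $R$ complete local, or a homomorphic image of a Gorenstein ring --- I would use the classical description: after separating off the non-equidimensional part one forms, equidimensional-locally, the ring $\Hom_R(\omega_R,\omega_R)$ with $\omega_R$ the canonical module, equivalently the ring $\Gamma(U,\shO_{\Spec R})$ of sections over the complement $U$ of a closed subset of codimension $\ge 2$ containing the non-$(S_2)$-locus; this is $(S_2)$, is an isomorphism with $R$ in codimension $\le 1$, is a \emph{finite} $R$-module (since $\omega_R$ is), and is unique with these properties. For a general $(S_2)$-quasi-excellent $R$ one reduces to this case: pass to the completions $\widehat{R_{\mm}}$ at maximal ideals, which do admit dualizing complexes, form their $S_2$-ifications, and glue. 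Here the openness of the $(S_2)$-loci of finite $R$-algebras together with the $(S_2)$-ness of the formal fibres of $R$ ensure that these local--complete data are compatible under the intervening flat base changes, hence patch to a coherent sheaf of $\shO_{\Spec R}$-algebras; its ring of global sections $S$ is finite over $R$ and satisfies $S\otimes_R\widehat{R_{\mm}}\cong(\widehat{R_{\mm}})^{S_2}$ for every $\mm$, which forces $S$ itself to satisfy $(S_2)$. This existence statement is exactly where $(S_2)$-quasi-excellence is used, and I would invoke \cite{Ces} for it.

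\emph{The remaining properties.} Since $S$ is finite over the $(S_2)$-quasi-excellent ring $R$, it is $(S_2)$-quasi-excellent by \cref{S_n-excellent}~(3); universal catenarity and local equidimensionality are delivered by the construction (which realizes $S$ completion-locally inside universally catenary rings, with the non-equidimensional components separated off beforehand), so $S$ is $(S_2)$-excellent and locally equidimensional. For the refinement when $R$ is a domain with $K:=Q(R)$: here $S=\bigcap_{\htt\pp\le 1}R_{\pp}\subseteq K$ --- a noetherian domain satisfies $(S_2)$ precisely when it coincides with this intersection --- so $S$ is an $R$-subalgebra of $K$ containing $R$; in particular $\phi$ is injective and induces an isomorphism $Q(R)\isoto Q(S)$, while local equidimensionality is automatic since each localization of a domain has a single minimal prime.

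\emph{Main obstacle.} The crux is the module-finiteness of $\phi$ --- equivalently, the coherence of the glued sheaf above, equivalently the finite generation of the relevant local cohomology and $\Ext$ modules of $R$. This fails for arbitrary noetherian rings and is precisely what $(S_2)$-quasi-excellence, through the openness of $(S_2)$-loci and the good behaviour of formal fibres, is designed to supply. A subsidiary technical point that needs care is the compatibility of the local--complete $S_2$-ifications under localization and completion, which I would settle using the uniqueness of the $S_2$-ification among finite, codimension-$\le 1$-neutral, $(S_2)$ extensions.
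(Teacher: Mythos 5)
There is a genuine gap, and it lies precisely in the parts you do \emph{not} delegate to \cite{Ces}. The paper's proof consists of citing \cite[Corollary 2.14]{Ces}, which already supplies everything at once: a finite morphism $f\colon X=\Spec S\to\Spec R$ with $S$ satisfying $(S_2)$, locally equidimensional and $(S_2)$-\emph{excellent}, together with the extra fact (crucial below) that $f$ is an isomorphism over the $(S_2)$-locus $U_{(S_2)}(R)$ and that $f^{-1}(U_{(S_2)}(R))$ is dense. You also invoke \cite{Ces} for existence, but you then try to read the remaining properties off your own sketched construction instead of off the cited statement, and that sketch does not hold up. Gluing $S_2$-ifications of the completions $\widehat{R_\mm}$ over \emph{all} maximal ideals is not a valid descent argument for a non-semilocal $R$ (the map $R\to\prod_\mm\widehat{R_\mm}$ is not even flat in general); this ``reduction'' is essentially the content of the Kawasaki--\v{C}esnavi\v{c}ius theorem, not a way around it. More seriously, universal catenarity of $S$ (needed for $(S_2)$-excellence) cannot be obtained from \cref{S_n-excellent}~(3) plus the phrase ``delivered by the construction'': $R$ is only assumed $(S_2)$-\emph{quasi}-excellent, so it need not be universally catenary, and finiteness over $R$ gives you nothing here. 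In the paper this property is simply part of what \cite[Corollary 2.14]{Ces} asserts about $X$.

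The ``moreover'' part of your argument is circular. You write $S=\bigcap_{\htt\pp\le 1}R_\pp\subseteq Q(R)$, but to view the $S$ produced by \cite{Ces} (or by any abstract existence statement) as a subring of $Q(R)$ you must already know that $\phi$ is injective and birational --- which is exactly what is to be proved. If instead you \emph{define} $S$ to be that intersection, you can no longer quote \cite{Ces} and must prove module-finiteness, $(S_2)$, local equidimensionality and excellence for it from scratch, i.e.\ redo the hard existence theorem; and the unqualified equivalence ``a noetherian domain is $(S_2)$ iff it equals $\bigcap_{\htt\pp\le1}R_\pp$'' that you lean on is itself doubtful in the converse direction. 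The paper's route avoids all of this: since $R$ is a domain, the generic point lies in $U_{(S_2)}(R)$, which is dense, and $f$ is an isomorphism over it with dense preimage; hence $\Min R\cong\Min S$, so $\Spec S$ is irreducible, the generic stalks give $Q(R)\cong Q(S)$, $S$ is reduced (having no embedded primes by $(S_1)$) and therefore a domain, and injectivity of $\phi$ follows from the commutative square into the fraction fields. If you want to salvage your write-up, replace the construction sketch and the intersection identification by these two steps: quote the full statement of \cite[Corollary 2.14]{Ces} for the first part, and argue the domain case via the isomorphism over the dense $(S_2)$-locus.
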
	
\begin{proof}	
It is a direct translation of \cite[Corollary 2.14]{Ces} for commutative rings.	
We write it down for the reader's convenience.	
By \cite[Corollary 2.14]{Ces}, we have a finite morphism $f\colon X \to \Spec R$	
that is an isomorphism over the $(S_2)$-locus $U_{(S_2)}(R)$	
such that $X$ is $(S_2)$, locally equidimensional, $(S_2)$-excellent,	
and has $f^{-1}(U_{(S_2)}(R))$ as a dense open subset.	
Since $f$ is finite, the scheme $X$ is affine (cf.\ \cite[Proposition and Definition 12.9]{GW-AG}),	
and put $X=\Spec S$.	
Then the corresponding morphism $\phi \colon R \to S$ satisfies the desired properties in the first part.	
﻿	
To prove the remaining part, we suppose that $R$ is an integral domain.	
Since $U_{(S_2)}(R)$ and $f^{-1}(U_{(S_2)}(R))$ are dense open in the noetherian schemes,	
we have	
\[	
\Min(R) = \Min(U_{(S_2)}(R)) \iso \Min f^{-1}(U_{(S_2)}(R)) = \Min(S),	
\]	
where, for a scheme $Z$, $\Min Z$ denotes the set of generic points of $Z$.	
This means that $\Spec S$ is irreducible.	
Considering the stalks of the generic points,	
the isomorphism $f^{-1}(U_{(S_2)}(R)) \isoto U_{(S_2)}(R)$	
induces an isomorphism $Q(R) \iso Q(S)$ since $S$ has no embedded points.	
Thus, $S$ is reduced, and hence it is an integral domain.	
Then $\phi$ is injective by the following:	
\[	
\begin{tikzcd}[ampersand replacement=\&]	
R \ar[d,hook] \ar[r,"\phi"] \& S \ar[d,hook]\\	
Q(R) \ar[r,"\simeq"] \& Q(S).	
\end{tikzcd}	
\]	
\end{proof}	
﻿	
\begin{lem}\label{S_n descend univ catenary}	
Let $R \inj S$ be a finite ring extension of noetherian integral domains.	
Suppose that $R$ is universally catenary.	
If $M \in \catmod S$ is $(S_n)$,	
then it is also $(S_n)$ as an $R$-module.	
\end{lem}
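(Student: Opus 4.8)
The plan is to fix a prime $\pp\in\Spec R$ and verify the depth inequality of \cref{def:S_n} for $M$ over $R$ at $\pp$, namely $\depth_{R_\pp}M_\pp\ge\min\{n,\htt_R\pp\}$. Let $\varphi\colon\Spec S\to\Spec R$ denote the induced map; since $R\inj S$ is finite and injective it is an integral extension, so $\varphi$ is surjective by lying-over. The first step is to apply \cref{comparison of depth} to the finite homomorphism $R\to S$ and to $M\in\catmod S$, which yields
\[
\depth_{R_\pp}M_\pp=\inf_{\qq\in\varphi^{-1}(\pp)}\depth_{S_\qq}M_\qq,
\]
where, with the convention $\depth 0=\infty$, the primes $\qq$ outside $\Supp_S M$ do not affect the infimum.

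Next, since $M$ is $(S_n)$ as an $S$-module, $\depth_{S_\qq}M_\qq\ge\min\{n,\htt_S\qq\}$ for every $\qq\in\Spec S$. So everything reduces to the purely ring-theoretic statement that $\htt_S\qq\ge\htt_R\pp$ whenever $\qq\cap R=\pp$: granting it,
\[
\depth_{R_\pp}M_\pp\ge\inf_{\qq\in\varphi^{-1}(\pp)}\min\{n,\htt_S\qq\}\ge\min\{n,\htt_R\pp\},
\]
which is exactly the desired bound.

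For the height comparison I would invoke the dimension formula for universally catenary noetherian rings. Since $R$ is a universally catenary noetherian domain and $S$ is a domain that is a finitely generated $R$-algebra (indeed finite as an $R$-module), the dimension formula gives, for $\qq\in\Spec S$ lying over $\pp$,
\[
\htt_S\qq+\operatorname{trdeg}_{\kk(\pp)}\kk(\qq)=\htt_R\pp+\operatorname{trdeg}_{Q(R)}Q(S),
\]
where $\kk(\qq):=S_\qq/\qq S_\qq$ is the residue field of $S$ at $\qq$. Because $R\inj S$ is finite, $Q(S)$ is a finite field extension of $Q(R)$, so $\operatorname{trdeg}_{Q(R)}Q(S)=0$; likewise $S/\pp S$ is a finite $R/\pp$-module, so $\kk(\qq)$ is finite over $\kk(\pp)$ and $\operatorname{trdeg}_{\kk(\pp)}\kk(\qq)=0$. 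Hence $\htt_S\qq=\htt_R\pp$, which is even stronger than the inequality needed, completing the argument. The only substantive input is the dimension formula, whose applicability rests on $S$ being a finitely generated $R$-algebra and the two transcendence degrees vanishing because the extension is finite (indeed integrality already suffices for the latter); everything else is bookkeeping with \cref{comparison of depth} and the definition of Serre's condition $(S_n)$, so I do not expect any real obstacle beyond correctly assembling these ingredients.
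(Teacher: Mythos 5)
Your proposal is correct and follows essentially the same route as the paper: both apply \cref{comparison of depth} to relate $\depth_{R_\pp}M_\pp$ to depths over primes $\qq\in\Spec S$ lying over $\pp$, and then use the dimension formula for the finite extension of domains with $R$ universally catenary to get $\htt_S\qq=\htt_R\pp$ (the paper cites \cite[Theorems 15.5, 15.6]{CRT} for exactly this). Your version, working directly with the infimum rather than picking a prime attaining it, even handles the trivial cases (empty support, $\depth=\infty$) a bit more explicitly.
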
	
\begin{proof}	
Take any $\pp \in \Spec R$.	
There exists $\qq \in \Spec S$ lying over $\pp$	
such that $\depth_{R_\pp} M_\pp = \depth_{S_\qq} M_\qq$ by \cref{comparison of depth}.	
Since $R \inj S$ is a finite homomorphism,	
we have $\htt\pp= \htt \qq$ by the dimension formula (cf.\ \cite[Theorem 15.5, 15.6]{CRT}).	
Thus, we obtain	
\[	
\depth_{R_\pp} M_\pp = \depth_{S_\qq} M_\qq \ge \min\{n,\htt\qq\} = \min\{n,\htt\pp\}.	
\]	
This proves $M$ is $(S_n)$ as an $R$-module.	
\end{proof}	

Now we can prove the existence of nonzero $(S_2)$-modules over a $(S_2)$-excellent domain.
\begin{cor}\label{exist S_2}
If $R$ is $(S_2)$-excellent, then $\Se_2(R/\pp) \ne 0$ for all $\pp\in \Spec R$.	
\end{cor}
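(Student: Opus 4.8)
The plan is to reduce immediately to the case of an $(S_2)$-excellent integral domain and then feed it through the $(S_2)$-ification together with the descent lemma for Serre's conditions. Fix $\pp \in \Spec R$ and set $A := R/\pp$. Since $A$ is a homomorphic image of $R$ it is a finitely generated $R$-algebra, hence $(S_2)$-excellent by \cref{S_n-excellent}; in particular $A$ is universally catenary and is an integral domain. As $\Se_2(R/\pp)$ is by definition the subcategory of $\catmod A$ consisting of $A$-modules satisfying Serre's condition $(S_2)$ over $A$, it suffices to produce a single nonzero object of $\Se_2(A)$.

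Next I would apply the $(S_2)$-ification of \cref{S_2-ification} to the $(S_2)$-quasi-excellent domain $A$: this yields a finite injective ring homomorphism $\phi\colon A \inj S$ with $S$ an integral domain satisfying $(S_2)$. Finiteness of $\phi$ gives $S \in \catmod A$, and $S \ne 0$. Since a ring is $(S_2)$ exactly when it is $(S_2)$ as a module over itself, $S \in \Se_2(S)$. Now \cref{S_n descend univ catenary}, applied to the finite extension $A \inj S$ of noetherian domains (with $A$ universally catenary) and the $S$-module $S$, shows that $S$ is $(S_2)$ as an $A$-module. Hence $0 \ne S \in \Se_2(A)$, and therefore $\Se_2(R/\pp) \ne 0$, as desired.

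I do not expect a genuine obstacle here: the substantive content is already packaged in \cref{S_2-ification} and \cref{S_n descend univ catenary}, and the only points that require care are bookkeeping ones — checking that $R/\pp$ inherits $(S_2)$-excellence (hence universal catenarity) from $R$, and matching the definition of $\Se_2(R/\pp)$ with the hypotheses of those two lemmas, each of which speaks about Serre's condition relative to the acting ring. If one wanted the sharper facts used later (that a nonzero $M \in \Se_2(R/\pp)$ satisfies $\Supp_R M = V(\pp)$ and $\depth_{R_\qq} M_\qq \ge \min\{2, \htt(\qq/\pp)\}$ for $\qq \in V(\pp)$), one would additionally note that $(S_1)$ forces $\Ass_A M = \{(0)\}$, so $M$ is a nonzero torsion-free module over the domain $A = R/\pp$ and hence has full support over $A$; the depth bound is then the definition of $(S_2)$ over $A$, using that depths over $R_\qq$ and $A_{\qq/\pp}$ agree.
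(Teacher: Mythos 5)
Your proof is correct and follows essentially the same route as the paper: pass to $A=R/\pp$, which is $(S_2)$-excellent by \cref{S_n-excellent}, take the $(S_2)$-ification $A\hookrightarrow S$ from \cref{S_2-ification} (using the domain case, so the extension is a finite injection of noetherian domains), and conclude $0\ne S\in \Se_2(A)$ via \cref{S_n descend univ catenary}. Your extra remarks about full support and the depth bound over $R_\qq$ are not needed for the statement itself but correctly record how the paper later uses nonzero objects of $\Se_2(R/\pp)$.
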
	
\begin{proof}	
Note that $R/\pp$ is $(S_2)$-excellent for any $\pp \in \Spec R$ by \cref{S_n-excellent} (3).	
Take a $(S_2)$-ification $R/\pp \inj S$ (\cref{S_2-ification}).	
Then $0\ne S_{R/\pp} \in \Se_2(R/\pp)$ by \cref{S_n descend univ catenary}.	
\end{proof}

\begin{cor}\label{cor:bij KE 2-Bass}
If $R$ is $(S_2)$-excellent,
then there is an order-reversing bijection between $\ke(\catmod R)$ and $\Bass_2(\Spec R)$.
\end{cor}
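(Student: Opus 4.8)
The plan is to assemble \cref{cor:bij KE 2-Bass} directly from the machinery built up in this subsection, organized around the two maps $f_{(-)}$ and $\catX_{(-)}$ from \eqref{eq:subcat funct corresp}. Recall these maps are order-reversing, so it suffices to show that they restrict to mutually inverse bijections between $\ke(\catmod R)$ and $\Bass_2(\Spec R)$ once we assume $R$ is $(S_2)$-excellent.

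First I would check that $f_{(-)}$ sends $\ke(\catmod R)$ into $\Bass_2(\Spec R)$. For a KE-closed subcategory $\catX$, \cref{ex:Bass fct}~(1) (equivalently \cref{prp:funct ass to cat}) shows $f_\catX$ is a Bass function, and \cref{prp:bound KE}~(3) gives $f_\catX(\pp)\le 2$ for all $\pp\in\Supp\catX=\dom(f_\catX)$; hence $f_\catX\in\Bass_2(\Spec R)$. Next I would check that $\catX_{(-)}$ sends $\Bass_2(\Spec R)$ into $\ke(\catmod R)$. Any $f\in\Bass_2(\Spec R)$ satisfies $f(\pp)\le 2$ on $\dom(f)$, so \cref{prp:catX_f KE}~(4) yields that $\catX_f$ is a KE-closed subcategory.

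Then I would establish that the two restricted maps are inverse to each other. One direction is immediate: by \cref{prp:X=XfX}, $\catX_{f_\catX}=\catX$ for every $\catX\in\ke(\catmod R)$, with no hypothesis on $R$ needed. For the other direction, I invoke the standing hypothesis: since $R$ is $(S_2)$-excellent, \cref{exist S_2} gives $\Se_2(R/\pp)\ne 0$ for every $\pp\in\Spec R$, so \cref{prp:2-Bass satisfies F} applies and shows that every $2$-Bass function $f$ arises as $f=f_\catX$ for some subcategory $\catX$; by \cref{prp:fuctions F} (the implication (ii)$\imply$(i)) this gives $f_{\catX_f}=f$ for every $f\in\Bass_2(\Spec R)$. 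Combining the two identities, $f_{(-)}$ and $\catX_{(-)}$ are mutually inverse bijections between $\ke(\catmod R)$ and $\Bass_2(\Spec R)$, and they are order-reversing as noted. This completes the proof.

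There is no genuine obstacle here: the corollary is a bookkeeping consequence of results already in place. The one point requiring care is simply to cite the correct half of \cref{prp:fuctions F}: we need that a function satisfying condition \textbf{(F)}, or equivalently one of the form $f_\catX$, satisfies $f=f_{\catX_f}$, and \cref{prp:2-Bass satisfies F} supplies exactly the input that a $2$-Bass function has this property under the $(S_2)$-excellence hypothesis. The only ``real'' mathematics was done earlier — the reconstruction theorem \cref{thm:KE-closed reconstruction} underpinning \cref{prp:X=XfX}, and the $(S_2)$-ification argument of \cref{S_2-ification,S_n descend univ catenary} underpinning \cref{exist S_2} — so the proof of the corollary itself is short.
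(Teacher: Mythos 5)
Your proposal is correct and follows essentially the same route as the paper, whose proof of \cref{cor:bij KE 2-Bass} simply cites \cref{prp:X=XfX,prp:2-Bass satisfies F,exist S_2} (with the well-definedness of the restricted maps coming from \cref{prp:catX_f KE,prp:bound KE}, exactly as you argue). You merely spell out the bookkeeping — including the correct use of \cref{prp:fuctions F} to pass from ``$f=f_\catX$ for some $\catX$'' to $f_{\catX_f}=f$ — so there is nothing to add.
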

\begin{proof}
It follows from \cref{prp:X=XfX,prp:2-Bass satisfies F,exist S_2}.
\end{proof}


From the existence of nonzero $(S_2)$-modules,
we can also prove the following proposition that extends the main theorem of \cite{KS} (cf.\ \cref{KE=torf in 1-dim}).
\begin{prp}\label{prp:KE=torf for S_2 excellent}
Assume $R$ is $(S_2)$-excellent.
Then the equality $\ke(\catmod R)=\torf(\catmod R)$ holds true if and only if $\dim R\le 1$.
\end{prp}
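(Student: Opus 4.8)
The plan is to obtain both implications from results already in hand. The ``if'' direction is immediate and needs no hypothesis on $R$: if $\dim R \le 1$, then $\ke(\catmod R) = \torf(\catmod R)$ by $\cref{KE=torf in 1-dim}$.

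For the ``only if'' direction I would argue contrapositively. Assume $\dim R \ge 2$ and fix a chain $\pp_0 \subsetneq \pp_1 \subsetneq \pp_2$ in $\Spec R$; the goal is to exhibit $X \in \catmod R$ together with $\pp \in \Supp X$ satisfying $\depth_{R_\pp} X_\pp \ge 2$, which by the criterion $\cref{prp:KE=torf via depth}$ forces $\ke(\catmod R) \ne \torf(\catmod R)$. First, $R/\pp_0$ is again $(S_2)$-excellent, being a homomorphic image of $R$ (see $\cref{S_n-excellent}$), so $\cref{exist S_2}$ provides a nonzero module $M \in \Se_2(R/\pp_0)$. Because $R/\pp_0$ is a domain, $M$ automatically has full support: otherwise $\Supp_{R/\pp_0} M = V(I)$ with $I \ne 0$, and a prime $\qq$ minimal over $I$ would have $\htt\qq \ge 1$ while $\depth_{(R/\pp_0)_\qq} M_\qq = 0$ (a nonzero module of finite length has depth $0$), contradicting $(S_2)$. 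Hence $\Supp_R M = V(\pp_0)$, which contains $\pp_2$. Finally, evaluating the $(S_2)$-condition of $M$ over $R/\pp_0$ at the prime $\pp_2/\pp_0$, whose height in $R/\pp_0$ is $\htt(\pp_2/\pp_0) \ge 2$ by the chosen chain, and using that $\depth_{R_{\pp_2}} M_{\pp_2} = \depth_{(R/\pp_0)_{\pp_2/\pp_0}} M_{\pp_2/\pp_0}$ since $\pp_0$ annihilates $M$, one gets $\depth_{R_{\pp_2}} M_{\pp_2} \ge \min\{2, \htt(\pp_2/\pp_0)\} = 2$. Taking $X := M$ and $\pp := \pp_2$ then completes the argument.

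The one nontrivial input is $\cref{exist S_2}$, namely the existence of a nonzero $(S_2)$-module over the domain $R/\pp_0$; this is precisely where the $(S_2)$-excellence hypothesis enters, and it has already been secured in $\cref{ss:S2}$ via $\cref{S_2-ification}$ and $\cref{S_n descend univ catenary}$. The remaining steps — passing to $R/\pp_0$, extracting full support from the domain hypothesis, and translating depths back to $R$ — are elementary bookkeeping with heights and depths. I therefore do not expect a substantive obstacle here: the difficulty of the theorem has been front-loaded into $\cref{exist S_2}$ and into the reconstruction result $\cref{thm:KE-closed reconstruction}$ underlying $\cref{prp:KE=torf via depth}$, and the present proof merely assembles these with $\cref{KE=torf in 1-dim}$.
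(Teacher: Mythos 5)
Your proposal is correct and follows essentially the same route as the paper: both reduce via \cref{prp:KE=torf via depth} and, for $\dim R\ge 2$, use \cref{exist S_2} to produce a nonzero module $M\in\Se_2(R/\pp)$ which has depth at least $2$ at a prime of relative height $2$, contradicting the depth bound (the paper handles $\dim R\le 1$ by the easy direction of the same criterion rather than by \cref{KE=torf in 1-dim}, an immaterial difference). Your explicit verification that $M$ has full support over the domain $R/\pp_0$ is a detail the paper leaves implicit, but it is the same argument.
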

\begin{proof}
From \cref{prp:KE=torf via depth}, it suffices to show that
$\dim R\le 1$ if and only if $\depth_{R_\pp}M_\pp\le 1$ for any $M\in \catmod R$ and $\pp \in \Supp M$.
Since the sufficiency is clear, we prove the necessity.
Suppose that $\dim R \ge 2$.
Take $\pp \in \Spec R$ such that $\dim R/\pp \ge 2$.
There exists $0 \ne M \in \Se_2(R/\pp)$ by \cref{exist S_2}.
Then, for any $\qq \in V(\pp)$ such that $\htt(\qq/\pp) \le 2$,
we have $M_\qq\ne 0$ and $\depth_{R_\qq} M_\qq \ge 2$.
\end{proof}

\subsection{Examples and consequences}\label{ss:ex Bass}
In this subsection, we give some consequences and examples of our classification.

For $n \ge 0$,
we denote by $\Fct_n(\Spec R)$ be the set of functions $f\colon \Spec R \to \bbN\cup\{\infty\}$ such that $f(\pp) \le n$ for any $\pp \in \dom(f)$.
Thus, $\Bass_n(\Spec R)=\Bass(\Spec R)\cap \Fct_n(\Spec R)$.
The following observation is useful for enumerating all KE-closed subcategories, possibly with repetitions.
\begin{lem}\label{prp:enumerate KE}
Let $R$ be a commutative noetherian ring (not necessarily $(S_2)$-excellent).
For any subset $\Psi$ of $\Fct_2(\Spec R)$ containing $\Bass_2(\Spec R)$,
we have $\ke(\catmod R)=\{\catX_f\mid f\in \Psi\}$.
\end{lem}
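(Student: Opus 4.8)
The plan is to deduce the statement by combining the reconstruction result \cref{prp:X=XfX} with the facts already established about which functions arise as $f_\catX$ and which functions $f$ yield a KE-closed $\catX_f$. The argument splits into the two inclusions, and I expect neither to present real difficulty once those ingredients are in place.

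First I would prove $\ke(\catmod R) \subseteq \{\catX_f \mid f \in \Psi\}$. Given a KE-closed subcategory $\catX$, \cref{ex:Bass fct} tells us that $f_\catX$ is a Bass function, while \cref{prp:bound KE} together with the identification $\dom(f_\catX) = \Supp \catX$ from \cref{prp:funct ass to cat} shows that $f_\catX(\pp) \le 2$ for every $\pp \in \dom(f_\catX)$; hence $f_\catX \in \Bass_2(\Spec R) \subseteq \Psi$. Since \cref{prp:X=XfX} gives $\catX = \catX_{f_\catX}$, we conclude that $\catX$ lies in the right-hand set.

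Conversely, to see $\{\catX_f \mid f \in \Psi\} \subseteq \ke(\catmod R)$, I would take any $f \in \Psi$. By hypothesis $\Psi \subseteq \Fct_2(\Spec R)$, so $f(\pp) \le 2$ for all $\pp \in \dom(f)$, and then \cref{prp:catX_f KE} shows that $\catX_f$ is a KE-closed subcategory of $\catmod R$. This completes both inclusions.

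The only substantive input is \cref{prp:X=XfX}, which itself rests on \cref{thm:KE-closed reconstruction}; granting that, the present lemma is essentially bookkeeping, so I do not anticipate a genuine obstacle. The point worth emphasizing --- and the reason the hypothesis is merely $\Bass_2(\Spec R) \subseteq \Psi \subseteq \Fct_2(\Spec R)$ rather than an equality --- is that the two inclusions use the two containments separately: the surjectivity of $\catX_{(-)}$ onto $\ke(\catmod R)$ needs only that $\Psi$ contains all $2$-Bass functions, while the assertion that every $\catX_f$ with $f\in\Psi$ is KE-closed needs only that every element of $\Psi$ lies in $\Fct_2(\Spec R)$. In particular no $(S_2)$-type hypothesis on $R$ is required here, because we never invoke surjectivity of $f_{(-)}$, only that it admits a one-sided inverse on KE-closed subcategories.
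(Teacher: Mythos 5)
Your argument is correct and matches the paper's own proof, which deduces the lemma directly from \cref{prp:catX_f KE}, \cref{prp:bound KE}, and \cref{prp:X=XfX} (with $f_\catX$ being a $2$-Bass function exactly as you argue via \cref{prp:funct ass to cat}). Nothing further is needed.
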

\begin{proof}
It easily follows from \cref{prp:catX_f KE,prp:bound KE,prp:X=XfX}.
\end{proof}

\begin{ex}
Suppose that $R$ is a domain.
Let $(0) \subsetneq \pp_1 \subsetneq \pp_2$ be a saturated chain of $\Spec R$.
Then the possible values of Bass functions on $\Spec R$ can be illustrated as follows:
\[
\begin{tikzpicture}
\node (ht0) at (0,0) {height 0:};
\node [above of=ht0] (ht1) {height 1:};
\node [above of=ht1] (ht2) {height 2:};

\node (generic) at (1.5,0) {$(0)$};
\node [above of=generic] (p) {$\pp_1$};
\node [above of=p] (m) {$\pp_2$};

\draw (generic)--(p);
\draw (p)--(m);

\node (0) at (3,0) {$0$};
\node [above of=0] (p0) {$0$};
\node [above of=p0] (m0) {$0$};
\node [right of=p0] (p1) {$1$};
\node [right of=m0] (m1) {$1$};
\node [right of=m1] (m2) {$2$};

\draw (0)--(p0);
\draw (0)--(p1);
\draw (p0)--(m0);
\draw (p1)--(m0);
\draw[white,line width=6pt] (p0) --(m1);
\draw (p0)--(m1);
\draw (p1)--(m1);
\draw (p1)--(m2);

\node (infty) at (6,0) {$\infty$};
\node [above of=infty] (q0) {$0$};
\node [right of=q0] (qinf) {$\infty$};
\node [above of=q0] (n0) {$0$};
\node [right of=n0] (n1) {$1$};
\node [right of=n1] (ninf) {$\infty$};

\draw (infty)--(q0);
\draw (infty)--(qinf);
\draw (q0)--(n0);
\draw (qinf)--(n0);
\draw[white,line width=6pt] (q0) --(n1);
\draw (q0)--(n1);
\draw (qinf)--(n1);
\draw (qinf)--(ninf);
\end{tikzpicture}
\]
This shows, for example, that if $f(\pp_1)=\infty$, then the possible values of $f(\pp_2)$ are $0$, $1$, or $\infty$.
Thus, if $(R,\mm)$ is a two-dimensional local domain,
there is at most one $2$-Bass function that is not $1$-Bass,
namely, the height function $\htt \colon \pp \mapsto \htt \pp$.
It is not a Bass function in general,
but it becomes one when $R$ is catenary (cf.\ \cref{ex:Bass fct}).
As $\catX_{\htt}$ coincides with $\cm R$,
we have $\ke(\catmod R) = \torf(\catmod R)\cup\{ \cm R \}$ by \cref{prp:enumerate KE}.
Note that $\cm R$ can be zero, in which case $\cm R \in \torf(\catmod R)$.
\end{ex}

We now classify the Bass functions having maximal values.
We denote by $\Assh R$ the set of prime ideals $\pp$ such that $\dim R/\pp = \dim R$.
Also, recall that $\Phi^{\mathsf{up}}:=\{\pp \in \Spec R \mid \exists \qq \in \Phi,\, \qq \subseteq \pp \}$ for a subset $\Phi$ of $\Spec R$.
\begin{lem}\label{prp:classify Bass with maximal values}
Suppose that $(R,\mm)$ is a $d$-dimensional local ring.
Consider the following two sets:
\begin{enua}
\item
the set of $d$-Bass functions on $\Spec R$ that are not $(d-1)$-Bass.
\item
the set of non-empty subsets of $\Assh R$.
\end{enua}
Then the assignment $f \mapsto \dom(f) \cap \Assh(R)$ defines an injection from (1) to (2).
Moreover, it is bijective when $R$ is catenary.
In this case, the inverse map is given by $\Phi \mapsto g_\Phi$,
where
\[
g_\Phi(\pp):=
\begin{cases}
\htt(\pp) & \text{if $\pp \in \Phi^{\up}$},\\
\infty & \text{if $\pp \not\in \Phi^{\up}$}.
\end{cases}
\]
\end{lem}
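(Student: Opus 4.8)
Write \(d=\dim R\). The plan is to prove the sharp statement that for every \(f\) in the set~(1) one has \(f=g_{\Phi}\) with \(\Phi:=\dom(f)\cap\Assh(R)\), and that \(\Phi\) is a non-empty subset of \(\Assh(R)\); this gives at one stroke that the assignment \(f\mapsto \dom(f)\cap\Assh(R)\) lands in the set~(2) and is injective, since \(g_{\Phi}\) recovers \(f\). The content of the catenary case is then the converse, that \(g_{\Phi}\in(1)\) for every non-empty \(\Phi\subseteq\Assh(R)\). I will use freely that \(\mm\) is the unique prime of height \(d\) (a proper overprime of a height-\(d\) prime would force \(\htt\mm\ge d+1\)) and that \(\Assh(R)\subseteq\Min(R)\) (a non-minimal prime has co-height \(<d\)), together with \cref{Bass fct basic} and the conditions \textbf{(B1)}--\textbf{(B3)} of \cref{dfn:Bass fct}.

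First I would locate the top value. Since \(f\) is \(d\)-Bass but not \((d-1)\)-Bass, there is \(\pp_1\in\dom(f)\) with \(f(\pp_1)=d\); then \(d=f(\pp_1)\le\htt\pp_1\le d\) by \cref{Bass fct basic}~(2), so \(\htt\pp_1=d\) and \(\pp_1=\mm\). Hence \(\mm\in\dom(f)\) and \(f(\mm)=d\). Next I would pin down \(\dom(f)\). As \(\Phi\subseteq\dom(f)\) and \(\dom(f)\) is specialization-closed by \textbf{(B1)}, we get \(\Phi^{\up}\subseteq\dom(f)\). Conversely, given \(\pp\in\dom(f)\), choose a minimal element \(\pp_0\) of \(\dom(f)\) with \(\pp_0\subseteq\pp\); then \(f(\pp_0)=0\) by \textbf{(B2)}, and \cref{Bass fct basic}~(1) gives \(d=f(\mm)\le f(\pp_0)+\htt(\mm/\pp_0)=\dim R/\pp_0\le d\), so \(\dim R/\pp_0=d\), i.e. \(\pp_0\in\Assh(R)\cap\dom(f)=\Phi\) and \(\pp\in\Phi^{\up}\). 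Thus \(\dom(f)=\Phi^{\up}\); in particular \(\Phi\neq\emptyset\) because \(\mm\in\dom(f)\), and the minimal elements of \(\dom(f)\) are exactly the elements of \(\Phi\).

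I would then compute \(f\) on \(\dom(f)=\Phi^{\up}\) by a squeeze. Fix \(\pp\in\Phi^{\up}\). Applying \cref{Bass fct basic}~(1) to \(\pp\subseteq\mm\) gives \(f(\pp)\ge f(\mm)-\htt(\mm/\pp)=d-\dim R/\pp\). For the reverse, for every \(\qq\in\Phi\) with \(\qq\subseteq\pp\) we have \(\dim R/\qq=d\) (as \(\qq\in\Assh R\)), so the inequality \(\htt(\pp/\qq)+\dim R/\pp\le\dim R/\qq=d\) in \(R/\qq\) yields \(\htt(\pp/\qq)\le d-\dim R/\pp\); since \(\htt_{\dom(f)}\pp\) is attained at these minimal elements, \(\htt_{\dom(f)}\pp\le d-\dim R/\pp\), whence \(f(\pp)\le d-\dim R/\pp\) by \cref{Bass fct basic}~(2). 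Therefore \(f(\pp)=d-\dim R/\pp\), and combining with \(f(\pp)\le\htt\pp\le d-\dim R/\pp\) (again \cref{Bass fct basic}~(2)) we get \(f(\pp)=\htt\pp\). Since \(f\) takes the value \(\infty\) outside \(\dom(f)=\Phi^{\up}\), this shows \(f=g_{\Phi}\), so \(f\) is determined by \(\Phi\) and \(f\mapsto\dom(f)\cap\Assh(R)\) is a well-defined injection from~(1) into~(2).

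Finally, for surjectivity when \(R\) is catenary I would show \(g_{\Phi}\in(1)\) for every non-empty \(\Phi\subseteq\Assh(R)\). The one non-formal ingredient is the classical fact that a catenary local domain \(A\) satisfies \(\htt P+\dim A/P=\dim A\) for all \(P\in\Spec A\) (concatenate a saturated chain from the minimal prime to \(P\) with one from \(P\) to the maximal ideal and use catenariness). Applying this to the catenary local domains \(R/\qq\) for \(\qq\in\Phi\subseteq\Min(R)\) gives \(\htt\pp=d-\dim R/\pp\) for all \(\pp\in\Phi^{\up}\), and applying it to \(R/\pp\) for a saturated inclusion \(\pp\subsetneq\pp'\) in \(\Phi^{\up}\) gives \(\dim R/\pp-\dim R/\pp'=1\). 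Then \textbf{(B1)} holds since \(\dom(g_{\Phi})=\Phi^{\up}\) is specialization-closed; \textbf{(B2)} holds since the minimal elements of \(\Phi^{\up}\) are the \(\qq\in\Phi\), which are minimal primes, so \(g_{\Phi}(\qq)=\htt\qq=0\); and \textbf{(B3)} holds because for such \(\pp\subsetneq\pp'\) we get \(g_{\Phi}(\pp')-g_{\Phi}(\pp)=\htt\pp'-\htt\pp=\dim R/\pp-\dim R/\pp'=1\). Thus \(g_{\Phi}\) is a Bass function with values \(\le d\) attaining \(d\) at \(\mm\), i.e. \(g_{\Phi}\in(1)\), and \(\dom(g_{\Phi})\cap\Assh(R)=\Phi^{\up}\cap\Assh(R)=\Phi\) (a prime in \(\Phi^{\up}\cap\Assh(R)\) contains, hence equals, some element of \(\Phi\subseteq\Assh(R)\)). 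Hence the map is bijective with inverse \(\Phi\mapsto g_{\Phi}\). I expect the verification of \textbf{(B3)} for \(g_{\Phi}\) — equivalently, the quoted fact on catenary local domains — to be the only step that is not purely formal.
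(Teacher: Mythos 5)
Your proof is correct and follows essentially the same route as the paper: injectivity via the squeeze $f(\mm)=d$, $f(\pp)\le\htt\pp$, and the Bass inequality from \cref{Bass fct basic}, forcing $f=g_{\dom(f)\cap\Assh R}$; and surjectivity in the catenary case by verifying \textbf{(B3)} for $g_\Phi$ through the identity $\htt\pp+\dim R/\pp=\dim R$ in catenary local domains (the paper phrases this as $\htt(\pp/\qq)=\htt(\mm/\qq)-\htt(\mm/\pp)$, which is the same fact). Your write-up is merely a bit more explicit than the paper about $\dom(f)=\Phi^{\up}$ and about why $\dom(g_\Phi)\cap\Assh R=\Phi$, which the paper leaves implicit.
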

\begin{proof}
Take a $d$-Bass function $f$ that is not $(d-1)$-Bass.
Then $f(\mm)=d$ holds by definition.
As $d=f(\mm)\le \htt_{\dom} \mm \le \htt\mm=d$,
there is $\pp_0 \in \dom(f)$ such that $\htt\mm/\pp_0=d$.
Thus, the subset $\dom(f)\cap \Assh(R)$ is non-empty.
We also have $f(\pp)=\htt\pp$ for any $\pp \in \dom(f)$ by the following inequalities:
\[
d=f(\mm)\le f(\pp)+\htt(\mm/\pp) \le \htt\pp+\htt(\mm/\pp) \le \htt\mm=d.
\]
Therefore, we have $f=g_{\dom(f)\cap\Assh(R)}$,
which proves the injectivity.

Suppose that $R$ is catenary.
To prove the bijectivity, it is enough to show that $g_\Phi$ is a Bass function for any non-empty subset $\Phi$ of $\Assh R$.
The only nontrivial condition to check is \textbf{(B3)}.
We first prove that $\htt\pp=d-\htt(\mm/\pp)$ for any $\pp \in (\Assh R)^{\up}$.
As $R$ is catenary,
for any inclusion $\qq \subseteq \pp$ in $\Spec R$,
we have $\htt(\pp/\qq)=\htt(\mm/\qq)-\htt(\mm/\pp)$.
Thus, we have the following equality for any $\pp \in (\Assh R)^{\up}$:
\[
\htt \pp 
= \max\{\htt(\pp/\qq) \mid \qq \subseteq \pp\}
= \max\{\htt(\mm/\qq) \mid \qq \subseteq \pp\} - \htt(\mm/\pp)
= d - \htt(\mm/\pp).
\]
Here, the fact $\pp \in (\Assh R)^{\up}$ is used in the last equality.
Take any saturated chain $\pp \subsetneq \qq$ in $\dom(g_{\Phi})=\Phi^{\up}$.
Then we obtain the following desired equality which proves $g_\Phi$ satisfies \textbf{(B3)}:
\[
g_{\Phi}(\qq)=\htt \qq = d - \htt(\mm/\qq) = d - (\htt(\mm/\pp)-1) = \htt\pp +1 = g_{\Phi}(\pp)+1.
\]
\end{proof}

This lemma immediately yields a classification of KE-closed subcategories in the two-dimensional local case.
\begin{cor}\label{prp:classify KE but not torf}
Suppose that $R$ is a two-dimensional local ring.
Consider the following two sets:
\begin{enua}
\item
the set of KE-closed subcategories of $\catmod R$ that are not torsion-free classes.
\item
the set of non-empty subsets of $\Assh R$.
\end{enua}
Then there is an injection from (1) to (2).
Moreover, this map is bijective if $R$ is $(S_2)$-excellent.
In this case,
the KE-closed subcategory $\catX$ corresponding to a non-empty subset $\Phi$ of $\Assh R$ is the following:
\[
\catX:=\{M\in \catmod R \mid \text{$\Supp M \subseteq \Phi^{\up}$ and $\depth_{R_{\pp}}M_{\pp} \ge \htt\pp$ for any $\pp \in \Phi^{\up}$}\}.
\]
\end{cor}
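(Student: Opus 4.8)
The plan is to combine the correspondences assembled in diagram \eqref{diag:summary KE} with the classification of Bass functions of maximal value in \cref{prp:classify Bass with maximal values}, specialized to $d=2$. First I would record the key bookkeeping fact: for a KE-closed subcategory $\catX$ of $\catmod R$, the subcategory $\catX$ is a torsion-free class if and only if $f_{\catX}$ is a $1$-Bass function. Indeed, if $\catX$ is torsion-free then $f_{\catX}$ is $1$-Bass by \cref{prp:bound KE}; conversely, if $f_{\catX}$ is $1$-Bass then $\catX=\catX_{f_{\catX}}$ by \cref{prp:X=XfX}, and $\catX_{f_{\catX}}$ is a torsion-free class by \cref{prp:catX_f KE}. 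Since $f_{(-)}\colon \ke(\catmod R)\hookrightarrow \Bass_2(\Spec R)$ is an injection (it has retraction $\catX_{(-)}$ by \cref{prp:X=XfX}), it follows that the set (1) of KE-closed subcategories that are not torsion-free is carried injectively into the set of $2$-Bass functions that are not $1$-Bass.

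Next I would apply \cref{prp:classify Bass with maximal values} with $d=2$: the assignment $f\mapsto \dom(f)\cap\Assh R$ is an injection from the set of $2$-Bass functions that are not $1$-Bass into the set of non-empty subsets of $\Assh R$. Composing this with $f_{(-)}$ and using $\dom(f_{\catX})=\Supp\catX$ from \cref{prp:funct ass to cat}, I obtain the desired injection $\catX\mapsto \Supp\catX\cap\Assh R$ from (1) to (2). For the bijectivity statement I assume $R$ is $(S_2)$-excellent; then $R$ is universally catenary, in particular catenary, so \cref{prp:classify Bass with maximal values} is a bijection with inverse $\Phi\mapsto g_\Phi$, and \cref{cor:bij KE 2-Bass} upgrades $f_{(-)}$ to a bijection $\ke(\catmod R)\cong\Bass_2(\Spec R)$; by the first paragraph this restricts to a bijection between torsion-free classes and $1$-Bass functions, hence to a bijection between (1) and the $2$-Bass-but-not-$1$-Bass functions. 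Composing the two bijections gives the bijection between (1) and (2), whose inverse sends $\Phi$ to $\catX_{g_\Phi}$.

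It then remains to unwind $\catX_{g_\Phi}$. Since $g_\Phi(\pp)=\infty$ for $\pp\notin\Phi^{\up}$ and $g_\Phi(\pp)=\htt\pp$ for $\pp\in\Phi^{\up}$, the defining condition ``$\depth_{R_\pp}M_\pp\ge g_\Phi(\pp)$ for all $\pp\in\Spec R$'' is equivalent to the conjunction of $M_\pp=0$ for $\pp\notin\Phi^{\up}$ (that is, $\Supp M\subseteq\Phi^{\up}$) and $\depth_{R_\pp}M_\pp\ge\htt\pp$ for all $\pp\in\Phi^{\up}$, which is precisely the stated description of $\catX$. The argument is entirely a matter of assembling earlier results; the only step needing a short verification is the equivalence ``$\catX$ torsion-free $\iff$ $f_{\catX}$ is $1$-Bass'', and the only place the extra hypothesis is used is to secure simultaneously catenarity (for \cref{prp:classify Bass with maximal values}) and surjectivity of $f_{(-)}$ (via \cref{cor:bij KE 2-Bass}), so I do not anticipate any genuine obstacle.
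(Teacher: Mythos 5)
Your proposal is correct and follows essentially the same route as the paper: composing the injection $\ke(\catmod R)\setminus\torf(\catmod R)\hookrightarrow\Bass_2(\Spec R)\setminus\Bass_1(\Spec R)$ from the classification diagram with the injection of \cref{prp:classify Bass with maximal values} for $d=2$, and invoking \cref{cor:bij KE 2-Bass} plus catenarity under $(S_2)$-excellence for bijectivity. You merely spell out details the paper leaves implicit (the equivalence ``torsion-free $\iff$ $f_\catX$ is $1$-Bass'' and the unwinding of $\catX_{g_\Phi}$), and these verifications are accurate.
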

\begin{proof}
There is an injection $\ke(\catmod R)\setminus \torf(\catmod R) \inj \Bass_2(\Spec R) \setminus \Bass_1(\Spec R)$ (cf.\ \eqref{diag:summary KE}).
By \cref{prp:classify Bass with maximal values}, we also have an injection from $\Bass_2(\Spec R) \setminus \Bass_1(\Spec R)$ to the set of non-empty subsets of $\Assh R$.
Moreover, each of these maps is bijective when $R$ is $(S_2)$-excellent.
\end{proof}

From this, we obtain the following interesting observation.
\begin{cor}
If $R$ is a two-dimensional local ring,
there are only finitely many KE-closed subcategories that are not torsion-free classes. \qed
\end{cor}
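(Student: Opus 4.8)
The plan is to read off the statement directly from \cref{prp:classify KE but not torf}. That corollary provides, for any two-dimensional local ring $R$, an injection from the set of KE-closed subcategories of $\catmod R$ that are not torsion-free classes into the set of non-empty subsets of $\Assh R$. So the only thing to verify is that this target set is finite, i.e.\ that $\Assh R$ is finite.

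To see this, I would argue that $\Assh R \subseteq \Min R$: if some $\pp \in \Assh R$ were not a minimal prime, there would be a prime $\pp_0 \subsetneq \pp$, and concatenating a saturated chain from $\pp_0$ to $\pp$ with one from $\pp$ to $\mm$ would give $\dim R \ge \dim R/\pp_0 \ge \htt(\pp/\pp_0) + \dim R/\pp \ge 1 + \dim R$, a contradiction. Since $R$ is noetherian, $\Min R$ is finite, hence so is $\Assh R$ and therefore its power set. Composing with the injection above shows that $\ke(\catmod R)\setminus\torf(\catmod R)$ is finite, as claimed.

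There is essentially no obstacle here: all of the substantive work has already been absorbed into \cref{prp:classify KE but not torf}, and the supplementary input — that a commutative noetherian ring has only finitely many minimal primes — is standard. The only point worth spelling out is the elementary inclusion $\Assh R\subseteq\Min R$ indicated above.
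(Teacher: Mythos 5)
Your proposal is correct and is exactly the paper's argument: the corollary is stated as an immediate consequence of \cref{prp:classify KE but not torf}, with the finiteness of $\Assh R$ (via $\Assh R \subseteq \Min R$, which is finite by noetherianity) left implicit. Your spelling out of that standard inclusion is fine and adds nothing problematic.
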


\begin{ex}
Let $\bbk$ be a field, and let $R:=\bbk[[x,y,z]]/(xy,xz)$.
Then $R$ is an $(S_2)$-excellent two-dimensional local ring,
which is not equidimensional and $\Assh R = \{(x)\}$.
By \cref{prp:classify KE but not torf},
there is exactly one KE-closed subcategory $\catX$ given by
\[
\catX=\{M\in \catmod R \mid \text{$\Supp M \subseteq V(x)$ and $\depth_{R_{\pp}}M_{\pp} \ge \htt\pp$ for any $\pp \in V(x)$}\}.
\]
\end{ex}

Next, we describe the classification of KE-closed subcategories in terms of higher associated primes (\cref{dfn:A^n}).
It reveals the relationships between our classification of KE-closed subcategories and those of other classes of subcategories.
For a subset $\Phi$ of $\Spec R$,
we denote by 
\[
\Phi^{\cov}:= \{\pp\in \Spec R  \mid \exists \qq \in \Phi,\, \qq \subsetneq \pp\colon\text{saturated}\}.
\]


\begin{dfn} \label{dfn:Bass sequence}
A sequence $(\Phi_i)_{i\in\mathbb{N}}$ of subsets of $\Spec R$ is called a \emph{Bass sequence} if it satisfies the following conditions:
\begin{enur}
\item for any $i\ge 0$, $\Phi_i \cup \Phi_i^{\cov} \subseteq \Phi_{i+1}$, and
\item $\Phi_0^\up=\bigcup_{i\ge 0}\Phi_i$.
\end{enur}
Moreover, if $\Phi_i=\Phi_0^\up$ for all $i\ge n$, then it is called an \emph{$n$-Bass sequence}.
\end{dfn}
As an $n$-Bass sequence $(\Phi_i)_{i\in\mathbb{N}}$ is determined by $(\Phi_i)_{i=0}^{n-1}$ for $n \ge 2$, and by $\Phi_0$ for $n=0,1$,
we often identify it with $(\Phi_i)_{i=0}^{n-1}$ in the former case, and with $\Phi_0$ in the latter.
Then $0$-Bass sequences are identified with specialization-closed subsets of $\Spec R$.
Also, $1$-Bass sequences are identified with subsets of $\Spec R$.
A $2$-Bass sequence is identified with a pair $(\Phi,\Psi)$ of subsets of $\Spec R$ such that $\Phi \cup \Phi^{\cov}\subseteq \Psi \subseteq \Phi^\up$.
We define an order on Bass sequences by
\[
(\Phi_i)_{i\in \bbN} \ge (\Psi_i)_{i\in \bbN} \quad :\Longleftrightarrow \quad
\text{$\Phi_i \supseteq \Psi_i$ for any $i \in \bbN$}.
\]
Bass sequences are naturally related to Bass functions.
\begin{lem}
There exist order-reversing bijections between the following two posets:
\begin{enua}
\item the set of Bass functions $f$.
\item the set of Bass sequences $\Phi=(\Phi_i)_{i\in \bbN}$.
\end{enua}
The bijections are given by $f \mapsto (f^{-1}\{0,1,\dots,i\})_{i\in \mathbb{N}}$ and $\Phi \mapsto [f_\Phi \colon \pp \mapsto \inf\{i \mid \pp\in \Phi_i\}]$.
Moreover, it restricts to a bijection between $n$-Bass functions and $n$-Bass sequences for each $n\ge 0$.
\end{lem}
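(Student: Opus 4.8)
The plan is to verify, in turn, that the two displayed assignments --- write $\alpha\colon f\mapsto(f^{-1}\{0,\dots,i\})_{i\in\bbN}$ and $\beta\colon\Phi\mapsto f_\Phi$ --- are well defined on the respective sets, that $\beta\alpha=\id$ and $\alpha\beta=\id$, that both reverse the two orders, and finally that they interchange $n$-Bass functions with $n$-Bass sequences. The one structural fact I would record first is that every Bass sequence $(\Phi_i)_{i\in\bbN}$ is \emph{increasing}: condition (i) of \cref{dfn:Bass sequence} gives $\Phi_i\subseteq\Phi_{i+1}$, hence $\Phi_j\subseteq\Phi_i$ whenever $j\le i$. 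This is precisely what is needed to identify the sublevel sets of $f_\Phi$ with the $\Phi_i$.

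For well-definedness of $\alpha$, put $\Phi_i:=f^{-1}\{0,\dots,i\}$ for a Bass function $f$. The inclusion $\Phi_i\subseteq\Phi_{i+1}$ is immediate; and if $\pp\in\Phi_i^{\cov}$, choosing $\qq\in\Phi_i$ with $\qq\subsetneq\pp$ saturated we have $\qq\in\dom(f)$, so $\pp\in\dom(f)$ by \textbf{(B1)} and $f(\pp)\le f(\qq)+1\le i+1$ by \textbf{(B3)}, whence $\pp\in\Phi_{i+1}$; this is condition (i). For condition (ii), $\Phi_0=f^{-1}(0)$ contains every minimal element of $\dom(f)$ by \textbf{(B2)}, while $\dom(f)$ is specialization-closed by \textbf{(B1)} and --- since in a noetherian ring every prime has finite height --- is the up-closure of its minimal elements; together with $\Phi_0\subseteq\dom(f)$ this gives $\Phi_0^{\up}=\dom(f)=\bigcup_i\Phi_i$. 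For well-definedness of $\beta$, a Bass sequence has $\dom(f_\Phi)=\bigcup_i\Phi_i=\Phi_0^{\up}$ by condition (ii), which is specialization-closed because every up-closure is (this is \textbf{(B1)}); a minimal element $\pp$ of $\Phi_0^{\up}$ lies in $\Phi_0$ --- pick $\qq\in\Phi_0$ with $\qq\subseteq\pp$, then $\qq\in\dom(f_\Phi)$ forces $\qq=\pp$ --- which is \textbf{(B2)}; and for a saturated inclusion $\qq\subsetneq\pp$ in $\dom(f_\Phi)$, writing $i:=f_\Phi(\qq)$ one has $\qq\in\Phi_i$, hence $\pp\in\Phi_i^{\cov}\subseteq\Phi_{i+1}$ by condition (i), so $f_\Phi(\pp)\le i+1$, which is \textbf{(B3)}.

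Mutual inverseness is then a short check: $\beta\alpha(f)(\pp)=\inf\{i\mid f(\pp)\le i\}$ equals $f(\pp)$ when $f(\pp)<\infty$ and $\infty$ otherwise; and, using monotonicity, $f_\Phi(\pp)\le i$ holds iff $\pp\in\Phi_i$, so the $i$-th term of $\alpha\beta(\Phi)$ is $\Phi_i$. Order-reversal is formal: $f\le g$ yields $\{f\le i\}\supseteq\{g\le i\}$ for all $i$, and $\Phi_i\supseteq\Psi_i$ for all $i$ makes the defining infima for $f_\Phi$ at most those for $f_\Psi$. For the $n$-Bass claim, if $f$ is $n$-Bass then for $i\ge n$ each $\pp\in\dom(f)$ has $f(\pp)\le n\le i$, so $\Phi_i=\dom(f)=\Phi_0^{\up}$ and $\alpha(f)$ is an $n$-Bass sequence; conversely if $(\Phi_i)$ is $n$-Bass then $\Phi_n=\Phi_0^{\up}=\dom(f_\Phi)$ forces $f_\Phi\le n$ on its domain.

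I do not expect a genuine obstacle here; the lemma is essentially bookkeeping about poset structure. The one point I would watch --- and the only place that truly uses monotonicity of Bass sequences --- is the identity $\alpha\beta=\id$: without monotonicity the sublevel set $\{f_\Phi\le i\}$ would only be the increasing hull $\bigcup_{j\le i}\Phi_j$ rather than $\Phi_i$ itself. A secondary routine point to keep in mind is that, in a noetherian ring, every prime has finite height, so a specialization-closed subset equals the up-closure of its minimal elements; this is what makes \textbf{(B2)} match on both sides.
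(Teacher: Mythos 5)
Your verification is correct and complete; the paper itself omits the proof as straightforward, and your argument is exactly the routine bookkeeping that omission presupposes (including the two points worth flagging: monotonicity $\Phi_i\subseteq\Phi_{i+1}$ from condition (i), used for $\alpha\beta=\id$, and the finite-height/noetherian fact that a specialization-closed set is the up-closure of its minimal elements, used to match \textbf{(B2)} with condition (ii)). No gaps.
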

\begin{proof}
We omit the proof since it is straightforward.
\end{proof}
The Bass function $f_{\catX}$ associated to a subcategory $\catX$ of $\catmod R$
corresponds to the Bass sequence $(A^i(\catX))_{i \in \bbN}$ via the bijection described above (cf.\ \cref{lem:function and higher ass}).
Thus, we can describe the classification of KE-closed subcategories in terms of higher associated primes and Bass sequences.
\begin{cor}\label{prp:classification via seq}
Assume $R$ is $(S_2)$-excellent.
There exist order-preserving bijections between the following two posets:
\begin{enua}
\item the set $\ke(\catmod R)$ of KE-closed subcategories $\catX$.
\item the set of $2$-Bass sequences $(\Phi,\Psi)$.
\end{enua}
The bijections are given by the assignments $\catX \mapsto (A^0(\catX),A^1(\catX))$ and $(\Phi,\Psi) \mapsto \catmod_\Phi^\ass R \cap \catmod_\Psi^1 R$.
Under this bijection, torsion-free classes $\catmod_\Phi^\ass R$ correspond to the pairs of the form $(\Phi,\Phi^\up)$.\qed
\end{cor}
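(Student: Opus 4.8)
The plan is to deduce the corollary by composing two bijections that are already available. By \cref{cor:bij KE 2-Bass}, the maps $f_{(-)}$ and $\catX_{(-)}$ restrict to mutually inverse, order-reversing bijections between $\ke(\catmod R)$ and $\Bass_2(\Spec R)$; this is the step where the $(S_2)$-excellence hypothesis is used. By the preceding lemma, $f\mapsto (f^{-1}\{0,1,\dots,i\})_{i\in\bbN}$ is an order-reversing bijection from $\Bass_2(\Spec R)$ onto the set of $2$-Bass sequences, with inverse sending $(\Phi_i)_{i\in\bbN}$ to the function $\pp\mapsto\inf\{i\mid\pp\in\Phi_i\}$. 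Since the composition of two order-reversing bijections is order-preserving, composing these two yields an order-preserving bijection between $\ke(\catmod R)$ and the set of $2$-Bass sequences.

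It then remains to identify the composite maps with the assignments in the statement. Tracing a KE-closed subcategory $\catX$ through the composite gives $\catX\mapsto f_\catX\mapsto (f_\catX^{-1}\{0,1,\dots,i\})_{i\in\bbN}$; as a $2$-Bass sequence is determined by its terms in degrees $0$ and $1$, this data is the pair $(f_\catX^{-1}\{0\},\, f_\catX^{-1}\{0,1\})$, which equals $(A^0(\catX),A^1(\catX))$ by \cref{lem:function and higher ass}. Conversely, the $2$-Bass sequence identified with a pair $(\Phi,\Psi)$ corresponds to the $2$-Bass function $f$ with $f\equiv 0$ on $\Phi$, $f\equiv 1$ on $\Psi\setminus\Phi$, $f\equiv 2$ on $\Phi^{\up}\setminus\Psi$, and $f\equiv\infty$ off $\Phi^{\up}$; I would then check that $\catX_f=\catmod^{\ass}_\Phi R\cap\catmod^1_\Psi R$. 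Unwinding the depth inequalities defining $\catX_f$ together with \cref{rem n-ass} gives ``$\subseteq$'', and for ``$\supseteq$'' one uses in addition that $A^0(M)\subseteq\Phi$ forces $\Supp M\subseteq\Phi^{\up}$, so that $M_\pp=0$ for $\pp\notin\Phi^{\up}$, matching $f(\pp)=\infty$ --- exactly the argument already used in the proof of \cref{prp:X=XfX}. For the final assertion about torsion-free classes, a KE-closed subcategory $\catX$ is a torsion-free class precisely when $f_\catX$ is $1$-Bass (equivalently $A^1(\catX)=\dom(f_\catX)=\Supp\catX$), and combining this with the identity $\Supp(\catmod^{\ass}_\Phi R)=\Phi^{\up}$ coming from \cref{fct:Takahashi} shows that the torsion-free classes $\catmod^{\ass}_\Phi R$ are exactly the pairs $(\Phi,\Phi^{\up})$.

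I do not anticipate a real obstacle: the substantive input is \cref{thm:KE-closed reconstruction} (via \cref{prp:X=XfX}) and the existence of $(S_2)$-modules (\cref{exist S_2}, via \cref{cor:bij KE 2-Bass}). The only points needing care are the bookkeeping identity $\catX_f=\catmod^{\ass}_\Phi R\cap\catmod^1_\Psi R$, which is a short application of the depth lemma, and keeping track of the order conventions on the three posets --- inclusion on subcategories, pointwise $\le$ on functions, and pointwise $\supseteq$ on Bass sequences --- so that the two order-reversals compose to an order-preservation.
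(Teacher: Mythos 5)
Your proposal is correct and follows essentially the same route as the paper, which treats this corollary as an immediate composition of the bijection $\ke(\catmod R)\simeq\Bass_2(\Spec R)$ (\cref{cor:bij KE 2-Bass}) with the order-reversing bijection between Bass functions and Bass sequences, identified via $f_\catX^{-1}(\{0,\dots,n\})=A^n(\catX)$ (\cref{lem:function and higher ass}). Your explicit verifications that $\catX_f=\catmod^\ass_\Phi R\cap\catmod^1_\Psi R$ and that torsion-free classes correspond to pairs $(\Phi,\Phi^{\up})$ are exactly the routine checks the paper leaves implicit.
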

Our classification is summarized as follows (cf.\ \eqref{diag:summary KE}):
\[
\begin{tikzcd}
\ke(\catmod R) \ar[r,"{f_{(-)}}",yshift=2.5pt,hook] & \Bass_2(\Spec R) \ar[l,"\catX_{(-)}",yshift=-2.5pt,two heads] \ar[r,"\simeq"] & \{\text{$2$-Bass sequences $(\Phi,\Psi)$ of $\Spec R$}\} \ar[l] \\
\torf(\catmod R) \ar[u,phantom,"\subseteq"sloped] \ar[r,"\simeq"] & \Bass_1(\Spec R) \ar[u,phantom,"\subseteq"sloped] \ar[r,"\simeq"] \ar[l] & \{\text{subsets of $\Spec R$}\} \ar[l] \ar[u,phantom,"\subseteq"sloped]\\
\serre(\catmod R) \ar[u,phantom,"\subseteq"sloped] \ar[r,"\simeq"] & \Bass_0(\Spec R) \ar[u,phantom,"\subseteq"sloped] \ar[r,"\simeq"] \ar[l] & \{\text{specialization-closed subsets of $\Spec R$}\} \ar[u,phantom,"\subseteq"sloped] \ar[l]
\end{tikzcd}
\]
Here, the middle bijection coincides with Takahashi's classification of torsion-free classes \cite{Takahashi}, and the bottom bijection coincides with Gabriel's classification of Serre subcategories \cite{Gab}.
Thus, our classification extends both the classical ones.

Bass functions have the advantage that their properties are easier to investigate, whereas Bass sequences have the advantage that examples can be constructed more easily.
Using the description of our classification in terms of Bass sequences, we also have the following remarkable observation.
\begin{prp}\label{prp:smallest KE}
Suppose that $R$ is $(S_2)$-excellent.
For any torsion-free class $\catF=\catmod_\Phi^\ass R$, 
the KE-closed subcategory corresponding to $(\Phi,\Phi\cup\Phi^{\cov})$ is the smallest KE-closed subcategory whose torsion-free closure is $\catF$.
\end{prp}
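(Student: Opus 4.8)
The plan is to transfer the problem through the bijection of \cref{prp:classification via seq} between $\ke(\catmod R)$ and the set of $2$-Bass sequences, which sends $\catX$ to $(A^0(\catX),A^1(\catX))$ and $(\Phi,\Psi)$ to $\catmod_\Phi^\ass R\cap\catmod_\Psi^1 R$, and is order-preserving. The first observation is that a KE-closed subcategory $\catX$ satisfies $\F(\catX)=\catF=\catmod_\Phi^\ass R$ if and only if $\Ass\catX=\Phi$, by \cref{prp:torf closure for comm ring} together with Takahashi's classification \cref{fct:Takahashi}; and $\Ass\catX=A^0(\catX)$ by \cref{rem n-ass}. Hence, under the bijection, the KE-closed subcategories with torsion-free closure $\catF$ correspond exactly to the $2$-Bass sequences with first term $\Phi$, i.e.\ to pairs $(\Phi,\Psi)$ with $\Phi\cup\Phi^{\cov}\subseteq\Psi\subseteq\Phi^\up$ (the torsion-free class $\catF$ itself being the one with $\Psi=\Phi^\up$). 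Let $\catX_0$ be the subcategory corresponding to $(\Phi,\Phi\cup\Phi^{\cov})$; this is legitimate because a saturated inclusion is in particular an inclusion, so $\Phi^{\cov}\subseteq\Phi^\up$ and $(\Phi,\Phi\cup\Phi^{\cov})$ is indeed a $2$-Bass sequence. By construction $A^0(\catX_0)=\Phi$, so $\Ass\catX_0=\Phi$ and $\F(\catX_0)=\catF$; thus $\catX_0$ is among the candidates.

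For minimality, I would take an arbitrary KE-closed subcategory $\catX$ with $\F(\catX)=\catF$, so that $A^0(\catX)=\Phi$ and $(\Phi,A^1(\catX))=(A^0(\catX),A^1(\catX))$ is a $2$-Bass sequence. The very definition of a $2$-Bass sequence forces $\Phi\cup\Phi^{\cov}\subseteq A^1(\catX)$. (If one wishes to see this by hand: $\Phi=A^0(\catX)\subseteq A^1(\catX)$ always holds by \cref{rem n-ass}, and if $\pp\in\Phi^{\cov}$, pick a saturated $\qq\subsetneq\pp$ with $\qq\in\Ass\catX$ and some $M\in\catX$ with $\depth_{R_\qq}M_\qq=0$; then \cref{Bass lemma} gives $\depth_{R_\pp}M_\pp\le\depth_{R_\qq}M_\qq+\htt(\pp/\qq)=1$, so $\pp\in A^1(M)\subseteq A^1(\catX)$.) Consequently $(\Phi,\Phi\cup\Phi^{\cov})\le(\Phi,A^1(\catX))$ in the order on $2$-Bass sequences, and since the bijection of \cref{prp:classification via seq} is order-preserving we conclude $\catX_0\subseteq\catX$. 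This proves that $\catX_0$ is the smallest KE-closed subcategory whose torsion-free closure equals $\catF$.

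As for the difficulty: with \cref{thm:KE-closed reconstruction}, \cref{prp:classification via seq}, and the existence of nonzero $(S_2)$-modules (\cref{exist S_2}) already available, the argument is essentially bookkeeping and I do not expect a genuine obstacle. The two points that need care are the equivalence $\F(\catX)=\catF\iff A^0(\catX)=\Phi$ and keeping track of the direction of the order-preserving correspondence, so that the minimal admissible second term $\Phi\cup\Phi^{\cov}$ really does yield the minimal — not the maximal — KE-closed subcategory. The one substantive input hiding in the background is that $\Phi\cup\Phi^{\cov}$ is actually attained as $A^1$ of some KE-closed subcategory with $A^0=\Phi$; this is exactly the surjectivity part of \cref{prp:classification via seq}, and it is there that the $(S_2)$-excellence hypothesis is used.
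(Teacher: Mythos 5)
Your proof is correct and follows essentially the same route as the paper: both identify the candidate subcategory and any competitor through the bijection with $2$-Bass sequences of \cref{prp:classification via seq}, use $A^0(\catY)=\Phi$ to pin down the torsion-free closure, invoke the Bass-sequence condition to force $\Phi\cup\Phi^{\cov}\subseteq A^1(\catY)$, and conclude the containment via order-preservation (the paper phrases this last step via \cref{thm:KE-closed reconstruction}, which amounts to the same thing). Your added hands-on check of $\Phi^{\cov}\subseteq A^1(\catY)$ via Bass' lemma and your remark locating the use of $(S_2)$-excellence in the surjectivity of the correspondence are both accurate but not needed beyond what the cited results already give.
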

\begin{proof}
Put $\catX:=\catmod^0_{\Phi}R \cap \catmod^1_{\Phi\cup\Phi^{\cov}} R$.
As $A^0(\catX)=\Phi$ by \cref{prp:classification via seq}, its torsion-free closure is $\catmod_\Phi^\ass R$ (cf.\ \cref{prp:torf closure for comm ring}).
Take any KE-closed subcategory $\catY$ such that $\F(\catY)=\catF$.
Since $A^0(\catY)=A^0(\F(\catY))=\Phi$,
the $2$-Bass sequence associated to $\catY$ is $(\Phi, A^1(\catY))$.
As it is a Bass sequence, we obtain an inclusion $(\Phi, \Phi\cup \Phi^{\cov})\subseteq(\Phi, A^1(\catY))$ of $2$-Bass sequences.
This yields the inclusion $\catX \subseteq \catY$ of KE-closed subcategories by \cref{thm:KE-closed reconstruction},
and thus we obtain the desired conclusion.
\end{proof}

\begin{ex}
Suppose that $R$ is an $(S_2)$-excellent domain.
Then the category $\tf R$ of torsion-free $R$-modules 
is a torsion-free class with $\Ass(\tf R)=\{(0)\}$.
Thus, by \cref{prp:smallest KE},
the category $\Se_2(R) = \catmod^0_{\{(0)\}}R \cap \catmod^1_{\{\pp \mid \htt\pp \le 1\}} R$ of $R$-modules satisfying Serre's $(S_2)$-condition is
the smallest KE-closed subcategory whose torsion-free closure is $\tf R$.
\end{ex}



\begin{thebibliography}{NNNN00}
\bibitem[AIR14]{AIR}
 T.\ Adachi, O.\ Iyama, I.\ Reiten,
 \emph{$\tau$-tilting theory},
 Compos.\ Math.\ \textbf{150} (2014), no.\ 3, 415--452.

\bibitem[AF74]{AF}
 F.\ W.\ Anderson, K.\ R.\ Fuller,
 \emph{Rings and categories of modules},
 Grad.\ Texts in Math., Vol.\ 13
 Springer-Verlag, New York-Heidelberg, 1974.


\bibitem[AR91]{AR}
 M.\ Auslander, I.\ Reiten,
 \emph{Applications of contravariantly finite subcategories},
 Adv.\ Math.\ \textbf{86} (1991), no.\ 1, 111--152.

\bibitem[Bas63]{Bass}
 H.\ Bass,
 \emph{On the ubiquity of Gorenstein rings},
 Math.\ Z.\ \textbf{82} (1963), 8--28.

\bibitem[Bou89]{Bourbaki CA}
 N.\ Bourbaki,
 \emph{Commutative Algebra: Chapters 1--7},
 Springer, 1989.

\bibitem[BH93]{BH}
 W.\ Bruns, J.\ Herzog,
 \emph{Cohen-Macaulay rings},
 Cambridge Studies in Advanced Mathematics, \textbf{39},
 Cambridge University Press, Cambridge, 1993.

\bibitem[B\"{u}h10]{Bu}
 T.\ B\"{u}hler,
 \emph{Exact categories},
 Expo.\ Math.\ \textbf{28} (2010), no.\ 1, 1--69.

\bibitem[\v{C}es21]{Ces}
 K.\ \v{C}esnavi\v{c}ius,
 \emph{Macaulayfication of Noetherian schemes},
 Duke Math.\ J.\ \textbf{170} (2021), no.\ 7, 1419--1455.

\bibitem[DT15]{DT}
 H.\ Dao, R.\ Takahashi,
 \emph{Classification of resolving subcategories and grade consistent functions},
 Int.\ Math.\ Res.\ Not.\ (2015), no.\ 1, 119--149.

\bibitem[Eno]{Eno}
 H.\ Enomoto,
 \emph{IE-closed subcategories of commutative rings are torsion-free classes},
 preprint (2023), arXiv:2304.03260v2.

\bibitem[ES23]{ES}
 H.\ Enomoto, A.\ Sakai,
 \emph{Image-extension-closed subcategories of module categories of hereditary algebras},
 J.\ Pure Appl.\ Algebra \textbf{227} (2023), no.\ 9, Paper No. 107372, 17 pp.

\bibitem[Gab62]{Gab}
 P.\ Gabriel,
 \emph{Des cat\'{e}gories ab\'{e}liennes},
 Bull.\ Soc.\ Math.\ France \textbf{90} (1962), 323--448.

\bibitem[GW20]{GW-AG}
 U.\ Gortz, T.\ Wedhorn,
 \emph{Algebraic geometry I. Schemes--with examples and exercises}, Second edition,
 Springer Studium Mathematik--Master, 2020.

\bibitem[HS06]{HS}
C.\ Huneke, I.\ Swanson, \emph{Integral closure of ideals, rings, and modules}, London Mathematical Society Lecture Note Series, \textbf{336}. Cambridge University Press, 2006. xiv+431 pp.

\bibitem[IMST24]{IMST}
 K.\ Iima, H.\ Matsui, K.\ Shimada, R.\ Takahashi,
 \emph{When Is a Subcategory Serre or Torsion-Free?},
 Publ.\ Res.\ Inst.\ Math.\ Sci.\ \textbf{60} (2024), no.\ 4, 831--857.

\bibitem[IK24]{IK}
 O.\ Iyama, Y.\ Kimura,
 \emph{Classifying subcategories of modules over Noetherian algebras},
 Adv.\ Math.\ \textbf{446} (2024), Paper No.\ 109631.

\bibitem[KS24]{KS}
 T.\ Kobayashi, S.\ Saito,
 \emph{When are KE-closed subcategories torsion-free classes?},
 Math.\ Z.\ \textbf{307} (2024), no.\ 4, Paper No.\ 65.

\bibitem[Lam01]{Lam}
 T.\ Y.\ Lam,
 \emph{A first course in noncommutative rings}, Second edition,
 Grad.\ Texts in Math., \textbf{131},
 Springer-Verlag, New York, 2001. xx+385 pp.

\bibitem[LW00]{LW}
G.\ Leuschke, R.\ Wiegand, \emph{Ascent of finite Cohen-Macaulay type}, J.\ Algebra \textbf{228} (2000), no.\ 2, 674--681.

\bibitem[Mat86]{CRT}
 H.\ Matsumura,
 \emph{Commutative ring theory},
 Cambridge Stud.\ Adv.\ Math., \textbf{8},
 Cambridge University Press, Cambridge, 1986. xiv+320 pp.

\bibitem[SW11]{SW}
 D.\ Stanley, B.\ Wang,
 \emph{Classifying subcategories of finitely generated modules over a Noetherian ring},
 J.\ Pure Appl.\ Algebra \textbf{215} (2011), no.\ 11, 2684--2693.

\bibitem[Tak08]{Takahashi}
 R.\ Takahashi,
 \emph{Classifying subcategories of modules over a commutative Noetherian ring},
 J.\ Lond.\ Math.\ Soc.\ (2) \textbf{78} (2008), no.\ 3, 767--782.

\bibitem[Tak21]{Takahashi2}
 R.\ Takahashi,
 \emph{Classification of dominant resolving subcategories by moderate functions},
 Illinois J.\ Math.\  \textbf{65} (2021), no.\ 3, 597--618.

\bibitem[Tak23]{Takahashi3}
 R.\ Takahashi,
 \emph{Faltings' annihilator theorem and $t$-structures of derived categories},
 Math.\ Z.\ \textbf{304} (2023), no.\ 1, Paper No.\ 10, 13 pp.
\end{thebibliography}
\end{document}